\documentclass[12pt]{amsart}
\usepackage{amssymb,amsmath,amsthm,}  
\usepackage{graphicx}

\usepackage{color}
\definecolor{Blue}{rgb}{0.3,0.3,0.9}

\bibliographystyle{alpha}
\parindent=.2in 

\newtheorem{thm}{Theorem}[section]
\newtheorem{cor}[thm]{Corollary}
\newtheorem{lem}[thm]{Lemma}
\newtheorem{prop}[thm]{Proposition}

\newtheorem{problem}[thm]{Problem}
\newtheorem{basicproblem}[thm]{Basic problem}

\newtheorem{que}[thm]{Question}

\theoremstyle{definition}
\newtheorem{defn!}[thm]{Definition}
\newtheorem{nota!}[thm]{Notation}
\newtheorem{exe!}[thm]{Example}
\newtheorem{exes!}[thm]{Examples}
\newtheorem{rem!}[thm]{Remark}
\newtheorem{rems!}[thm]{Remarks}
\newtheorem{remind!}[thm]{Reminder}
\newtheorem{que!}[thm]{Question}

\newcommand{\Z}{\mathbf{Z}}

\newcommand{\N}{\mathbf{N}}
\newcommand{\R}{\mathbf{R}}
\newcommand{\C}{\mathbf{C}}
\newcommand{\Q}{\mathbf{Q}}
\newcommand{\un}{\textnormal{un}}

\title[Finitely presented covers]
{Amenable groups without finitely presented amenable covers
}

\author[M.G.\ Benli, R.\ Grigorchuk, and P.\ de la Harpe]
{Mustafa G\"okhan
Benli, Rostislav Grigorchuk \\ and  Pierre de la Harpe
\\
with a post-scriptum by Ralph Strebel}

\address{M.B.\ and R.G.:  Department of Mathematics \\
Mailstop 3368 \\ 
\newline
Texas A{\&}M University \\ College Station \\
TX 77843--3368 \\ USA.}
\email{mbenli@math.tamu.edu \\ grigorch@math.tamu.edu}

\address{P.H.:  Section de math\'ematiques \\
Universit\'e de Gen\`eve \\
C.P.~64 \\
\newline
CH--1211 Gen\`eve~4 \\ Suisse.}
\email{Pierre.delaHarpe@unige.ch}

\address{R.S.: 
Universit\'e de Fribourg \\
D\'epartement de math\'ematiques \\
\newline
Chemin du Mus\'ee 23 \\
CH--1700 Fribourg \\ Suisse.}
\email{ralph.strebel@unifr.ch}

\date{March 20, 2013, and April 27 for the Post Scriptum}

\subjclass[2000]{20E08,
20E22,
20F05  	
43A07.}  	

\thanks{We are grateful to the Swiss National Science Foundation
for supporting a visit of the first author to Geneva.
The first author is grateful to the department of mathematics, University of  Geneva, 
for their hospitality during his stay.
The first and the second authors are partially supported 
by NSF grant DMS -- 1207699.
}

\keywords{
Finitely generated groups, finitely presented covers, self-similar groups, 
metabelian groups, soluble groups, groups of intermediate growth,
amenable groups,  groups with non-abelian free subgroups, large groups}

\begin{document}

\begin{abstract} 
The goal of this article is to study results and examples concerning 
finitely presented covers of finitely generated amenable groups.
We collect examples of groups $G$ with the following properties:
(i) $G$ is finitely generated,
(ii) $G$ is amenable, e.g.\  of intermediate growth,
(iii) any finitely presented group with a quotient isomorphic to $G$
contains non-abelian free subgroups,
or the stronger 
(iii') any finitely presented group with a quotient isomorphic to $G$
is large.
\end{abstract}

\maketitle

\footnotesize
After submission of the final version of this paper to the
\emph{Bulletin of Mathematical Sciences} \hskip.1cm
\verb+http://www.springer.com/birkhauser/mathematics/journal/13373+~,
we have received a note of Ralph Strebel on an construction
of Yves de Cornulier, providing a strenghtening of Corollary \ref{BNSrem}.
This note, slightly modified, is added as a post-scriptum to the present file.
\normalsize

\section{\textbf{Introduction}}
\label{sectionIntro}

\subsection{Motivation}
\label{Motivation}
In the study of finiteness conditions on groups, 
the following type of question is natural:

\begin{que}
\label{initialquestion}
Given a Property ($\mathcal P$) of groups, 
does any finitely generated group with~($\mathcal P$) 
have a finitely presented cover with ($\mathcal P$) ?
\end{que}

A \textbf{cover} of a group $G$ 
is a group $E$ given together with an epimorphism $E \twoheadrightarrow G$.
A \emph{cover} fits into an \emph{extension}
$1  \rightarrow  
N \rightarrow 
E   \twoheadrightarrow 
G \rightarrow 
1$.
\par

The answer can be positive for trivial reasons,
for example when Property ($\mathcal P$) holds for free groups
(such as exponential growth)
or when Property ($\mathcal P$) implies finite presentation
(such as nilpotency, or polynomial growth).
The answer is also positive when
($\mathcal P$) is Kazhdan's Property ($\mathcal T$),
by a non-trivial result of Shalom \cite{Shal--00}.
\par

Here, we concentrate on Question \ref{initialquestion} for amenability,
a case in which the answer is negative.
Indeed, the goal of this article is to study examples and results concerning 
\emph{finitely generated amenable groups
that do not have finitely presented amenable covers.}
\par

Our motivation is better to understand
the class of \emph{finitely presented amenable groups},
and related groups.
To describe the situation, it is convenient to introduce
the class $\mathcal A \mathcal G$ of amenable groups,
and the subclass $\mathcal E \mathcal G$ of elementary amenable groups,
which is easier to understand
(Appendix \ref{AppendixDGrowthAmenability} is a reminder
on these classes, and on growth).
\par

On the one hand, many examples are known in $\mathcal E \mathcal G$ 
of finitely presented soluble groups,
including the standard examples described in the exposition \cite{Stre--84},
the Baumslag-Remeslennikov group,
which is a finitely presented metabelian group 
with derived group free abelian of infinite rank 
(references in our Remark \ref{RemarkOnHall}.c),
and the Kharlampovich group, 
which is a finitely presented soluble group with unsoluble word problem
(references in the proof of Proposition \ref{HallAbelsEtc}).
There are also finitely presented groups in $\mathcal E \mathcal G$
which are not virtually soluble (Example \ref{Houghton}).

In the complement $\mathcal A \mathcal G \smallsetminus \mathcal E \mathcal G$,
a reasonable number of \emph{finitely generated} examples are known, 
in particular the \emph{groups of intermediate growth}.
Also known are \emph{finitely presented} examples,
all closely related to self-similar groups.
But not enough examples are known to allow one to guess at a general picture;
this is illustrated by the following old and basic problem in the theory of growth of groups
(see, e.g., 
\cite{Kour--06}, Problems 4.5.b,  9.8, and 10.11):

\begin{basicproblem}
\label{basicproblem}
Does there exist a finitely presented group of intermediate growth?
\end{basicproblem}


The answer does not seem to be at hand.
Here is a possibly easier problem:

\begin{problem}
\label{problextintgr}
Does there exist a finitely generated group of intermediate growth 
that is the quotient of a finitely presented group
without non-abelian free subgroups?
or the quotient of a finitely presented amenable group?
\end{problem}

Finitely generated elementary amenable groups
are never of intermediate growth \cite{Chou--80},
so that Problems \ref{basicproblem} and \ref{problextintgr} 
do not arise in $\mathcal E \mathcal G$.
Problem \ref{problextintgr} involves \emph{two} questions,
because there exist finitely presented groups that are non-amenable
and do not contain non-abelian free subgroups \cite{OlSa--02}.
\par

Strictly speaking, the solution 
of the analogue of Problem \ref{basicproblem} is known for the class
$\mathcal A \mathcal G \smallsetminus \mathcal E \mathcal G$.
Indeed, one knows a few sporadic examples of finitely presented groups
in this class,
such as the HNN-extension $\widehat{\mathfrak G}$ of the Grigorchuk group \cite{Grig--98},
and the HNN-extension of the Brunner-Sidki-Vieira group
that appears in \cite[Proposition 6]{GrZ--02b}
(the latter group is amenable by \cite{BaKN--10} 
and not elementary amenable by \cite{GrZ--02b}).
The finitely presented HNN-extension of the basilica group 
that appears in \cite[Theorem 1.7]{GrZ--02b}
is not even in the class
$\mathcal A \mathcal G \smallsetminus \mathcal S \mathcal G$,
defined in Appendix \ref{AppendixDGrowthAmenability};
see also \cite[Theorem 12]{BaVi--05}.
But, despite these examples, 
very little is known about finitely presented groups in
$\mathcal A \mathcal G \smallsetminus \mathcal E \mathcal G$,
and it would be interesting to find methods providing new examples;
a priori, one could hope and try finitely presented covers,
but our paper shows that this does not look very promising.
\par

Before stating the next problem, 
here is a definition:
the \textbf{elementary amenable radical} 
$\operatorname{Rad}_{\operatorname{ea}}(G)$ of a group $G$
is its unique maximal normal elementary amenable subgroup.
Note that $\operatorname{Rad}_{\operatorname{ea}}(G)$ is contained in the
amenable radical of $G$, that appears (but for its name) 
in \cite[Lemma 1 of Section 4]{Day--57}.

\begin{problem}
\label{AddedOnMay21}
Is there a finitely presented group $G$
in $\mathcal A \mathcal G \smallsetminus \mathcal E \mathcal G$,
with $\operatorname{Rad}_{\operatorname{ea}}(G) = \{1\}$, 
that has infinitely presented\footnote{A
finitely generated group is ``infinitely presented'' if it is not finitely presented.}  
quotients?
or that has uncountably many pairwise non-isomorphic quotients?
\end{problem}

\subsection{First examples}
When ($\mathcal P$) is amenability,
we know two approaches to Question \ref{initialquestion}:
One is based on the theory of $\Sigma$-invariants, 
developed in a series of papers by Bieri, Strebel, and Neumann
(see Appendix \ref{AppendixCBNS}). 
The other involves self-similar groups and approximation methods
(see Sections \ref{sectionSelfsimilar} and \ref{sectionMark+Chab}). 
\par

The first result we quote is due to Bieri and Strebel
\cite[Theorem 5.5 and Corollary 5.6]{BiSt--80}.
On the one hand, we reformulate it
in a slightly weaker version, assuming that $E$ is finitely presented
(instead of assuming that $E$ has Property FP$_2$, as in the original paper);
on the other hand, we formulate it for \emph{virtually} metabelian groups,
because this follows immediately from the case of metabelian groups.

\begin{thm}[Bieri-Strebel]
\label{ThBieriStrebel}
Let $G$ be a virtually metabelian group 
that is finitely generated and infinitely presented. 
\par

Any finitely presented cover $E$ of $G$ 
has non-abelian free subgroups.
In particular, $E$ is non-soluble, indeed non-amenable.
\end{thm}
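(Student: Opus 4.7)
The plan is to reduce to the metabelian case and then appeal to the $\Sigma$-invariant machinery developed by Bieri and Strebel. For the reduction, let $G_0 \trianglelefteq G$ be a normal metabelian subgroup of finite index, and set $E_0 = \pi^{-1}(G_0)$, where $\pi \colon E \twoheadrightarrow G$. Then $E_0$ has finite index in $E$, hence is finitely presented, and it surjects onto $G_0$; and $G_0$ is finitely generated but infinitely presented, for otherwise $G$ itself would be finitely presented (finite extensions preserve finite presentability in both directions). Since any non-abelian free subgroup of $E_0$ is also one of $E$, we may assume that $G$ itself is metabelian.

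Next, I set $A = [G,G]$ and $Q = G/A$, so that both groups are abelian and $A$ becomes a $\Z Q$-module by conjugation. The Bieri-Strebel criterion expresses finite presentation of $G$ (equivalently, FP$_2$) as a \emph{tameness} property of $A$: for every non-zero character $\chi \colon Q \to \R$, the module $A$ must be finitely generated over at least one of the monoid rings $\Z Q_\chi$, $\Z Q_{-\chi}$, where $Q_\chi = \{q \in Q : \chi(q) \geq 0\}$. The hypothesis that $G$ is infinitely presented supplies a character $\chi$ for which tameness fails on both sides. I then pull $\chi$ back along $E \twoheadrightarrow G \twoheadrightarrow Q$ to a real character $\widetilde\chi$ on $E$.

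The central step is Bieri-Strebel's Theorem 5.5 itself, which says that once $E$ is FP$_2$, the simultaneous failure of tameness of $A$ over $\Z Q_{\pm\chi}$ forces $E$ to admit an HNN-type structure over a non-finitely-generated subgroup sitting inside $\ker \widetilde\chi$; a ping-pong argument in that structure then exhibits two elements of $E$ that generate a free group of rank two. The \emph{in particular} clauses are immediate consequences, since $F_2$ is non-amenable and non-amenability passes to overgroups, ruling out both solubility and amenability of $E$. The delicate part of the argument is precisely this last implication, where the module-theoretic non-tameness of $A$ over $\Z Q$ must be converted into an explicit pair of ping-pong generators in $E$; this is where finite presentability of $E$ is essentially used, through the valuation-theoretic analysis of $\Z Q$-modules carried out in \cite{BiSt--80}. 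Everything upstream of that analysis is formal.
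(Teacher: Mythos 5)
Your argument is correct in substance, and your reduction to the metabelian case (pass to a normal metabelian subgroup $G_0$ of finite index, pull it back to $E_0 = \pi^{-1}(G_0)$, and use that finite presentability and the absence of free subgroups both pass between a group and its finite-index subgroups) is exactly the reduction the paper treats as immediate. Where you diverge is in the metabelian case itself: you run the original \cite{BiSt--80} argument, converting infinite presentability into the failure of tameness of $A=[G,G]$ over both monoid rings $\Z Q_{\pm\chi}$ and then invoking the HNN/ping-pong structure theory for FP$_2$ groups. The paper instead routes everything through the Bieri--Neumann--Strebel invariant $\Sigma^1$ (Corollary \ref{BNScorollary}): by Theorem \ref{BNSgeneral}.iv a finitely generated metabelian $G$ that is infinitely presented has $\Sigma^1(G)\cup(-\Sigma^1(G))\subsetneqq S(G)$; by \ref{BNSgeneral}.vi this defect is inherited by every finitely generated cover $E$; and by \ref{BNSgeneral}.v (Theorem C of \cite{BiNS--87}) a finitely presented group with that defect must contain non-abelian free subgroups. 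The two approaches are mathematically the same ($\Sigma^1$ for metabelian groups is a repackaging of tameness), but the $\Sigma^1$ version isolates the functoriality under epimorphisms and the free-subgroup criterion as clean, quotable general statements, which is why the paper prefers it. One imprecision in your sketch worth fixing: in the Bieri--Strebel structure theorem the base and associated subgroups of the HNN decomposition of the finitely presented group $E$ (relative to a discrete character $\widetilde\chi$) are \emph{finitely generated}; what fails to be finitely generated is $\ker\widetilde\chi$, equivalently the module $A$ over the monoid rings. The point is that two-sided failure of tameness rules out the ascending and descending cases, so Britton's lemma applies to the properly non-ascending HNN extension and produces the free subgroup. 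You should also note that, since the complement of $\Sigma^1$ for a metabelian group is a rational polyhedron, the bad character $\chi$ may be chosen discrete, which is needed to invoke the HNN structure theorem.
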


The proof of Theorem \ref{ThBieriStrebel} can now be understood
in a much simpler way than in \cite{BiSt--80},
as we indicate in Corollary \ref{BNScorollary}. 
The reason is that we can use better invariants,
namely those defined in \cite{BiNS--87} 
and their later reformulations
(see Strebel's exposition in progress \cite{Stre}).
\par

Examples of metabelian groups 
that are finitely generated and infinitely presented include 
matrix groups like
$
\left(
\begin{array}{cc}
  \big(\frac{\ell}{m} \big)^{\Z} & \Z \left[ \frac{1}{\ell m} \right] \\
  0   & 1 \\
\end{array} 
\right ) 
$
where $\ell, m \ge 2$ are coprime integers, 
wreath products $A \wr \Z$
where $A \ne \{1\}$ is a finitely generated abelian group,
and free metabelian groups $\operatorname{FSol}(k,2) := F_k / [[F_k, F_k],[F_k, F_k]]$,
where $F_k$ is the free group on $k \ge 2$ generators.
\par

Appendix \ref{AppendixAMetabelian} includes a reminder 
on metabelian groups.
Appendix~\ref{AppendixBWreath} is a reminder 
on wreath products and lamplighter groups.
Appendix~\ref{AppendixCBNS} is a reminder
on Bieri-Neumann-Strebel invariants.
Proposition \ref{AwrZ}, Corollary \ref{freeSol},  Proposition \ref{BS+GrMa},
Corollary \ref{BNScorollary}, and Corollary \ref{BNSrem} give
examples of finitely generated groups 
of which all finitely presented covers have non-abelian free subgroups.

\vskip.2cm

We denote by $\mathfrak G$ the \emph{Grigorchuk group}\footnote{Two
of the authors insist that we mention this terminology, often used today.
The third author submits.},
introduced in \cite{Grig--80};
see Example \ref{Ex1stGrig}.
Recall here that $\mathfrak G$ is finitely generated, 
indeed generated by four involutions
traditionally written $a,b,c,d$.
This group has many remarkable properties, including that
of being an infinite $2$-group of intermediate growth;
in particular it is amenable.
The group $\mathfrak G$ has a presentation, due to Lysenok,
with four generators and infinitely many relators.
Appropriate finite subsets of these relators naturally define 
a sequence $(\mathfrak G_n)_{n \ge 0}$ of four-generated finitely presented groups
converging to $\mathfrak G$ (Definition \ref{defGn}).
\par

There is a reminder
on convergence of groups in Section \ref{sectionMark+Chab}.

\begin{thm}[\cite{GrHa--01}]
\label{GrHa+i}
Any finitely presented cover of the group $\mathfrak G$ 
is large.
\end{thm}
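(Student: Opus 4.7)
The plan is to reduce the problem to showing that the approximating groups $\mathfrak G_n$ of Definition \ref{defGn} are all (or all sufficiently far along in the sequence) large, and then to invoke the trivial fact that largeness is inherited by overgroups.

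\smallskip

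First, I would argue that any finitely presented cover of $\mathfrak G$ factors through some $\mathfrak G_n$. Write Lysenok's presentation as $\mathfrak G = F_4 / N$, where $N$ is the normal closure of the countable family $\{r_i\}_{i \ge 1}$ of Lysenok relators, and $\mathfrak G_n = F_4 / N_n$ with $N_n$ the normal closure of the first $n$ of these relators, so that $N = \bigcup_n N_n$. Let $\pi : E \twoheadrightarrow \mathfrak G$ be a finitely presented cover; choosing a lift, present $E$ as $F_k / R$ and then, after enlarging the generating set, as $F_4 / R$ with $R$ normally generated by a \emph{finite} set $\{s_1, \dots, s_m\}$ contained in $N$. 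Each $s_j$ is a finite product of conjugates of the $r_i^{\pm 1}$, so only finitely many $r_i$ occur, and for $n$ large enough one has $\{s_1,\dots,s_m\} \subset N_n$, hence $R \subseteq N_n$. This yields a surjection $E \twoheadrightarrow \mathfrak G_n$.

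\smallskip

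The main obstacle is the second ingredient: one must show that the finitely presented groups $\mathfrak G_n$ are large for every (sufficiently large) $n$. This is where the detailed analysis of Lysenok's relators and the branching/self-similar structure of $\mathfrak G$ enters. The natural route is to exhibit, for each $n$, a subgroup of finite index in $\mathfrak G_n$ that admits a surjection onto a non-abelian free group; equivalent strategies include proving that $\mathfrak G_n$ has deficiency at least two and invoking the Baumslag--Pride theorem, or exploiting the fact that $\mathfrak G$ has an index-$2$ subgroup that embeds into $\mathfrak G \times \mathfrak G$ so that the analogous branching subgroup of $\mathfrak G_n$ surjects onto a product that has enough free behaviour because only finitely many Lysenok relators are imposed. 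The non-triviality here is that, although $\mathfrak G$ itself is amenable, each truncation $\mathfrak G_n$ fails infinitely many relators of $\mathfrak G$ and so contains much more ``room''; one must quantify this so as to produce a free-by-finite quotient.

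\smallskip

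Assuming Step 2, the conclusion is immediate: largeness passes to covers. Indeed, if $H \le \mathfrak G_n$ has finite index and surjects onto a non-abelian free group, then $\pi_n^{-1}(H)$ has finite index in $E$, where $\pi_n : E \twoheadrightarrow \mathfrak G_n$ is the surjection produced in Step~1, and it inherits a surjection onto the same free group. Hence $E$ is large. The whole difficulty of the theorem thus concentrates in establishing largeness of the approximating groups $\mathfrak G_n$, which is the content of the work in \cite{GrHa--01}.
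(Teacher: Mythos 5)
Your proposal is correct and follows essentially the same route as the paper: Step 1 is precisely Proposition \ref{main} (a finitely presented cover of the Chabauty limit $\mathfrak G = \lim_n \mathfrak G_n$ covers $\mathfrak G_n$ for $n$ large), Step 3 is the standard inheritance of largeness by covers (Corollary \ref{main2}), and the crux you isolate — largeness of each $\mathfrak G_n$ — is exactly what \cite{GrHa--01} proves and what the paper sharpens as Theorem \ref{GrHa+ii}. Among the strategies you list for that crux, the one actually used is the branching one: the index-$2$ subgroup $\Xi_n$ maps isomorphically onto a finite-index subgroup of $\mathfrak G_{n-1}\times\mathfrak G_{n-1}$, and an induction starting from the virtually free group $\mathfrak G_{-1}\simeq C_2\ast V$ produces a finite-index subgroup of $\mathfrak G_n$ isomorphic to a direct product of $2^n$ free groups of rank $3$.
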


Recall that a group is \textbf{large} if it contains a subgroup of finite index
that has non-abelian free quotients. 
Note that large groups have non-abelian free subgroups.
\par

In view of Proposition \ref{main} below,
Theorem \ref{GrHa+i} is a straightforward consequence of the main result of \cite{GrHa--01};
see also Theorem \ref{FirstThmOnSSGps} and Example \ref{Ex1stGrig}.
More precisely, it was shown in \cite{GrHa--01}
that each $\mathfrak G_n$ is virtually 
a direct product of finitely generated non-abelian free groups;
this has been sharpened in \cite{BaCo--06},
and can be further improved:

\begin{thm}[Section \ref{sectionGrigGroup}]
\label{GrHa+ii}
Let $\mathfrak G$ and $\mathfrak G_n$ be as above.
For each $n \ge 0$, 
the group $\mathfrak G_n$ has a normal subgroup $H_n$ of index $2^{2^{n+1}+2}$ 
that is isomorphic to the direct product of $2^n$ free groups of rank $3$. 
\end{thm}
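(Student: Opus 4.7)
The plan is to argue by induction on $n$, exploiting the self-similar structure built into Lysenok's presentation.

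\textbf{Base case ($n=0$).} I would analyse $\mathfrak G_0$ directly. After eliminating $d$ via $bcd = 1$, it becomes a presentation on three involutions $a,b,c$ with only two further relators, namely $(ad)^4$ and $(adacac)^4$ rewritten in terms of $a,b,c$. Construct an explicit epimorphism $\pi_0 : \mathfrak G_0 \twoheadrightarrow Q_0$ onto a $2$-group of order $16$ (first the abelianization, then a suitable metabelian extension detected by the Lysenok relators), and let $H_0 = \ker \pi_0$. Using Reidemeister--Schreier with a convenient transversal for $H_0$, rewrite every defining relator of $\mathfrak G_0$ and verify that each rewritten relator is trivial. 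Counting Schreier generators then yields rank $3$, so $H_0 \cong F_3$, normal in $\mathfrak G_0$ of index $16 = 2^{2^{1}+2}$.

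\textbf{Inductive step.} For $n \ge 1$, write $S_n := \operatorname{St}_{\mathfrak G_n}(1)$ for the preimage in $\mathfrak G_n$ of the first-level stabilizer, of index $2$ in $\mathfrak G_n$. The self-similar action of $\mathfrak G$ on the two subtrees below the root induces a homomorphism
\[
\psi_n : S_n \longrightarrow \mathfrak G_{n-1} \times \mathfrak G_{n-1},
\]
and the very design of the Lysenok relators (the level-$n$ relators are obtained from the level-$(n-1)$ ones by applying the substitution $\sigma$) guarantees that $\psi_n$ is well defined. Two facts then need to be verified by direct inspection of the defining relators of $\mathfrak G_n$: first, $\psi_n$ is injective; second, its image has index $8$ in $\mathfrak G_{n-1} \times \mathfrak G_{n-1}$---the classical branch index, which descends from $\mathfrak G$ to each finite approximation. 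Define $H_n := \psi_n^{-1}(H_{n-1} \times H_{n-1})$. Normality in $\mathfrak G_n$ is automatic: conjugation by any lift of the nontrivial coset of $\mathfrak G_n / S_n \cong \Z/2$ acts on $\psi_n(S_n)$ by swapping coordinates, and $H_{n-1} \times H_{n-1}$ is stable under that swap. The induction hypothesis $H_{n-1} \cong F_3^{2^{n-1}}$ then gives $H_n \cong F_3^{2^n}$, and the indices satisfy
\[
[\mathfrak G_n : H_n] \;=\; 2 \cdot \frac{[\mathfrak G_{n-1} \times \mathfrak G_{n-1} : H_{n-1} \times H_{n-1}]}{8} \;=\; \tfrac{1}{4}\bigl(2^{2^n+2}\bigr)^2 \;=\; 2^{2^{n+1}+2}.
\]

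\textbf{Main obstacle.} The computational heart of the argument is the base case: constructing the right index-$16$ normal subgroup of $\mathfrak G_0$ and verifying, via Reidemeister--Schreier, that the two Lysenok relators become redundant in $H_0$, so that $H_0$ is genuinely free. The inductive step is then essentially bookkeeping over the wreath embedding already used in \cite{GrHa--01} and \cite{BaCo--06}; the sharpening here lies in pinning down rank exactly $3$ for the free factors, which is forced by the specific choice of $H_0$ in the base case.
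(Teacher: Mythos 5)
Your proposal is correct and follows essentially the same route as the paper: there the base case is $H_0 = K_0 = \langle\langle (ab)^2 \rangle\rangle_{\mathfrak G_0}$, free of rank $3$ and of index $16$ (index $2$ in $B_0 = \langle\langle b \rangle\rangle_{\mathfrak G_0}$, itself of index $8$), and the inductive step sets $H_n = \psi_n^{-1}(H_{n-1} \times H_{n-1})$ for the level-one decomposition map $\psi_n$ defined on the index-$2$ stabilizer $\Xi_n$, whose image is $(B_{n-1} \times B_{n-1}) \rtimes D_{n-1}^{\operatorname{diag}}$ of index $8$, with normality obtained exactly as you say by the coordinate swap under conjugation by $a$. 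The one detail you leave implicit is that $H_{n-1} \times H_{n-1}$ must actually lie inside that index-$8$ image for the isomorphism $H_n \simeq H_{n-1} \times H_{n-1}$ to hold; the paper secures this by identifying the image explicitly and checking $H_{n-1} \subset K_{n-1} \subset B_{n-1}$.
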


Another way of sharpening Theorem \ref{GrHa+i} is given in Proposition \ref{GrigNotFP2},
where the condition of finite presentability for the cover of $\mathfrak G$
is replaced by the weaker Condition FP$_2$.

\vskip.2cm

We denote by $\mathfrak B$ the Basilica group.
Recall here that $\mathfrak B$ can be generated by two elements,
and is an amenable torsion-free group of exponential growth.
See Example \ref{ExBasilica} and Appendix \ref{AppendixDGrowthAmenability} 
for some other properties of $\mathfrak B$.

\begin{thm}[Erschler]
\label{Erschler}
Any finitely presented cover of the Basilica group $\mathfrak B$ 
has non-abelian free subgroups.
\end{thm}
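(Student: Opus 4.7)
The plan is to mimic the template used for Theorem \ref{GrHa+i}: produce a sequence of finitely presented approximations $\mathfrak{B}_n \twoheadrightarrow \mathfrak{B}$ converging to $\mathfrak{B}$ in the space of marked two-generator groups, show that each $\mathfrak{B}_n$ already contains a non-abelian free subgroup, and then invoke Proposition \ref{main} to transfer this property to any finitely presented cover. Concretely, I would start from the known L-presentation of $\mathfrak{B}$ (due to Grigorchuk–\.Zuk): $\mathfrak{B}$ is generated by two elements $a,b$ subject to the infinite family of relators $\sigma^k(r_1), \ldots, \sigma^k(r_\ell)$, $k\ge 0$, where $\sigma$ is a substitution endomorphism of the free group $F_2$ and $r_1,\dots,r_\ell$ is a finite initial family of relators. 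Setting $\mathfrak{B}_n := F_2 / \langle\!\langle \sigma^k(r_i) \,:\, 0\le k\le n,\ 1\le i\le \ell\rangle\!\rangle$ gives a descending chain of finitely presented two-generator groups with $\mathfrak{B}_n \twoheadrightarrow \mathfrak{B}$ and $\mathfrak{B}_n \to \mathfrak{B}$ in the Cayley/Chabauty topology.

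The substantive input is Erschler's statement that each approximation $\mathfrak{B}_n$ contains a non-abelian free subgroup; this is the main obstacle. The expected route is to exploit the self-similar branch structure of $\mathfrak{B}$: truncating the L-presentation at level $n$ leaves the ``deep'' levels of the tree action unconstrained, so that suitable pairs of elements, obtained for instance by applying $\sigma^{n+1}$ to a pre-chosen pair in $F_2$ and then projecting into $\mathfrak{B}_n$, have the property that every relation they satisfy would be a consequence of relators lying beyond the truncation level and hence would be trivial in $\mathfrak{B}_n$. One expects to verify this by a ping-pong-type argument on the rooted binary tree action induced on $\mathfrak{B}_n$, or equivalently by arguing in the inverse limit of tree automorphism groups that $\mathfrak{B}_n$ acts on, along the same lines as the verification used for the groups $\mathfrak{G}_n$ in \cite{GrHa--01} and sharpened in Theorem \ref{GrHa+ii}.

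Granting this, the conclusion is formal. If $E$ is any finitely presented cover of $\mathfrak{B}$, choose a marking of $E$ extending the marking of $\mathfrak{B}$ on $\{a,b\}$; since $E$ has only finitely many defining relators and each of them is a consequence of the full infinite L-presentation of $\mathfrak{B}$, each uses only finitely many of the relators $\sigma^k(r_i)$. Hence for $n$ large enough every defining relator of $E$ already holds in $\mathfrak{B}_n$, which gives an epimorphism $E \twoheadrightarrow \mathfrak{B}_n$ — this is exactly the content of Proposition \ref{main}. Lifting a pair of free generators from $\mathfrak{B}_n$ to preimages $\tilde x,\tilde y \in E$ produces a non-abelian free subgroup in $E$, since any nontrivial reduced word in $\tilde x,\tilde y$ that vanished in $E$ would project to a nontrivial relation between free generators of $\mathfrak{B}_n$, a contradiction. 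This completes the proof; by contrast with the Grigorchuk case, we do not expect ``large'' to replace ``has non-abelian free subgroups,'' because the currently available structural information on the $\mathfrak{B}_n$ is weaker than the virtual product-of-free-groups decomposition known for the $\mathfrak{G}_n$.
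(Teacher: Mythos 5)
Your overall architecture (a converging sequence of finitely presented marked groups with free subgroups, plus Proposition \ref{main}/Corollary \ref{main2}) matches the paper's, but the one step you flag as ``the main obstacle'' --- that each truncation $\mathfrak B_n$ of the Grigorchuk--\.Zuk L-presentation contains a non-abelian free subgroup --- is exactly the substantive content, and your proposal does not supply it. Worse, the verification route you suggest cannot work as stated: the only natural action of $\mathfrak B_n$ on the rooted binary tree is the one factoring through $\mathfrak B_n \twoheadrightarrow \mathfrak B \to \operatorname{Aut}(X^*)$, whose image is the amenable group $\mathfrak B$, so no ping-pong on the tree can detect a free subgroup of $\mathfrak B_n$. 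The paper also warns (at the start of Section \ref{sectionGrigGroup}, for $\mathfrak G$) that the L-presentation truncations are \emph{not} the same sequence as the self-similar approximations of Section \ref{sectionSelfsimilar}; an analysis of the L-presentation truncations of $\mathfrak B$ in the style of \cite{GrHa--01} is not available and would require genuine new work.

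The paper avoids this entirely by choosing a different approximating sequence. Since the nucleus $\mathcal N = \{1, a^{\pm 1}, b^{\pm 1}, c^{\pm 1}\}$ of $\mathfrak B$ satisfies no relations of length at most $3$ beyond the trivial ones, the universal contracting cover is $G_0 = \langle a,b \mid \emptyset\rangle \simeq F_2$ (Example \ref{ExBasilica}), and one takes $G_n = G_0/\ker(\varphi_n)$ as in Definition \ref{DefGnEtc}. Lemma \ref{backwardsalongG_n's} embeds $G_n$ into $G_{n-1} \wr S_2$, and the self-replicating property produces a finite-index subgroup $H_n \le G_n$ surjecting onto $G_{n-1}$; by induction from $G_0 = F_2$ each $G_n$ is large, Lemma \ref{N=KerPi} gives $\lim_n G_n = \mathfrak B$ in $\mathcal M_2$, and Corollary \ref{main2} concludes. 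Note that this yields the stronger statement that every finitely presented cover of $\mathfrak B$ is \emph{large}, contrary to your closing remark: the relevant ``structural information'' is not a virtual-product decomposition of the approximations but simply the largeness of $F_2$ propagated up the tower of virtual surjections.
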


Given an invertible automaton $(A, \tau)$ over a finite alphabet $X$, 
Erschler introduces a notion of 
``automatically presented group $G^*(A, \tau)$ generated by the automaton''
(see \cite{Ersc--07} for details).
She shows that, if $G^*(A, \tau)$ is not virtually abelian, 
then any finitely presented cover of it has non-abelian free subgroups.
For the Basilica automaton, the group $G^*(A, \tau)$ coincides with $\mathfrak B$,
and Theorem \ref{Erschler} follows.
\par

As noted in \cite{Ersc--07},
these arguments do not apply to the group $\mathfrak G$ of Theorem \ref{GrHa+i};
this is due to the fact that, for $(A,\tau)$ the automaton of $\mathfrak G$,
the cover $G^*(A, \tau) \twoheadrightarrow \mathfrak G$
has a non-trivial kernel.
\par

In our setting, for $G = \mathfrak B$,
the universal contracting cover $G_0$ of Definition \ref{DefG0} is free of rank $2$
(see Example \ref{ExBasilica}), and Theorem \ref{Erschler} 
follows from our Theorem \ref{FirstThmOnSSGps}.
Indeed, our argument shows that 
\begin{center}
\emph{any finitely presented cover of $\mathfrak B$ is large.}
\end{center}

\vskip.2cm

\textbf{Notation:} In this paper, the symbols $\mathfrak G$ and $\mathfrak B$
will be used \emph{only} for the two groups introduced above.
In Examples \ref{IMGz^2+i} to \ref{Hanoi},
we introduce four other groups, with their usual notation in this subject:
the iterated monodromy group of $z^2+i$ denoted by $\mathfrak J$, 
the Gupta-Sidki group $\mathfrak G \mathfrak S$,  
the Fabrykowski-Gupta group $\mathfrak F \mathfrak G$, 
and the Hanoi Towers group $\mathfrak H$.

\vskip.2cm

We show in Section \ref{sectionSelfsimilar} that
Theorem \ref{FirstThmOnSSGps} implies
Theorems \ref{GrHa+i} on $\mathfrak G$, 
\ref{Erschler} on $\mathfrak B$,
and \ref{thmetc} on
$\mathfrak J$, $\mathfrak G \mathfrak S$, 
$\mathfrak F \mathfrak G$, and $\mathfrak H$.
We state now a shorthand version of \ref{FirstThmOnSSGps}.
Notation and technical terms are defined in Section  \ref{sectionSelfsimilar}.

\begin{thm}
\label{mainsection2}
Let $G$ be an infinite finitely generated self-similar group.
Assume that $G$ is contracting faithful self-replicating.
Let $G_0$ denote a standard contracting cover of $G$, as in Definition \ref{DefG0Tilde}.
\par

If $G_0$ has non-abelian free subgroups, then so does any finitely presented cover of $G$.
\par

If $G_0$ is large, any finitely presented cover of $G$ is large.
\end{thm}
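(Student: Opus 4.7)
The plan is to combine two ingredients: an approximation of $G$ by finitely presented marked quotients whose structure is controlled by $G_0$, and the standard fact that any finitely presented cover of $G$ factors through such an approximation. Using the contracting hypothesis, I would first build a sequence of finitely presented marked quotients $G_0 \twoheadrightarrow G_n \twoheadrightarrow G$ converging to $G$ in the Cayley--Chabauty topology on marked groups; this is the abstract analogue of the construction of $\mathfrak{G}_n$ from the Lysenok presentation of $\mathfrak{G}$ recalled in Definition \ref{defGn}. The relations of $G$ are organised into layers produced by iterated substitution coming from the self-similar structure, and $G_n$ is obtained from $G_0$ by imposing only the first $n$ layers. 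If $\pi \colon E \twoheadrightarrow G$ is any finitely presented cover, the compactness argument reviewed in Section \ref{sectionMark+Chab} yields, for every sufficiently large $n$, a factorisation $\pi = q_n \circ \pi_n$ with $\pi_n \colon E \twoheadrightarrow G_n$ and $q_n \colon G_n \twoheadrightarrow G$.

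The hard part is the structural step: to show that for each $n$, the group $G_n$ has a normal subgroup $K_n$ of finite index admitting an epimorphism onto the direct power $G_0^{|X|^n}$, where $X$ is the alphabet of the self-similar action. This should follow by induction on $n$. Self-replicativity should make the restriction of the level-$1$ stabiliser in $G_n$ to its component at $x \in X$ surjective onto a previous-level approximation that, by induction, contains $G_0^{|X|^{n-1}}$ up to finite index; contractivity should ensure that the only new relators introduced at stage $n$ are precisely those detected by these section maps, so no further collapse occurs beyond a bounded finite-index error; faithfulness prevents these sections from having large kernels. This is the abstract version of Theorem \ref{GrHa+ii}, in which $G_0 = F_3$, $|X| = 2$, and one obtains $K_n \cong F_3^{2^n}$ on the nose.

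The conclusion is then a routine pullback. Let $K_n' := \pi_n^{-1}(K_n)$, a finite-index subgroup of $E$; composing $\pi_n$ restricted to $K_n'$ with the epimorphism $K_n \twoheadrightarrow G_0^{|X|^n}$ yields an epimorphism $\rho \colon K_n' \twoheadrightarrow G_0^{|X|^n}$. If $G_0$ contains a nonabelian free subgroup, then so does $G_0^{|X|^n}$, and choosing lifts in $K_n'$ of a free basis of such a subgroup produces a nonabelian free subgroup of $K_n'$, hence of $E$. If $G_0$ is large, then $G_0^{|X|^n}$ admits a finite-index subgroup with a nonabelian free quotient, and pulling back under $\rho$ yields a finite-index subgroup of $K_n'$ --- and hence of $E$ --- with a nonabelian free quotient, so $E$ is large.
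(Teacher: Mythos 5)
Your overall architecture --- approximate $G$ by finitely presented marked quotients $G_n$ converging to $G$, factor any finitely presented cover through some $G_n$ by the compactness argument of Section \ref{sectionMark+Chab} (Proposition \ref{main} and Corollary \ref{main2}), and transfer largeness or free subgroups from $G_0$ up the tower --- is exactly the paper's. The gap is in the step you yourself flag as ``the hard part'': you assert, with a chain of ``should''s in place of an argument, that $G_n$ has a finite-index normal subgroup admitting an epimorphism onto the full direct power $G_0^{|X|^n}$. This claim is unproven and, as stated, dubious in general. What self-replication gives is that the level-$n$ stabilizer of $G_n$ embeds into $G_0^{|X|^n}$ as a \emph{subdirect} product, i.e.\ surjecting onto each coordinate separately; a subdirect product of $G_0^{|X|^n}$ need not admit $G_0^{|X|^n}$ as a quotient. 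Already for the Basilica group at level $n=1$ the level-one stabilizer embeds as the subgroup of $F_2\times F_2$ generated by the three elements $(a,1)$, $(1,a)$, $(b,b)$; being $3$-generated it cannot surject onto $F_2\times F_2$, whose abelianization is $\mathbf{Z}^4$, and nothing in your sketch produces a deeper finite-index subgroup that does. Your appeal to Theorem \ref{GrHa+ii} also misreads it: there $H_n$ is \emph{isomorphic to} a product of $2^n$ free groups of rank $3$ (a finite-index subgroup of $(C_2\ast V)^{2^n}$), which is not a surjection onto $G_0^{2^n}$ with $G_0=C_2\ast V$; moreover the $\mathfrak G_n$ of Section \ref{sectionGrigGroup} are explicitly not the $G_n$ of Section \ref{sectionSelfsimilar}.

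The repair is that you need far less. Lemma \ref{backwardsalongG_n's} shows $N_n=\varphi_1^{-1}(N_{n-1}^d)$, so $\varphi_1$ induces an \emph{injective} homomorphism $\psi_n:G_n\to G_{n-1}\wr S_d$; setting $H_n=\psi_n^{-1}\bigl(G_{n-1}^{X}\rtimes S_d^{(0)}\bigr)$, a finite-index subgroup, and composing with the projection onto the coordinate $x=0$ gives a homomorphism $H_n\to G_{n-1}$ that is onto because $(G_0,\varphi_1)$ is self-replicating. Thus $G_n$ virtually surjects onto a \emph{single} copy of $G_{n-1}$, and induction on $n$ transfers both ``contains a non-abelian free subgroup'' and ``is large'' from $G_0$ to every $G_n$; your final pullback paragraph then goes through verbatim with $G_0^{|X|^n}$ replaced by $G_0$. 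Note also that faithfulness and contraction are used to prove the convergence $G_n\to G$ (Lemma \ref{N=KerPi}), not to control ``new relators at stage $n$'' as you suggest.
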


\subsection{Infinitely more examples}
The group $\mathfrak G$ has uncountably many\footnote{Here
 and elsewhere, ``uncountably many'' groups means 
``uncountably many \emph{pairwise non-isomorphic}'' groups.}  
relatives $G_\omega$
introduced in \cite{Gri--84a}.
Each of these groups is generated by a set $S_\omega$ of four involutions.
They are parameterized by the space $\Omega = \{0,1,2\}^{\N}$.
We have $\mathfrak G = G_\omega$
when $\omega$ is the $3$-periodic sequence $012012012\cdots$.
Let $\Omega_+$ denote the complement in $\Omega$ 
of the space of eventually constant sequences.
Section \ref{sectionOmega} contains 
a description of the family $(G_\omega)_{\omega \in \Omega_+}$.

\begin{thm}
\label{ThGri84}
For $\omega \in \Omega_+$, the group $G_\omega$
is of intermediate growth,
and any finitely presented cover of $G_\omega$ is large.
\end{thm}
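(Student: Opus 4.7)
The theorem splits into two assertions. The first — that $G_\omega$ has intermediate growth for $\omega \in \Omega_+$ — is precisely the content of Grigorchuk's 1984 paper \cite{Gri--84a}, and we would simply invoke it. The new content is the statement on finitely presented covers, which we plan to deduce from Theorem \ref{mainsection2} applied to $G = G_\omega$.

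The first step is to verify the hypotheses of Theorem \ref{mainsection2}. Each $G_\omega$ is infinite (it is in fact an infinite $2$-group for generic $\omega$), finitely generated by the four involutions of $S_\omega$, and self-similar by construction as a group of tree automorphisms. Standard properties of the Grigorchuk family of 1984 give, uniformly in $\omega$, that $G_\omega$ is contracting, faithful on the rooted binary tree, and self-replicating; these are built into the definition and require only a direct inspection of the section maps. The only substantive hypothesis to be checked is therefore that the standard contracting cover $G_{0,\omega}$ of Definition \ref{DefG0Tilde} is large.

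For the latter, the plan is to adapt the argument of \cite{GrHa--01} (and of our Theorem \ref{GrHa+ii}) from the periodic case $\omega = (012)^{\infty}$, where the group is $\mathfrak G$, to arbitrary $\omega \in \Omega_+$. Starting from a Lysenok-type presentation of $G_\omega$ with four generators and an infinite, $\omega$-indexed set of relators, one introduces the finitely presented approximations $G_{\omega,n}$ by truncating the relators at level $n$. In each $G_{\omega,n}$ one then exhibits a finite-index normal subgroup $H_{\omega,n}$ that is isomorphic to a direct product of $2^n$ copies of a common group: this is obtained by analyzing the stabilizer of level $n$, whose image in $G_\omega$ factors along the $2^n$ vertices at level $n$ via the self-similarity map. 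A careful bookkeeping of the relators absorbed into each factor shows that, for $n$ sufficiently large, the factor is a non-abelian free group. The largeness of $G_{0,\omega}$ then follows by pulling back through the tower $(G_{\omega,n})_{n \ge 0}$ converging to $G_\omega$, after which Theorem \ref{mainsection2} yields that every finitely presented cover of $G_\omega$ is large.

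The main obstacle lies in carrying out this decomposition \emph{uniformly in} $\omega$: in the periodic case the shifted sequences $\sigma^n(\omega)$ are only three in number, which keeps the form of the relators and of the factor groups tractable; for arbitrary $\omega \in \Omega_+$ the combinatorics at level $n$ depends on $\sigma^n(\omega)$ and so varies with $n$. The condition $\omega \in \Omega_+$ — that $\omega$ is not eventually constant — is exactly what guarantees that none of the three auxiliary generators of the factor groups becomes eventually trivial along the branch, so that the factors do not collapse to abelian or otherwise degenerate quotients and a non-abelian free group genuinely appears. Once this uniform analysis is in place, the reduction to Theorem \ref{mainsection2} is immediate.
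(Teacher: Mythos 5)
The central step of your plan --- ``deduce the statement on covers from Theorem \ref{mainsection2} applied to $G = G_\omega$'' --- does not go through, because $G_\omega$ is not a self-similar group in the sense of Definition \ref{SsGroup} for a general $\omega \in \Omega_+$. The wreath recursion on the generators sends $b_\omega \mapsto (a_{\beta(\omega_1)}, b_{\sigma(\omega)})$, etc., so the sections of elements of $G_\omega$ lie in $G_{\sigma(\omega)}$, not in $G_\omega$; the natural homomorphism is $\Phi_\omega : G_\omega \to G_{\sigma(\omega)} \wr S_2$ rather than $G_\omega \to G_\omega \wr S_2$. By Proposition \ref{Grig84}.iv, $G_{\sigma(\omega)}$ is not even isomorphic to $G_\omega$ unless $\omega$ and $\sigma(\omega)$ differ by a permutation of $\{0,1,2\}$, which fails for all but countably many $\omega$. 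Consequently the hypotheses ``contracting, faithful, self-replicating'' of Theorem \ref{mainsection2} are not even well posed here (they are properties of a self-similar structure that does not exist), and the ``standard contracting cover of Definition \ref{DefG0Tilde}'' is likewise undefined for $G_\omega$. Your claim that these hypotheses ``require only a direct inspection of the section maps'' is exactly where the inspection would reveal the obstruction. This is why the paper does not cite Theorem \ref{FirstThmOnSSGps} in Section \ref{sectionOmega} but instead redoes its steps: it fixes the single finitely presented group $G_0 = C_2 \ast V$, introduces three maps $\varphi_0, \varphi_1, \varphi_2 : G_0 \to G_0 \wr S_2$, composes them along the letters of $\omega$ to get $\varphi_\omega^{(n)}$, sets $G_{\omega,n} = G_0/\ker \varphi_\omega^{(n)}$, proves $\lim_n G_{\omega,n} = G_\omega$ (Lemma \ref{kernelsGomega}), shows by induction that each $G_{\omega,n}$ is large (a finite-index subgroup of $G_{\omega,n}$ surjects onto $G_{\omega,n-1}$, and $G_0$ contains $F_3$ of index $8$), and concludes by Corollary \ref{main2}.

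Your fallback plan for establishing largeness of the approximations is also off target in two ways. First, it is aimed at the wrong goal: you present it as a means of verifying that ``$G_{0,\omega}$ is large'' so as to feed Theorem \ref{mainsection2}, but the cover $G_0 = C_2 \ast V$ is a single explicit virtually free group whose largeness is immediate; the genuinely hard passage is from largeness of the approximations to largeness of all finitely presented covers of $G_\omega$, and that passage is Proposition \ref{main} plus Corollary \ref{main2}, not Theorem \ref{mainsection2}. Second, the proposed mechanism --- truncating a Lysenok-type presentation of $G_\omega$ and exhibiting a normal subgroup isomorphic to a direct product of $2^n$ free groups --- is far more than is needed and rests on unproved input (explicit L-presentations and a uniform direct-product decomposition for all $\omega \in \Omega_+$). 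Note that even for $\mathfrak G$ itself the paper warns that the Lysenok-truncation sequence $(\mathfrak G_n)$ is \emph{not} the wreath-recursion sequence $(G_n)$ of Section \ref{sectionSelfsimilar}; conflating the two is a real risk in your outline. The one point you identify correctly is the role of $\Omega_+$: it is needed (as in the proof of Lemma \ref{kernelsGomega}) to guarantee that none of $b_{\sigma^n(\omega)}, c_{\sigma^n(\omega)}, d_{\sigma^n(\omega)}$ becomes trivial, so that $\widehat\pi_{\omega,n}$ remains injective on generators and the kernels $N_{\omega,n}$ exhaust $\ker \pi_\omega$.
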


Note also the following straightforward consequence 
of Theorems \ref{GrHa+i}, \ref{Erschler}, and \ref{ThGri84}:

\begin{cor}
\label{corollaryBE}
Let $H$ be a finitely generated cover of one of
$\mathfrak G$ (as in Theorem \ref{GrHa+i}),
$\mathfrak B$ (as in Theorem \ref{Erschler}),
or $G_\omega$, $\omega \in \Omega_+$ (as in Theorem \ref{ThGri84}).
Any finitely presented cover of $H$ is large.
\end{cor}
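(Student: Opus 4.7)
The plan is essentially to observe that the class of finitely presented covers of $G$ (where $G \in \{\mathfrak{G}, \mathfrak{B}, G_\omega\}$) is closed under precomposition with epimorphisms. Concretely, suppose $\pi \colon H \twoheadrightarrow G$ is a finitely generated cover as in the hypothesis, and let $\varepsilon \colon E \twoheadrightarrow H$ be any finitely presented cover of $H$. Then the composition
\[
\pi \circ \varepsilon \colon E \twoheadrightarrow G
\]
is an epimorphism, since a composition of surjective homomorphisms is surjective. Thus $E$, equipped with $\pi \circ \varepsilon$, is itself a cover of $G$ in the sense defined in Section \ref{sectionIntro}.

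Since $E$ is by assumption finitely presented, $E$ is a \emph{finitely presented} cover of $G$. I would now simply invoke the appropriate one of the three results already established in the excerpt: Theorem \ref{GrHa+i} if $G = \mathfrak{G}$; the strengthened version of Theorem \ref{Erschler} asserted in the paragraph immediately following its statement (``any finitely presented cover of $\mathfrak B$ is large'') if $G = \mathfrak{B}$; and Theorem \ref{ThGri84} if $G = G_\omega$ for some $\omega \in \Omega_+$. In each case the conclusion is that every finitely presented cover of $G$ is large, so $E$ is large.

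As this essentially exhausts the argument, there is no real obstacle: the only thing one must check is that ``finitely presented cover'' is preserved under composition of epimorphisms on the source side, which is immediate from the definition of cover as ``group with a distinguished epimorphism onto $G$'' and requires no compatibility of $H$ with any finite-presentability hypothesis. In particular, it is crucial that the intermediate group $H$ is allowed to be only finitely generated (not finitely presented): we never use any finiteness property of $H$ itself, only of $E$ and $G$.
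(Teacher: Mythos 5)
Your proposal is correct and is exactly the intended argument: composing the finitely presented cover $E \twoheadrightarrow H$ with the given cover $H \twoheadrightarrow G$ exhibits $E$ as a finitely presented cover of $G$, whence largeness follows from the cited theorems (using, for $\mathfrak B$, the strengthened ``is large'' form noted after Theorem \ref{Erschler}). The paper treats this as a straightforward consequence and gives no further detail, so your proof matches its approach.
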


There are several interesting classes of groups 
that qualify to be the $H$ of Corollary \ref{corollaryBE}:

(i) 
The uncountably many groups of \cite{Ersc--04},
which are finitely generated, of intermediate growth, 
and not residually finite, 
each one being a \emph{central} cover of $\mathfrak G$.

(ii) 
The groups of \cite{BaEr--a},
which are finitely generated groups of intermediate growth,
with exactly known growth functions,
each one being a cover of $\mathfrak G$.

(iii)
Permutational wreath products of the form 
$A \wr_X G_\omega$,
where $A \ne \{1\}$ is a finite group
and $G_\omega$ is as in Theorem \ref{ThGri84}  \cite{BaEr--b}.

\vskip.2cm

There is an uncountable family of finitely generated amenable simple groups,
which are topological full groups of minimal homeomorphisms of the Cantor space
\cite{JuMo}.
None of these groups is finitely presented 
(see \cite[Theorem 5.7]{Matu--06}, as well as \cite{GrMe}).
In our context, it is natural to formulate:

\begin{problem}
\label{ProblemOnJuMo}
Let $G$ be one of the finitely generated amenable simple groups
that appears in \cite{JuMo}.
Does $G$ have an amenable finitely presented cover?
\end{problem}

\begin{rem!}  
We state two kinds of results concerning appropriate covers:
some establish that the covers are large (see Theorem \ref{GrHa+i}),
others, weaker, that they contain non-abelian free groups (see Theorem \ref{ThBieriStrebel}).
Strong statements do not hold in all cases.
\par

For example, let
$G = \operatorname{Met}(k,\ell) \simeq \Z \left[ \frac{1}{\ell m} \right] \rtimes_{\ell / m} \Z$,
for two coprime integers $k,\ell \ge 2$ (Definition \ref{defmet(l,m)}).
Then $G$ satisfies the hypothesis of Theorem \ref{ThBieriStrebel}.
The Baumslag-Solitar group $\operatorname{BS}(\ell,m)$ 
is a finitely presented cover of $G$; 
it is known that $\operatorname{BS}(\ell,m)$ has non-abelian free subgroups,
but is not large (Proposition \ref{BSbasique}.iv).

\end{rem!}

\subsection{A dual to Question 1.1}
 \label{DualQuestion1.1}
 Given a Property ($\mathcal P$) of groups, it is standard to ask
whether any countable group with ($\mathcal P$)
is a \emph{subgroup} of some finitely generated group with ($\mathcal P$).
For example, the answer is  known to be positive
if ($\mathcal P$) is solubility \cite{NeNe--59}, or amenability,
or elementary amenability \cite[Corollary 1.3]{OlOs}.
An earlier result of \cite{HiNN--49} also gives a positive answer to a similar question:
for a given integer $n$, if $G$ is a group that has a presentation
with a countable number (possibly infinite) of generators and $n$ relators,
then $G$ can be embedded
in a finitely presented group with $2$ generators and $n$ relators.
But the answer is negative if ($\mathcal P$) is nilpotency,
because any subgroup of a finitely generated nilpotent group is finitely generated.
It is also negative if ($\mathcal P$) is metabelianity:
indeed, for any prime $p$, the abelian quasi-cyclic group $\Z(p^\infty)$
cannot be embedded in a finitely generated metabelian group
\cite[Lemma 5.3]{NeNe--59}.

As a digression from our main theme,
and since recursively presented groups are mentioned in Appendix A,
we formulate one more question, which is in some sense dual to Question 1.1.

\begin{que}
\label{sgfp}
Given a Property ($\mathcal P$) of groups,
is any finitely generated recursively presented group with ($\mathcal P$)
a subgroup of some finitely presented group with ($\mathcal P$)?
\end{que}

The answer to Question \ref{sgfp} is known to be positive
if ($\mathcal P$) is metabel\-ianity (Proposition \ref{MetabelianWellBehaved}), 
or solubility of the word problem (\cite{Clap--67},
and also \cite[Theorem 2.8]{Mill--89}).
The answer is not known if ($\mathcal P$) is amenability \cite[Problem 1.7]{OlOs}.

\subsection{Plan of the paper}  
Section \ref{sectionSelfsimilar}:
Non-abelian free subgroups of finitely presented covers
of contracting self-similar groups.
Proofs of Theorems \ref{mainsection2} and \ref{FirstThmOnSSGps};
proofs of Theorems
\ref{GrHa+i}, \ref{Erschler}, and their analogues for
$\mathfrak J$, $\mathfrak G \mathfrak S$, 
$\mathfrak F \mathfrak G$, and $\mathfrak H$.

Section \ref{sectionMark+Chab}:
Marked groups and the Chabauty topology, a reminder.

Section \ref{sectionOmega}:
The analogue of Theorem \ref{GrHa+i}
for the family $\left( G_\omega \right)_{\omega \in \Omega}$ of \cite{Gri--84a},
Theorem \ref{ThGri84} and its proof.

Section \ref{sectionGrigGroup}:
The group of intermediate growth $\mathfrak G$, and the proof of
Theorem \ref{GrHa+ii}; this is a quantitative sharpening of Theorem \ref{GrHa+i}.
Proposition \ref{GrigNotFP2} on FP$_2$-covers of $\mathfrak G$.

Appendix \ref{AppendixAMetabelian}:
On soluble groups, metabelian groups, and finite presentations.

Appendix \ref{AppendixBWreath}:
On wreath products and lamplighter groups.

Appendix \ref{AppendixCBNS}:
On Bieri-Neumann-Strebel invariants.

Appendix \ref{AppendixDGrowthAmenability}: 
On growth and amenability.

Post-scriptum: \emph{A construction of large groups, due to Yves de Cornulier,}
 by Ralph Strebel.

\subsection{Acknowledgements}
We are most grateful to Yves de Cornulier and Ralph Strebel,
for generous comments on preliminary versions of this paper,
as well as to Laurent Bartholdi, Gilbert Baumslag, Marc Burger, 
Tullio Ceccherini-Silberstein, Anna Erschler, Sergei Ivanov,
Olga Kharlampovich, Volodymyr Nekrashevych, 
Alexander Ol'shanskii, Vitali\u{i} Roman'kov, and Mark Sapir,  
for useful discussions and mail exchanges.

\section{\textbf{Non-abelian free subgroups of finitely presented covers
of contracting self-similar groups}}
\label{sectionSelfsimilar}

Let $G,H$ be two groups and $X$ a $H$-set;
here, this means that $H$ acts on $X$ \emph{from the right}.
The corresponding \textbf{permutational wreath product} 
is the semi-direct product 
\begin{equation*}
G \wr_X H \, := \, G^{(X)} \rtimes H .
\end{equation*}
We denote by $G^{(X)}$ the group of functions
$(g_x)_{x \in X} : X \longrightarrow G, x \longmapsto g_x$ with finite support,
and we consider the action \emph{from the left} of $H$ on $G^{(X)}$,
for which the action of $h$ on $(g_x)_{x \in X}$ is $(g_{xh})_{x \in X}$.
Hence the product of two elements in $G \wr_X H$ is given by
\begin{equation*}
\left( (g_x)_{x \in X}, h \right)  \left( (g'_x)_{x \in X}, h' \right)
\, = \, 
\left( (g_xg'_{xh})_{x \in X}, hh' \right) .
\end{equation*}
In case $G$ acts \emph{from the right} on some set $W$, 
the group $G \wr_X H$ acts naturally \emph{from the right} on $W \times X$, by
\begin{equation*}
(w,y) \left( (g_x)_{x \in X}, h\right) \ = \, (wg_y, yh) .
\end{equation*}
In expositions of wreath products and self-similar groups,
choices of which groups actions are from the left and which from the right
vary from one paper to the other, 
but a (sometimes hidden) mixture of left actions and right actions
seems unavoidable.
\par

If $G,G'$ are two groups and $\alpha: G \longrightarrow G'$ a homomorphism,
we have a natural homomorphism
\begin{equation}
\label{WreathHomo}
\alpha \wr_X 1_H \, : \, 
G \wr_X H \longrightarrow G' \wr_X H , \hskip.2cm
\left( (g_x)_{x \in X}, h \right) \longmapsto \left( (\alpha(g_x))_{x \in X}, h \right) ,
\end{equation}
where $1_H$ stands for the identity automorphism of $H$.
\par

If $X$ is clear from the context, we write ``$\wr$'' for ``$\wr_X$''.
In particular, with $X = \{0, 1, \hdots, d-1 \}$ and $S_d$ the full symmetric group of $X$,
we write $G \wr S_d$ for $G \wr_X S_d$.
Also, we write $1_d$ for $1_{S_d}$, and
\begin{equation*}
((g_x)_{x \in X}, \tau) = (g_0, \hdots, g_{d-1}, \tau) \hskip.5cm \text{with} \hskip.2cm
g_0, \hdots, g_{d-1} \in G \hskip.2cm \text{and} \hskip.2cm \tau \in S_d
\end{equation*}
for a typical element of $G \wr S_d = G^{X} \rtimes S_d$.
\par

The \textbf{iterated wreath products} with $S_d$ are defined inductively, 
for each integer $n \ge 0$, by
\begin{equation*}
G \wr^n S_d \, = \, \left\{
\aligned
G \hskip2cm  &\text{for} \hskip.2cm n=0
\\
\left( G \wr^{n-1} S_d \right) \wr S_d \hskip.2cm &\text{for} \hskip.2cm n \ge 1 .
\endaligned
\right.
\end{equation*}
We have the following \textbf{associativity of permutational wreath products}:
for a $H$-set $X$ and a $K$-set $Y$, 
the canonical mapping
\begin{equation*}
\left\{
\aligned
(G \wr_X H) \wr_Y K 
\hskip1cm &\longrightarrow \hskip1cm 
G \wr_{X \times Y} (H \wr_Y K)
\\
\left( \big( (g_{x,y})_{x \in X} , h_y \big)_{y \in Y}, k \right) 
\hskip.2cm &\longmapsto \hskip.2cm
\left( \big( g_{x,y} \big)_{(x,y) \in X \times Y} , \big( (h_y)_{y \in Y},  k \big) \right)
\endaligned
\right.
\end{equation*}
is an isomorphism of groups
(this is standard, see e.g. \cite[Chapter 1, Theorem 3.2]{Meld--95}).
In particular, for $n \ge 1$, we have
\begin{equation*}
G \wr^n S_d \, = \, 
\left( G \wr^{n-1} S_d \right) \wr S_d \, = \,
\left( G \wr S_d \right) \wr^{n-1} S_d \, = \, G^{X^n} \rtimes S_d^{(n)} ,
\end{equation*}
where 
$S_d^{(n)} = \left( \cdots (S_d \wr S_d) \wr \cdots \right) \wr S_d
= S_d^{(n-1)} \wr S_d$
is the appropriate permutation group of $X^n$, acting here \emph{from the right}.
We write
\begin{equation*}
\left( (g_v)_{v \in X^n} , \tau \right) \hskip.5cm \text{with} \hskip.2cm
g_v \in G \hskip.2cm \text{for all} \hskip.2cm v \in X^n
\hskip.2cm \text{and} \hskip.2cm \tau \in S_d^{(n)}
\end{equation*}
for a typical element of $G \wr^n S_d$.

\begin{defn!}
\label{SsGroup}
Let $G$ be a group and $d \ge 2$ an integer.
A \textbf{self-similar structure of degree $d$} on $G$
is a homomorphism
\begin{equation}
\label{GivenPhi}
\Phi \, : \, G \longrightarrow G \wr S_d  .
\end{equation}
A \textbf{self-similar group} is such a pair $(G,\Phi)$;
when $\Phi$ is clear from the context, 
we write also ``$G$ is a self-similar group''.
\end{defn!}

If $(G,\Phi)$ be a self-similar group, 
the construction (\ref{WreathHomo}) gives rise to a sequence of homomorphisms
\begin{equation}
\label{EqPhin}
\Phi_n \, : \, G \overset{\Phi_{n-1}}{\longrightarrow} 
G \wr^{n-1} S_d \overset{ \Phi \wr 1_{d^{n-1}} }{\longrightarrow} 
(G \wr S_d) \wr^{n-1}  S_d = G \wr^n S_d
\end{equation}
for $n \ge 2$; we write $\Phi_0 = \operatorname{id}_G$ and $\Phi_1 = \Phi$.
Note that, if $\Phi$ is injective, so is $\Phi_n$ 
for all $n \ge 0$.
It is routine to check that $\Phi_{m+n}$ is the composition
\begin{equation}
\label{m+n=mn} 
\Phi_{m+n}
\, : \,
G 
\overset{ \Phi_n }{\longrightarrow} 
G \wr^n S_d 
\overset{ \Phi_m \wr 1_{d^n} }{\longrightarrow}
\left( G \wr^m S_d \right) \wr^n S_d = G \wr^{m+n} S_d
\end{equation}
for all $m,n \ge 0$.
\par

The composition of $\Phi_n$ and the quotient map 
$G \wr^n S_d \longrightarrow S_d^{(n)}$
is a homomorphism
\begin{equation}
\label{DefOfTau}
G \longrightarrow S_d^{(n)}, \hskip.2cm g \longmapsto \tau_g^{(n)} .
\end{equation}
Thus, introducing the $v$-coordinates of $\Phi_n(\cdot )$, we have
\begin{equation*}
\Phi_n (g) \, = \, \left( (g_v)_{v \in X^n}, \tau_g^{(n)} \right) \, \in \,
G \wr_{X^n} S_d^{(n)} = G \wr^n S_d 
\end{equation*}
for all $g \in G$.
Note that
\begin{equation}
\label{RecTauN}
\tau_g^{(n)} \, = \, \big( \big( \tau_{g_x}^{(n-1)} \big)_{x \in X} , \tau_g^{(1)} \big)
\, \in \, S_d^{(n)}
\end{equation}
for all $g \in G$ and $n \ge 1$.
\par

Let $X^* = \bigsqcup_{n \ge 0} X^n$ 
be the free monoid over $X = \{0, 1, \hdots, d-1 \}$. 
The \textbf{length} of $v \in X^*$ is the integer $n = \vert v \vert$
such that $v \in X^n$.
The \textbf{$d$-regular rooted tree} is the tree with vertex set $X^*$, 
and with edges connecting pairs of vertices of the form
$(x_1\cdots x_n , x_1\cdots x_n x_{n+1})$, with $n \ge 0$ 
and $x_1, \hdots, x_{n+1} \in X$; 
abusively, we denote this tree also by $X^*$.
The homomorphisms of (\ref{DefOfTau}) define an action 
\emph{from the right} of $G$ on the tree $X^*$.
\par

For $n \ge 1$ and $v \in X^n$, we define the \textbf{stabilizer of $v$}
to be the subgroup
\begin{equation}
\label{DefOfStab}
\operatorname{Stab}_G (v) \, = \, 
\left\{ g \in G \mid  v \tau_g^{(n)} = v \right\} ,
\end{equation}
and we have a homomorphism
\begin{equation}
\label{DefOfPhiw}
\Phi_v \, : \, \operatorname{Stab}_G (v)   
\longrightarrow G , \hskip.2cm g \longmapsto g_v
\end{equation}
where $g_v = \Phi_v(g)$ is the $v$-coordinate of $\Phi_n(g)$.
%
%

\begin{lem}
\label{SectionsOfProducts}
With the notation above,
\begin{equation*}
g_{vw} \, = \ (g_v)_w, \hskip.2cm 
(gh)_v \, =  \, g_v h_{v \tau_g^{(n)} } , \hskip.2cm \text{and} \hskip.2cm
(h^{-1})_v \, = \,  \left( h_{v \tau_{h^{-1}}^{(n)}} \right)^{-1} ,
\end{equation*}
for all $g,h \in G$, $n \ge 1$, $v \in X^n$,  and $w \in X^*$.
\end{lem}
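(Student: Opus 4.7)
My plan is to derive all three identities directly from the formalism already set up: the first from associativity of permutational wreath products together with the definition of $\Phi_{n+m}$ in (\ref{m+n=mn}); the second from the fact that $\Phi_n$ is a homomorphism together with the explicit multiplication formula in $G \wr_{X^n} S_d^{(n)}$; and the third as an easy consequence of the second applied to the equation $h^{-1}h = 1$.

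For the first identity, I fix $v \in X^n$ and $w \in X^m$. Starting from
\begin{equation*}
\Phi_n(g) \, = \, \bigl( (g_{v'})_{v' \in X^n}, \tau_g^{(n)} \bigr) \, \in \, G \wr^n S_d,
\end{equation*}
I apply $\Phi_m \wr 1_{d^n}$, which acts coordinatewise: the $v'$-th entry becomes
$\Phi_m(g_{v'}) = \bigl( ((g_{v'})_{w'})_{w' \in X^m} , \tau_{g_{v'}}^{(m)} \bigr)$. Under the associativity isomorphism $(G \wr_{X^m} S_d^{(m)}) \wr_{X^n} S_d^{(n)} \cong G \wr_{X^{n+m}} S_d^{(n+m)}$, the $G$-valued coordinate at position $(v',w')$ is $(g_{v'})_{w'}$. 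By (\ref{m+n=mn}) this element equals $\Phi_{n+m}(g)$, whose $G$-valued coordinate at $vw \in X^{n+m}$ is $g_{vw}$ by definition. Matching coordinates at $(v,w) \leftrightarrow vw$ yields $g_{vw} = (g_v)_w$.

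For the second identity, I use that $\Phi_n$ is a homomorphism:
$\Phi_n(gh) = \Phi_n(g)\Phi_n(h)$. Writing both factors as $((g_{v'})_{v'}, \tau_g^{(n)})$ and $((h_{v'})_{v'}, \tau_h^{(n)})$ and applying the wreath product multiplication formula given in the introduction to Section \ref{sectionSelfsimilar} (with $X$ replaced by $X^n$ and $H$ by $S_d^{(n)}$), I obtain that the $v$-coordinate of the product is $g_v \, h_{v \tau_g^{(n)}}$, which equals $(gh)_v$ by definition. The third identity then follows by specializing: apply the second identity to the pair $(h^{-1},h)$, giving
\begin{equation*}
1_G \, = \, (h^{-1}h)_v \, = \, (h^{-1})_v \cdot h_{v \tau_{h^{-1}}^{(n)}},
\end{equation*}
so $(h^{-1})_v = \bigl( h_{v \tau_{h^{-1}}^{(n)}} \bigr)^{-1}$, and we may also note incidentally that the permutation part confirms $\tau_{h^{-1}}^{(n)} = (\tau_h^{(n)})^{-1}$.

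The only real obstacle is bookkeeping conventions, since the paper mixes a left action of $S_d^{(n)}$ on coordinate tuples with a right action on the tree $X^n$. The key consistency check is that the right action of $\tau_g^{(n)}$ on the index $v$ in the wreath product multiplication formula matches the right action on the tree defined through (\ref{DefOfTau}); once this is verified via (\ref{RecTauN}) and an induction on $n$, the three formulas drop out immediately.
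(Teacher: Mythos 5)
Your proof is correct and is essentially the computation the paper itself makes: the paper establishes the second identity by comparing $\bigl((vw)g\bigr)h$ with $(vw)(gh)$ under the right action on the tree, which is exactly the same cocycle calculation you perform via the homomorphism property of $\Phi_n$ and the wreath-product multiplication formula. The paper writes out only the second identity and leaves the other two to the reader; your derivations of those (the first from the associativity isomorphism together with $\Phi_{m+n}=(\Phi_m\wr 1_{d^n})\circ\Phi_n$, the third from applying the second to $h^{-1}h=1$) are the intended routes.
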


\begin{proof}
To illustrate the fact that $G$ acts on $X^*$ \emph{from the right}, 
we spell out the proof of the second equality,
writing $vg$ for $v \tau^{(n)}_g$.
We have on the one hand
\begin{equation*}
\big( (vw) g \big) h \ = \, \big( (vg) (w g_v)\big) h \, = \, (v gh) (w g_v h_{vg})
\end{equation*}
 and on the other hand
\begin{equation*}
(vw) (gh) \ = \,  (v gh) \big(w (gh)_v) \big) .
\end{equation*}
Hence $(gh)_v = g_v h_{vg}$.
\end{proof}

\begin{defn!}
\label{faithful}
A self-similar group $(G,\Phi)$
is \textbf{faithful} if its action on the tree $X^*$ described above is faithful;
this implies that the homomorphism $\Phi$ is injective 
(but the converse does not hold\footnote{Set
$H = \Phi^{-1}(G^X)$. The kernel of the action of $G$ on $X^*$
is the largest normal subgroup $N$ of $G$ that is contained in $H$
and such that $\Phi(N) \subset N^X$.}).
\par

A self-similar group $(G,\Phi)$
is \textbf{contracting} if there is a finite subset $\mathcal M \subset G$
with the following property:
\begin{itemize}
\item[]
for all $g \in G$, there exists an integer $k \ge 0$ 
\item[]
such that
$g_v \in \mathcal M$ for all $v \in X^*$ with $\vert v \vert \ge k$.
\end{itemize}
The smallest such $\mathcal M$, namely
\begin{equation*}
\mathcal N \, := \, \bigcup_{g \in G} \bigcap_{k \ge 0}
\{ g \in G \mid \exists \hskip.1cm h \in G, \hskip.1cm \ell \ge k, \hskip.1cm  v \in X^\ell \hskip.2cm \text{with} \hskip.2cm h_v = g \}
\end{equation*} 
is called the \textbf{nucleus} of $(G,\Phi)$.
\par

A self-similar group $(G,\Phi)$
is \textbf{self-replicating}\footnote{Or
\textbf{recurrent}, or \textbf{fractal}, 
as in \cite[Definition 2.8.1]{Nekr--05}. See \cite{BaGN--03} for relations of such groups
with fractal sets.} 
if, for all $g \in G$ and $x \in X$, 
there exists $h \in \operatorname{Stab}_G (x)$ such that $h_x = g$,
namely if, for all $x \in X$,
the homomorphism $\Phi_x$ of (\ref{DefOfPhiw}) is onto.
When this is so, it is easy to check by induction on the level that,
for all $g \in G$, $n \ge 1$, and $v \in X^n$,
there exists $h \in \operatorname{Stab}_G (v)$ 
such that $\Phi_v(h) = g$, 
namely $\Phi_v$ is onto.
\end{defn!}

Observe that, by definition, we have $1 \in \mathcal N$.
Moreover for $g \in G$, we have $g \in \mathcal N$
if and only if $g^{-1} \in \mathcal N$, by Lemma \ref{SectionsOfProducts}.

The following proposition records basic facts 
about the nucleus of a contracting group.

\begin{prop}
\label{PropOnN}
Let $(G, \Phi)$ be a contracting self-similar group with nucleus $\mathcal N$, as above.
\par
(i)
For $g \in \mathcal N$ and $x \in X$, we have $g_x \in \mathcal N$.
\par
(ii)
If $(G, \Phi)$ is self-replicating and $G$ is finitely generated,
then $\mathcal N$ generates $G$.
\end{prop}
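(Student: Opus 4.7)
The plan is to handle (i) by directly unfolding the definition of $\mathcal N$, and (ii) by combining finite generation, the contracting property applied uniformly to a finite generating set, and the iterated form of self-replicativity recorded right after Definition \ref{faithful}.

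For (i), assume $g \in \mathcal N$. The definition provides an $h \in G$ such that for every $k \ge 0$ there exists $v \in X^*$ with $|v| \ge k$ and $h_v = g$. Applying the first identity of Lemma \ref{SectionsOfProducts} to the word $vx$ yields
\begin{equation*}
h_{vx} \, = \, (h_v)_x \, = \, g_x ,
\end{equation*}
and $|vx| \ge k+1$. Since $k$ is arbitrary, $g_x$ is realized as a section of the fixed element $h$ along words of arbitrarily large length, so $g_x \in \mathcal N$.

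For (ii), let $S = S^{-1}$ be a finite generating set of $G$. Applying the contracting property to each $s \in S$ separately gives integers $k_s$ with $s_v \in \mathcal N$ for all $|v| \ge k_s$; since $S$ is finite, $k := \max_{s \in S} k_s$ is a uniform level at which every section of every generator lies in the nucleus. I next propagate this to arbitrary group elements: write $g = s_1 \cdots s_m$ with $s_i \in S$ and iterate the product formula $(g h)_v = g_v h_{v \tau_g^{(n)}}$ from Lemma \ref{SectionsOfProducts} to obtain
\begin{equation*}
g_v \, = \, (s_1)_{v_0} (s_2)_{v_1} \cdots (s_m)_{v_{m-1}}
\quad \text{with} \quad
v_i \, := \, v \tau_{s_1 \cdots s_i}^{(n)} \, \in \, X^n ,
\end{equation*}
where $n = |v|$. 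For $|v| \ge k$ each factor lies in $\mathcal N$, so $g_v \in H := \langle \mathcal N \rangle$ for every $g \in G$ and every $v \in X^k$.

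For the final step, fix any $v \in X^k$ and let $g \in G$ be arbitrary. By the iterated form of self-replicativity, the map $\Phi_v : \operatorname{Stab}_G(v) \to G$ is surjective, so there is $h \in \operatorname{Stab}_G(v)$ with $h_v = g$. By the previous step $h_v \in H$, hence $g \in H$, and we conclude $G = H$. The only point needing care is the bookkeeping of the product formula, specifically that each re-indexed word $v_i$ has the same length as $v$; this is immediate from the fact that the right $G$-action on $X^*$ preserves levels, and everything else is a clean application of the definitions.
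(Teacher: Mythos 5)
Your proof is correct and follows essentially the same route as the paper's: part (i) by unfolding the definition of the nucleus together with the identity $g_{vw}=(g_v)_w$, and part (ii) by taking a uniform contraction level $k$ for a finite symmetric generating set, expanding $g_v$ via the product formula of Lemma \ref{SectionsOfProducts}, and then invoking surjectivity of $\Phi_v$ from the self-replicating property. The only cosmetic difference is in (i), where the paper appeals to minimality of $\mathcal N$ to place all deep sections of $h$ in $\mathcal N$, while you observe directly that $g_x=h_{vx}$ is again a section of $h$ at arbitrarily large depth; both are valid.
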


\begin{proof}
For $g \in \mathcal N$, there exist $h \in G$, $k \ge 0$,  and $v \in X^k$ 
such that $h_v = g$ and $h_w \in \mathcal N$ for all $w \in X^*$ with $\vert w \vert \ge k$
(otherwise, $\mathcal N$ would not be minimal).
Hence $g_x = (h_v)_x =  h_{vx}  \in \mathcal N$ for all $x \in X$.
This proves (i).
\par

For (ii), we paraphrase \cite[Lemma 2.11.3]{Nekr--05}.
Denote by $\langle \mathcal N \rangle$ the subgroup of $G$
generated by $\mathcal N$.
Let $S$ be a symmetric finite generating set of $G$.
For all $s \in S$, there exists $k_s \ge 0$ such that $s_v \in \mathcal N$
for all $v \in X^*$ with $\vert v \vert \ge k_s$.
Set $k = \max \{k_s \mid s \in S \}$.
\par

Let $g \in G$ and $v \in X^*$ with $\vert v \vert \ge k$.
There exist $s_1 \hdots, s_m \in S$ with $g = s_1 \cdots s_m$,
so that
\begin{equation*}
\aligned
g_v \ &= \, 
(s_1)_v (s_2 \cdots s_m)_{vs_1} \, = \, \cdots 
\\
\, &= \,
(s_1)_v (s_2)_{vs_1} (s_3)_{vs_1s_2} \cdots (s_m)_{vs_1 \cdots s_{m-1}} \, \in \,
\langle N \rangle ,
\endaligned
\end{equation*}
where the last inclusion follows from
$\vert v \vert = \vert vs_1 \vert = \cdots \vert vs_1 \cdots s_{m-1} \vert \ge k$.
In particular,  the image of $\Phi_v$,
as defined in (\ref{DefOfPhiw}), 
is contained in $\langle \mathcal N \rangle$.
\par

If $(G, \Phi)$ is self replicating, then $\Phi_v$ is onto for all $v \in X^*$
with $\vert v \vert \ge 1$. 
The conclusion follows.
\end{proof}

The next proposition was inspired to us by \cite[Lemma 2.13.4]{Nekr--05}.
We need some notation and a definition; 
our exposition borrows from \cite{Bart}.

\begin{defn!}
\label{DefG0}
Let $(G,\Phi)$ be a self-replicating contracting self-similar group, 
with nucleus $\mathcal N = \{n_1, \hdots, n_\ell\}$.
Let $S = \{s_1, \hdots, s_\ell \}$ be a finite set given with a bijection
$s_j \leftrightarrow n_j$ with $\mathcal N$.
Let $R$ be the set of relators in the letters of $S$ of one the forms
\begin{equation*}
\begin{array}{cccc}
s_i &= 1 \hskip.5cm &\text{if} \hskip.2cm n_i &= 1 \in G,
\\
s_i s_j &= 1 \hskip.5cm &\text{if} \hskip.2cm n_i n_j &= 1 \in G,
\\
s_i s_j s_k &= 1 \hskip.5cm &\text{if} \hskip.2cm n_i n_j n_k &= 1 \in G.
\end{array}
\end{equation*}
Note that these relators are of length at most $3$;
they are indexed by a subset of $\mathcal N \sqcup \mathcal N^2 \sqcup \mathcal N^3$.
\par

Assume furthermore that $G$ is finitely generated.
The   \textbf{universal contracting cover} of $G$ 
is the finitely presented group $G_0^\un$ defined by the presentation
with $S$ as set of generators and $R$ as set of relators.
The assignment $\pi^\un (s_i) = n_i$ extends to 
a group homomorphism 
\begin{equation}
\label{DefPi}
\pi^\un \, : \, G_0^\un = \langle S \mid R \rangle \longrightarrow G ,
\end{equation}
because $\pi^\un (r) = 1$ for any $r \in R$.

Note that $\pi^\un$ is onto, by  Proposition \ref{PropOnN}.
We define finally
\begin{equation}
\label{DefPiHat}
\widehat \pi^\un = \pi^\un \wr 1_d \, : \, G_0^\un \wr S_d \longrightarrow G \wr S_d .
\end{equation}
\end{defn!}

\begin{rem!}
\label{InExamplesDeleteFromS}
In particular examples, and for simplicity, it is often convenient to delete from $S$
the generator corresponding to $1 \in \mathcal N$,
to delete  $s_k$ if 
there exist $i,j \in \{1, \hdots, \ell\}$ with $n_k = n_i n_j$,
and to delete one generator of every pair corresponding to $\{n, n^{-1} \} \subset \mathcal N$.
For example, in Example \ref{ExBasilica},
we have $\mathcal N = \{1, a^{\pm 1}, b^{\pm 1}, c^{\pm 1} \}$ with $7$ elements,
and $c = a^{-1}b$, 
but $S = \{a,b\}$ with $2$ elements.
\par
Note however that, in Example \ref{Ex1stGrig}, 
we keep $d$ in the generating set $\{a,b,c,d\}$ of $G_0$, even though $d=bc$.
\end{rem!}

\begin{prop}
\label{CoveringSSGroup}
Let $(G,\Phi)$ be a self-replicating contracting self-similar group of degree $d$,
with nucleus $\mathcal N$.
Assume that $G$ is finitely generated.
Let $G_0^\un = \langle S \mid R \rangle$ and $\pi^\un : G_0^\un \twoheadrightarrow G$ 
be the universal contracting cover and the projection of Definition \ref{DefG0}.
\par

Then there exists a homomorphism
\begin{equation*}
\varphi_1^\un \, : \, G_0^\un \longrightarrow G_0^\un \wr S_d
\end{equation*}
such that the self-similar group $(G_0^\un, \varphi_1^\un)$ is 
contracting, with nucleus $S$.
Moreover the diagram 
\begin{equation}
\label{DiagramInf}
\begin{array}{ccc}
 G_0^\un         & \overset{\varphi_1^\un}{\longrightarrow}  & G_0^\un \wr S_d 
 \\
 &&
 \\
 \pi^\un \downarrow  &                        & \downarrow \widehat \pi^\un
 \\
 &&
 \\
 G           & \overset{\Phi}{\longrightarrow}     & G\wr S_d
\end{array}
\end{equation}
commutes.
\end{prop}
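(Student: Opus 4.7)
The plan is to define $\varphi_1^\un$ first on generators in the natural way, then verify in turn that the relators of $R$ are respected, that the diagram \eqref{DiagramInf} commutes, and finally that the resulting self-similar structure is contracting with nucleus $S$. By Proposition~\ref{PropOnN}(i), each section $(n_i)_x$ of $n_i \in \mathcal{N}$ lies again in $\mathcal{N}$, so via the bijection $s_j \leftrightarrow n_j$ it has a unique preimage $s_{f(i,x)} \in S$. Writing $\tau_i := \tau_{n_i}^{(1)}$, set
\[
\varphi_1^\un(s_i) \, := \, \bigl( (s_{f(i,x)})_{x \in X}, \, \tau_i \bigr) \, \in \, G_0^\un \wr S_d .
\]
To check that this extends to a homomorphism, one verifies $\varphi_1^\un(r) = 1$ for every $r \in R$. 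For a length-$3$ relator $s_i s_j s_k = 1$, the wreath product multiplication yields a permutation part $\tau_i\tau_j\tau_k$ and coordinates $s_{f(i,x)} s_{f(j,x\tau_i)} s_{f(k,x\tau_i\tau_j)}$; applying $\Phi$ to $n_i n_j n_k = 1$ forces $\tau_i\tau_j\tau_k = \mathrm{id}$ and $n_{f(i,x)} n_{f(j,x\tau_i)} n_{f(k,x\tau_i\tau_j)} = 1$ in $G$ for each $x$, which is in turn a length-$3$ relator of $R$ making the corresponding coordinate trivial in $G_0^\un$. The length-$1$ and length-$2$ cases are analogous but easier. Commutativity of \eqref{DiagramInf} is then immediate on generators: $\widehat\pi^\un(\varphi_1^\un(s_i)) = ((n_{f(i,x)})_{x \in X}, \tau_i) = \Phi(n_i) = \Phi(\pi^\un(s_i))$, and the identity extends globally since $S$ generates $G_0^\un$.

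For the contracting property with nucleus $S$, the starting point is that $S$ is closed under sections: the coordinates of $\varphi_1^\un(s_i)$ all lie in $S$, and iterating shows $(s_i)_v \in S$ for every level $n$ and every $v \in X^n$. The conclusion for arbitrary $\tilde g \in G_0^\un$ is obtained by induction on the word length $m = |\tilde g|_S$. For $m \ge 2$, decompose $\tilde g = \tilde g_1 \tilde g_2$ into shorter factors and apply Lemma~\ref{SectionsOfProducts} to write, for $|v|$ large enough (using the inductive hypothesis on $\tilde g_1$ and $\tilde g_2$), the section $\tilde g_v$ as a product $s_a s_b$ of two elements of $S$. The main obstacle is then to collapse this product to a single generator of $S$ inside $G_0^\un$; the key is the iterated form of the commutative diagram, which yields $\pi^\un(\tilde g_v) = (\pi^\un(\tilde g))_v$ and which, by the contracting property of $(G,\Phi)$, lies in $\mathcal{N}$ once $|v|$ is sufficiently large. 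Hence $n_a n_b \in \mathcal{N}$; choosing $n_c \in \mathcal{N}$ with $n_c = (n_a n_b)^{-1}$ (using that $\mathcal{N}$ is closed under inverse), the identity $n_a n_b n_c = 1$ yields a length-$3$ relator $s_a s_b s_c = 1$ in $R$, which combined with the length-$2$ relator from $n_c^{-1} n_c = 1$ shows that $s_a s_b$ equals a single element of $S$ in $G_0^\un$. This proves that $S$ is a contracting set; the equality with the nucleus follows by invoking the self-replicating assumption on $G$ to ensure that every element of $S$ arises as a section of some element of $G_0^\un$ at arbitrarily deep levels.
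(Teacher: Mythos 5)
Your proposal is correct and follows essentially the same route as the paper's proof: define $\varphi_1^\un$ on generators via the sections of nucleus elements, verify the length-$\le 3$ relators using $\Phi(n_in_jn_k)=1$, check commutativity of the diagram on generators, and establish contraction by induction on word length, reducing to products of two generators and collapsing $s_as_b$ with $n_an_b\in\mathcal N$ by means of the built-in relators of $R$. Your use of the commutative diagram to see that $\pi^\un(\tilde g_v)$ lands in $\mathcal N$ for deep $v$ is just a rephrasing of the paper's direct argument with sections of $n_in_j$ in $G$, and your closing remark on minimality of the nucleus is a small point the paper leaves implicit.
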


\begin{proof}
\emph{Step 1,  definition of $\varphi_1^\un$.}
Denote by $\ell$ the order of $\mathcal N$, 
and write $\mathcal N = \{n_1, \hdots, n_\ell\}$, as above.
Let $i \in \{1, \hdots, \ell\}$. 
By Proposition \ref{PropOnN}, 
there exist $i_0, \hdots, i_{d-1} \in \{1, \hdots, \ell\}$ and $\tau_i \in S_d$ such that
\begin{equation*}
\Phi(n_i) \, = \, (n_{i_0}, \hdots, n_{i_{d-1}} , \tau_i ) .
\end{equation*}
We set 
\begin{equation*}
\varphi_1^\un(s_i) \, = \, (s_{i_0}, \hdots, s_{i_{d-1}} , \tau_i ) \, \in \, G_0^\un \wr S_d ,
\end{equation*}
and we claim that this extends to 
a group homomorphism $\varphi_1^\un$ as in (\ref{DiagramInf}).
\par

Consider a relator as in Definition \ref{DefG0}, 
say $s_is_js_k = 1$ (shorter relators are dealt with similarly);
hence $n_i n_j n_k = 1 \in G$.
Choose $x \in X$; recall that $X$ stands for $\{0, \hdots, d-1\}$.
There exist $p,q,r \in \{1, \hdots, \ell\}$ 
and $\tau_p, \tau_q, \tau_r \in S_d$
such that the $x$-coordinate and the last coordinate of 
$\Phi( n_i n_j n_k)$
can be written as
\begin{equation*}
\left( n_i n_j n_k  \right)_x \, = \, n_p n_q n_r 
\hskip.2cm \text{and} \hskip.2cm 
\tau_{ n_i n_j n_k } \, = \, \tau_{p}  \tau_{q} \tau_{r} .
\end{equation*}
Since $n_i n_j n_k = 1 \in G$, we have 
\begin{equation*}
n_p n_q n_r \, = \, 1 \in G \hskip.2cm \forall x \in X
\hskip.2cm \text{and} \hskip.2cm 
\tau_{p}  \tau_{q}  \tau_{r} \, = \,  1 \in S_d .
\end{equation*}
Hence
$\varphi_1^\un(s_i) \varphi_1^\un(s_j) \varphi_1^\un(s_k) = 1 \in G_0^\un$.
The claim is proven.

\vskip.2cm

\emph{Step 2: $(G_0^\un, \varphi_1^\un)$ is a contracting group with nucleus $S$.}
For any word $w$ in the letters of $S$, we have to show that there exists
a vertex $v \in X^*$ such that $(w)_v \in S$.
By induction on the word length, and by Lemma \ref{SectionsOfProducts},
it is enough to show this for a word of length $2$.
\par

Let $s_i, s_j \in S$ and $v \in X^*$ be such that $(n_i n_j)_v \in \mathcal N$,
say $(n_i n_j)_v = n_k$.
We have 
\begin{equation*}
(n_i)_v (n_j)_{v \tau_{n_i}^{ ( \vert v \vert ) } } \, =  \, n_k 
\hskip.5cm \text{in} \hskip.2cm G ,
\end{equation*}
which is a relator of length at most $3$.
Hence the corresponding relator $(s_is_j)_v = s_k$ holds in $S$.
\par

It follows that $S$ is the nucleus of the group $(G, \varphi_1^\un)$.

\vskip.2cm

\emph{Step 3, commutativity of the diagram.}
This can be checked on the set $S$ of generators of $G_0^\un$.
\end{proof}

The universal contracting cover $(G_0^\un, \varphi_1^\un)$ of $(G, \Phi)$
is uniquely defined by $(G,\Phi)$, and contracting.
But we believe it need not be self-replicating
(even though we do not know of any specific example).
In all cases, $(G_0^\un, \varphi_1^\un)$ has quotients by finite sets of relations
that are self-replicating contracting covers of $(G,\Phi)$,
as described in the Definition \ref{DefG0Tilde} and Proposition \ref{CoveringSSGroupTilde}.
Note however that these quotients are no more uniquely defined by $(G, \Phi)$,
since choices are involved.
\par

In each of Examples \ref{Ex1stGrig} to \ref{Hanoi} below,
the universal contracting cover is self-replicating.

\begin{defn!}
 \label{DefG0Tilde} 
Let $(G,\Phi)$ be a self-replicating contracting self-similar group, 
with nucleus $\mathcal N = \{n_1, \hdots, n_\ell\}$;
assume that $G$ is finitely generated.
Let $S = \{s_1, \hdots, s_\ell \}$ be in bijection with $\mathcal N$,
and $(G_0^\un, \varphi_1^\un)$ the universal contracting cover of $(G,\Phi)$, 
as in Definition \ref{DefG0}. Let $\pi^\un : G_0^\un \longrightarrow G$ be as in (\ref{DefPi}).
\par

Let $x \in X$ and  $n_i \in \mathcal N$.
Since the pair $(G,\Phi)$ is self-replicating, 
there exists\footnote{Note that a choice is involved here.} 
$g(x,n_i) \in \operatorname{Stab}_G(x)$
such that $(g(x,n_i))_x = n_i$.
Since $\pi^\un$ is onto, 
there exists $h(x,n_i) \in G_0^\un$
such that $\pi^\un (h(x,n_i)) = g(x,n_i)$;
moreover, since $\widehat \pi^\un$ is the identity 
on the permutations of the wreath product,
we have $h(x,n_i) \in \operatorname{Stab}_{G_0^\un}(x)$.
By commutativity of  Diagram (\ref{DiagramInf}), we have $\pi^\un ((h(x,n_i)) _x)=n_i$.
Set $w(x,n_i) = (h(x,n_i))_x s_i^{-1}$;  then
$w(x,n_i)$ belongs to the kernel of $\pi^\un$.
\par

Again, by commutativity of (\ref{DiagramInf}), 
we have $(w(x,n_i))_v \in \ker (\pi^\un)$ for all $v \in X^*$.
Since $(G_0^\un, \varphi_1^\un)$ is  contracting, the subset 
\begin{equation*}
E(x,n_i) \, = \,  \{ g \in G_0^\un \mid g = (w(x,n_i))_v \hskip.2cm \text{for some} \hskip.2cm v \in X^* \}
\end{equation*}
of $G_0^\un$ is finite.
Define
\begin{equation*}
E \, = \, \bigcup_{x \in X,  n \in \mathcal N} E(x,n) 
\hskip.5cm \text{and} \hskip.5cm H = \langle\langle E \rangle\rangle \subset G_0^\un ,
\end{equation*}
where $\langle\langle E \rangle\rangle$ denote the \emph{normal} subgroup of $G_0^\un$
generated by $E$.
\par

A \textbf{standard contracting cover} of $G$ is a quotient group of the form 
$G_0 = G_0^\un /H$, with $H$ as above;
the image of $S$ in $G$ is a generating set, that we denote again (abusively) by $S$.
Note that $E$ is a finite subset of $G_0^\un$, and consequently that 
$G_0$ is a finitely presented group.
\par

The epimorphism $\pi^\un$ factors through a homomorphism $\pi : G_0 \longrightarrow G$,
because $E$ is a subset of $\ker \pi^\un$.
It follows from the definition that the restriction of $\pi$ to the generating set $S$ of $G_0$ is injective.
\end{defn!}

The following proposition is 
the analogue of proposition \ref{CoveringSSGroup} for $G_0$.

\begin{prop}
\label{CoveringSSGroupTilde}
Let $(G,\Phi)$ be a self-replicating contracting self-similar group of degree $d$,
with nucleus $\mathcal N$.
Assume that $G$ is finitely generated.
Let $G_0$ and $\pi : G_0 \twoheadrightarrow G$ 
be a standard contracting cover of $(G,\Phi)$ and its projection to $G$,  
as in Definition \ref{DefG0Tilde}. 
\par
Then there exists a homomorphism
\begin{equation*}
\varphi_1 \, : \, G_0 \longrightarrow G_0 \wr S_d
\end{equation*}
such that the self-similar group $(G_0, \varphi_1)$ is 
contracting and self-replicating with nucleus $S$.
Moreover the diagram 
\begin{equation}
\label{DiagramInfTilde}
\begin{array}{ccc}
 G_0         & \overset{\varphi_1}{\longrightarrow}  & G_0\wr S_d 
 \\
 &&
 \\
 \pi\downarrow  &                        & \downarrow \widehat \pi
 \\
 &&
 \\
 G           & \overset{\Phi}{\longrightarrow}     & G\wr S_d
\end{array}
\end{equation}
commutes.
\end{prop}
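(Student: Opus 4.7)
The plan is to construct $\varphi_1$ by descending $\varphi_1^\un$ from Proposition \ref{CoveringSSGroup} through the quotient map $q : G_0^\un \twoheadrightarrow G_0 = G_0^\un / H$, and then to read off the remaining properties from their analogues for the universal cover. Write $\widetilde q = q \wr 1_d : G_0^\un \wr S_d \to G_0 \wr S_d$, whose kernel is the normal subgroup $H^X \rtimes \{1_d\}$.

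The first (and essentially only serious) step is to show that $\widetilde q \circ \varphi_1^\un$ vanishes on $H$, so that it factors through a homomorphism $\varphi_1 : G_0 \to G_0 \wr S_d$. Since $\ker(\widetilde q \circ \varphi_1^\un)$ is a normal subgroup of $G_0^\un$ and $H = \langle\langle E \rangle\rangle$, it is enough to check this on $E$. Pick $e = (w(x,n_i))_v \in E$. By the definition of $E$, $\pi^\un(e) = 1$ in $G$, so commutativity of Diagram~(\ref{DiagramInf}) gives $\widehat{\pi}^\un(\varphi_1^\un(e)) = \Phi(1) = 1$ in $G \wr S_d$. Since $\widehat{\pi}^\un$ is the identity on the $S_d$-factor, this forces the permutation component $\tau_e$ to be $1_d$. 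For each $y \in X$, Lemma \ref{SectionsOfProducts} gives $(e)_y = (w(x,n_i))_{vy}$, which lies in $E(x,n_i) \subseteq E \subseteq H$. Hence $\varphi_1^\un(e) \in H^X \rtimes \{1_d\}$, as required.

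The remaining assertions follow routinely. That $(G_0, \varphi_1)$ is contracting with $S$ a contracting set is inherited from Step~2 of Proposition \ref{CoveringSSGroup}: sections in $G_0$ are $q$-images of sections in $G_0^\un$, so they eventually lie in $q(S) = S$. To see that $S$ is exactly the nucleus, use that $\pi$ is injective on $S$ and $\pi(S) = \mathcal N$ is the nucleus of $G$: for any $s_i \in S$, choose $g \in G$ and $v \in X^*$ with $|v|$ arbitrarily large and $g_v = n_i$; lift $g$ to $\widetilde g \in G_0$, and observe that contraction forces $\widetilde g_v \in S$ for $|v|$ large, while $\pi(\widetilde g_v) = n_i$ and injectivity of $\pi|_S$ force $\widetilde g_v = s_i$. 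For self-replicating, I would check the condition on generators: for each $s_i$ and each $x \in X$, the element $h(x,n_i) \in \operatorname{Stab}_{G_0^\un}(x)$ of Definition \ref{DefG0Tilde} satisfies $(h(x,n_i))_x \cdot s_i^{-1} = w(x,n_i) \in H$, so its image in $G_0$ stabilizes $x$ and has $x$-section equal to $s_i$; the product formula of Lemma \ref{SectionsOfProducts} then promotes this to all of $G_0$, since products and inverses of stabilizer elements with prescribed $x$-sections realize products and inverses of those sections. Commutativity of Diagram~(\ref{DiagramInfTilde}) is immediate from commutativity of (\ref{DiagramInf}), given that $\pi$ and $\varphi_1$ are the descents of $\pi^\un$ and $\varphi_1^\un$.

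The main obstacle is the descent of $\varphi_1^\un$, where one must show $\varphi_1^\un(H) \subseteq H^X \rtimes \{1_d\}$. This is precisely why $E$ was defined in Definition \ref{DefG0Tilde} to include all vertex-sections $(w(x,n_i))_v$, rather than just the elements $w(x,n_i)$ themselves: the closure of $E$ under sections, combined with $\pi^\un(E) = \{1\}$ and the commutativity of the universal diagram, is exactly what makes the descent work, and everything else flows from it.
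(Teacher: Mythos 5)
Your proof is correct and follows essentially the same route as the paper's: descend $\varphi_1^\un$ through the quotient by $H=\langle\langle E\rangle\rangle$ using the fact that $E$ is closed under taking sections, inherit the contracting property from $(G_0^\un,\varphi_1^\un)$, use the elements $h(x,n_i)$ (whose $x$-sections become $s_i$ in $G_0$ precisely because $w(x,n_i)\in H$) to get self-replication, and check commutativity on generators. If anything you are more careful than the paper, which leaves implicit both the triviality of the permutation component of $\varphi_1^\un(e)$ for $e\in E$ and the minimality argument showing $S$ is exactly the nucleus of $G_0$.
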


\begin{proof}
By construction of the set $E$, for any element $g\in E$ and any $x\in X$, 
we have $g_x \in E$. Hence the homomorphism 
$ \varphi_1^\un \, : \,  G_0^\un \longrightarrow  G_0^\un \wr S_d$
induces a homomorphism $\varphi_1 : G_0 \longrightarrow G_0 \wr S_d$.
Since  $(G_0^\un ,\varphi_1^\un)$ is contracting with nucleus $S$,
the self-similar group $(G_0, \varphi_1)$ is contracting with nucleus $S$.
\par

Let $x \in X$, and $n_i \in \mathcal N$.
We continue with the notation of Definition \ref{DefG0Tilde}.
By construction of $G_0$, 
the relation $h(x,n_i) = s_i$ holds in $G_0$;
moreover $h(x,n_i)$ is an element of $\operatorname{Stab}_{G_0}(x)$.  
This shows that the pair $(G_0, \varphi_1)$ is  self-replicating.
\par

The commutativity of diagram (\ref{DiagramInfTilde}) 
can be checked on the generators of  $G_0$.
\end{proof}

From here to Corollary \ref{GNotfp}, 
we keep the same notation as in Definition \ref{DefG0Tilde} and Proposition \ref{CoveringSSGroupTilde}
for $G_0$, $\pi$, and $\varphi_1$, in relation with a given
contracting self-replicating self-similar group $(G,\Phi)$,
with $G$ finitely generated.
 
\begin{defn!}
\label{DefGnEtc}
For an integer $n \ge 0$, define 
\begin{itemize}
\item[(i)]
the homomorphism
$\varphi_n : G_0 \longrightarrow G_0 \wr^n S_d$ as in (\ref{EqPhin}),
\item[(ii)]
its kernel $N_n = \ker (\varphi_n)$ and the quotient $G_n = G_0 / N_n$,
\item[(iii)]
the homomorphism
\begin{equation}
\label{DefPiHatN}
\widehat \pi_n = \pi \wr 1_{d^n} \, : \,
G_0 \wr^n S_d \longrightarrow G \wr^n S_d
\end{equation}
as in (\ref{WreathHomo}); 
note that $\widehat \pi_1$ is the $\widehat \pi$ of (\ref{DefPiHat}).
\end{itemize}
We have $\Phi_n \pi = \widehat \pi_n \varphi_n$,
i.e.\ the diagram
\begin{equation}
\label{DiagramN}
\begin{array}{ccc}
   G_0         & \xrightarrow{\varphi_n}  & G_0 \wr^n S_d 
   \\
   &&
   \\
 \pi\downarrow  &                        & \downarrow\widehat\pi_n
  \\
  &&
  \\
 G           & \xrightarrow{\Phi_n}     & G \wr^n S_d
\end{array}
\end{equation}
commutes.
Observe that 
$N_0 \subset \cdots \subset N_n  \subset N_{n+1} \subset \cdots$
and define
\begin{equation*}
N = \bigcup_{n=0}^{\infty} N_n .
\end{equation*}
\end{defn!}

\begin{rem!}
\label{RestrictionPiOnS}
As noted in Definition \ref{DefG0Tilde}, 
the restriction of $\pi$ to $S$ is injective.
More generally, in Definition \ref{DefGnEtc},
the restriction of $\widehat \pi_n$ to the subset
$(S^{X^n} , 1)$ of $G_0 \wr^n S_d = G_0^{X^n} \rtimes S_d^{(n)}$
is injective.
\end{rem!}

\begin{lem}
\label{N=KerPi}
Let $(G, \Phi)$ be a self-similar group;
assume that $G$ is finitely generated
and that $(G,\Phi)$ is faithful contracting self-replicating.
With the notation above, we have
\begin{equation*}
N \, = \, \ker \pi , \hskip.2cm \text{namely} \hskip.2cm G_0/N = G ,
\end{equation*}
so that 
\begin{equation*}
\lim_{n \to \infty} G_n \, = \, G 
\end{equation*}
in the space of marked groups on $\vert S \vert$ generators
(in the sense of Section \ref{sectionMark+Chab}).
\end{lem}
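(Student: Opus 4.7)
The plan is to prove the two inclusions $N \subseteq \ker \pi$ and $\ker \pi \subseteq N$ separately, and then derive the convergence $G_n \to G$ from a standard property of ascending unions of normal subgroups in the space of marked groups.

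For $N \subseteq \ker \pi$, the key input is faithfulness of $(G,\Phi)$. If $g \in N_n$, then $\varphi_n(g) = 1$, and commutativity of the diagram (\ref{DiagramN}) gives $\Phi_n(\pi(g)) = \widehat\pi_n(\varphi_n(g)) = 1$. It then suffices to observe that in a faithful self-similar group, $\Phi_n(h)=1$ forces $h=1$: indeed, $\Phi_n(h) = ((h_v)_{v \in X^n}, \tau_h^{(n)}) = 1$ says that $h$ fixes every vertex of $X^n$ \emph{and} that each section $h_v$ is the identity of $G$, so $h$ acts trivially on the whole tree $X^*$, and faithfulness yields $h=1$.

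The converse $\ker\pi \subseteq N$ is the main content. Given $g \in \ker \pi$, write $\varphi_n(g) = ((g_v)_{v \in X^n}, \tau_g^{(n)})$. Commutativity of (\ref{DiagramN}) applied to $\pi(g)=1$ yields $\pi(g_v) = 1 \in G$ for every $v \in X^n$ and $\tau_g^{(n)}=1$, for every $n$. By the contracting property of $(G_0,\varphi_1)$ with nucleus $S$ (Proposition \ref{CoveringSSGroupTilde}), we may pick $n$ large enough that $g_v \in S$ for all $v \in X^n$. Injectivity of $\pi|_S$ (Remark \ref{RestrictionPiOnS}) then forces each $g_v$ to be the unique generator in $S$ corresponding to $1 \in \mathcal{N}$, which equals $1 \in G_0$ by the defining relator of $R$ of the form $s_i = 1$. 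Hence $\varphi_n(g)=1$, so $g \in N_n \subseteq N$.

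For the convergence: we have $G_n = G_0/N_n$ with $N_0 \subset N_1 \subset \cdots$ and $\bigcup_n N_n = N = \ker\pi$, so $G = G_0/N$. A standard property of the Chabauty topology on marked groups (reviewed in Section \ref{sectionMark+Chab}) states that an ascending union $\bigcup_n N_n = N$ of normal subgroups of a finitely generated group produces marked quotients $G_0/N_n$ that converge to $G_0/N$: any word of bounded length in the generating set lies in $N$ if and only if it already lies in some $N_n$, so the defining relations of $G_n$ stabilize to those of $G$ on balls of every fixed radius. The principal obstacle in the argument is the contracting step in the second inclusion, where faithfulness is not enough and one genuinely needs the nucleus to control the sections at sufficiently deep levels.
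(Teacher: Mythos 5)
Your proof is correct and follows essentially the same route as the paper's: faithfulness of $(G,\Phi)$ gives $N \subseteq \ker\pi$ via the commuting diagram (\ref{DiagramN}), while the contracting property of $(G_0,\varphi_1)$ together with the injectivity of $\pi$ on the nucleus $S$ (Remark \ref{RestrictionPiOnS}) gives the reverse inclusion, and the convergence statement is the standard Chabauty fact for an ascending union of normal subgroups. Your version merely spells out in more detail why $\Phi_n(h)=1$ forces $h=1$ under faithfulness and why the section $g_v \in S$ with $\pi(g_v)=1$ must be the identity of $G_0$, both of which the paper leaves implicit.
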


\begin{proof}
Let $g \in N$. Let $n \ge 1$ be such that $g \in \ker (\varphi_n)$.
Then $\Phi_n \pi (g) = \widehat \pi_n \varphi_n (g) = 1$, 
hence $g \in \ker (\pi)$ by the faithfulness assumption.
\par

Conversely, let $k \in \ker (\pi)$. 
On the one hand,
since $(G_0, \varphi)$ is contracting, there exists $n \ge 0$
such that $k_v \in S$ for all $v \in X^n$.
On the other hand,
$\pi(k) = 1$ implies
$ \pi(k_v) = 1$ for all $v \in X^n$;
moreover, the $S_d^{(n)}$-coordinate of $\varphi_n(k)$ is $1$,
by commutativity of Diagram (\ref{DiagramN}).
Hence, by Remark (\ref{RestrictionPiOnS}),
we have $k_v = 1$ for all $v \in X^n$,
and therefore $k \in N_n = \ker (\varphi_n)$, a fortiori $k \in N$.
\end{proof}

\begin{lem}
\label{backwardsalongG_n's}
In the situation of the previous lemma,  for all $n \ge 1$, we have
\begin{equation*}
N_n \, = \,  \varphi_1^{-1}(N_{n-1}^d)
\end{equation*}
so that $\varphi_1 : G_0 \longrightarrow G_0 \wr S_d$
induces a homomorphism
\begin{equation*}
\psi_n \, : \, \left\{
\aligned
G_n \, &\longrightarrow \, \hskip1cm G_{n-1} \wr S_d
\\
gN_n \, &\longmapsto \, 
\left( 
\big( \varphi_1(g)_x N_{n-1} \big)_{x \in X} , \tau_g^{(1)} \right) .
\endaligned
\right.
\end{equation*}
Moreover $\psi_n$ is injective.
\end{lem}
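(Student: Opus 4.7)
The proof plan is to exploit the inductive/associative description of $\varphi_n$ provided by formula (\ref{m+n=mn}), applied to the self-similar group $(G_0, \varphi_1)$. Namely, writing the composition $\varphi_n = (\varphi_{n-1} \wr 1_d) \circ \varphi_1$, one reads off directly that
\begin{equation*}
\varphi_n(g) \, = \, \left( (\varphi_{n-1}(g_x))_{x \in X}, \tau_g^{(1)} \right) \, \in \, (G_0 \wr^{n-1} S_d) \wr S_d ,
\end{equation*}
where $g_x$ denotes the $x$-coordinate of $\varphi_1(g)$. Hence $g \in N_n$ if and only if the permutation component $\tau_g^{(1)}$ is trivial and $g_x \in N_{n-1}$ for every $x \in X$, i.e.\ if and only if $\varphi_1(g)$ lies in the subgroup $N_{n-1}^X \times \{1\}$ of $G_0 \wr S_d$; this is the subgroup abbreviated $N_{n-1}^d$. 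This proves the first equality.

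For the second assertion, I would introduce the quotient map $q_{n-1} \colon G_0 \twoheadrightarrow G_{n-1}$ and form the induced homomorphism $q_{n-1} \wr 1_d \colon G_0 \wr S_d \longrightarrow G_{n-1} \wr S_d$, as in (\ref{WreathHomo}). Composing with $\varphi_1$ yields a homomorphism
\begin{equation*}
(q_{n-1} \wr 1_d) \circ \varphi_1 \, : \, G_0 \longrightarrow G_{n-1} \wr S_d, \hskip.2cm g \longmapsto \left( (\varphi_1(g)_x N_{n-1})_{x \in X}, \tau_g^{(1)} \right) .
\end{equation*}
Its kernel is precisely $\varphi_1^{-1}(N_{n-1}^X \times \{1\}) = \varphi_1^{-1}(N_{n-1}^d) = N_n$ by the first part. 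The homomorphism therefore factors through an injective homomorphism $\psi_n \colon G_n \hookrightarrow G_{n-1} \wr S_d$ with exactly the displayed formula, by the first isomorphism theorem.

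There is essentially no obstacle here; the only point requiring care is keeping straight the meaning of the notation $N_{n-1}^d$ (the subgroup of $G_0 \wr S_d$ obtained by taking the $d$-fold direct power of $N_{n-1}$ inside the base group $G_0^X$, with trivial permutation component), and matching this with the description of $\ker \varphi_n$ obtained from the recursive formula for $\varphi_n$.
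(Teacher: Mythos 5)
Your proof is correct and follows essentially the same route as the paper: both arguments rest on the recursive factorization $\varphi_n = (\varphi_{n-1}\wr 1_d)\circ\varphi_1$ to identify $N_n$ with $\varphi_1^{-1}(N_{n-1}^d)$, and then obtain $\psi_n$ and its injectivity from the first isomorphism theorem (a step the paper leaves implicit but you spell out via $q_{n-1}\wr 1_d$). No gaps.
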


\begin{proof}
For $g \in G$, write
\begin{equation}
\label{ProofLemma1.11}
\varphi(g) \, = \, \big( (g_x)_{x \in X}, \tau_g^{(1)} \big)
\hskip.5cm \text{and} \hskip.5cm
\varphi_n(g) \, = \, \big( (g_v)_{v \in X^n}, \tau_g^{(n)} \big)  .
\end{equation}
\par

Assume first that $g \in N_n$.
Thus $(g_x)_{v'} = 1$ and $\tau_{g_x}^{(n-1)} = 1$
for all $x \in X$ and $v' \in X^{n-1}$.
This can be written
\begin{equation*}
\varphi_{n-1}(g_x) \, = \, 
\big( ((g_x)_{v'})_{v' \in X^{n-1}}, \tau_{g_x}^{(n-1)} \big) \, = \, 1 
\hskip.5cm \forall x \in X ,
\end{equation*}
namely $g_x \in N_{n-1} \hskip.2cm \forall x \in X$.
We have checked that $\varphi_1(N_n) \subset N_{n-1}^d$,
and $N_n \subset \varphi_1^{-1}(N_{n-1}^d)$ follows.
\par

Assume now that $g \in \varphi_1^{-1} (N_{n-1}^d)$,
namely that $(g_x)_{v'} = 1$ and $\tau_{g_x}^{(n-1)} = 1$
for all $x \in X$ and $v' \in X^{n-1}$.
This can be written $g_v = 1$ for all $v \in X^n$ and $\tau_g^{(n)} = 1$,
namely $g \in N_n$.
Hence $\varphi_1^{-1}(N_{n-1}^d) \subset N_n$.
\end{proof}

The next theorem is a detailed version of Theorem \ref{mainsection2}.

\begin{thm}
\label{FirstThmOnSSGps}
Let $(G, \Phi)$ be a self-similar group;
assume that $G$ is finitely generated
and that $(G, \Phi)$ is faithful contracting self-replicating.
Let $G_0$ be a standard contracting cover,
as in Definition \ref{DefG0Tilde}.
\par

Assume that $G_0$ contains non-abelian free subgroups.
Then, for each $n \ge 0$, the group $G_n$ of Definition \ref{DefGnEtc}
contains non-abelian free subgroups.
More generally, every finitely presented cover of $G$
contains non-abelian free subgroups.
\par

Assume moreover that $G_0$ is large.
Then every finitely presented cover of $G$ is large.
\end{thm}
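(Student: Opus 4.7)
The plan is to factor the argument into two independent steps. Step 1 passes from $G_0$ to each approximation $G_n = G_0/N_n$, using that $(G_0,\varphi_1)$ is itself self-replicating by Proposition \ref{CoveringSSGroupTilde}. Step 2 passes from the $G_n$'s to an arbitrary finitely presented cover $E$ of $G$, using the convergence $G_n \to G$ in the space of marked groups established in Lemma \ref{N=KerPi}.

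For Step 1, suppose $a,b \in G_0$ generate a non-abelian free subgroup, and fix any $v \in X^n$. Self-replication at level $n$ makes $\Phi_v : \operatorname{Stab}_{G_0}(v) \to G_0,\ g \mapsto g_v$ surjective, so $a,b$ lift to $\tilde a, \tilde b \in \operatorname{Stab}_{G_0}(v)$. Then $\varphi_n(\tilde a), \varphi_n(\tilde b) \in G_0 \wr^n S_d$ have $v$-coordinates $a,b$, hence freely generate a non-abelian free subgroup of $G_0 \wr^n S_d$: any non-trivial reduced word in them projects to a non-trivial word in $\langle a,b \rangle$. Since $N_n = \ker \varphi_n$, the induced map $G_n \hookrightarrow G_0 \wr^n S_d$ is injective, so the images of $\tilde a, \tilde b$ in $G_n$ freely generate a non-abelian free subgroup of $G_n$. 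For the largeness statement, let $K \le G_0$ be finite-index with an epimorphism $K \twoheadrightarrow F$ onto a non-abelian free group, and set $K^v := \Phi_v^{-1}(K) \subseteq \operatorname{Stab}_{G_0}(v)$. Surjectivity of $\Phi_v$ gives $[\operatorname{Stab}_{G_0}(v) : K^v] = [G_0 : K]$, so $K^v$ is finite-index in $G_0$. Any $g \in N_n$ satisfies $g_v = 1$ and $\tau_g^{(n)} = 1$, whence $N_n \subseteq K^v$; the composition $K^v \to K \twoheadrightarrow F$ therefore descends to an epimorphism $K^v/N_n \twoheadrightarrow F$ from the finite-index subgroup $K^v/N_n \le G_n$, showing that $G_n$ is large.

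For Step 2, let $E = \langle T \mid R_E \rangle$ be a finitely presented cover of $G$ via an epimorphism $p : E \twoheadrightarrow G$. For each $t \in T$ fix a word $w_t$ in the letters of $S$ with $p(t) = w_t$ in $G$, and use these words to mark each $G_n$ by the $|T|$-tuple $(w_t)_{t \in T}$. Marked-group convergence is preserved under such a fixed substitution of generators, so the convergence on $S$ (Lemma \ref{N=KerPi}) transfers to convergence on $T$. The finitely many relators $R_E$ hold in $G$ under the new marking, hence hold in $G_n$ for all sufficiently large $n$, yielding an epimorphism $E \twoheadrightarrow G_n$. A non-abelian free subgroup $\langle a,b \rangle \le G_n$ then lifts: choose preimages $\hat a, \hat b \in E$ and observe that any non-trivial reduced word in $\hat a, \hat b$ projects to a non-trivial word in $a,b$, so $\hat a, \hat b$ freely generate a non-abelian free subgroup of $E$. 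Largeness lifts the same way: if $K' \le G_n$ is finite-index with $K' \twoheadrightarrow F$ non-abelian free, the preimage $\hat K' \le E$ is finite-index and inherits the epimorphism onto $F$, making $E$ large.

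The main delicate point is the bookkeeping in Step 2 when $E$ is not marked by the nucleus $S$, handled by the rewriting trick above. Conceptually, Step 1 exploits self-replication to hide any chosen configuration of $G_0$ arbitrarily deep in the rooted tree so that it survives modulo $N_n$, while Step 2 is the familiar compactness principle that finitely presented groups are detected by their (finitely many) relators in sufficiently close marked-group approximations.
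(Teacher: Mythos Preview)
Your proof is correct, and Step 1 actually streamlines the paper's argument. The paper proceeds \emph{inductively}: using Lemma \ref{backwardsalongG_n's} it builds an injective $\psi_n : G_n \hookrightarrow G_{n-1} \wr S_d$, then exhibits a finite-index subgroup $H_n \le G_n$ (the $\psi_n$-preimage of the stabiliser of the letter $0$) together with a surjection $H_n \twoheadrightarrow G_{n-1}$ obtained by projecting to the $0$-coordinate; induction on $n$ then transfers non-abelian free subgroups and largeness from $G_0$ to every $G_n$. You bypass the induction by working at level $n$ in one shot: since $(G_0,\varphi_1)$ is self-replicating, the section map $\Phi_v : \operatorname{Stab}_{G_0}(v) \twoheadrightarrow G_0$ is already surjective for $v \in X^n$, and its kernel visibly contains $N_n$, so you get a finite-index subgroup of $G_n$ mapping onto $G_0$ directly. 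Both arguments rest on the same mechanism (self-replication yields a surjective coordinate projection from a finite-index stabiliser), but yours avoids the detour through $G_{n-1}$ and the auxiliary $\psi_n$.

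Your Step 2 is exactly the content of Proposition \ref{main} and Corollary \ref{main2}, which the paper invokes by name; your inline rewriting-of-generators argument is precisely the proof of Proposition \ref{main}. So there is no new idea here, just a self-contained restatement of machinery the paper has already set up in Section \ref{sectionMark+Chab}.
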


\begin{proof}

Let $S^{(0)}_d$ be the subgroup of $S_d$ of permutations fixing the letter $x=0$.
For $n \ge 1$, let $H_n$ be the finite index  subgroup of $G_n$ defined by
\begin{equation*}
H_n \, = \,  \psi_n^{-1}(G_{n-1}^{ \{0, 1, \hdots, d-1 \} } \rtimes S^{(0)}_d) ,
\end{equation*}
where $\psi_n$ is as in lemma \ref{backwardsalongG_n's}.
Projection onto the first coordinate (i.e. the coordinate $x=0$)
 \begin{equation*}
p^{(0)}_n \, : G_{n}^{ \{0, 1, \hdots, d-1 \} } \rtimes S^{(0)}_d \longrightarrow  G_{n}
\end{equation*}
defined by 
\begin{equation*}
p^{(0)}_n((g_x N_{n} \big)_{x \in X} , \tau) \, = \, g_0N_{n}
\end{equation*} 
is a group homomorphism. It turns out that the composition
\begin{equation*}
q^{(0)}_n\,:H_n \overset{\psi_n}{\longrightarrow}\psi_n(H_n)\overset{p^{(0)}_{n-1}}{\longrightarrow}
G_{n-1} 
\end{equation*}
defines a group homomorphism from $H_n$ to $G_{n-1}$.

Given a generator $sN_{n-1}$ of $G_{n-1}$ (where $s$ is a generator of $G_0$), 
using the self-replicating property of $(G_0,\varphi_1)$
let $h\in St_{G_0}(0)$ be such that $\varphi_1(h)_0=s$. It turns out that $q^{(0)}(hN_n)=sN_{n-1}$
which shows that $q^{(0)}_n$ is onto $G_{n-1}$. The conclusion is that for each $n\geq 1$,
$G_n$ contains a finite index subgroup $H_n$ which maps onto $G_{n-1}$.

Therefore, if $G_0$ contains
non-abelian free subgroups (respectively is large), by induction on $n$, each $G_n$ will contain
non-abelian free subgroups (respectively will be large). Then by lemma \ref{N=KerPi}
and corollary \ref{main2} below,
every finitely presented cover of $G$ will contain non-abelian free subgroups 
(respectively will be large).
\end{proof}

%
%

\begin{cor}
\label{GNotfp}
Let $G$ be as in Theorem \ref{FirstThmOnSSGps}.
If $G_0$ contains non-abelian free subgroups, 
then $G$ is infinitely presented.
\end{cor}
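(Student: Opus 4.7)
The proof plan is a short reductio ad absurdum, obtained by applying Theorem \ref{FirstThmOnSSGps} to the identity epimorphism $\operatorname{id}_G : G \twoheadrightarrow G$. First, I would assume that $G$ is finitely presented; then $\operatorname{id}_G$ exhibits $G$ as a finitely presented cover of itself. Since $G_0$ contains non-abelian free subgroups by hypothesis, Theorem \ref{FirstThmOnSSGps} applies to this self-cover and forces $G$ itself to contain a non-abelian free subgroup.

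As an intrinsic variant, one can pass through Lemma \ref{N=KerPi}. If $G$ were finitely presented, then since $\pi : G_0 \twoheadrightarrow G$ has finitely presented source $G_0$, the kernel $N = \ker \pi$ would be finitely generated as a normal subgroup of $G_0$. Because $N = \bigcup_{n \ge 0} N_n$ is an increasing union of normal subgroups, any finite normal-generating set of $N$ would lie in some single $N_n$, forcing $N = N_n$ and hence $G \cong G_n$ for that $n$. Theorem \ref{FirstThmOnSSGps} then asserts that $G_n$ contains a non-abelian free subgroup, giving the same conclusion as the direct argument.

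The only non-trivial point is interpretive rather than technical: the argument above strictly shows that $G$ is either infinitely presented or itself contains a non-abelian free subgroup. The intended reading of the corollary is in the setting of the applications---the Grigorchuk group $\mathfrak{G}$, the Basilica group $\mathfrak{B}$, the family $G_\omega$, the Hanoi Towers group $\mathfrak{H}$, and the other cases treated in this paper, all of which are amenable and hence have no non-abelian free subgroups. In each such case the second alternative is excluded a priori, and the corollary delivers genuine infinite presentability.
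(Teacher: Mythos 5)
Your reductio is exactly the paper's argument: assume $G$ finitely presented, apply Theorem \ref{FirstThmOnSSGps} to the identity cover $G \twoheadrightarrow G$, and conclude that $G$ contains a non-abelian free subgroup. (Your intrinsic variant via Lemma \ref{N=KerPi} --- a finite normal generating set for $N = \bigcup_n N_n$ must land in some $N_n$, forcing $G \cong G_n$ --- is also sound and reaches the same dichotomy.) But the point where you stop, declaring that the argument only shows ``$G$ is infinitely presented or $G$ contains a non-abelian free subgroup'' and that the second alternative must be excluded case-by-case via amenability of the particular examples, is a genuine gap: the corollary as stated is unconditional for \emph{every} $G$ satisfying the hypotheses of Theorem \ref{FirstThmOnSSGps}, not only for the amenable ones.

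The missing ingredient is a general theorem of Nekrashevych, cited in the paper as \cite[Theorem 4.2]{Nekr--10}: a faithful contracting self-similar group contains no non-abelian free subgroups. Since $G$ is assumed faithful and contracting, the second horn of your dichotomy is excluded a priori by this result, with no appeal to amenability and no restriction to the examples $\mathfrak G$, $\mathfrak B$, $G_\omega$, $\mathfrak H$, etc. In short: your structure is right and matches the paper's one-line proof, but the step you flag as ``interpretive'' is in fact a substantive citation you are missing, and without it your argument proves a strictly weaker statement than the corollary claims.
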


\begin{proof}
Since $G_0$ does contain non-abelian free subgroups, by assumption,
and $G$ does not, by \cite[Theorem 4.2]{Nekr--10},
$G$ cannot be finitely presented, by the previous theorem.
\end{proof}

Consider integers $d, \ell \ge 1$, and a system of relations
\begin{equation}
\label{SsGenerators}
\left\{ \aligned
s_1 &\, = \, \left( (s_1)_0, \hdots, (s_1)_{d-1} \right) \tau_1
\\
\cdot\cdot & \cdots\cdots\cdots\cdots
\\
s_\ell &\, = \left( (s_\ell)_0, \hdots, (s_\ell)_{d-1} \right) \tau_\ell
\endaligned \right.
\end{equation}
with 
\begin{equation*}
\aligned
(s_j)_x \, \in \,  \{s_1, \hdots, s_\ell \} 
\hskip.2cm &\text{and} \hskip.2cm
\tau_j \, \in \,  S_d 
\\
\hskip.2cm \text{for} \hskip.2cm 
    j \in \{1, \hdots, \ell\}
\hskip.2cm &\text{and} \hskip.2cm    
    x  \in   X = \{0, \hdots, d-1 \} .
\endaligned
\end{equation*}
By induction on $n \ge 0$, the system (\ref{SsGenerators})
defines a set $S$ of automorphisms of the $d$-regular rooted tree $X^*$,  
again denoted by $s_1, \hdots, s_\ell$.
These generate a group $G = \langle S \rangle$
of automorphisms of the tree $X^*$,
and (\ref{SsGenerators}) define a self-similarity structure $\Phi$ on $G$;
thus $G$ is a self-similar group as in Definition \ref{SsGroup},
and moreover $G$ is faithful.
The system (\ref{SsGenerators}) is often denoted by $\Phi$ again.
\par

We review below some classical examples of self-similar groups defined this way.

\begin{exe!}
\label{Ex1stGrig}
The $4$-generated group
$\mathfrak G = \langle a,b,c,d \rangle$ of 
Theorem \ref{GrHa+i} and \cite{Grig--80, Grig--83} 
is a self-similar group of degree $2$, 
with $\Phi : \mathfrak G \longrightarrow \mathfrak G \wr C_2$ defined by
\begin{equation*}
\Phi(a) = (1,1)\tau, \hskip.2cm
\Phi(b) = (a,c), \hskip.2cm
\Phi(c) = (a,d), \hskip.2cm
\Phi(d) = (1,b) .
\end{equation*}
Here, $C_2 = \{1, \tau \}$ denotes the cyclic group of order $2$
(written $S_2$ in Definition \ref{SsGroup}).
The self-similar group $(\mathfrak G, \Phi)$ is 
faithful, contracting, and self-replicating.
The group $\mathfrak G$ is of intermediate growth.
\par

The nucleus is 
\begin{equation*}
\mathcal N \ = \, \{ 1, a, b, c, d \} .
\end{equation*}
The universal contracting cover of Definition \ref{DefG0}
has the presentation
\begin{equation*}
G_0 \, = \, \langle a,b,c,d \mid 
a^2, b^2, c^2, d^2, bcd \rangle \, \simeq \,
C_2 \ast V ,
\end{equation*}
where $C_2$ is now the group $\{1,a\}$ 
and $V$ the Klein Vierergruppe $\{1, b, c, d\}$,
isomorphic to $C_2 \times C_2$.
The sign $\simeq$ indicates an isomorphism of groups. 
It can easily be checked that this cover is self-replicating,
so that a cover as in Definition \ref{DefG0Tilde} is not needed here.
\par

Proofs of these facts, and of other properties of $\mathfrak G$,
can be found in \cite{Grig--80}, \cite[Chapter VIII]{Harp--00},
or \cite[Section 1.6]{Nekr--05}, 
to quote \emph{some} of the existing expositions only;
see also our Section \ref{sectionGrigGroup}.
The group $\mathfrak G$ can be viewed as the IMG 
of an orbifold version of the \emph{tent map} 
$T : \mathopen[0,1\mathclose] \longrightarrow \mathopen[0,1\mathclose]$, defined by
$T(x) = 2x$  for $x \le 1/2$
and $T(x) = 2-2x$ for $x \ge 1/2$
\cite[Section 5.3]{Nekr--11}.
Here and below, ``IMG'' stands for 
``\textbf{Iterated Monodromy Group}'' (see \cite{Nekr--05}).
\par

Observe that $C_2 \ast V$ is virtually a non-abelian free group,
because it is a free product of finite groups, distinct from $C_2 \ast C_2$
(see for example \cite[Proposition 4 in Number I.1.3, Page 16]{Serr--77}).
It contains a free subgroup $F_3$ of index $8$;
one easy way to check this involves computing 
virtual Euler-Poincar\'e characteristics, as in \cite[Section 1.8]{Serr--71}:
if 
\begin{equation*}
F_x \, = \,  \ker (p : C_2 \ast V \overset{\text{canonical}}{\longrightarrow} C_2 \times V) ,
\end{equation*}
where $F_x$ stands for the free group of rank $x$, then
\begin{equation*}
\chi (C_2 \ast V) \, = \, \frac{1}{2} + \frac{1}{4} - 1 \, = \,
\frac{1}{ [C_2 \ast V : \ker p] } \chi (F_x) \, = \, \frac{1}{8} (1-x) ,
\end{equation*}
and therefore $x=3$.
\par

Hence  Theorem \ref{GrHa+i}
is a particular case of Theorem \ref{FirstThmOnSSGps}.
\end{exe!}

\begin{exe!}
\label{ExBasilica}
The $2$-generated \textbf{Basilica group} $\mathfrak B = \langle a,b \rangle$
is a self-similar group of degree $2$, 
with homomorphism $\Phi : \mathfrak B \longrightarrow \mathfrak B \wr C_2$
defined by
\begin{equation*}
\Phi(a) = (b,1)\tau, \hskip.2cm
\Phi(b) = (a,1). 
\end{equation*}
The self-similar group $(\mathfrak B, \Phi)$ is 
faithful, contracting, and self-replicating.
The group $\mathfrak B$ is of exponential growth.
\par

The nucleus is 
\begin{equation*}
\mathcal N \ = \, \{ 1, a^{\pm 1},  b^{\pm 1}, c^{\pm 1} \}, 
\hskip.2cm \text{where} \hskip.2cm
c = a^{-1} b .
\end{equation*}
The universal contracting cover 
has the presentation
\begin{equation*}
G_0 \, = \, \langle a, b \mid  \emptyset \rangle \, \simeq \, F_2 .
\end{equation*}
It is self-replicating.
\par

The group $\mathfrak B$ has been introduced in \cite{GrZ--02a, GrZ--02b}.
The name ``Basilica'' was given by Mandelbrot to the Julia set
of the quadratic transformation $z \longmapsto z^2-1$ of the complex plane,
in honour of the \emph{Basilica Cattedrale Patriarcale di San Marco,}
and its reflection in Venetian waters \cite[Page 254]{Mand--80}.
The group $\mathfrak B$ was identified as $\operatorname{IMG}(z^2-1)$ 
in \cite[Theorem 5.8]{BaGN--03}\footnote{As acknowledged in \cite{BaGN--03},
part of the credit for this is due to Richard Pink.},
and the group was named ``Basilica'' in
\cite{BaVi--05},  \cite{Kaim--05}, and \cite{Nekr--05}.
\par

Our notation for $\Phi(a)$ and $\Phi(b)$ 
is essentially that of \cite[Page 208]{Nekr--05};
the roles of $a$ and $b$ are exchanged in \cite{GrZ--02a}.
\par

Again, Theorem \ref{Erschler}
is a particular case of Theorem \ref{FirstThmOnSSGps}.
\par
Incidentally, since $\mathfrak B$ is amenable 
(references in Appendix \ref{AppendixDGrowthAmenability}),
Theorem \ref{Erschler} shows
that $\mathfrak B$ is \emph{not} finitely presented.
Since we could not find references for a direct proof the latter statement in the literature,
let us allude to two other simple ways to show that $\mathfrak B$ is not finitely presented.
One, suggested by Julia Bartsch (private communication),
uses the infinite presentation of $\mathfrak B$ given in \cite{GrZ--02b}
and obtained together with Laurent Bartholdi;
then a nice argument concludes that this presentation is minimal
(erasing any of its relators would change the group).
The other uses the contracting property of the Basilica group established in \cite{GrZ--02a}
and follows the idea indicated in \cite{Grig--80} for $\mathfrak G$.
\end{exe!}

\begin{exe!}
\label{IMGz^2+i}
The \textbf{IMG of $z^2+i$} 
\begin{equation*}
\mathfrak J \, = \, \operatorname{IMG}(z^2+i) \, = \, \langle a,b,c \rangle
\end{equation*}
is defined by
\begin{equation*}
\Phi(a) = (1,1)\tau, \hskip.2cm
\Phi(b) = (a,c) , \hskip.2cm
\Phi(c) = (b,1) . 
\end{equation*}
It was studied in detail in \cite{GrSS--07},
and was shown to be of intermediate growth in \cite{BuPe--06}.
\par

Its nucleus is
\begin{equation*}
\mathcal N \ = \, \{ 1, a, b, c \} ,
\end{equation*}
and the only non-trivial relators of length $\le 3$ among elements of $\mathcal N$
are $a^2 = b^2 = c^2 = 1$ (this can best be checked with the GAP 
package
(\verb+http://www.gap-system.org/Packages/automgrp.html+).
Thus the universal contracting cover
\begin{equation*}
G_0 \, = \, \langle a,b,c \mid a^2, b^2, c^2 \rangle
\, \simeq \, C_2 \ast C_2 \ast C_2
\end{equation*}
has a free subgroup of finite index
(indeed a subgroup $F_{5}$ of index $8$).
It is self-replicating.
\end{exe!}

\begin{exe!}
\label{GuptaSidki}
The \textbf{Gupta-Sidki group} 
$\mathfrak G \mathfrak S = \langle a,b \rangle$ 
is the $2$-gene\-rated group of automorphisms of the ternary rooted tree defined by
\begin{equation*}
\Phi(a) \ = \, (1,1,1)\tau , \hskip.2cm \Phi(b) \, = \, (a, a^{-1},b) ,
\end{equation*}
where $\tau \in S_3$ is the cyclic permutation $(0,1,2)$.
It is the infinite $3$-group introduced in \cite{GuSi--83};
it is just infinite \cite[Proposition 8.3]{BaGr--02};
it can be viewed as an IMG \cite[Section 4.5]{Nekr--11}.
\par

Its nucleus is
\begin{equation*}
\mathcal N \ = \, \{ 1, a, a^{-1}, b, b^{-1} \} .
\end{equation*}
The universal contracting cover is
\begin{equation*}
G_0 \, = \, \langle a,b \mid a^3, b^3 \rangle
\, \simeq \, C_3 \ast C_3 ,
\end{equation*}
and contains a free subgroup $F_4$ of index $9$.
It is self-replicating.
\par
The growth type of $\mathfrak G \mathfrak S$ is not known.

\end{exe!}

\begin{exe!}
\label{FabrykowskiGupta}
The \textbf{Fabrykowski-Gupta group} $\mathfrak F \mathfrak G  = \langle a,b \rangle$ 
is the $2$-generated group of automorphisms of the ternary rooted tree defined by
\begin{equation*}
\Phi(a) \ = \, (1,1,1)\tau , \hskip.2cm \Phi(b) \, = \, (a, 1, b) ,
\end{equation*}
with $\tau$ as in Example \ref{GuptaSidki}
\cite{FaGu--85, FaGu--91}.
It is of intermediate growth 
(see the original papers, 
and an exposition with improved estimates of growth in \cite{BaPo--09}),
it is just infinite \cite[Proposition 6.2]{BaGr--02},
and it is the IMG of the cubic polynomial
$z^3(-\frac{3}{2} + i \frac{ \sqrt 3 }{2}) + 1$ \cite[Section 5.4]{Nekr--11}.
\par

As in the previous example, the nucleus is
\begin{equation*}
\mathcal N \ = \, \{ 1, a, a^{-1}, b, b^{-1} \} .
\end{equation*}
The universal contracting cover is
\begin{equation*}
G_0 \, = \, \langle a,b \mid a^3, b^3 \rangle
\, \simeq \, C_3 \ast C_3 .
\end{equation*}
It is self-replicating.
\end{exe!}     

\begin{exe!}
\label{Hanoi}
The ternary \textbf{Hanoi Towers group} $\mathfrak H = \langle a,b,c \rangle$
is the $3$-generated group of automorphisms of the ternary rooted tree defined by
\begin{equation*}
\Phi (a) \, = \, (a,1,1) \tau_{1,2} , \hskip.2cm
\Phi(b) \, = \, (1,b,1) \tau_{0,2} , \hskip.2cm
\Phi(c) \, = \, (1,1,c) \tau_{0,1}
\end{equation*}
where $\tau_{1,2}$ is the transposition of $S_3$
exchanging $1$ and $2$, and similarly for $\tau_{0,2}, \tau_{0,1}$.
It was introduced in \cite{GrSu--06} 
as a model for the well-known Hanoi Towers problem;
it is known to be of exponential growth 
(\cite[Subsection 6.1]{GrSu--07} and \cite{Grig--06}),
and isomorphic to $\operatorname{IMG}(z^2 - \frac{16}{27 z})$
\cite[Example 8]{GrSu--07}.
\par

The nucleus is
\begin{equation*}
\mathcal N \, = \, \{1,a,b,c\} .
\end{equation*}
The universal contracting cover is
\begin{equation*}
G_0 \, = \, \langle a,b,c \mid a^2, b^2, c^2 \rangle
\, \simeq \, C_2 \ast C_2 \ast C_2 .
\end{equation*}
It is self-replicating.
\end{exe!}

\begin{thm} 
\label{thmetc}
Any finitely presented cover of one of the groups
$\mathfrak J$, $\mathfrak G \mathfrak S$, $\mathfrak F \mathfrak G$,
$\mathfrak H$ of the four previous examples is large.
\end{thm}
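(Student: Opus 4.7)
The plan is to reduce the theorem directly to Theorem \ref{FirstThmOnSSGps} applied to each of the four groups separately. For each $G \in \{\mathfrak J, \mathfrak G \mathfrak S, \mathfrak F \mathfrak G, \mathfrak H\}$, Examples \ref{IMGz^2+i}, \ref{GuptaSidki}, \ref{FabrykowskiGupta}, and \ref{Hanoi} establish that $(G,\Phi)$ is a finitely generated faithful contracting self-replicating self-similar group and compute its nucleus; hence the hypotheses of Theorem \ref{FirstThmOnSSGps} are met. It suffices therefore to exhibit, in each of the four cases, a standard contracting cover $G_0$ of $G$ in the sense of Definition \ref{DefG0Tilde} and show that $G_0$ is large.

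For the first step, I would invoke the remark, made in each of the four examples, that the universal contracting cover $G_0^\un$ of Definition \ref{DefG0} is already self-replicating. As noted for $\mathfrak G$ in Example \ref{Ex1stGrig}, when this is the case the construction of Definition \ref{DefG0Tilde} produces trivial elements $w(x,n_i) = (h(x,n_i))_x s_i^{-1} = 1$, so that $E = \{1\}$, the normal subgroup $H$ is trivial, and we may take $G_0 = G_0^\un$. Explicitly, this gives
\begin{equation*}
G_0 \,\simeq\, C_2 \ast C_2 \ast C_2 \quad\text{for}\ G = \mathfrak J \ \text{and}\ G = \mathfrak H,
\end{equation*}
and
\begin{equation*}
G_0 \,\simeq\, C_3 \ast C_3 \quad\text{for}\ G = \mathfrak G \mathfrak S \ \text{and}\ G = \mathfrak F \mathfrak G.
\end{equation*}

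For the second step, I would observe that both free products are virtually non-abelian free. This is a free product of finite groups distinct from $C_2 \ast C_2$, and the virtual Euler–Poincaré computation reproduced in Example \ref{Ex1stGrig} (or an application of the Kurosh subgroup theorem) yields the precise indices recorded in the examples: $C_2 \ast C_2 \ast C_2$ contains a free subgroup $F_5$ of index $8$, and $C_3 \ast C_3$ contains a free subgroup $F_4$ of index $9$. Since any group containing a non-abelian free subgroup of finite index is large (the free subgroup being a finite-index subgroup with itself as a non-abelian free quotient), each of the four covers $G_0$ is large.

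With both ingredients in hand, the large conclusion of Theorem \ref{FirstThmOnSSGps} applied to each of the four groups completes the argument. There is no real obstacle here: the theorem is essentially a packaging of the examples together with the machinery of Section \ref{sectionSelfsimilar}, and the only small point worth spelling out is the reduction $G_0 = G_0^\un$, which is immediate once one checks that the universal contracting cover is self-replicating.
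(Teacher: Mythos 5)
Your proposal is correct and follows exactly the route the paper intends: the paper's proof of Theorem \ref{thmetc} consists of the single remark that it is ``a straightforward consequence of Theorem \ref{FirstThmOnSSGps}'', with Examples \ref{IMGz^2+i}--\ref{Hanoi} supplying the verifications you spell out (self-replicating universal contracting covers $C_2 \ast C_2 \ast C_2$ and $C_3 \ast C_3$, each virtually non-abelian free, hence large). Your explicit justification that one may take $G_0 = G_0^\un$ when the universal contracting cover is self-replicating is exactly the point the paper makes in passing before Definition \ref{DefG0Tilde}.
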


This is a straightforward consequence of Theorem \ref{FirstThmOnSSGps}.
In Section \ref{sectionOmega},
we will show how to modify \ref{FirstThmOnSSGps}
to cover uncountably many examples.

\begin{rem!}
\label{fewquotients}
Groups of interest here are often known to have rather few quotients,
of special kinds. Let us illustrate this as follows.

\vskip.2cm

(i) 
A group is \textbf{just infinite} if all its proper quotients are finite.
The group $\mathfrak G$ is just infinite.
More generally, with the notation of section \ref{sectionOmega}, 
the group $G_\omega$ is just infinite for all $\omega \in \Omega_0$
(as we repeat below in Proposition \ref{Grig84}.ii).

\vskip.2cm

(ii) 
Without recalling here the technical definitions, 
let us mention the following property of a finitely generated group $G$
assumed to be branch, or even weakly branch:
for any normal subgroup $N \ne \{1\}$ of $G$, 
there exists an integer $n \ge 1$ such that $N$ contains
the derived group of the rigid stabilizer $\operatorname{Rist}_G(n)$;
this follows from the proof of \cite[Theorem 4]{Grig--00}.
\par
As a consequence, if $G$ is branch, then any proper quotient of $G$
is virtually abelian.
\par
In particular, any proper quotient of one of
the groups $\mathfrak J$, 
$\mathfrak G \mathfrak S$,
$\mathfrak F \mathfrak G$, and $\mathfrak H$,
is virtually abelian.
\par

(This does \emph{not} apply to $\mathfrak B$,
which is weakly branch but \emph{not} branch group.
This applies to $\mathfrak G$, but it is of little interest in this case
since the property of (i) is strictly stronger.)

\vskip.2cm

(iii)
It is shown in \cite{Grig--98} that $\mathfrak G$ has 
a finitely presented HNN-extension $\widehat{\mathfrak G}$
which is in
$\mathcal S \mathcal G \smallsetminus \mathcal E \mathcal G$.
Any proper quotient of $\widehat{\mathfrak G}$ 
is metabelian and virtually abelian \cite[Theorem 2.3]{SaWi--02}.

\vskip.2cm

(iv)
The Basilica group $\mathfrak B$ is \textbf{just non-soluble},
which means that all its proper quotients are soluble
\cite[Proposition 6]{GrZ--02a}.

\vskip.2cm

(v)
Recall however that there exist groups of intermediate growth 
with uncountably many quotients: see \cite{Gri--84b}
and Definition \ref{LambdaUniversal}.
\end{rem!}

\section{\textbf{Marked groups and the Chabauty topology}}
\label{sectionMark+Chab}
For $k$ a positive integer, let $F_k$ denote the free group of rank $k$,
given together with an ordered free basis $(s_1, \hdots, s_k)$ of generators.
A \textbf{marked group of rank $k$} is a pair $(G,S)$
where $G$ is a group and $S$ an ordered set of $k$ generators
(for distinct $s,t \in S$,  equalities $s=1$ and $s=t \in G$ are allowed).
To such a pair corresponds a free cover
$\pi_G : F_k \twoheadrightarrow G$,
with $\pi_G(s_j)$ being the $j$th generator of $S$ ($1 \le j \le k$).
We denote by $\mathcal M_k$ the set of marked groups on $k$ generators,
identified here with the set of normal subgroups of $F_k$
via the bijection $(G,S) \longleftrightarrow \ker \pi_G$.
\par

The idea to furnish a space of (sub)groups with a topology
goes back at least to Chabauty \cite{Chab--50}, 
and has been revisited on many occasions, among others by
Bourbaki \cite[chapitre VIII, $\S$~5]{Bour--63},
Gromov \cite[final remarks]{Grom--81},
one of us \cite{Gri--84a}, 
Stepin \cite{Step--96}, 
Champetier \cite{Cham--00},
Champetier-Guirardel \cite{ChGu--05},
and Ceccherini-Silberstein $\&$ Coornaert \cite{CeCo--10}.
\par

The \textbf{Chabauty topology} on $\mathcal M_k$,
also called the \textbf{Cayley topology}, 
is that defined by the basis\footnote{There
is an equivalent definition in terms of the subbasis
\begin{equation*}
\mathcal O_{K,V} \, = \, 
\left\{ N \triangleleft F_k \hskip.2cm : \hskip.2cm 
N \cap K = \emptyset \hskip.2cm \text{and} \hskip.2cm N \cap V \ne \emptyset \right\} ,
\end{equation*}
indexed by pairs $(K,V)$ where $K$ is a finite subset of $F_k$
and $V$ a subset of $F_k$.
With $K$ compact and $V$ open,
it has the advantage to carry over to the space of closed subgroups
of a locally compact group $G$.
}
\begin{equation}
\label{DefChabauty}
\mathcal O_{K, K'} \, = \,
\left\{ N \triangleleft F_k \hskip.2cm : \hskip.2cm  
N \cap K \ = \emptyset \hskip.2cm \text{and} \hskip.2cm K' \subset N \right\} ,
\end{equation}
with $K,K'$ finite subsets in $F_k$.
This topology makes $\mathcal M_k$ a totally disconnected compact space.
It is also completely metrisable, as we now recall.
For two subsets $A,A'$ in $F_k$, let $v(A,A')$ denote the largest integer $n$
such that $A \cap B_S^{F_k}(n) = A' \cap B_S^{F_k}(n)$,
where $S = (s_1, \hdots, s_k)$ in $F_k$ is as above,
and where balls $B_S^{F_k}(n)$ are 
as in Appendix \ref{AppendixDGrowthAmenability}.
Set $d(A,A') = \exp (-v(A,A'))$.
Then $d$ is a metric (indeed an ultrametric) and makes
the set $2^{F_k}$ of subsets of $F_k$ a totally discontinuous compact metric space,
in which the space $\mathcal M_k$ of marked groups on $k$ generators
(namely the space of normal subgroups of $F_k$) is closed.
The topology induced by $d$ on $\mathcal M_k$ 
coincides with that defined by (\ref{DefChabauty}).
\par

Here is an elementary and basic fact about this topology.
The earliest written reference we know for it is
\cite[Lemma 1.3 and Lemma 1]{CoGP--07}.

\begin{prop}
\label{localinM}
Let $k,\ell$ be two positive integers.
Let $(G,S) \in \mathcal M_k$ and $(G,T) \in \mathcal M_\ell$
be two marked groups with the same underlying group.
Then there exist neighbourhoods $U \subset \mathcal M_k$ of $(G,S)$
and $V \subset \mathcal M_\ell$ of $(G,T)$
that are homeomorphic.
\par
In loose words, local properties of $(G,S)$ are properties of $G$ itself.
\end{prop}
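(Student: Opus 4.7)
The plan is to use the two generating sets to build explicit continuous bijections between neighborhoods, exhibited via substitution of words.

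First I would pick words expressing each marking in terms of the other. Write $t_j = w_j(s_1,\dots,s_k)$ and $s_i = u_i(t_1,\dots,t_\ell)$ in $G$, choosing $w_j \in F_k$ and $u_i \in F_\ell$. These choices define homomorphisms $W : F_\ell \to F_k$ sending the $j$th free generator to $w_j$, and $U : F_k \to F_\ell$ sending the $i$th free generator to $u_i$. Set
\begin{equation*}
r_i \, = \, u_i(w_1,\dots,w_\ell)\, s_i^{-1} \, \in \, F_k, \hskip.3cm
r'_j \, = \, w_j(u_1,\dots,u_k)\, t_j^{-1} \, \in \, F_\ell .
\end{equation*}
By the choices made, $r_i \in \ker \pi_G^S$ and $r'_j \in \ker \pi_G^T$. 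Define
\begin{equation*}
U \, = \, \mathcal O_{\emptyset, \{r_1,\dots,r_k\}} \, \subset \, \mathcal M_k, \hskip.3cm
V \, = \, \mathcal O_{\emptyset, \{r'_1,\dots,r'_\ell\}} \, \subset \, \mathcal M_\ell ,
\end{equation*}
open neighborhoods of $(G,S)$ and $(G,T)$ respectively.

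Next I would define the candidate homeomorphisms by pullback: let $\Phi : \mathcal M_k \to \mathcal M_\ell$ send $N \triangleleft F_k$ to $W^{-1}(N)$, and $\Psi : \mathcal M_\ell \to \mathcal M_k$ send $N' \triangleleft F_\ell$ to $U^{-1}(N')$. Both are continuous, since for any finite $K', K'' \subset F_\ell$ one computes directly
\begin{equation*}
\Phi^{-1}(\mathcal O_{K',K''}) \, = \, \mathcal O_{W(K'), W(K'')} ,
\end{equation*}
and symmetrically for $\Psi$. Intuitively, a marked group $(H, S')$ close to $(G,S)$ in $\mathcal M_k$ is sent to the group $H$ marked by the tuple $(w_1(S'),\dots,w_\ell(S'))$; it lies in $\mathcal M_\ell$ because, when $N = \ker \pi_H$ contains each $r_i$, the identities $s'_i = u_i(w_1(S'),\dots,w_\ell(S'))$ hold in $H$ and so the $w_j(S')$ do generate $H$.

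Finally I would verify that $\Phi$ and $\Psi$ are mutually inverse on $U$ and $V$. Consider the composed homomorphism $WU : F_k \to F_k$; on the $i$th free generator it produces $u_i(w_1,\dots,w_\ell) = r_i \cdot s_i$. If $N \in U$, so that $r_i \in N$, then $WU$ and $\mathrm{id}_{F_k}$ agree modulo $N$ on the generators, hence (by the universal property of $F_k$) they induce the same map $F_k \to F_k/N$. Consequently
\begin{equation*}
(\Psi \circ \Phi)(N) \, = \, (WU)^{-1}(N) \, = \, N ,
\end{equation*}
and a symmetric computation handles $\Phi \circ \Psi$ on $V$. One also checks $\Phi(U) \subseteq V$ by evaluating $W(r'_j)$ and reducing modulo $N$ using the $r_i$-relations, and symmetrically $\Psi(V) \subseteq U$; then $\Phi|_U : U \to V$ and $\Psi|_V : V \to U$ are continuous inverses. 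The only genuine work is the bookkeeping in Step 3 (that $WU$ and $UW$ reduce to the identity modulo the appropriate kernels); the topology and the continuity of $\Phi, \Psi$ come essentially for free from the definition of the Chabauty basis.
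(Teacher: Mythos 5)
Your proof is correct. The paper itself does not prove this proposition (it only cites \cite{CoGP--07}), and your substitution--pullback argument --- choosing words $w_j, u_i$ expressing each generating tuple in terms of the other, taking the induced homomorphisms $W \colon F_\ell \to F_k$ and $F_k \to F_\ell$, restricting to the basic open sets where the ``round-trip'' relators $r_i$, $r'_j$ are killed, and checking that the two preimage maps are continuous mutual inverses there --- is precisely the standard argument from that reference. The only blemishes are notational: you use the letter $U$ both for the homomorphism $F_k \to F_\ell$ and for the neighbourhood of $(G,S)$, and in the definitions of $r_i$ and $r'_j$ the symbols $s_i$ and $t_j$ must be read as the free generators of $F_k$ and $F_\ell$ rather than as elements of $G$; neither affects the mathematics.
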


This proposition justifies the following definitions:
a property ($\mathcal P$) of finitely generated groups is 
\textbf{open} [respectively \textbf{closed]} if,
for any positive integer $k$, the subset of $\mathcal M_k$
of marked groups $(G,S)$ such that $G$ has Property ($\mathcal P$)
is open [respectively closed].
A finitely generated group $G$ is \textbf{isolated}
if, for any ordered generating set $S = (s_1, \hdots, s_k)$ of $G$,
the point $(G,S)$ is isolated in $\mathcal M_k$.
We collect a few examples as follows:

\begin{prop} 
\label{propertiesChab}
For $k \ge 2$,
in the space $\mathcal M_k$ of marked groups of rank $k$:
\begin{itemize}
\item[(i)]
``Being abelian'' is both an open and a closed property;
more generally, for $d \ge 1$,
``being nilpotent of nilpotent class at most~$d$''
is both open and closed.
``Being nilpotent'' is open and non-closed.
\item[(ii)]
``Being soluble of solubility class at most k'' is closed and non-open.
``Being soluble'' is neither open nor closed .
\item[(iii)]
``Being finite'' and ``having torsion'' are open and  non-closed.
\item[(iv)]
If $(G,S) \in \mathcal M_k$ is a marking of a finitely presented group $G$,
there exists a neighbourhood of $(G,S)$ in $\mathcal M_k$
containing only marked quotients of $(G,S)$.
\item[(v)]
A necessary condition for $(G,S)$ to be an isolated point in $\mathcal M_k$
is that $G$ is finitely presented. 
Finite groups and finitely presented simple groups are isolated.
\item[(vi)]
There exists an isolated group that is $3$-soluble and non-Hopfian;
the group $\widehat{\mathfrak G}$ 
mentioned in Remark \ref{fewquotients}.iii is isolated.
\item[(vii)]
Amenability is neither open nor closed.
\item[(viii)]
Kazhdan Property (T) is open in $\mathcal M_k$.
\item[(ix)]
Serre Property (FA) is not open in $\mathcal M_k$.
\end{itemize}
\end{prop}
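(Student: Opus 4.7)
My approach rests on the observation that for each word $w \in F_k$, both $\{N : w \in N\} = \mathcal O_{\emptyset, \{w\}}$ and its complement $\{N : w \notin N\} = \mathcal O_{\{w\}, \emptyset}$ are basic clopen subsets of $\mathcal M_k$, thanks to (\ref{DefChabauty}). Consequently, any property of $(G,S)$ expressible as a finite Boolean combination of such conditions is clopen, while any property of the form ``$W \subset N$'' for some $W \subseteq F_k$ is closed, as an intersection of clopens. This abstract machinery disposes of most items; the non-open or non-closed halves require exhibiting explicit convergent sequences.

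For item (i), ``abelian'' is the finite condition $\{[s_i,s_j]\in N\}_{i<j}$, hence clopen; the analogous finite set of iterated commutators of length $d+1$ in $s_1,\dots,s_k$ yields clopenness of ``nilpotent of class $\le d$''. Thus ``nilpotent'' is open as a union over $d$; it fails to be closed since $F_k/\gamma_d(F_k)\to F_k$ as $d\to\infty$ by residual nilpotence of $F_k$. For item (ii), ``soluble of class $\le d$'' is the closed condition $F_k^{(d)}\subset N$. It is not open: starting from $N=F_k^{(d)}$, the approximations $N_n=\langle\langle N\cap B_S^{F_k}(n)\rangle\rangle$ satisfy $N_n\subsetneq N$ and $N_n\to N$, and $F_k/N_n$ cannot be soluble of class $\le d$ because $F_k^{(d)}$ is not finitely normally generated in $F_k$ for $k,d\ge 2$ (Corollary \ref{freeSol}). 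Analogous constructions (using free soluble groups of growing class converging to $F_k$) show ``soluble'' is neither open nor closed. Item (iii): ``torsion'' is the open union $\bigcup_{w\ne 1,\,n\ge 1}\mathcal O_{\{w\},\{w^n\}}$, not closed since $\Z/n\Z\to\Z$; ``finite'' is open by the lemma below and not closed by residually finite approximations of $F_k$.

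The lemma underlying items (iv) and (v) is: if $N=\langle\langle R\rangle\rangle_{F_k}$ with $R\subset B_S^{F_k}(n)$ finite, then every $N'\in\mathcal M_k$ agreeing with $N$ on $B_S^{F_k}(n)$ contains $R$ and hence $N$, making $F_k/N'$ a marked quotient of $F_k/N$. This is (iv). For (v), isolation of $(G,S)$ means that some neighbourhood determined by $B_S^{F_k}(n)$ reduces to $\{N\}$; applied with $R=N\cap B_S^{F_k}(n)$, the lemma forces $N=\langle\langle R\rangle\rangle$, so $G$ is finitely presented. Finite groups admit finitely many markings, each finitely presented, so are isolated. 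A finitely presented simple $G$ is isolated because (iv) restricts a small enough neighbourhood to marked quotients of $G$, of which there are only $G$ and the trivial group, and the trivial marking is separated from $(G,S)$ by any $s_i$ nontrivial in $G$. For (vi), the group $\widehat{\mathfrak G}$ of Remark \ref{fewquotients}.iii serves: it is a finitely presented $3$-soluble non-Hopfian group whose extremely restricted quotient lattice, analysed in \cite{Grig--98, SaWi--02}, yields isolation by a direct check.

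For item (vii), amenability is not closed since $F_k/F_k^{(d)}$ is soluble and converges to $F_k$, which is non-amenable; it is not open since $\mathfrak G_n\to\mathfrak G$ (Definition \ref{defGn}), with $\mathfrak G$ amenable but each $\mathfrak G_n$ large, hence non-amenable, by Theorem \ref{GrHa+ii}. For item (viii), if $(G,S)$ has Kazhdan's property (T), one produces a Kazhdan pair $(Q,\epsilon)$ with $Q$ a finite set of words in $S$; the $(Q,\epsilon)$-Kazhdan inequality depends only on the multiplication table of $G$ on a bounded radius and is therefore preserved under sufficiently small Chabauty perturbation, following Shalom's effective characterisation. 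For item (ix), one must exhibit a marked group with Serre's property (FA) arising as a Chabauty limit of markings without (FA); this is delicate but available in the literature via small-cancellation or limit-group constructions. The main obstacles are thus items (vi) and (ix), which demand explicit constructions rather than formal manipulations of clopen sets, together with the non-openness half of (ii), which leans essentially on the quantitative non-finite-presentation statement of Corollary \ref{freeSol}.
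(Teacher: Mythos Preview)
Your argument tracks the paper's approach closely for items (i), (iii), (iv), (v), (vii), (viii), but there are two genuine problems.

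First, item (vi): you have conflated two distinct claims. The statement asserts the existence of \emph{some} isolated $3$-soluble non-Hopfian group, and \emph{separately} that $\widehat{\mathfrak G}$ is isolated. The former is a construction of de~Cornulier--Guyot--Pitsch \cite[Proposition~10]{CoGP--07}; the latter follows from \cite{SaWi--02}. You write that $\widehat{\mathfrak G}$ itself is ``a finitely presented $3$-soluble non-Hopfian group'', which is false: $\widehat{\mathfrak G}$ is an HNN-extension of $\mathfrak G$ lying in $\mathcal{SG}\smallsetminus\mathcal{EG}$ (see Remark~\ref{fewquotients}.iii and Appendix~\ref{AppendixDGrowthAmenability}), hence is not soluble at all.

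Second, item (ii): your argument that ``soluble of class $\le d$'' is non-open is fine (and slightly different from the paper, which simply notes that $\Z\wr\Z$ is a limit of non-soluble groups via Proposition~\ref{AwrZ}). But your reduction for ``soluble'' being non-open is incomplete: the approximations $N_n\nearrow F_k^{(d)}$ give groups $F_k/N_n$ that are not soluble of class $\le d$, yet they might still be soluble of some larger class, so this does not establish the claim. The paper's route is cleaner: $\Z\wr\Z$ is metabelian and is the limit of the groups $W_n$ of Proposition~\ref{AwrZ}, each of which contains a non-abelian free subgroup and is therefore non-soluble. This single example simultaneously handles non-openness of ``soluble'' and of ``soluble of class $\le d$'' for every $d\ge 2$.

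Finally, for item (ix) your gesture towards ``small-cancellation or limit-group constructions'' is not a proof; the paper simply cites \cite{DuMi}, and you should too.
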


\begin{proof}[On the proof]
Claims (i) to (v) are elementary;
most of them appear explicitly in
\cite[Section 2.6 and Lemma 2.3]{ChGu--05}.
For (i), note moreover that ``being nilpotent'' is open by (iv),
because nilpotent groups are finitely presented.
For (ii), note that ``being soluble'' is non-open,
because metabelian groups like $\Z \wr \Z$
are limits of non-soluble groups (see Example \ref{AwrZ}, say).
\par
``Being nilpotent'', ``being soluble'', ``being finite'', ``being amenable''
and ``having torsion'' are non-closed properties, 
because non-abelian free groups
are residually finite $p$-groups, for any prime $p$
(due to \cite{Hall--50}, see also \cite{Vale--93}).
\par
For (v), 
observe that a finitely generated infinitely presented group $G$
is always a limit of finitely presented groups $G_n$; more precisely
\begin{equation*}
\aligned
& G \, = \,  
\langle s_1, \hdots, s_k \mid (r_i)_{i \ge 1} \rangle
\, = \, \lim_{n \to \infty} G_n
\\
& \hskip.5cm \text{with} \hskip.2cm
G_n \, = \, 
\langle s_1, \hdots, s_k \mid r_1, \hdots, r_n  \rangle .
\endaligned
\end{equation*}
Necessary \emph{and sufficient} conditions for isolated points
are known in terms of the existence of ``finite discriminating subsets''; 
we refer to  \cite[Proposition 2]{CoGP--07}; see also \cite[Theorem 2.1]{Grig--05}.
The class of isolated groups contains considerably more groups
than the finite groups and the finitely presented simple groups \cite{CoGP--07}.
\par
The first part of Claim (vi) is \cite[Proposition 10]{CoGP--07};
the second part is implicit in \cite{SaWi--02}, 
and explicit in \cite[Proposition 5.18]{CoGP--07}).
``Being amenable'' is non-open, again because
$\Z \wr \Z$ is a limit of groups with non-abelian free subgroups
(Example \ref{AwrZ}).
Claim (viii) is a result of \cite{Shal--00}, and (ix) of \cite{DuMi}.
\end{proof}

The Chabauty topology on $\mathcal M_k$ plays an important role
in connection with many group properties including
``Property LEF'' and ``soficity''.
The two latter properties define subspaces in $\mathcal M_k$ that are closed
\cite[Propositions 7.3.7 and 7.5.13]{CeCo--10}.
\par

Note the contraposition of (v):
for $(G,S) \in \mathcal M_k$ with $G$ infinitely presented,
there exists a sequence $\big( (G_n, S_n) \big)_{n \ge 1}$ 
of pairwise distinct points in $\mathcal M_k$
such that $\lim_{n \to \infty} (G_n, S_n) = (G,S)$.
\par

The simplest examples of converging sequences in $\mathcal M_k$
are of the following kind. Let
\begin{equation*}
N_1 \, \subset \, \cdots 
\, \subset \, N_n  \, \subset \, N_{n+1}  \, \subset \, \cdots \, 
\subset \, N := \bigcup_{n \ge 1} N_n
\end{equation*}
be a nested sequence of normal subgroups in $F_k$.
Let $S_0$ be a free basis of $F_k$.
Denote by $p_n : F_k \longrightarrow G_n := F_k/N_n$ ($n \ge 1$)
and $p : F_k \longrightarrow G := F_k/N$ the canonical projections.
Set $S_n = p_n(S_0)$ and $S = p(S_0)$.
Then  $\big( (G_n,S_n) \big)_{n \ge 1}$ is a sequence in $\mathcal M_k$
converging to $(G,S)$.
In this case we often suppress the emphasis on the generating sets
and write simply that 
the sequence $(G_n)_{n \ge 1}$ converges to $G$ in $\mathcal M_k$.
\par

Converging sequences in $\mathcal M_k$ need not be of this special kind,
with $G$ a quotient of $G_n$ for all $n$ large enough.
See below, Proposition \ref{Grig84}.vi.

\vskip.2cm

The following observation about $\mathcal M_k$ and covers, 
basic for us, is well-known; see e.g. \cite[Lemma 3.1]{CoKa--11}.
We provide a proof for the convenience of the reader.

\begin{prop}
\label{main}
Let $\big( (G_n,S_n) \big)_{n \ge 1}$ be a converging sequence in $\mathcal M_k$;
set $(G,S) = \lim_{n \to \infty} (G_n,S_n)$.
Let $\Gamma$ be a finitely presented group; assume there exists
a cover $\pi : \Gamma \twoheadrightarrow G$.
\par
Then $\Gamma$ is a cover of $G_n$ for $n$ large enough.
\end{prop}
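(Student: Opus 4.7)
The plan is to reformulate both hypothesis and conclusion as assertions about finite subsets of the free group $F_k$, and then apply the definition of the Chabauty topology directly.

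Since $\Gamma$ is finitely presented, I fix a presentation $\Gamma = \langle x_1, \ldots, x_m \mid r_1, \ldots, r_l \rangle$. Because $\pi$ is surjective onto the group $G$ marked by $S = (s_1, \ldots, s_k)$, I can choose, for each $i \le m$, a word $w_i \in F_k$ with $\pi(x_i) = w_i(s_1, \ldots, s_k)$ in $G$. Conversely, since each generator $s_j$ lies in the image of $\pi$, I can choose, for each $j \le k$, a word $u_j \in F_m$ with $\pi(u_j(x_1, \ldots, x_m)) = s_j$ in $G$.

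I then form the finite subset of $F_k$
\[
K \, = \, \{ r_j(w_1, \ldots, w_m) : 1 \le j \le l \} \cup \{ u_j(w_1, \ldots, w_m) s_j^{-1} : 1 \le j \le k \} .
\]
By construction, every element of $K$ represents $1 \in G$, so $K$ is contained in $N := \ker(F_k \twoheadrightarrow G)$. Under the identification $(G_n, S_n) \leftrightarrow N_n := \ker(F_k \twoheadrightarrow G_n)$, the convergence $(G_n, S_n) \to (G, S)$ in $\mathcal{M}_k$ says that $N_n \to N$ in the Chabauty topology; in particular, applied to the basic open set $\mathcal{O}_{\emptyset, K}$ of (\ref{DefChabauty}), this yields $K \subset N_n$ for all sufficiently large $n$.

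For such $n$, the relators $r_j$ of $\Gamma$ are satisfied when each $x_i$ is replaced by $w_i(s^{(n)}_1, \ldots, s^{(n)}_k) \in G_n$, so the assignment $x_i \mapsto w_i(s^{(n)}_1, \ldots, s^{(n)}_k)$ extends to a homomorphism $\pi_n : \Gamma \to G_n$ satisfying $\Phi_n \circ \pi = \pi_n$ at the level of the underlying maps from $F_m$. The remaining elements of $K$ then force $\pi_n(u_j(x_1, \ldots, x_m)) = s_j^{(n)}$ for each $j$, showing that the image of $\pi_n$ contains every generator of $G_n$, hence all of $G_n$; so $\pi_n$ is surjective. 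The only mildly delicate step is the inclusion of the $u_j$-elements in $K$: without them one only obtains a homomorphism $\Gamma \to G_n$ whose image could in principle be proper. Including them builds surjectivity into the Chabauty convergence at no extra cost.
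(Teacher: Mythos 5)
Your proof is correct, and its core is the same as the paper's: lift $\pi$ to a map between free groups, observe that the images of the finitely many relators of $\Gamma$ land in $N = \ker(F_k \twoheadrightarrow G)$, and use the basic open sets $\mathcal O_{\emptyset, K}$ of the Chabauty topology to push this finite set into $N_n$ for $n$ large. The one place where you genuinely go beyond the paper's own argument is surjectivity. The paper simply declares the induced map $h_1 : \Gamma \to F_k / \langle\langle h(R) \rangle\rangle$ to be a ``cover'', but the image of that map is only the subgroup generated by the chosen words $\tilde w_j$, so surjectivity of the composite $\Gamma \to G_n$ is not automatic and does require a further argument. Your device of adjoining the elements $u_j(w_1, \hdots, w_m) s_j^{-1}$ to $K$ --- finitely many witnesses, all lying in $N$, which force each marked generator of $G_n$ into the image as soon as they lie in $N_n$ --- supplies exactly this missing justification, at no extra cost since Chabauty convergence absorbs any finite subset of $N$. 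One minor quibble: the identity ``$\Phi_n \circ \pi = \pi_n$'' is not meaningful as written, since in general no homomorphism $G \to G_n$ exists; what you mean, and what suffices for $\pi_n$ to be well defined, is that the composite $F_m \to F_k \to G_n$ kills the relators $r_j$, which is precisely the content of the first half of the inclusion $K \subset N_n$.
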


\noindent
\emph{Note.} In case $G$ itself is finitely presented, 
this lemma is an immediate consequence of Proposition \ref{propertiesChab}.iv.

\begin{proof}
Denote as above by $(s_1, \hdots, s_k )$ an ordered free basis of $F_k$.
Let $p_n : F_k \twoheadrightarrow G_n$
and $p : F_k \twoheadrightarrow G$
be the free covers corresponding to $(G_n, S_n)$ and $(G,S)$ respectively.
Set $N_n = \ker (p_n)$ and $N = \ker (p)$.
Let $(t_1, \hdots, t_\ell)$ an ordered generating set of $\Gamma$.
Consider the free group $F_\ell$ on an ordered basis
$U = (u_1, \hdots, u_\ell)$ and the free cover 
$q : F_\ell \twoheadrightarrow \Gamma$
defined by $q(u_j) = t_j$ for $j = 1, \hdots, \ell$.
\par

Since $\Gamma$ is finitely presented,
there exists a finite subset $R \subset F_\ell$ 
of words $v_1, \hdots, v_m$ in the letters of $U \cup U^{-1}$
such that $\ker (q)$ is the normal subgroup of $F_\ell$ generated by $R$,
namely such that $\langle U \mid R \rangle$ is a presentation of $\Gamma$.
For $j  \in \{1, \hdots, \ell \}$, choose a word $w_j$ 
in the letters $p(s_1), \hdots, p(s_k)$ and their inverses
such that $\pi(t_j) = w_j$.
Let $\tilde w_j$ be the word in $\{s_1, s_1^{-1}, \hdots, s_k, s_k^{-1} \}$
obtained by substitution of $s_i^{\pm 1}$ in place of $p(s_i)^{\pm 1}$; 
observe that $p(\tilde w_j) = w_j = \pi(t_j)$.
Consider the homomorphism
\begin{equation*}
h \,  : \,  F_\ell \longrightarrow F_k
\hskip.5cm \text{defined by} \hskip.5cm
h(u_j) \, = \, \tilde w_j \hskip.5cm (1 \le j \le \ell ).
\end{equation*}
Then $ph(u_j) = p(\tilde w_j) = w_j = \pi(t_j) = \pi q (u_j)$ for all $j$,
so that $p h = \pi q$, and therefore $h(R) \subset N$.
\par

The last inclusion means that the open subset
\begin{equation*}
\mathcal O' \, := \, 
\left\{ M  \triangleleft F_k \hskip.2cm : \hskip.2cm h(R) \subset M \right\}
\, = \,
\bigcap_{i=1}^m \mathcal O_{\emptyset, \{h(r_i)\}}
\end{equation*}
is a neighbourhood of $N$ in $\mathcal M_k$.
Hence, for $n$ large enough, we have $N_n \in \mathcal O'$
and therefore $h(R) \subset N_n$.
\par

Denote by $\langle\langle T \rangle\rangle$ the \emph{normal} subgroup
of a group $H$ generated by a subset $T \subset H$. Let
\begin{equation*}
h_1 \,  : \, \Gamma = F_\ell  / \langle\langle R \rangle\rangle \longrightarrow
F_k / \langle\langle h(R) \rangle\rangle 
\end{equation*}
be the cover induced by $h$, and 
\begin{equation*}
h_2 \,  : \, F_k / \langle\langle h(R) \rangle\rangle 
\longrightarrow F_k / N_n \, = \, G_n
\end{equation*}
that defined by the inclusion $\langle\langle h(R) \rangle\rangle \subset N_n$
(for $n \gg 1$).
The composition $h_2h_1$ is a cover $\Gamma \twoheadrightarrow G_n$,
and this concludes the proof.
\end{proof}

An immediate consequence of the previous proposition is the following corollary,
of very frequent use in our work.

\begin{cor}
\label{main2}
Consider the three following 
group properties:
\begin{itemize}
\item[(NA)]
non-amenability,
\item[(Fr)]
containing non-abelian free groups,
\item[(La)]
being large.
\end{itemize}
Let $k \ge 2$ and
 $\big( (G_n,S_n) \big)_{n \ge 1}$ be a converging sequence in $\mathcal M_k$,
with limit $(G,S)$.
\par

If, for all $n$ large enough, $G_n$ has one of the three properties above,
then any finitely presented cover of $G$ has the same property.
\end{cor}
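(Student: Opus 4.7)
The plan is to combine Proposition \ref{main} with the fact that each of the three properties (NA), (Fr), (La) is inherited upward along covers, i.e.\ if a group $H$ has the property and $\Gamma \twoheadrightarrow H$ is an epimorphism, then $\Gamma$ also has the property. Once this is established, the corollary is almost immediate: given any finitely presented cover $\pi \colon \Gamma \twoheadrightarrow G$, Proposition \ref{main} produces, for all $n$ large enough, an epimorphism $\Gamma \twoheadrightarrow G_n$; for $n$ also large enough that $G_n$ has the property in question, inheritance upward then transfers the property to $\Gamma$.

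So the only real work is checking inheritance for each of the three properties. For (NA), amenability passes to quotients, so if $G_n$ is non-amenable then any cover $\Gamma \twoheadrightarrow G_n$ is non-amenable as well. For (Fr), suppose $F \le G_n$ is a non-abelian free subgroup on generators $x_1, x_2$, and let $q \colon \Gamma \twoheadrightarrow G_n$ be the epimorphism supplied by Proposition \ref{main}. Pick preimages $\tilde x_1, \tilde x_2 \in \Gamma$ of $x_1, x_2$; since $F$ is free, the assignment $x_i \mapsto \tilde x_i$ extends to a homomorphism $F \to \Gamma$ that is a section of $q|_{\langle \tilde x_1, \tilde x_2 \rangle}$, so $\langle \tilde x_1, \tilde x_2 \rangle \le \Gamma$ is non-abelian free. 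For (La), let $H \le G_n$ be a finite-index subgroup admitting an epimorphism $H \twoheadrightarrow F_r$ with $r \ge 2$; then $q^{-1}(H)$ is a finite-index subgroup of $\Gamma$ with an epimorphism $q^{-1}(H) \twoheadrightarrow H \twoheadrightarrow F_r$, hence $\Gamma$ is large.

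Putting this together, the proof consists of (i) applying Proposition \ref{main} to obtain the epimorphisms $\Gamma \twoheadrightarrow G_n$ for $n$ sufficiently large, and (ii) invoking the appropriate one of the three inheritance observations above. I do not expect a serious obstacle: Proposition \ref{main} is doing all the substantive work, and each of the three inheritance statements is a one-line standard fact. The only minor point to be careful about is choosing $n$ large enough to satisfy \emph{both} the conclusion of Proposition \ref{main} \emph{and} the hypothesis that $G_n$ has the property, but since both conditions hold cofinally, their intersection is nonempty.
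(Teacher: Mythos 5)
Your proposal is correct and is essentially the paper's own argument: apply Proposition \ref{main} to obtain an epimorphism $\Gamma \twoheadrightarrow G_n$ for $n$ large, then use the fact that each of (NA), (Fr), (La) passes from a quotient to the group itself. The paper states this inheritance in one line, whereas you verify it property by property, but the content is identical.
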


\begin{proof}
The point is that a group that has a quotient with one of the properties (NA), (Fr), (La) 
has itself the same property.
\end{proof}

\section{\textbf{The analogue of Theorem \ref{GrHa+i}
for the family $\left( G_\omega \right)_{\omega \in \Omega}$ of \cite{Gri--84a}}}
\label{sectionOmega}

Let $\Omega$ be the Cantor space $\{0,1,2\}^{\N}$ 
of all sequences of $0$'s, $1$'s and $2$'s, with the product topology.
Denote by $\Omega_-$ the countable subspace of eventually constant sequences,
by $\Omega_+$ its complement,
and by $\Omega_0$ the subspace of sequences with infinitely many
occurrences of each of $0, 1, 2$; thus
\begin{equation*}
\Omega_0 \, \subset \, \Omega_+
\, \subset \ \Omega \, = \, \Omega_+ \sqcup \Omega_- .
\end{equation*}
We denote by $\sigma$ the shift on $\Omega$,
defined by $(\sigma(\omega))_n = \omega_{n+1}$ for all $n \ge 1$.
\par

We will recall the construction of \cite{Gri--84a},
which is a generalisation of that of Section \ref{sectionSelfsimilar}.
It associates with each point $\omega \in \Omega$
a marked group $(G_\omega,S_\omega)$ 
with $S_\omega$ consisting of $4$ generators of order $2$;
for example, $\mathfrak G = G_{\overline{012}}$,
where $\overline{012}$ stands for the $3$-periodic sequence $012012012\cdots$.
In this section, set
\begin{equation*}
X \, = \, \{0,1\} 
\end{equation*}
and identify $X^*$ with the $2$-regular rooted tree.
We proceed to define for all $\omega \in \Omega$ a marked group 
$(G_\omega. S_\omega) \in \mathcal M_4$ of automorphisms of $X^*$.

\begin{defn!}
\label{DefGOmega}
The flip $a \in \operatorname{Aut}(X^*)$ is defined by
\begin{equation*}
a(0v) \, = \, 1v \hskip.2cm \text{and} \hskip.2cm a(1v) \, = \, 0v
\hskip.2cm \text{for all} \hskip.2cm v \in X^*.
\end{equation*}
Set
\begin{equation*}
\begin{array}{ccc}
a_{\beta(0)} = a \hskip.2cm &  a_{\beta(1)} = a \hskip.2cm & a_{\beta(2)} = 1 \phantom{.}
\\
a_{\gamma(0)} = a \hskip.2cm &  a_{\gamma(1)} = 1 \hskip.2cm & a_{\gamma(2)} = a \phantom{.}
\\
a_{\delta(0)} = 1 \hskip.2cm &  a_{\delta(1)} = a \hskip.2cm & a_{\delta(2)} = a .
\end{array}
\end{equation*}
Define for each $\omega = (\omega_n)_{n \ge 1} \in \Omega$
a set $S_\omega = \{a, b_\omega, c_\omega, d_\omega\}$ 
of four automorphisms of $X^*$ by
\begin{equation*}
\aligned
b_\omega \, &= \,  \left( a_{\beta(\omega_1)} , b_{\sigma(\omega)} \right) 
\\
c_\omega \, &= \,  \left( a_{\gamma(\omega_1)} , c_{\sigma(\omega)} \right) 
\\
d_\omega \, &= \,  \left( a_{\delta(\omega_1)} , d_{\sigma(\omega)} \right).
\endaligned
\end{equation*}
It is easy to check that
\begin{equation}
\label{Relations0InGomega}
\aligned
ac_\omega a \, &= \, \left( b_{\sigma (\omega)}, a_{\beta(\omega_1)} \right)
\\
ad_\omega a \, &= \, \left( c_{\sigma (\omega)}, a_{\gamma(\omega_1)} \right)
\\
ab_\omega a \, &= \, \left( d_{\sigma (\omega)}, a_{\delta(\omega_1)} \right)
\\
a^2 \, = \, b_\omega^2 \, &= \, c_\omega^2 \, = \, d_\omega^2 \, = \,
b_\omega c_\omega d_\omega = 1 .
\endaligned
\end{equation}
We define the group
\begin{equation*}
G_\omega \, = \ \langle a, b_\omega, c_\omega, d_\omega \rangle
\end{equation*}
generated by $S_\omega$; it is a subgroup of  $\operatorname{Aut}(X^*)$.
It follows from the last line of
(\ref{Relations0InGomega}) that any element of $G_\omega$
can be written as 
\begin{equation}
\label{ElementsInGomega}
(\ast) a \ast a \ast \cdots a (\ast)
\end{equation}
with $\ast \in \{ b_\omega, c_\omega, d_\omega \}$, 
$(\ast) \in \{1,  b_\omega, c_\omega, d_\omega \}$,
and $n \ge 0$ occurrences of $a$.
\end{defn!}

Observe that any permutation $\tau$
of $\{0,1,2\}$ induces 
a permutation of $\Omega$, again denoted by $\tau$;
the groups $G_{\tau(\omega)}$ and  $G_\omega$ are isomorphic.
\par

In \cite[Section 6]{Gri--84a}, there is moreover a modified construction
providing a marked group $(\widetilde G_\omega, \widetilde S_\omega)$;
we refer to the original paper.
Note that (v) below holds for the modified groups, 
but not for the groups $G_\omega$.

\begin{prop}
\label{Grig84}
Let $\Omega = \{0,1,2\}^{\N}$.
For $\omega \in \Omega$, 
let $G_\omega$ and $\widetilde G_\omega$ be as above.
\begin{itemize}
\item[(i)]
For $\omega \in \Omega$, the groups
$G_\omega$ and $\widetilde G_\omega$ are both infinite,
and $\widetilde G_\omega$ is infinitely presented.
\item[(ii)]
For $\omega \in \Omega_+$, the marked groups
$(G_\omega, S_\omega)$ and $(\widetilde G_\omega, \widetilde S_\omega)$
are isomorphic;
the group $G_\omega$
is of intermediate growth.
\par
For $\omega \in \Omega_0$, the group $G_\omega$
is an infinite $2$-group, and is just infinite.\footnote{Let
$\Omega_1$ be the subset of $\Omega_+$
of sequences containing infinitely many occurrences
of two of $0,1,2$, and finitely many occurrences of the third,
so that $\Omega_+ = \Omega_0 \sqcup \Omega_1$.
For $\omega \in \Omega_1$, the group $G_\omega$
is not a $2$-group, indeed it has elements of infinite order.}
\item[(iii)]
For $\omega \in \Omega_-$, 
the group $G_\omega$ is
virtually free abelian, 
and consequently finitely presented of polynomial growth,
while the group $\widetilde G_\omega$ is virtually metabelian
and of exponential growth.
\item[(iv)]
For $\omega, \omega' \in \Omega_+$,
the groups $G_{\omega}$ and $G_{\omega'}$ are isomorphic
if and only if $\omega' = \tau(\omega)$ for some permutation $\tau$
of $\{0,1,2\}$.
\item[(v)]
The mapping $\Omega \longrightarrow \mathcal M_4$,
$\omega \longmapsto (\widetilde G_\omega, \widetilde S_\omega)$ 
is a homeomorphism onto its image.
\item[(vi)]
For a converging sequence $(\omega_{(n)})_{n \ge 1}$ 
of points in $\Omega_+$ with a limit $\omega$ in $\Omega_+$, 
we have $\lim_{n \to \infty} (G_{\omega_{(n)}}, S_{\omega_{(n)}})
= (G_\omega, S_\omega)$ in $\mathcal M_4$.
If, moreover, $\omega_n \in \Omega_0$ for all $n \ge 1$ and $\omega \notin \Omega_0$,
then, for all $n \ge 1$, 
the group $G_\omega$ is \emph{not} a quotient of $G_{\omega_{(n)}}$.
\end{itemize}
\end{prop}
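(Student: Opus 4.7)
The plan is to work through the six assertions in the order they are stated, relying on the self-similar recursion (\ref{Relations0InGomega}) and the decomposition (\ref{ElementsInGomega}) as the main computational tool, together with the convergence framework of Section~\ref{sectionMark+Chab} and the marked-group formalism.

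For (i), to show $G_\omega$ is infinite I would exhibit an element whose action on $X^*$ has unbounded orbits; the recursion shows that for any $\omega$ at least one of $b_\omega, c_\omega, d_\omega$ has a section equal to $a$ at level one, and iterating one produces an element whose support meets every level, so the action on $X^*$ is not trivial at any level. For the infinite presentation of $\widetilde G_\omega$, I would follow the strategy Grigorchuk used for $\mathfrak G$: construct an endomorphism-like ``branching'' operator from the level-one decomposition, show it is injective on the abstract group defined by any finite presentation, and use the contracting property to argue that any finite set of relations is satisfied in a strictly larger group, forcing nontrivial kernel at some level — i.e.\ infinitely many independent relations are needed.

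For (ii), the identification $G_\omega \simeq \widetilde G_\omega$ for $\omega \in \Omega_+$ and intermediate growth are precisely the content of \cite{Gri--84a}, so I would simply invoke that paper. The $2$-group statement for $\omega \in \Omega_0$ I would prove by induction on the length $n$ of an element written as in (\ref{ElementsInGomega}): the sections at level $1$ are shorter words in generators of the \emph{same} type over $\sigma(\omega)$, and because each of $0,1,2$ appears infinitely often in $\omega$, after finitely many applications of $\sigma$ one kills each of $b,c,d$ in turn, so the order divides a power of $2$. Just-infiniteness I would quote from the branch-group arguments in \cite{Grig--00} as in Remark \ref{fewquotients}(ii).

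For (iii), if $\omega$ is eventually equal to $i\in\{0,1,2\}$, then for large $n$ the section $\sigma^n(\omega)$ makes one of $b,c,d$ trivial and the other two equal to $a$ at every level, so the structure collapses to an abelian-by-finite or metabelian-by-finite description; I would spell this out explicitly in the case $\omega=\overline{0}$ and invoke permutation symmetry for the others. For (iv), given an isomorphism $G_\omega\to G_{\omega'}$, one uses the fact that $a$ is the unique generator of order $2$ outside the ``rigid kernel'' to pin down its image up to conjugacy, and then compares the images of $b_\omega,c_\omega,d_\omega$ level by level via their sections, extracting the sequence $\omega'$ from $\omega$ up to a permutation of $\{0,1,2\}$; this is essentially the invariant constructed in \cite{Gri--84a}.

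For (v) and (vi), continuity of $\omega\mapsto (\widetilde G_\omega, \widetilde S_\omega)$ is immediate from the recursive definition — the first $n$ levels of the action (hence all relations of length $\le n$) depend only on $\omega_1,\dots,\omega_n$. Injectivity follows from (iv), so compactness of $\Omega$ turns the continuous injection into a homeomorphism onto its image, giving (v); the first half of (vi) is the restriction of this statement to $\Omega_+$. The key and most delicate point is the second half of (vi): if $\omega_{(n)}\in\Omega_0$ converges to $\omega\in\Omega_+\setminus\Omega_0$, then $G_\omega$ is \emph{not} a quotient of $G_{\omega_{(n)}}$. The main obstacle here is that convergence in $\mathcal M_4$ does \emph{not} in general yield a quotient relation. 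I would exploit asymmetry: each $G_{\omega_{(n)}}$ is an infinite $2$-group by (ii), whereas for $\omega \in \Omega_+\setminus\Omega_0$ one of $b_\omega, c_\omega, d_\omega$ has finite order while the group $G_\omega$ contains elements of infinite order (the torsion-free elements arising from letters that occur only finitely often in $\omega$, as indicated in the footnote of (ii)); a quotient of a $2$-group is a $2$-group, so no quotient map $G_{\omega_{(n)}}\twoheadrightarrow G_\omega$ can exist.
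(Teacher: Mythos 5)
The paper's own proof of this proposition is essentially a list of citations to \cite{Gri--84a} and \cite{Nekr--05}; the only item argued in the text is the second half of (vi), where the paper combines (ii) and (iv): since $G_{\omega_{(n)}}$ is just infinite and $G_\omega$ is infinite, a surjection $G_{\omega_{(n)}} \twoheadrightarrow G_\omega$ would have to be an isomorphism, which (iv) excludes because permutations of $\{0,1,2\}$ preserve $\Omega_0$. Your route for that step --- $G_{\omega_{(n)}}$ is a $2$-group, quotients of $2$-groups are $2$-groups, and $G_\omega$ has elements of infinite order for $\omega \in \Omega_1$ --- is also valid and arguably more economical, though it leans on the footnote to (ii), which neither you nor the paper independently establishes; the paper's version needs only the (cited) just-infiniteness.

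Two of your sketches have genuine gaps. For (i), ``the action on $X^*$ is not trivial at any level'' does not imply that $G_\omega$ is infinite: the single involution $b_\omega$ already acts nontrivially at infinitely many levels, yet generates a group of order $2$. The standard argument is that $\operatorname{Stab}_{G_\omega}(1)$ has index $2$ and surjects onto $G_{\sigma(\omega)}$ under either coordinate projection (using $b_\omega, c_\omega, d_\omega$ together with their conjugates by $a$, two of whose first coordinates equal $a$), whence $\vert G_\omega \vert \ge 2\,\vert G_{\sigma(\omega)} \vert$ and so $\vert G_\omega \vert \ge 2^n$ for all $n$ by induction. For (v), injectivity of $\omega \longmapsto (\widetilde G_\omega, \widetilde S_\omega)$ does not follow from (iv): first, (iv) only covers $\Omega_+$ while (v) is asserted on all of $\Omega$; second, and more seriously, (iv) concerns abstract isomorphism, whereas injectivity into $\mathcal M_4$ means that the defining kernels in $F_4$ are distinct --- when $\omega' = \tau(\omega)$ for a nontrivial permutation $\tau$, (iv) gives $G_\omega \simeq G_{\omega'}$ and says nothing about whether the marked groups coincide. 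One needs the direct argument of \cite[Proposition 6.2]{Gri--84a}, which exhibits explicit words over the generating set that are relations if and only if a given $\omega_n$ takes a prescribed value. Finally, your attribution of (iv) to \cite{Gri--84a} is slightly off: the paper points out that only a weaker statement is proved there, the full rigidity being \cite[Theorem 2.10.13]{Nekr--05}.
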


\begin{proof}[On the proof]
Most of this is proved in \cite{Gri--84a}; more precisely:
\par

(i) $G_\omega$ is infinite \cite[Theorem 2.1]{Gri--84a}
and $\widetilde G_\omega$ is infinitely presented \cite[Theorem 6.2]{Gri--84a}.
\par  

(ii) For $\omega \in \Omega_+$, we have
$(G_\omega, S_\omega) = (\widetilde G_\omega, \widetilde S_\omega)$
\cite[observation just before Theorem 6.1]{Gri--84a},
and $G_\omega$ is  of intermediate growth
\cite[Corollary 3.2]{Gri--84a}.
For $\omega \in \Omega_0$,
$G_\omega$ is an infinite $2$-group that is just infinite
\cite[Theorems 2.1 and  8.1]{Gri--84a}.
\par

(iii) $G_\omega$ is virtually free abelian 
\cite[Theorem 2.1.(3)]{Gri--84a},
while $\widetilde G_\omega$ is virtually
metabelian and of exponential growth
\cite[Theorem 6.1]{Gri--84a}.
\par

About (iv), see \cite[Theorem 2.10.13]{Nekr--05}.
A weaker statement is proved in \cite[Section 5]{Gri--84a}. 
\par

For (v), see \cite[Proposition 6.2]{Gri--84a}.
\par

For (vi), given any $n \ge 1$, 
note that $G_\omega$ is neither isomorphic to $G_{\omega_{(n)}}$, by (iv),
nor a non-trivial quotient of $G_{\omega_{(n)}}$, by (ii).
\end{proof}

For the main result of this section (Theorem \ref{Muntyan}),
we will need an analogue in the present context 
of the homomorphisms (\ref{GivenPhi}) and (\ref{EqPhin}) 
of Section~\ref{sectionSelfsimilar}.
Recall that we have a natural \emph{isomorphism}
\begin{equation*}
\Phi_X \, : \, 
\operatorname{Aut}(X^*) \, \overset{\simeq}{\longrightarrow} \, 
\operatorname{Aut}(X^*) \wr S_2 .
\end{equation*}
We keep the notation of Definition \ref{DefGOmega}.

\begin{defn!}
\label{DefPhiOmega}
Let $\omega \in \Omega$. The restriction to $G_\omega$
of the isomorphism $\Phi_X$ provides an injective homomorphism
\begin{equation*}
\Phi_\omega^{(1)} =
\Phi_\omega \, : \, G_\omega \longrightarrow G_{\sigma(\omega)} \wr S_2 .
\end{equation*}
On the generators, we have
\begin{equation*}
\aligned
\Phi_{\omega} (a) \, &= \, (1,1)\tau
\\
\Phi_{\omega} (b_\omega) \, &= \, (a_{\beta(\omega_2)} , b_{\sigma(\omega)})
\\
\Phi_{\omega} (c_\omega) \, &= \, (a_{\gamma(\omega_2)} , c_{\sigma(\omega)})
\\
\Phi_{\omega} (d_\omega) \, &= \, (a_{\delta(\omega_2)} , d_{\sigma(\omega)})
\endaligned
\end{equation*}
(recall that $S_2 = \{1, \tau\}$).
The sequence of homomorphisms
$\big( \Phi_\omega^{(n)} \big)_{n \ge 1}$
is defined inductively by
\begin{equation*}
\Phi_\omega^{(n)} \, : \,
G_\omega 
\overset{ \Phi_{\omega\phantom{ {}^n }}^{(n-1)} }{\longrightarrow}
G_{\sigma^{n-1}(\omega)} \wr^{n-1} S_2
\overset{ \Phi_{\sigma^{n-1}(\omega)}^{(1)} \wr 1_{d^{n-1}} }{\longrightarrow}
G_{\sigma^{n}(\omega)} \wr^{n} S_2 .
\end{equation*}
\end{defn!}

\begin{lem}[contraction in $G_\omega$]
\label{ContractionInGomega}
Let $\omega \in \Omega$. We keep the notation above.
\par

(i) 
For each $n \ge 1$, the homomorphism $\Phi_\omega^{(n)}$ is injective.
\par

(ii)
For all $g \in G_\omega$, there exists an integer $n \ge 1$ such that
\begin{equation*}
\Phi_\omega^{(n)}(g) = \big( (g_v)_{v \in X^{n}} , \tau_g^{(n)} \big)
\end{equation*}
with $g_v \in \{1, a, b_{\sigma^n(\omega)}, c_{\sigma^n(\omega)}, d_{\sigma^n(\omega)}\}
\hskip.1cm \forall \hskip.1cm v \in X^n$ 
and $\tau_g^{(n)} \in S_2$.
\end{lem}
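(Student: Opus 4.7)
My plan for part (i) is a direct induction on $n$. The base case $n=1$ holds because $\Phi_\omega^{(1)}$ is, by construction, the restriction to $G_\omega$ of the wreath-product isomorphism $\Phi_X \colon \operatorname{Aut}(X^*) \to \operatorname{Aut}(X^*) \wr S_2$. For the inductive step, the factorization
\[
\Phi_\omega^{(n)} \, = \, \bigl(\Phi_{\sigma^{n-1}(\omega)}^{(1)} \wr 1_{2^{n-1}}\bigr) \circ \Phi_\omega^{(n-1)}
\]
from Definition \ref{DefPhiOmega} exhibits $\Phi_\omega^{(n)}$ as a composition of two injective homomorphisms: the right factor by the inductive hypothesis, and the left factor because the construction $\alpha \wr 1_X$ in (\ref{WreathHomo}) preserves injectivity whenever $\alpha$ does.

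For part (ii) I would induct on the $S_\omega$-word length $\ell(g)$, carried out uniformly in $\omega \in \Omega$. In the base case $\ell(g) \le 1$, the element $g$ is either trivial or a generator, and the formulas in Definition \ref{DefPhiOmega} show directly that the two level-$1$ sections of $\Phi_\omega(g)$ already lie in $\{1, a, b_{\sigma(\omega)}, c_{\sigma(\omega)}, d_{\sigma(\omega)}\}$; thus $n=1$ works.

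For the inductive step I take $g$ with $\ell(g) \ge 2$ and put it in the canonical form $g = x_0 a x_1 a \cdots a x_k$ of (\ref{ElementsInGomega}), with $x_0, x_k \in \{1, b_\omega, c_\omega, d_\omega\}$ and $x_1, \hdots, x_{k-1} \in \{b_\omega, c_\omega, d_\omega\}$, available thanks to the relations (\ref{Relations0InGomega}). Writing $\Phi_\omega(x_i) = (p_i, q_i)$ with $p_i \in \{1,a\}$ and $q_i \in \{1, b_{\sigma(\omega)}, c_{\sigma(\omega)}, d_{\sigma(\omega)}\}$, a direct computation in the wreath product (using $\Phi_\omega(a) = ((1,1),\tau)$ together with the product rule) shows that the two sections $g_0, g_1 \in G_{\sigma(\omega)}$ of $\Phi_\omega(g)$ are alternating products of the $p_i$'s and the $q_i$'s, selected by parity of position. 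A length count then yields the contraction bound $\ell(g_j) \le \lceil \ell(g)/2 \rceil + 1$ for $j \in \{0,1\}$, which is strictly less than $\ell(g)$ as soon as $\ell(g)$ exceeds a small explicit threshold. I then apply the inductive hypothesis to $g_0$ and $g_1$, viewed as elements of $G_{\sigma(\omega)}$, to obtain integers $n_0, n_1 \ge 1$ for which the level-$n_j$ sections of $g_j$ lie in the required set; the associativity of iterated wreath products expressed in (\ref{m+n=mn}) then lets me conclude for $g$ with $n = 1 + \max(n_0, n_1)$.

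The main obstacle will be the contraction estimate. Although it is essentially the classical Grigorchuk contraction, I need the constants to be independent of $\omega$ so that the induction closes when the parameter shifts from $\omega$ to $\sigma(\omega)$. I also need to handle by direct inspection the finitely many short-length cases in which the naive bound fails to give a strict decrease in length, verifying that one or two further applications of $\Phi$ still drive all sections into $\{1, a, b_{\sigma^n(\omega)}, c_{\sigma^n(\omega)}, d_{\sigma^n(\omega)}\}$.
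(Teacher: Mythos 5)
Your proposal is correct and follows essentially the same route as the paper, whose entire proof reads ``By induction on the length of $g$, in the sense of (\ref{ElementsInGomega})''; your write-up simply supplies the details the paper leaves implicit (the factorization argument for (i), the uniform-in-$\omega$ contraction estimate, and the short-word inspection — note that the sharper count $\ell(g_j) \le m \le (\ell(g)+1)/2$, where $m$ is the number of $\{b,c,d\}$-letters, already gives strict decrease for $\ell(g) \ge 2$ and shrinks your list of exceptional cases). The only point worth making explicit when you take $n = 1 + \max(n_0, n_1)$ is that the set $\{1, a, b_{\sigma^n(\omega)}, c_{\sigma^n(\omega)}, d_{\sigma^n(\omega)}\}$ is closed under taking sections, so the conclusion of (ii) at level $n_j$ persists at all higher levels.
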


\begin{proof}
By induction on the length of $g$, in the sense of (\ref{ElementsInGomega}).
\end{proof}

In Theorem \ref{ThGri84} of the Introduction,
the claim on intermediate growth is a repetition of part of Proposition \ref{Grig84},
and the claim on covers is the theorem below.

\begin{thm}
\label{Muntyan}
For $\omega \in \Omega_+$, 
any finitely presented cover  of $G_\omega$ is large.
\end{thm}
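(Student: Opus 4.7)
The plan is to adapt the machinery of Theorem \ref{FirstThmOnSSGps} to the family $(G_\omega)_{\omega \in \Omega_+}$, which is not strictly self-similar in the sense of Definition \ref{SsGroup} but forms a ``self-similar sequence'' under the shift $\sigma$, via the homomorphisms $\Phi_\omega : G_\omega \to G_{\sigma(\omega)} \wr S_2$ of Definition \ref{DefPhiOmega}. The key observation is that the nucleus, in the sense of Lemma \ref{ContractionInGomega}, is $\mathcal N_\omega = \{1, a, b_\omega, c_\omega, d_\omega\}$ for every $\omega$, and the relations of length at most~$3$ among its elements are $a^2 = b_\omega^2 = c_\omega^2 = d_\omega^2 = b_\omega c_\omega d_\omega = 1$ (the last line of Relations \eqref{Relations0InGomega}). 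So for every $\omega \in \Omega_+$ the universal contracting cover, built as in Definition \ref{DefG0}, is the \emph{same} finitely presented group
\begin{equation*}
G_{\omega,0} \, = \, \langle a,b,c,d \mid a^2, b^2, c^2, d^2, bcd \rangle \, \simeq \, C_2 \ast V ,
\end{equation*}
which is large (it contains $F_3$ as a subgroup of index $8$, as computed in Example \ref{Ex1stGrig}).

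Next, I mirror the constructions of Definition \ref{DefGnEtc} in a version parametrized by the shift: using $\Phi_\omega$ and its iterates $\Phi_\omega^{(n)} : G_\omega \to G_{\sigma^n(\omega)} \wr^n S_2$, define $\varphi_{\omega,n} : G_{\omega,0} \to G_{\sigma^n(\omega),0} \wr^n S_2$ on generators by reading off the sequence $\omega$ just as in the Grigorchuk case, with kernel $N_{\omega,n}$ and quotient $G_{\omega,n} = G_{\omega,0}/N_{\omega,n}$. The analogue of Lemma \ref{N=KerPi}, together with the fact (Lemma \ref{ContractionInGomega}) that the action of $G_\omega$ on $X^*$ is faithful and contracting, gives
\begin{equation*}
\lim_{n \to \infty} (G_{\omega,n}, S_{\omega,n}) \, = \, (G_\omega, S_\omega) \quad \text{in } \mathcal M_4 .
\end{equation*}

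The heart of the argument is the analogue of Lemma \ref{backwardsalongG_n's} together with the induction step in the proof of Theorem \ref{FirstThmOnSSGps}, now crossing the shift. For this I first check the appropriate self-replicating property: for $\omega \in \Omega_+$ and $x \in X = \{0,1\}$, the $x$-coordinate map $\operatorname{Stab}_{G_\omega}(x) \to G_{\sigma(\omega)}$ is surjective. This follows from Relations \eqref{Relations0InGomega}, since exactly two of $a_{\beta(\omega_1)}, a_{\gamma(\omega_1)}, a_{\delta(\omega_1)}$ equal $a$ and the third equals $1$, so the $0$-coordinates of $b_\omega, c_\omega, d_\omega, ab_\omega a, ac_\omega a, ad_\omega a$ already contain $\{a, b_{\sigma(\omega)}, c_{\sigma(\omega)}, d_{\sigma(\omega)}\}$, which generate $G_{\sigma(\omega)}$. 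From this, an argument parallel to Lemma \ref{backwardsalongG_n's} produces an injective homomorphism $\psi_{\omega,n} : G_{\omega,n} \hookrightarrow G_{\sigma(\omega),n-1} \wr S_2$, and hence, as in the proof of Theorem \ref{FirstThmOnSSGps}, a finite-index subgroup $H_{\omega,n} \le G_{\omega,n}$ (the preimage of $G_{\sigma(\omega),n-1}^X \rtimes S_2^{(0)}$, where $S_2^{(0)}$ is the stabilizer of $0$) equipped with a surjection $H_{\omega,n} \twoheadrightarrow G_{\sigma(\omega),n-1}$ given by composing $\psi_{\omega,n}$ with projection to the $0$-coordinate.

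Since $G_{\sigma^n(\omega),0} \simeq C_2 \ast V$ is large for every $n$ and every $\omega \in \Omega_+$, a straightforward induction on $n$ (using the fact that a group with a finite-index subgroup mapping onto a large group is itself large) shows that $G_{\omega,n}$ is large for all $n \ge 0$. Applying Corollary \ref{main2} to the converging sequence $(G_{\omega,n})_n \to G_\omega$, every finitely presented cover of $G_\omega$ is large. The main obstacle is purely notational book-keeping: the shift $\sigma$ moves the base point in $\Omega_+$, so the diagrams from Section \ref{sectionSelfsimilar} must be set up simultaneously for the whole orbit $\{\sigma^n(\omega)\}_{n \ge 0}$; the uniformity provided by the fact that $G_{\omega,0} = C_2 \ast V$ independent of $\omega$ is what makes the induction run cleanly.
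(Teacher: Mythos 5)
Your proposal is correct and follows essentially the same route as the paper: the fixed cover $G_0 = C_2 \ast V$, the letter-indexed maps $\varphi_{\omega_i}$ composed along $\omega$ to get finitely presented groups $G_{\omega,n}$ converging to $G_\omega$ in $\mathcal M_4$, the injective $\psi_{\omega,n}$ into a wreath product yielding an index-$2$ subgroup of $G_{\omega,n}$ surjecting onto the previous stage, induction on largeness from $C_2 \ast V$, and Corollary \ref{main2}. Your bookkeeping of the shift (targeting $G_{\sigma(\omega),n-1}$ rather than $G_{\omega,n-1}$) is in fact the cleaner way to state the paper's Lemma \ref{onemoreeffortcompanion}, but it is the same argument.
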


\begin{rem!}
(1)
Let $\omega \in \Omega_-$.
Any finitely presented cover  of the infinitely presented group $\widetilde G_\omega$
contains non-abelian free groups, by Theorem \ref{ThBieriStrebel}.
As recorded in Proposition \ref{Grig84}.iii, the group $G_\omega$
is virtually free abelian, and finitely presented.
For example, if $\omega$ is the constant sequence $000\cdots$,
then $G_\omega$ is the infinite dihedral group.
\par
(2)
If we replace ``is large'' by ``contains non-abelian free subgroups'' in Theorem \ref{Muntyan},
the resulting statement has a short proof. More precisely:
\begin{itemize}
\item[]
\emph{For any $\omega \in \Omega$, 
any finitely presented cover of $\widetilde G_\omega$
has non-abelian free subgroups.}
\end{itemize}
Indeed, 
let $(\omega_n)_{n \ge 1}$ be a sequence of eventually constant sequences 
converging to $\omega$ in $\Omega$.
Then $\widetilde G_{\omega_n}$ is virtually metabelian and infinitely presented for all $n \ge 1$
(Claims (i) and (iii) in Proposition \ref{Grig84}),
and $(\widetilde G_{\omega_n})_{n \ge 1}$ converges to $\widetilde G_\omega$
(Proposition \ref{Grig84}.v). 
Let $E$ be a finitely presented cover of $\widetilde G_\omega$. 
Then $E$ is a cover of $G_{\omega_n}$ for $n$ large enough
(Proposition \ref{main}).
Hence Bieri-Strebel Theorem \ref{ThBieriStrebel} shows that $E$ contains non-abelian free subgroups.

\end{rem!}

From now on, we assume that 
\begin{equation*}
\omega \, \in \, \Omega_+ .
\end{equation*}
Our strategy for the proof of Theorem \ref{Muntyan}
is to adapt to the present context the steps of Section \ref{sectionSelfsimilar}.

The following definition should be compared 
with Definition \ref{DefGnEtc}.
Note however that $G_0$ has not quite the same meaning here and there.

\begin{defn!}
Set again 
\begin{equation*}
G_0 \, = \,  \langle a,b,c,d \mid a^2, b^2, c^2, d^2, bcd \rangle
\, \simeq C_2 \ast V ,
\end{equation*}
as in Example \ref{Ex1stGrig}.
Observe that any element of $G_0$
can be written as 
\begin{equation}
\label{ElementsInG0}
(\ast) a \ast a \ast \cdots a (\ast)
\end{equation}
with $\ast \in \{ b, c, d \}$, 
$(\ast) \in \{1,  b, c, d \}$,
and $n \ge 0$ occurrences of $a$
(compare with Equation (\ref{ElementsInGomega})).
\par

For $i \in \{0,1,2\}$, set
\begin{equation*}
\varphi_i(a) = (1,1)\tau \hskip.2cm \text{for all} \hskip.2cm i \in \{0,1,2\}
\end{equation*} 
and
\begin{equation*}
\begin{array}{cccccccc}
\varphi_0(b) &= (a,b) && \varphi_1(b) &= (a,b) && \varphi_2(b) &= (1,b) \phantom{.}
\\
\varphi_0(c) &= (a,c) && \varphi_1(c) &= (1,c) && \varphi_2(c) &= (a,c) \phantom{.}
\\
\varphi_0(d) &= (1,d) &&  \varphi_1(d) &= (a,d) && \varphi_2(d) &= (a,d) .
\end{array}
\end{equation*}
It is easy to check that these formulas define homomorphisms
\begin{equation*}
\varphi_i : G_0 \longrightarrow G_0 \wr S_2  \hskip.5cm (i = 0,1,2).
\end{equation*} 
\par

Set $\varphi_{\omega}^{(1)} = \varphi_{\omega_1}$
and define, inductively for $n \ge 2$, homomorphisms
\begin{equation*}
\varphi_{\omega}^{(n)} \,  : \,  G_0 
\overset{ \varphi_{\omega}^{(n-1)} }{\longrightarrow}
G_0 \wr^{n-1} S_2
\overset{ \varphi_{\omega_n} \wr 1_{2^n} }{\longrightarrow}
G_0 \wr^n S_2 .
\end{equation*}
For $n \ge 1$, set
\begin{equation*}
N_{\omega,n} \, = \, \ker ( \varphi_\omega^{(n)} )
\hskip.2cm \text{and} \hskip.2cm
G_{\omega,n} \, = \, G_0 / N_{\omega,n} .
\end{equation*}
\par

We have natural homomorphisms
\begin{equation*}
\aligned
\pi_\omega \, : \, G_0 \, & \longrightarrow \, G_\omega,
\\
\widehat \pi_\omega = \widehat \pi_{\omega,1} \, : \, 
G_0 \wr S_2  \, &  \longrightarrow \, G_{\sigma(\omega)} \wr S_2,
\\
\widehat \pi_{\omega,n} \, : \, 
G_0 \wr^n S_2 \, & \longrightarrow \, G_{\sigma^n(\omega)} \wr^n S_2 .
\endaligned
\end{equation*}
(Compare with (\ref{DefPi}), (\ref{DefPiHat}), and  (\ref{DefPiHatN}),
but note that $\widehat \pi_{\omega,1} = \pi_\omega \wr 1_2$ \emph{does not hold} here.)
\end{defn!}

The next lemma is about diagrams 
analogous to (\ref{DiagramInf}) and (\ref{DiagramN}). 
Its proof uses an argument similar to one in the proof of Proposition \ref{CoveringSSGroup},
and will be omitted.

\begin{lem}
\label{DiagramsOmega}
The diagram
\begin{equation}
\label{DiagramNomega}
\begin{array}{ccc}
   G_0     &\overset{\varphi_\omega^{(n)}}{\longrightarrow}  
                & G_0 \wr^n S_2 
\\
&&
\\
  \pi_\omega \downarrow  &          & \downarrow\widehat\pi_{\omega,n}
\\
&&
\\
   G        & \overset{\Phi_\omega^{(n)}}{\longrightarrow}  
                & G_{\sigma^n(\omega)} \wr^n S_2
\end{array}
\end{equation}
commutes for each $n \ge 1$.
\end{lem}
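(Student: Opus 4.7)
The plan is an induction on $n$, with the substantive content concentrated in the base case $n=1$ and a routine reduction of the inductive step to that base case.

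For $n=1$, I would verify the identity $\widehat\pi_{\omega,1}\circ\varphi_{\omega_1} = \Phi_\omega^{(1)}\circ\pi_\omega$ on the four generators $a,b,c,d$ of $G_0$. Here $\widehat\pi_{\omega,1}$ must coincide with $\pi_{\sigma(\omega)}\wr 1_2$, rather than $\pi_\omega\wr 1_2$, as the parenthetical warning preceding Lemma \ref{DiagramsOmega} indicates. The generator $a$ is handled immediately, since both sides evaluate to $(1,1)\tau \in G_{\sigma(\omega)}\wr S_2$. For each of $b,c,d$ one splits into the three cases $\omega_1\in\{0,1,2\}$; for instance, for $b$ with $\omega_1=0$, we have on one side
\begin{equation*}
\widehat\pi_{\omega,1}\left(\varphi_0(b)\right) \, = \, \widehat\pi_{\omega,1}(a,b) \, = \, (a,\, b_{\sigma(\omega)})
\end{equation*}
and on the other
\begin{equation*}
\Phi_\omega^{(1)}\left(\pi_\omega(b)\right) \, = \, \Phi_\omega^{(1)}(b_\omega) \, = \, \left(a_{\beta(\omega_1)},\, b_{\sigma(\omega)}\right) \, = \, (a,\, b_{\sigma(\omega)}).
\end{equation*}
The eight remaining case verifications are of the same shape, matching the three prescriptions of $\varphi_i$ against the values $a_{\beta(\omega_1)},\, a_{\gamma(\omega_1)},\, a_{\delta(\omega_1)}$ by the very definitions of $\beta,\gamma,\delta$.

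For the inductive step, the recursive definitions give
\begin{equation*}
\varphi_\omega^{(n)} \, = \, \left(\varphi_{\omega_n}\wr 1_{2^{n-1}}\right)\circ\varphi_\omega^{(n-1)}
\hskip.3cm\text{and}\hskip.3cm
\Phi_\omega^{(n)} \, = \, \left(\Phi_{\sigma^{n-1}(\omega)}^{(1)}\wr 1_{2^{n-1}}\right)\circ\Phi_\omega^{(n-1)}.
\end{equation*}
Applying the inductive hypothesis $\Phi_\omega^{(n-1)}\circ\pi_\omega = \widehat\pi_{\omega,n-1}\circ\varphi_\omega^{(n-1)}$, the commutativity of (\ref{DiagramNomega}) at level $n$ reduces to the auxiliary identity
\begin{equation*}
\widehat\pi_{\omega,n}\circ\left(\varphi_{\omega_n}\wr 1_{2^{n-1}}\right) \, = \, \left(\Phi_{\sigma^{n-1}(\omega)}^{(1)}\wr 1_{2^{n-1}}\right)\circ\widehat\pi_{\omega,n-1}
\end{equation*}
of homomorphisms $G_0\wr^{n-1}S_2 \to G_{\sigma^n(\omega)}\wr^n S_2$. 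Both sides fix the outer permutation $\tau\in S_2^{(n-1)}$ of an element $((g_v)_{v\in X^{n-1}},\tau)$ and act only on the innermost $G_0$-entries, so the identity reduces coordinate-by-coordinate (over $v\in X^{n-1}$) to
\begin{equation*}
\widehat\pi_{\sigma^{n-1}(\omega),1}\circ\varphi_{(\sigma^{n-1}(\omega))_1} \, = \, \Phi_{\sigma^{n-1}(\omega)}^{(1)}\circ\pi_{\sigma^{n-1}(\omega)},
\end{equation*}
which is precisely the $n=1$ diagram for the shifted sequence $\sigma^{n-1}(\omega)\in\Omega$, already established (since $(\sigma^{n-1}(\omega))_1 = \omega_n$).

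I do not anticipate a real mathematical obstacle; the only genuine hazard is bookkeeping. One must scrupulously track along which shift $\sigma^k(\omega)$ each projection $\pi_{\sigma^k(\omega)}$ acts and confirm that $\widehat\pi_{\omega,n}$ is the coordinate-wise application of $\pi_{\sigma^n(\omega)}$ (not $\pi_\omega$); this shift is precisely what makes the inductive reduction to a base case at a \emph{shifted} sequence function correctly. The argument is close in spirit to Step 3 of the proof of Proposition \ref{CoveringSSGroup}, which is presumably why the authors omit the details.
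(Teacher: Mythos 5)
Your proof is correct and is exactly the argument the paper has in mind: the authors omit the proof of Lemma \ref{DiagramsOmega}, saying only that it is "similar to one in the proof of Proposition \ref{CoveringSSGroup}" (i.e.\ a check on the generators $a,b,c,d$), and your base case plus the routine induction on $n$, reducing level $n$ to the $n=1$ diagram for the shifted sequence $\sigma^{n-1}(\omega)$, is the natural way to fill in those details. One remark: in your verification you use $\Phi_\omega^{(1)}(b_\omega)=\bigl(a_{\beta(\omega_1)},b_{\sigma(\omega)}\bigr)$, which is what Definition \ref{DefGOmega} and the fact that $\Phi_\omega$ restricts the canonical isomorphism $\Phi_X$ actually give, whereas Definition \ref{DefPhiOmega} as printed writes $a_{\beta(\omega_2)}$ — that is a typo in the paper, and your (silent) correction is the right reading, since the diagram would not commute otherwise.
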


The next lemma is analogous to Step 2 in the proof of Proposition \ref{CoveringSSGroup}.

\begin{lem}[contraction in $G_0$] 
\label{ContractionInGzero}
For all $k \in G_0$, there exists an integer $n \ge 1$ such that
\begin{equation*}
\varphi_\omega^{(n)}(k) = \big( (k_v)_{v \in X^{n}} , \tau_k^{(n)} \big)
\end{equation*}
with $k_v \in \{1, a, b, c, d \} \hskip.1cm \forall \hskip.1cm v \in X^n$
and $\tau_k^{(n)} \in S_2$.
\end{lem}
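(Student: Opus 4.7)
The proof follows the standard contraction argument for Grigorchuk-type groups, adapted here to the composed homomorphisms $\varphi_\omega^{(n)}$. Define the length $|k|$ of $k \in G_0 = C_2 \ast V$ to be the number of letters in its reduced normal form from (\ref{ElementsInG0}). Since the elements of $\{1, a, b, c, d\}$ are exactly those of length at most $1$, the conclusion of the lemma is equivalent to $|k_v| \le 1$ for all $v \in X^n$ for some $n \ge 1$.

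The main step is a uniform contraction estimate at depth $1$: for every $i \in \{0, 1, 2\}$ and every $k \in G_0$, writing $\varphi_i(k) = ((k_0, k_1), \tau_k^{(1)})$, one has
\begin{equation*}
|k_0| + |k_1| \, \le \, |k| + 1
\hskip.3cm \text{and hence} \hskip.3cm
\max(|k_0|, |k_1|) \, \le \, \frac{|k| + 1}{2} .
\end{equation*}
This is a direct computation from the normal form: each occurrence of $a$ contributes only to the permutation part via $\varphi_i(a) = (1,1)\tau$, while each letter $\ast_j \in \{b, c, d\}$ has $\varphi_i(\ast_j) = (u_{i,j}, \ast_j)$ with $u_{i,j} \in \{1, a\}$. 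Multiplying out the factors, the $\{b, c, d\}$-letters of $k$ are distributed between the two coordinates according to the parity of the number of $a$'s preceding them, and are interlaced with $a$-letters coming from the $u_{i,j}$'s; any cancellations in $C_2 \ast V$ (using $a^2 = b^2 = c^2 = d^2 = 1$ and $bcd = 1$) only decrease the length, so the stated bound follows.

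To conclude, one iterates the contraction. By the definition of $\varphi_\omega^{(n)}$ and the analogue of Lemma \ref{SectionsOfProducts} for the self-similar structures $(G_0, \varphi_i)$, the depth-$n$ sections $k_v$ of $\varphi_\omega^{(n)}(k)$ are obtained by successive sectioning through $\varphi_{\omega_1}, \varphi_{\omega_2}, \ldots, \varphi_{\omega_n}$. A straightforward induction on $n$, applying the depth-$1$ bound at each step and taking the maximum over $X^n$, yields
\begin{equation*}
\max_{v \in X^n} |k_v| \, \le \, 2^{-n} |k| + 1 .
\end{equation*}
For $n$ sufficiently large (depending only on $|k|$), the right-hand side is at most $1$, so every $k_v$ lies in $\{1, a, b, c, d\}$, as required.

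The main obstacle is the careful bookkeeping of length under multiplication in the normal form of $C_2 \ast V$, in particular the distribution of $\{b, c, d\}$-letters between the two coordinates. Since we need only an upper bound and free-product reductions only shorten words, the estimate is essentially mechanical once set up; it is the direct analogue of the classical contraction argument for $\mathfrak G$ (compare \cite[Chapter VIII]{Harp--00}, \cite[Section 1.6]{Nekr--05}), the only novelty being that successive levels use different homomorphisms $\varphi_{\omega_j}$, a variation absorbed without difficulty since the contraction factor is uniform in $i \in \{0, 1, 2\}$.
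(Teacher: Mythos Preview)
Your proof is correct and follows essentially the same approach as the paper's: the paper's proof reads in its entirety ``by induction on the length of $k$, in the sense of (\ref{ElementsInG0}),'' and your argument is exactly the quantitative unpacking of that induction, via the depth-$1$ contraction bound $|k_0|+|k_1|\le|k|+1$ (uniform in $i\in\{0,1,2\}$) and its iteration. There is no substantive difference in method, only in level of detail.
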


\begin{proof}[Proof:]
by induction on the length of $k$, in the sense of (\ref{ElementsInG0}).
\end{proof}

Define now 
\begin{equation*}
N_\omega \, = \,  \bigcup_{n \ge 1} N_{\omega,n} 
\end{equation*}
(compare with Definition \ref{DefGnEtc}).
The two following lemmas are appropriate modifications of 
Lemmas \ref{N=KerPi} and \ref{backwardsalongG_n's};
we repeat the proof for the first one, and not for the second one.

\begin{lem}
\label{kernelsGomega}
We have
\begin{equation*}
N_\omega \, = \,  
\ker \left( \pi_\omega :   G_0 \longrightarrow G_\omega \right) ,
\hskip.2cm \text{namely} \hskip.2cm G_\omega \simeq G_0/N_\omega ,
\end{equation*}
so that
\begin{equation*}
\lim_{n \to \infty} G_{\omega,n} \, = \, G_\omega 
\end{equation*}
in the space of marked groups on $4$ generators.
\end{lem}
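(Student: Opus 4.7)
The plan is to mirror the proof of Lemma \ref{N=KerPi}, adapting it to the shift-based setting of $(G_\omega)_{\omega \in \Omega_+}$. I would establish the two inclusions $N_\omega \subset \ker \pi_\omega$ and $\ker \pi_\omega \subset N_\omega$ separately, drawing on the commutative diagram (\ref{DiagramNomega}), the injectivity of the iterated homomorphisms $\Phi_\omega^{(n)}$ from Lemma \ref{ContractionInGomega}(i), and the contraction of $G_0$ from Lemma \ref{ContractionInGzero}.

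For $N_\omega \subset \ker \pi_\omega$, pick $g \in N_\omega$ and choose $n$ with $\varphi_\omega^{(n)}(g) = 1$; commutativity of (\ref{DiagramNomega}) gives $\Phi_\omega^{(n)}(\pi_\omega(g)) = \widehat\pi_{\omega,n}(\varphi_\omega^{(n)}(g)) = 1$, and injectivity of $\Phi_\omega^{(n)}$ forces $\pi_\omega(g) = 1$.

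The reverse inclusion is the substantive part. Given $k \in \ker \pi_\omega$, Lemma \ref{ContractionInGzero} produces $n \ge 1$ for which $\varphi_\omega^{(n)}(k) = \bigl( (k_v)_{v \in X^n}, \tau_k^{(n)} \bigr)$ with every $k_v$ in the finite set $\{1,a,b,c,d\} \subset G_0$. Commutativity of (\ref{DiagramNomega}) together with $\pi_\omega(k) = 1$ yields $\widehat\pi_{\omega,n}(\varphi_\omega^{(n)}(k)) = 1$; since $\widehat\pi_{\omega,n}$ acts coordinate-wise as $\pi_{\sigma^n(\omega)}$ while fixing the permutation component, this splits into $\tau_k^{(n)} = 1$ and $\pi_{\sigma^n(\omega)}(k_v) = 1$ in $G_{\sigma^n(\omega)}$ for each $v \in X^n$. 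The crucial step, analogous to Remark \ref{RestrictionPiOnS}, is that $\pi_{\sigma^n(\omega)}$ is injective on $\{1,a,b,c,d\}$: its images are $1, a, b_{\sigma^n(\omega)}, c_{\sigma^n(\omega)}, d_{\sigma^n(\omega)}$, which are pairwise distinct because $G_{\sigma^n(\omega)}$ is infinite (Proposition \ref{Grig84}(i)) and these four generators act as distinct non-identity automorphisms of $X^*$ by construction. Hence $k_v = 1$ for every $v$, so $k \in N_{\omega,n} \subset N_\omega$.

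For the convergence statement, note that $\varphi_\omega^{(n+1)} = (\varphi_{\omega_{n+1}} \wr 1_{2^n}) \circ \varphi_\omega^{(n)}$, so $N_{\omega,n} \subset N_{\omega,n+1}$; pulling back to a free cover $F_4 \twoheadrightarrow G_0$, the preimages form a nested increasing chain of normal subgroups of $F_4$ whose union corresponds to $N_\omega = \ker \pi_\omega$. The discussion of nested chains of normal subgroups just before Proposition \ref{main} in Section \ref{sectionMark+Chab} then gives $\lim_{n \to \infty} (G_{\omega,n}, S) = (G_\omega, S_\omega)$ in $\mathcal M_4$. The main obstacle is the injectivity step in the second inclusion; everything else is diagram chasing and invocation of results already recorded.
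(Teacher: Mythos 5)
Your proof follows the paper's argument essentially verbatim: both inclusions are handled the same way, via the commutative diagram (\ref{DiagramNomega}), injectivity of $\Phi_\omega^{(n)}$ (Lemma \ref{ContractionInGomega}.i), contraction in $G_0$ (Lemma \ref{ContractionInGzero}), and injectivity of $\widehat\pi_{\omega,n}$ on the generator set. One caveat on your justification of that last step: the reason $1, a, b_{\sigma^n(\omega)}, c_{\sigma^n(\omega)}, d_{\sigma^n(\omega)}$ are pairwise distinct is \emph{not} that $G_{\sigma^n(\omega)}$ is infinite (it is infinite for every $\omega\in\Omega$, including eventually constant ones, where one of $b,c,d$ genuinely becomes the identity), but that $\sigma^n(\omega)$ is never a constant sequence --- this is exactly where the hypothesis $\omega\in\Omega_+$ enters, as the paper's concluding parenthetical note emphasizes.
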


\begin{proof}
Let $g \in N$. Let $n \ge 1$ be such that $g \in \ker (\varphi_\omega^{(n)})$.
Since $\Phi_\omega^{(n)} \pi_\omega (g) = 
\widehat \pi_{\omega,n} \varphi_\omega^{(n)} (g)$,
we have $\pi_\omega(g) = 1$ by Lemma  \ref{ContractionInGomega}.i.
\par

Conversely, let $k \in G_0$. 
There exists $n \ge 0$ such that
$\left( \varphi_\omega^{(n)}(k) \right)_v \in \{1,a,b,c,d\}$ for all $v \in X^n$,
by Lemma \ref{ContractionInGzero}.
Assume that $k \in \ker (\pi_\omega)$.
Then $\widehat \pi_{\omega,n} (\varphi_\omega^{(n)} (k)) = 1$.
As $\widehat \pi_{\omega,n}$ is injective ``on generators''
(in a sense similar to that of Remark \ref{RestrictionPiOnS}),
we have $\varphi_\omega^{(n)}(k) = 1$,
and therefore $k \in N_{\omega,n} \subset N_\omega$.
\par

(Note that the hypothesis ``$\omega \in \Omega_+$'' is necessary
for the previous argument. 
If $\omega_n$ were eventually constant,
one of $b_{\sigma^n(\omega)}, c_{\sigma^n(\omega)}, d_{\sigma^n(\omega)}$ 
would be the identity of $G_{\sigma^n(\omega)}$ for $n$ large enough).
\end{proof}

\begin{lem}
\label{onemoreeffortcompanion}
In the situation of the previous lemma, we have for all $n \ge 1$
\begin{equation*}
\varphi_\omega^{(1)} (N_{\omega,n}) \subset N_{\omega, n-1}^2 \subset G_0 \wr S_2
\hskip.2cm \text{and} \hskip.2cm
\left( \varphi_\omega^{(1)}\right) ^{-1}(N_{\omega, n-1}^2) \subset N_{\omega,n} .
\end{equation*}
It follows that $\varphi_\omega^{(1)} : G_0 \longrightarrow G_0 \wr S_2$ 
induces a homomorphism
\begin{equation*}
\psi_\omega^{(n)} \, : \, \left\{
\aligned
G_{\omega,n} \, &\longrightarrow \, \hskip1cm G_{\omega, n-1} \wr S_2
\\
gN_{\omega,n} \, &\longmapsto \, 
\left( 
\big( (\varphi_{\omega_n}(g))_x N_{\omega, n-1} \big)_{x \in X} , \tau_g^{(1)} \right)
\endaligned
\right.
\end{equation*}
which is injective.
\end{lem}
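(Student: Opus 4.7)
My plan is to mirror the argument of Lemma \ref{backwardsalongG_n's}, the key being to re-express $\varphi_\omega^{(n)}$ as the composition of $\varphi_\omega^{(1)}$ with a map that acts on the two subtrees by an iterated morphism. Once such a decomposition is in place, both inclusions of the lemma will drop out of the characterization of the kernel, and the statements about $\psi_\omega^{(n)}$ will follow formally.

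First I would verify, by induction on $n \ge 2$ using the associativity of permutational wreath products (in the spirit of identity (\ref{m+n=mn}) together with the functoriality of $\alpha \mapsto \alpha \wr 1_H$ from (\ref{WreathHomo})), the decomposition
\[
\varphi_\omega^{(n)} \;=\; \bigl(\varphi_{\sigma(\omega)}^{(n-1)} \wr 1_2\bigr) \circ \varphi_\omega^{(1)} .
\]
Writing $\varphi_\omega^{(1)}(g) = \bigl((g_0, g_1), \tau_g^{(1)}\bigr) \in G_0 \wr S_2$, this produces the explicit formula
\[
\varphi_\omega^{(n)}(g) \;=\; \Bigl( \bigl( \varphi_{\sigma(\omega)}^{(n-1)}(g_0),\; \varphi_{\sigma(\omega)}^{(n-1)}(g_1) \bigr),\; \tau_g^{(1)} \Bigr) \;\in\; G_0 \wr^n S_2 .
\]

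From this formula, both inclusions of the lemma are immediate: one has $g \in N_{\omega,n} = \ker \varphi_\omega^{(n)}$ if and only if $\tau_g^{(1)} = 1$ and $g_0, g_1 \in \ker \varphi_{\sigma(\omega)}^{(n-1)} = N_{\sigma(\omega), n-1}$, the shifted copy of $N_{\omega, n-1}$ (which is the natural codomain for the $\psi$-map, in parallel with the occurrence of $\sigma(\omega)$ in the self-similar analogue). The first inclusion shows that the prescription $gN_{\omega,n} \mapsto \bigl(((\varphi_\omega^{(1)}(g))_x N_{\sigma(\omega), n-1})_{x \in X},\, \tau_g^{(1)}\bigr)$ descends to a well-defined map $\psi_\omega^{(n)}$ on cosets; it is a group homomorphism because $\varphi_\omega^{(1)}$ is; and its kernel is trivial by the reverse inclusion.

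The main obstacle is the decomposition step. The inductive definition of $\varphi_\omega^{(n)}$ given in the paper iterates in the ``leaf-last'' direction, applying $\varphi_{\omega_n}$ at the $2^{n-1}$ leaves of $\varphi_\omega^{(n-1)}$; the decomposition I need instead peels $\varphi_{\omega_1}$ off at the root and then iterates $\varphi_{\sigma(\omega)}^{(n-1)}$ on each subtree. Reconciling these two orders of iteration reduces, by a routine but careful manipulation of nested wreath products, to the standard associativity isomorphism recalled at the start of Section \ref{sectionSelfsimilar}; once that identity is established, the remainder of the argument is a transparent transcription of the proof of Lemma \ref{backwardsalongG_n's}.
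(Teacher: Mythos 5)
Your proof is correct and is essentially the argument the paper intends: the paper in fact omits the proof of this lemma, referring the reader to the proof of Lemma \ref{backwardsalongG_n's}, and that proof is exactly your computation written out in coordinates --- the identities $g_{xv'}=(g_x)_{v'}$ and $\tau_g^{(n)}=\big((\tau_{g_x}^{(n-1)})_{x\in X},\tau_g^{(1)}\big)$ used there are the coordinate form of your root-first decomposition, which is the $\omega$-analogue of (\ref{m+n=mn}) with $m=n-1$. One remark: your decomposition correctly produces the shift, so what you actually establish is $N_{\omega,n}=\big(\varphi_\omega^{(1)}\big)^{-1}\big(N_{\sigma(\omega),n-1}^2\big)$ with $\psi_\omega^{(n)}$ landing in $G_{\sigma(\omega),n-1}\wr S_2$; the lemma as printed writes $N_{\omega,n-1}$ and $G_{\omega,n-1}$ (and $\varphi_{\omega_n}$ where $\varphi_{\omega_1}=\varphi_\omega^{(1)}$ is meant in the displayed formula), which should be read with this shift, and the subsequent induction showing that each $G_{\omega,n}$ is large goes through unchanged with the shifted target, terminating at $G_{\sigma^n(\omega),0}=G_0\simeq C_2\ast V$.
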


\begin{prop}
For each $\omega \in \Omega_+$ and $n \ge 0$, 
the group $G_{\omega,n}$ is large.
\end{prop}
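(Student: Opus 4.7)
The plan is induction on $n$. For the base case $n = 0$, we have $G_{\omega, 0} = G_0 \simeq C_2 \ast V$ regardless of $\omega$; as recorded in Example \ref{Ex1stGrig}, this free product of finite groups (distinct from $C_2 \ast C_2$) contains a free subgroup of rank $3$ and index $8$, so $G_0$ is large.

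For the inductive step, assume that $G_{\omega', n-1}$ is large for every $\omega' \in \Omega_+$, and fix $\omega \in \Omega_+$. I would transcribe the argument of Theorem \ref{FirstThmOnSSGps}, using the injective homomorphism $\psi_\omega^{(n)} : G_{\omega, n} \hookrightarrow G_{\omega, n-1} \wr S_2$ supplied by Lemma \ref{onemoreeffortcompanion}. Let $H_{\omega, n} = \bigl( \psi_\omega^{(n)} \bigr)^{-1}\bigl( G_{\omega, n-1}^X \bigr)$ be the preimage of the base group; this is a subgroup of index at most $2$ in $G_{\omega, n}$. Composing the restriction of $\psi_\omega^{(n)}$ to $H_{\omega, n}$ with projection onto the coordinate $x = 0$ produces a homomorphism $q_\omega^{(n)} : H_{\omega, n} \longrightarrow G_{\omega, n-1}$.

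The key point is to show that $q_\omega^{(n)}$ is surjective. For this I would verify a self-replicating property for each of the three auxiliary homomorphisms $\varphi_i : G_0 \to G_0 \wr S_2$ with $i \in \{0, 1, 2\}$: for every generator $s \in \{a, b, c, d\}$ of $G_0$, there exists $h_s \in \operatorname{Stab}_{G_0}(0)$ with $(\varphi_i(h_s))_0 = s$. This is a direct check from the defining formulas. For instance, when $i = 0$ one takes $h_a = b$ (since $\varphi_0(b) = (a, b)$ has trivial permutation and first coordinate $a$), and $h_y = a y a$ for $y \in \{b, c, d\}$, using that the $\tau$ in $\varphi_0(a) = (1,1)\tau$ swaps the two coordinates: for example, $\varphi_0(a b a) = \tau (a, b) \tau = (b, a)$, hence $a b a \in \operatorname{Stab}_{G_0}(0)$ and $(\varphi_0(a b a))_0 = b$. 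The cases $i = 1$ and $i = 2$ are handled by the same recipe. Applying this to arbitrary lifts in $G_0$ of the generators of $G_{\omega, n-1}$ shows that $q_\omega^{(n)}$ hits every generator of $G_{\omega, n-1}$, and so is onto.

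By the inductive hypothesis $G_{\omega, n-1}$ is large; hence $H_{\omega, n}$, having $G_{\omega, n-1}$ as a quotient, is large, and its finite-index overgroup $G_{\omega, n}$ is large as well. The only real obstacle is the uniform verification of the self-replicating property for all three values of $i$, which is routine but requires writing the cases out individually; the remainder of the argument parallels the proof of Theorem \ref{FirstThmOnSSGps} almost verbatim.
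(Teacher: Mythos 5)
Your proof is correct and follows essentially the same route as the paper's: induction on $n$, with base case $G_{\omega,0}=G_0\simeq C_2\ast V$ large, and inductive step using the embedding $\psi_\omega^{(n)}$ of Lemma \ref{onemoreeffortcompanion} to produce an index-$2$ subgroup of $G_{\omega,n}$ surjecting onto $G_{\omega,n-1}$, exactly as the paper does by transposing the argument of Theorem \ref{FirstThmOnSSGps}. The only difference is that you make explicit the self-replicating verification for each of $\varphi_0,\varphi_1,\varphi_2$ (noting that for $i=2$ one must take a lift of $a$ other than $b$, e.g.\ $c$ or $d$), a routine check the paper leaves implicit.
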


\begin{proof}
The group $G_0 = C_2 \ast V$ has a free subgroup of finite index,
indeed a subgroup isomorphic to $F_3$ of index $8$.
For $n \ge 1$, because of the previous lemma and
as in the proof of Theorem \ref{FirstThmOnSSGps},
there exists a subgroup of index $2$ in $G_{\omega,n}$
and a homomorphism from this subgroup onto $G_{\omega, n-1}$.
It follows by induction on $n$ that $G_{\omega,n}$ is large.
\end{proof}

\begin{proof}[End of proof of Theorem \ref{Muntyan}]
Since $G_{\omega,n}$ is large for $n$ large enough, 
it follows from Lemma \ref{kernelsGomega} and Corollary \ref{main2}
that any cover of $G_\omega$ is large.
\end{proof}

\begin{defn!}
\label{LambdaUniversal}
For $\omega \in \Omega$, let $M_\omega$ denote
the kernel of the defining cover $F_4 \twoheadrightarrow G_\omega$;
in other terms, $M_{\omega}$ is the inverse image of $N_{\omega}$
by the epimorphism $F_4 \twoheadrightarrow G_0$
mapping the four generators of $F_4$ onto $a,b,c,d \in G_0$.
For a subset $\Psi$ of $\Omega$, the \textbf{$\Psi$-universal group}
is the group
\begin{equation*}
\mathcal U_\Psi \, = \, F_4 / \bigcap_{\omega \in \Psi} M_\omega .
\end{equation*}
For example, $\mathcal U_\emptyset = \{1\}$, 
and $\mathcal U_{\{\omega\}} = G_\omega$ for all $\omega \in \Omega$.
\end{defn!}

The terminology is justified by cases
that have appeared in the literature,  with $\Psi$ large.
For example, let $\Lambda$ denote the subset of $\Omega_0$
of sequences that are concatenations of blocks $012, 120, 201$.
Then $\mathcal U_\Lambda$ has uncountably many quotients
(a consequence of Proposition \ref{Grig84}.iv);
it has intermediate growth, and therefore is amenable
(established in \cite[Theorem 9.7]{Grig}).
\par

Suppose that $\Psi$ contains some $\omega \in \Omega_+$.
Then any cover of $\mathcal U_\Psi$ is a cover of $G_\omega$.
Theorem \ref{Muntyan} implies: 

\begin{cor}
\label{Lambdauniversal}
For any $\Psi \subset \Omega$ such that $\Psi \cap \Omega_+ \ne \emptyset$,
the $\Psi$-universal group $\mathcal U_\Psi$ is infinitely presented,
and any finitely presented cover of it is large.
\end{cor}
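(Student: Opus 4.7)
\emph{Plan.} I would derive both conclusions from the canonical epimorphism $\mathcal U_\Psi \twoheadrightarrow G_{\omega_0}$ for any chosen $\omega_0 \in \Psi \cap \Omega_+$. By construction $\bigcap_{\omega \in \Psi} M_\omega \subseteq M_{\omega_0}$ (the intersection lies in each of its terms), so the quotient map $F_4 \twoheadrightarrow G_{\omega_0}$ factors through $\mathcal U_\Psi$. This is the observation already made in the paragraph preceding the corollary, and it implies that every cover of $\mathcal U_\Psi$ is automatically a cover of $G_{\omega_0}$.

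\emph{Covers are large.} Given a finitely presented cover $E \twoheadrightarrow \mathcal U_\Psi$, composing with the above epimorphism yields a finitely presented cover $E \twoheadrightarrow G_{\omega_0}$, to which Theorem \ref{Muntyan} applies directly to give largeness of $E$.

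\emph{Infinite presentation.} I would argue by contradiction. If $\mathcal U_\Psi$ were finitely presented, the previous step applied with $E = \mathcal U_\Psi$ would show $\mathcal U_\Psi$ is itself large, and hence contains a non-abelian free subgroup. To rule this out, observe that $\mathcal U_\Psi$ embeds diagonally in $\prod_{\omega \in \Psi} G_\omega$, since $\bigcap_{\omega} M_\omega$ is precisely the kernel of the diagonal map $F_4 \to \prod_{\omega} F_4/M_\omega$. Each factor $G_\omega$ is amenable -- of intermediate growth for $\omega \in \Omega_+$ by Proposition \ref{Grig84}.ii, and virtually free abelian for $\omega \in \Omega_-$ by Proposition \ref{Grig84}.iii -- and therefore contains no non-abelian free subgroup. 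Invoking closure of the class of discrete amenable groups under subgroups and under direct products, I conclude that $\mathcal U_\Psi$ is amenable, contradicting the existence of a non-abelian free subgroup.

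\emph{Main obstacle.} The delicate step is asserting closure of amenability under arbitrary direct products when $\Psi$ is infinite (the finite case being immediate from closure under extensions and subgroups). For concrete uncountable $\Psi$ such as $\Lambda$ of Definition \ref{LambdaUniversal}, one can sidestep this by directly invoking the subexponential growth of $\mathcal U_\Lambda$ established in \cite{Grig}, which yields amenability of $\mathcal U_\Psi$ by another route.
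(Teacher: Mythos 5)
Your handling of the second assertion coincides with the paper's own one\--line argument: fix $\omega_0 \in \Psi \cap \Omega_+$, note that $\bigcap_{\omega \in \Psi} M_\omega \subseteq M_{\omega_0}$ makes every cover of $\mathcal U_\Psi$ a cover of $G_{\omega_0}$, and apply Theorem \ref{Muntyan}. That part is correct.

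The gap is in the first assertion, and it is a genuine one, not merely a ``delicate step''. To pass from ``every finitely presented cover of $\mathcal U_\Psi$ is large'' to ``$\mathcal U_\Psi$ is infinitely presented'' you must know that $\mathcal U_\Psi$ itself is not large, equivalently here that it contains no non\--abelian free subgroup; this extra input cannot be dispensed with, since for instance $F_4$ and $C_2 \ast V$ also have the property that all of their finitely presented covers are large, and yet they are finitely presented. Your route to this input --- $\mathcal U_\Psi$ is a subdirect product of the amenable groups $G_\omega$, and amenability is closed under subgroups and direct products --- breaks down precisely when $\Psi$ is infinite: the class of amenable groups is closed under subgroups, quotients, extensions and directed unions, but \emph{not} under unrestricted infinite direct products. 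Indeed $F_2$ is residually finite and hence embeds as a subdirect product of finite (a fortiori amenable) groups, so knowing that every projection $\mathcal U_\Psi \to G_\omega$ has amenable image says nothing about free subgroups of $\mathcal U_\Psi$. The cases the corollary is actually aimed at are the uncountable ones --- the paper emphasizes $\Psi = \Omega$, which settles the first part of Problem 9.5 of \cite{Grig--05} --- and there your argument proves nothing; the fallback via the growth estimate of \cite{Grig} covers only $\Lambda$ of Definition \ref{LambdaUniversal} and its subsets, not an arbitrary $\Psi$ with $\Psi \cap \Omega_+ \ne \emptyset$ and certainly not $\Omega$ itself. Your argument is complete only for finite $\Psi$. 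What has to be supplied is a proof, specific to these groups, that $\mathcal U_\Psi$ has no non\--abelian free subgroup (for instance exploiting the directed, contracting nature of the generators acting on the forest $\bigsqcup_{\omega \in \Psi} X^*$, in the spirit of the appeal to \cite{Nekr--10} in Corollary \ref{GNotfp}); no general closure property of amenability will do it.
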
                                                                                                                                                                                                                

In particular, this corollary solves 
the first part of Problem 9.5 in \cite{Grig--05}, 
by showing that $\mathcal U_\Omega$ is infinitely presented.

\section{\textbf{The group of intermediate growth $\mathfrak G$}}
\label{sectionGrigGroup}

Let $\mathfrak G$ be the self-similar group of degree $2$ 
of Example \ref{Ex1stGrig}. 
On the one hand, $\mathfrak G$ is a group of the family studied in the previous section:
$\mathfrak G = G_{ \overline{012} }$;
thus Theorem \ref{Muntyan} ``contains'' Theorem \ref{GrHa+i}.
On the other hand, in this particular case, 
we can describe much more precisely 
a sequence of finitely presented covers converging to $\mathfrak G$, 
and this is the subject of the present section.
Note however that, even if the cover $\mathfrak G_{-1}$ below
is the same as $G_0$ in Example \ref{Ex1stGrig}, 
the sequence $(\mathfrak G_n)_{n \ge 0}$ \emph{is not}
the particular case for $\mathfrak G$ of the sequence $(G_n)_{n \ge 1}$ (even shifted)
of Section \ref{sectionSelfsimilar}.
\par

Immediately after its discovery 
it was observed that $\mathfrak G$ is infinitely presented. 
Then, Lysenok found a presentation that we recall below.
\par

Set
\begin{equation*}
\mathfrak G_{-1} \, = \, \left\langle a,b,c,d \mid
a^2 = b^2 = c^2 = d^2 = bcd = 1 \right\rangle \, \simeq \,
C_2 \ast V ,
\end{equation*}
and denote by $S$ the system of four involutions $\{a,b,c,d\}$
generating $\mathfrak G_{-1}$.
Elements in $\mathfrak G_{-1}$ are in natural bijection with ``reduced words'' of the form
\begin{equation*}
t_0 a t_1 a \cdots a t_{k-1} a t_k
\end{equation*}
with $k \ge 0$, $t_1, \hdots, t_{k-1} \in \{b,c,d\}$,
and $t_0, t_k \in \{ \emptyset, b,c,d \}$.
Throughout the remainder of this section,
we use the same symbol to denote an element of $\mathfrak G_{-1}$
and its image in any quotient of $\mathfrak G_{-1}$, in particular in $\mathfrak G$;
thus, $S = \{a,b,c,d\}$ denotes a set of generators
in $\mathfrak G_{-1}$ \emph{and} in any quotient of $\mathfrak G_{-1}$.
\par

The substitution $\sigma$ defined by
\begin{equation*}
\sigma(a) = aca , \hskip.2cm \sigma(b) = d, \hskip.2cm
\sigma(c) = b , \hskip.2cm \sigma(d) = c
\end{equation*}
extends to reduced words,
for example $\sigma(abac) = acadacab$, and the resulting map
\begin{equation*}
\sigma \, : \, \mathfrak G_{-1} \longrightarrow \mathfrak G_{-1}
\end{equation*}
is a group endomorphism. Define 
\begin{equation*}
\begin{array}{ccc}
u_0 = (ad)^4        \hskip.2cm &  u_n = \sigma^n(u_0) & \forall n \ge 0
\\
v_0 = (adacac)^4 \hskip.2cm &  v_n = \sigma^n(v_0) & \forall n \ge 0
\end{array}
\end{equation*}

\begin{thm}[\cite{Lyse--85}]
\label{Lysenok}
The group $\mathfrak G$ has a presentation
\begin{equation*}
\left\langle a,b,c,d \mid
a^2 = b^2 = c^2 = d^2 = bcd = 1 , \hskip.2cm
u_n = v_n = 1 \hskip.2cm \forall n \ge 0
\right\rangle .
\end{equation*}
\end{thm}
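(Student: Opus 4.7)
The statement has two directions: (i) each listed relation holds in $\mathfrak G$, and (ii) they suffice to present $\mathfrak G$.

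For (i), the finite relations $a^2 = b^2 = c^2 = d^2 = bcd = 1$ follow directly from the definition of the generators. The identity $u_0 = (ad)^4 = 1$ is checked by computing $\Phi(ad) = (1,1)\tau \cdot (1,b) = (b,1)\tau$, hence $\Phi((ad)^2) = (b,b)$ and $\Phi((ad)^4) = (b^2, b^2) = (1,1)$; since $\Phi$ is injective and $b^2 = 1$, induction on the tree level gives $(ad)^4 = 1$. A similar direct computation yields $v_0 = 1$. To propagate these to $u_n = v_n = 1$ for all $n$, I show that $\sigma$ descends to an endomorphism of $\mathfrak G$; this reduces to checking that $\sigma$ sends each defining relator of $\mathfrak G$ into the identity of $\mathfrak G$, and is done by comparing $\Phi(\sigma(a))$, $\Phi(\sigma(b))$, $\Phi(\sigma(c))$, $\Phi(\sigma(d))$ with the recursion for $\Phi$.

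For (ii), let $H$ be the group defined by Lysenok's presentation and $\pi : H \twoheadrightarrow \mathfrak G$ the natural epimorphism. I must prove that $\pi$ is injective. The strategy is to lift the self-similar structure: define $\tilde\Phi : H \to H \wr S_2$ by the same formulas as $\Phi$ on generators, and check well-definedness by verifying that each Lysenok relator lies in $\ker \tilde\Phi$. For the finite relators this is a direct computation in $H \wr S_2$ analogous to the one above. For the infinite families $u_n, v_n$ with $n \ge 1$, I show that $\sigma$ lifts to an endomorphism of $H$ (which follows because the family of Lysenok relators is $\sigma$-stable as a set, since $\sigma$ cyclically permutes $\{b,c,d\}$ and the relators $u_n, v_n$ are $\sigma$-orbits), reducing the infinite family to $u_0, v_0$.

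Next I install a contraction property in $H$ mimicking the fact that $(\mathfrak G, \Phi)$ is contracting with nucleus $\mathcal N = \{1,a,b,c,d\}$. For any element $g \in H$ represented by a reduced word in $\{a,b,c,d\}$, I show by induction on length that $\tilde\Phi(g) = (g_0, g_1) \tau^\epsilon$ where the sections $g_0, g_1$ admit representatives in $H$ of length at most $\tfrac{1}{2}|g| + C$, modulo the Lysenok relators. The relators $u_n, v_n$ are precisely what makes the induction close: when sectioning the product of relations forming $g$, certain long subwords that are trivial in $\mathfrak G$ must also become trivial after reduction in $H$, and it is exactly the infinite family that kills these subwords at every level. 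Iterating this contraction, the sections at depth $k \gg 1$ of any element of $\ker \pi$ are forced to become words of bounded length whose images in $\mathfrak G$ are trivial; a finite check identifies these with the identity of $H$, and unwinding the iteration via the relators yields $g = 1$ in $H$.

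The main obstacle is the contraction step. Proving a length-decrease inequality inside $H$ cannot rely on the (already-established) contraction in $\mathfrak G$; it must use only the presentation at hand. The key technical point is that the section of each $u_{n+1}$ (and $v_{n+1}$) at a vertex of the first level must be expressible as a conjugate of a product of earlier relators $u_k, v_k$ and the finite relators, so that the induction on $n$ goes through. Verifying these explicit section identities in the free product $\mathfrak G_{-1} \simeq C_2 \ast V$ is the computational heart of the argument; once it is in place, the rest of the proof unwinds mechanically.
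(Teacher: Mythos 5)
The paper does not prove this theorem at all: it is quoted from Lysenok's article \cite{Lyse--85} (hence the bracketed citation in the theorem header), so there is no in-paper proof to compare your attempt against. That said, your outline does follow the architecture of the known proofs: verify the relations via the wreath recursion and the substitution $\sigma$; lift the self-similar structure to the group $H$ defined by the presentation; use contraction; and exploit the $\sigma$-stability of the relator set to reduce the infinite family to $u_0, v_0$. The overall route is the right one.

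There is, however, a genuine gap at the final step. Iterating $\tilde\Phi$ shows that for $g\in\ker\pi$ the deep sections are eventually trivial in $H$, i.e.\ $\tilde\Phi^{(k)}(g)=1$ in $H\wr^k S_2$ for some $k$; to conclude $g=1$ you need $\tilde\Phi$ to be \emph{injective} on $H$. This is not the well-definedness check you describe (that each relator maps to the identity under the recursion), nor is it the statement that the first-level sections of $u_{n+1}$ and $v_{n+1}$ are products of conjugates of earlier relators: it is the converse assertion that the kernel of the section homomorphism on the level-one stabilizer of $C_2\ast V$ is contained in the normal closure of $\{u_n,v_n\}_{n\ge 0}$. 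Concretely one must compute that kernel, and the answer (cf.\ Lemma \ref{inj}, quoted from \cite{GrHa--01}: $\ker(\psi_0)=\langle\langle u_1,v_0\rangle\rangle_{\Xi_0}$) is precisely where the relators $u_1$ and $v_0$ earn their keep. Your plan never isolates this computation, and without it the argument only places $g$ in $\bigcup_k\ker\tilde\Phi^{(k)}$, not in $\{1\}$. A smaller point: in part (i), ``checking that $\sigma$ sends each defining relator of $\mathfrak G$ into the identity'' is circular at that stage, since the presentation of $\mathfrak G$ is exactly what is being proved; the check must be carried out on $C_2\ast V$ and on the kernel of $C_2\ast V\twoheadrightarrow\mathfrak G$ directly, e.g.\ via the identity $\Phi(\sigma(w))=(\eta(w),w)$ where $\eta$ takes values in the finite subgroup $\langle a,d\rangle$.
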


\noindent \emph{Note.}
It is moreover known that this presentation is minimal \cite{Grig--99}.
Lysenok's presentation if the prototype of what is now called
an \emph{L-presentation} \cite{Bart--03}.

\begin{defn!}
\label{defGn}
For $n \ge 0$, define a pair $(\mathfrak G_n,S) \in \mathcal M_4$ by
\begin{equation*}
\aligned
&\mathfrak G_n \, = \, 
\left\langle a,b,c,d \hskip.1cm \bigg\vert \hskip.1cm 
\aligned
&a^2 = b^2 = c^2 = d^2 = bcd = 1
\\
&u_0 = \cdots = u_n = v_0 = \cdots = v_{n-1} = 1
\endaligned
\right\rangle 
\\
&S \, = \, \{a,b,c,d\} \, \subset \, \mathfrak G_n .
\endaligned
\end{equation*}
\end{defn!}

Observe that $\lim_{n \to \infty} (\mathfrak G_n,S) = (\mathfrak G, S)$
in $\mathcal M_4$, and that there are natural surjections
$\mathfrak G_{-1} \twoheadrightarrow \mathfrak G_n \twoheadrightarrow \mathfrak G$
for all $n \ge 0$.

\begin{thm}
\label{improved}
For each $n \ge 0$, 
the group $\mathfrak G_n$ has a normal subgroup $H_n$ of index $2^{2^{n+1}+2}$ 
which is isomorphic to the direct product of $2^n$ free groups of rank $3$. 
\end{thm}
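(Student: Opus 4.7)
The plan is to proceed by induction on $n \ge 0$, exploiting the self-similar structure of $\mathfrak{G}_{-1}$ to descend one level at a time. The main tool is a family of injective homomorphisms
\[ \psi_n \, : \, \mathfrak{G}_n \longrightarrow \mathfrak{G}_{n-1} \wr S_2, \qquad n \ge 0, \]
obtained by descent of the self-similar morphism $\Phi : \mathfrak{G}_{-1} \to \mathfrak{G}_{-1} \wr S_2$ of Example~\ref{Ex1stGrig}. Well-definedness follows from the direct computations $\Phi(u_0) = \Phi(v_0) = 1$ in $\mathfrak{G}_{-1} \wr S_2$, combined with the compatibility of the Lysenok substitution~$\sigma$ with~$\Phi$: on generators, $\Phi(\sigma(g))$ lies in the stabilizer $\mathfrak{G}_{-1}^2$ and has second coordinate~$g$. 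An easy induction on~$k$ then shows that $\Phi(u_{k+1})$ and $\Phi(v_{k+1})$ have \emph{both} coordinates in the normal closure $\langle\langle u_k, v_k \rangle\rangle \subset \mathfrak{G}_{-1}$, hence vanish in $\mathfrak{G}_{n-1} \wr S_2$ whenever $k \le n-1$. Injectivity of $\psi_n$ is then obtained by the contraction argument of Lemma~\ref{backwardsalongG_n's}.

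For the base case $n = 0$, one analyzes $\mathfrak{G}_0 = \langle a, b, c \mid a^2, b^2, c^2, (abc)^4 \rangle$ (after eliminating $d = bc$) directly. I would exhibit a finite 2-group $Q_0$ of order~$16$ as a quotient $\mathfrak{G}_0 \twoheadrightarrow Q_0$ — for instance, as the image of $\mathfrak{G}_0$ acting on a suitable finite level of the binary tree — and show that its kernel $H_0$ is free of rank~$3$. This follows by a Reidemeister--Schreier rewriting combined with a virtual Euler-characteristic computation in the spirit of the closing paragraph of Example~\ref{Ex1stGrig}: the identity $\chi(H_0) = 16 \cdot \chi(\mathfrak{G}_0) = -3$ pins down the rank, and the relators in the rewritten presentation reduce to consequences of the obvious ones.

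For the inductive step with $n \ge 1$, set $L_n := \psi_n^{-1}(\mathfrak{G}_{n-1}^2) \triangleleft \mathfrak{G}_n$, of index~$2$, and define
\[ H_n \, := \, \psi_n^{-1}\bigl( H_{n-1} \times H_{n-1} \bigr). \]
Normality of $H_n$ in all of $\mathfrak{G}_n$ (not merely in $L_n$) follows from the swap-symmetry of $H_{n-1} \times H_{n-1}$ under elements of $\mathfrak{G}_n \setminus L_n$, which is available because $H_{n-1}$ is normal in $\mathfrak{G}_{n-1}$. Injectivity of $\psi_n$, together with the crucial inclusion $H_{n-1} \times H_{n-1} \subseteq \psi_n(L_n)$, yields $H_n \cong H_{n-1} \times H_{n-1} \cong F_3^{2^{n-1}} \times F_3^{2^{n-1}} = F_3^{2^n}$. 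Multiplicativity of the index gives
\[ [\mathfrak{G}_n : H_n] \, = \, 2 \cdot \frac{[\mathfrak{G}_{n-1}^2 : H_{n-1}^2]}{[\mathfrak{G}_{n-1}^2 : \psi_n(L_n)]} \, = \, 2 \cdot \frac{2^{2^{n+1}+4}}{8} \, = \, 2^{2^{n+1}+2}. \]

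The hard part will be the branch-theoretic index identity $[\mathfrak{G}_{n-1}^2 : \psi_n(L_n)] = 8$ used in the last equality (and the closely related inclusion $H_{n-1}^2 \subseteq \psi_n(L_n)$). This is a uniform cokernel statement: the image of the level-$1$ stabilizer $L_n$ in $\mathfrak{G}_{n-1}^2$ must have index exactly $8$ at every stage~$n$, coming from a rank defect in the abelianization of $\psi_n$. For the Grigorchuk group $\mathfrak{G}$ itself this is the classical branch-group content, but for the finite approximations $\mathfrak{G}_{n-1}$ — which carry additional torsion coming from the finitely many Lysenok relators — one must verify that this extra torsion does not enlarge the cokernel. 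Overcoming this uniformity obstacle is precisely what distinguishes the sharpening in Theorem~\ref{improved} from the coarser statement of \cite{BaCo--06}.
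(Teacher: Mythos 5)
Your overall architecture coincides with the paper's: a wreath-recursion map one level down, the base case $H_0$ free of rank $3$ and of index $16$ in $\mathfrak G_0$, the recursive definition $H_n = \psi_n^{-1}(H_{n-1}\times H_{n-1})$, and the index bookkeeping $2\cdot 2^{2^{n+1}+4}/2^3 = 2^{2^{n+1}+2}$. But the proposal has a genuine gap exactly where you flag ``the hard part'': you never prove that the image of the level-one stabilizer in $\mathfrak G_{n-1}\times\mathfrak G_{n-1}$ has index $8$, nor the companion inclusion $H_{n-1}\times H_{n-1}\subseteq\psi_n(L_n)$, and without these the inductive step does not go through. In the paper this is the content of Lemma \ref{inj} (imported from \cite{GrHa--01}, Proposition 10: $\ker\psi_0 = \langle\langle u_1,v_0\rangle\rangle_{\Xi_0}$ and $\operatorname{Im}\psi_0 = (B_0\times B_0)\rtimes D^{\operatorname{diag}}_0$, of index $8$), combined with Lemmas \ref{bes} and \ref{yedi}: the substitution $\sigma_n$ gives an explicit section with $\psi_n(\sigma_n(g)) = (1,g)$ and $\psi_n(a\sigma_n(g)a)=(g,1)$ for $g\in B_{n-1}$, which is what actually shows $H_{n-1}\times H_{n-1}\subseteq\operatorname{Im}\psi_n$ (using $H_{n-1}\subseteq K_{n-1}\subseteq B_{n-1}$). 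These are concrete computations with the generators, not a formal ``uniform cokernel'' principle, and they are the heart of the theorem.

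Two further points. First, your appeal to Lemma \ref{backwardsalongG_n's} for injectivity of $\psi_n$ is misplaced: that lemma concerns the groups $G_n = G_0/\ker\varphi_n$ of Section \ref{sectionSelfsimilar}, and the paper explicitly warns that the Lysenok truncations $\mathfrak G_n$ are \emph{not} that sequence. Injectivity here amounts to the precise statement that the normal closure of $\{u_0,\hdots,u_n,v_0,\hdots,v_{n-1}\}$ is the full $\psi$-preimage of the corresponding normal closure one level down, which is again \cite{GrHa--01} Proposition 10 and must be proved; your inductive claim that both coordinates of $\Phi(u_{k+1})$ lie in $\langle\langle u_k,v_k\rangle\rangle$ does not obviously mesh with the staggered indexing ($v_{n-1}$ is a relator of $\mathfrak G_n$ but $v_{n-1}$ is not one of $\mathfrak G_{n-1}$). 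Second, a minor slip in the base case: after eliminating $d=bc$ the presentation of $\mathfrak G_0$ must retain the relator $(bc)^2$ (i.e.\ $[b,c]$); as written you present a quotient of $C_2\ast C_2\ast C_2$ rather than of $C_2\ast V$.
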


\begin{rem!}
(i)
A weaker result was first established in \cite{GrHa--01}:
For each $n \ge 0$, $\mathfrak G_n$ contains a subgroup of finite index 
isomorphic to the direct product
of $2^n$ copies of finitely generated non-abelian free groups.
This by itself implies that any finitely presented cover of $\mathfrak G$
contains non-abelian free subgroups.

\vskip.2cm

(ii)
The result of \cite{GrHa--01} was improved in \cite{BaCo--06}:
For each $n \ge 0$, the group $\mathfrak G_n$ has a normal subgroup
$H_n$ of index $2^{\alpha_n}$, where $\alpha_n \leq (11\cdot 4^n+1)/3$, 
and $H_n$ is a subgroup of index $2^{\beta_n}$ in a finite direct
product of $2^n$ non-abelian free groups of rank $3$, 
where $\beta_n \leq (11\cdot 4^n-8)/3-2^n$.

\vskip.2cm

(iii)
Our proof of Theorem \ref{improved} is split in several lemmas,
until \ref{endofproof}.
\end{rem!}

If $x, \hdots, y$ are elements of a group $H$,
we denote by $\langle x, \hdots , y \rangle_H$ the subgroup of $H$ 
they generate, 
and by $\langle\langle x, \hdots, y \rangle\rangle_H$ the \emph{normal} subgroup of $H$
they generate. Define first
\begin{equation*}
\aligned
B_0 \, &= \, \langle\langle b \rangle\rangle_{\mathfrak G_0},
\\
\Xi_0 \, &= \, \langle b,c,d,aba,aca,ada \rangle_{\mathfrak G_0},
\\
D_0 \, &= \,  \langle a, d \rangle_{\mathfrak G_0},
\\
D_0^{\operatorname{diag}} \, &= \,  \langle (a,d), (d,a) \rangle_{\mathfrak G_0}.
\endaligned
\end{equation*}
It is easy to check that $D_0^{\operatorname{diag}} \cap (B_0 \times B_0) = \{1\}$,
and that $D_0^{\operatorname{diag}}$ normalizes $B_0 \times B_0$.
The assignment
\begin{equation*}
\begin{array}{ccccccc}
b & \mapsto (a,c) & & & aba & \mapsto & (c,a) \\
c & \mapsto (a,d) & & & aba & \mapsto & (d,a) \\
d & \mapsto (1,b) & & & aba & \mapsto & (b,1) 
\end{array}
\end{equation*}
extends to a group homomorphism 
$\psi_0 : \Xi_0 \longrightarrow \mathfrak G_0 \times \mathfrak G_0$
\cite[Proposition 1]{GrHa--01}.
For each $n \ge 0$, define now
\begin{equation*}
\aligned
N_n \, &= \, 
\langle\langle u_0, \hdots, u_n, v_0, \hdots, v_{n-1} \rangle\rangle_{\mathfrak G_0} ;
\hskip.2cm \text{observe that} \hskip.2cm N_n \subset \Xi_0 ;
\\
\mathfrak G_n \, &= \, \mathfrak G_0 / N_n \hskip.2cm \text{and} \hskip.2cm
\pi_n : \mathfrak G_0 \twoheadrightarrow \mathfrak G_n \hskip.2cm \text{the canonical projection};
\\
B_n \, &= \,  \langle\langle b \rangle\rangle_{\mathfrak G_n} \, =  \, \pi_n(B_0);
\\
\Xi_n \, &= \,  \langle b,c,d,aba,aca,ada \rangle_{\mathfrak G_n} \, = \, \pi_n(\Xi_0) ;
\\
D_n^{\operatorname{diag}} \, &= \,  
\langle (a,d), (d,a) \rangle_{\mathfrak G_n \times \mathfrak G_n} ;
\\
\sigma_n \, &: \, \mathfrak G_{n-1} \longrightarrow \mathfrak G_n, \hskip.2cm
gN_{n-1} \longmapsto \sigma(g)N_n 
\hskip.5cm (\text{for $n \ge 1$ only}).
\endaligned
\end{equation*}
For the definition of the homomorphism $\sigma_n$,
note that  $\sigma(N_{n-1}) \subset N_n$.

\begin{lem}[\cite{GrHa--01}, Lemma 3]
\label{lemmaB_0}
Let $B_0$ denote the \emph{normal} subgroup of $\mathfrak G_0$ generated by $b$. Then:
\begin{itemize}
\item[(i)]
$B_0$ is of index $8$ in $\mathfrak G_0$;
\item[(ii)]
$B_0$ is generated by the four elements
\item[]
$\xi_1 := b$, $\xi_2 := aba$, $\xi_3 :=  dabad$, $\xi_4 = adabada$;
\item[(iii)]
$B_0$ has the presentation
$\langle \xi_1,\xi_2,\xi_3,\xi_4 \mid
\xi_1^2=\xi_2^2=\xi_3^2=\xi_4^2=1 \rangle$;
\item[(iv)]
$B_0$ contains $N_n$ for all $n \ge 1$.
\end{itemize}
\end{lem}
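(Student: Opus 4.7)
The plan is to identify $\mathfrak G_0/B_0$ explicitly as the dihedral group of order $8$, extract generators of $B_0$ via a Schreier transversal, and then confirm the presentation by a Reidemeister--Schreier rewriting. For (i), I would set $b = 1$ in the presentation $\mathfrak G_0 = \langle a,b,c,d \mid a^2, b^2, c^2, d^2, bcd, (ad)^4 \rangle$; the relator $bcd$ then forces $d = c$, and the quotient collapses to $\langle a, c \mid a^2, c^2, (ac)^4 \rangle$, the dihedral group $D$ of order $8$. Because $B_0$ is the smallest normal subgroup containing $b$, this quotient is precisely $\mathfrak G_0/B_0$, giving $[\mathfrak G_0 : B_0] = 8$.

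For (ii) and (iii), I take the Schreier transversal $T = \{1, a, d, ad, da, ada, dad, adad\}$ lifting $D$. Since $B_0$ is normal and equals the normal closure of $b$, it is generated by the conjugates $\{tbt^{-1} : t \in T\}$. The Vierergruppe relations make $b$, $c$, $d$ commute pairwise in $\mathfrak G_0$, so $dbd = b$; hence the eight conjugates collapse in the four pairs $(1,d),\ (a, ad),\ (da, dad),\ (ada, adad)$, yielding exactly the four involutions $\xi_1 = b$, $\xi_2 = aba$, $\xi_3 = dabad$, $\xi_4 = adabada$, each of which satisfies $\xi_i^2 = tb^2t^{-1} = 1$. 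This proves (ii) and supplies a surjection $C_2 \ast C_2 \ast C_2 \ast C_2 \twoheadrightarrow B_0$. To establish its injectivity in (iii), I would apply Reidemeister--Schreier rewriting: the relator $b^2$ produces exactly the four relations $\xi_i^2 = 1$, while the relators $a^2$, $c^2$, $d^2$, $bcd$, $(ad)^4$ rewrite to words lying in the ``dihedral part'' of the Schreier presentation and reduce trivially, thanks to the commutativity of $b$, $c$, $d$ and to the structure of $T$.

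For (iv), observe that $D$ has exponent $4$ (its nontrivial elements are four reflections, two rotations of order $4$, and the central rotation of order $2$), so every fourth power in $\mathfrak G_0$ lies in the kernel of $\mathfrak G_0 \twoheadrightarrow D$, which is $B_0$. Since $\sigma$ is a homomorphism of $\mathfrak G_{-1}$, the relators $u_n = (\sigma^n(ad))^4$ and $v_m = (\sigma^m(adacac))^4$ are fourth powers in $\mathfrak G_0$, hence lie in $B_0$ for all $n, m \geq 0$. Because $B_0$ is normal, the normal closure $N_n$ of $u_0, \ldots, u_n, v_0, \ldots, v_{n-1}$ is contained in $B_0$ for every $n \geq 1$.

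The main obstacle is the Reidemeister--Schreier verification in (iii): one must rewrite each defining relator of $\mathfrak G_0$ over the transversal $T$ and confirm that nothing beyond $\xi_i^2 = 1$ survives. The bookkeeping is tractable because $|T| = 8$ is small and the commutativity of $b$, $c$, $d$ collapses many terms, but this is where the real content of the lemma resides; (i), (ii), and (iv) follow quickly from the structural setup above.
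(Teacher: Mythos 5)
The paper does not prove this lemma at all: it is imported wholesale as Lemma~3 of \cite{GrHa--01}. Your self-contained route --- identify $\mathfrak G_0/B_0$ with the dihedral group of order $8$ by killing $b$, then run Reidemeister--Schreier over a prefix-closed transversal --- is the natural one, and parts (i) and (iv) as you present them are complete and correct; in particular the observation that every $u_k$ and $v_k$ is a fourth power, hence dies in the exponent-$4$ quotient $\mathfrak G_0/B_0$, is exactly the right argument for (iv).

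There is, however, a genuine gap in your justification of (ii). You assert that since $B_0=\langle\langle b\rangle\rangle$ is normal, it is generated by the transversal conjugates $\{tbt^{-1}: t\in T\}$. That principle is false in general: in $G=S_3\times C_2$ with $x=((12),1)$, the normal closure of $x$ is $S_3\times 1$, but the conjugates of $x$ by a transversal of this index-$2$ subgroup generate only $\langle x\rangle\simeq C_2$. What is true is that $\langle\langle b\rangle\rangle_G$ is the normal closure \emph{inside $B_0$} of $\{tbt^{-1}: t\in T\}$, and this equals the subgroup they generate only once one knows that subgroup is already normal. So you must either (a) verify directly that $K:=\langle \xi_1,\xi_2,\xi_3,\xi_4\rangle$ is normalized by $a$ and $d$ --- it is: conjugation by $a$ swaps $\xi_1\leftrightarrow\xi_2$ and $\xi_3\leftrightarrow\xi_4$, while conjugation by $d$ fixes $\xi_1$, swaps $\xi_2\leftrightarrow\xi_3$, and fixes $\xi_4$, the last point using $dadad=ada$, a consequence of $(ad)^4=1$ --- or (b) compute the full set of Schreier generators $ts\,\overline{ts}^{\,-1}$ for \emph{all} letters $s\in\{a,b,c,d\}$, not just $s=b$: the $a$- and $d$-generators are trivial modulo $a^2=d^2=(ad)^4=1$, and the $c$-generators coincide with the $b$-generators, so one lands again on $\xi_1,\dots,\xi_4$. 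Option (b) is in any case forced on you by the rewriting you invoke for (iii), so the repair costs nothing; but as written the reduction in (ii) rests on a false lemma. With that fixed, your sketch of (iii) is right: only the relators $b^2$, $c^2$ and $bcd$ contribute, each rewriting to some $\xi_i^2$, and the presentation $C_2\ast C_2\ast C_2\ast C_2$ follows.
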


\begin{lem}[\cite{GrHa--01}, mostly Proposition 10]
\label{inj}
(i) The kernel and the image of the homomorphism $\psi_0$ are given by
\begin{equation*}
\aligned
\ker(\psi_0) \, &= \, \langle\langle u_1, v_0 \rangle\rangle_{\Xi_0} ,
\\
\operatorname{Im}(\psi_0) \, &= \, (B_0 \times B_0) \rtimes D_n^{\operatorname{diag}}
\hskip.2cm \text{of index $8$ in} \hskip.2cm \mathfrak G_0 \times \mathfrak G_0 .
\endaligned
\end{equation*}
\par

(ii)
For $n \ge 1$, the homomorphism $\psi_0$ induces an isomorphism
\begin{equation*}
\psi_n \, : \, \Xi_n \overset{\simeq}{\longrightarrow} 
(B_{n-1} \times B_{n-1}) \rtimes  D_{n-1}^{\operatorname{diag}}
\, <_8 \, \mathfrak G_{n-1} \times \mathfrak G_{n-1}
\end{equation*}
where $<_8$ indicates that
the left-hand side is a subgroup of index 8 in the right-hand side.
\end{lem}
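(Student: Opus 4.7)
The plan is to prove (i) by explicit computation on the six listed generators of $\Xi_0$, and then to deduce (ii) by induction using the compatibility between $\psi_0$ and the Lysenok substitution $\sigma$.

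For (i), the first step is to verify that $\psi_0$ is well-defined. Since $\Xi_0$ is the kernel of the parity-of-$a$'s homomorphism $\mathfrak{G}_0 \twoheadrightarrow C_2$, a Kurosh-type argument shows it is generated by the six involutions $b,c,d,aba,aca,ada$ subject to the two Klein-type relations $bcd = 1$ and $(aba)(aca)(ada)=1$, together with the consequence $(ad)^4 = 1$ of $u_0 = 1$ in $\mathfrak{G}_0$. Each of these is a short verification on the images in $\mathfrak{G}_0 \times \mathfrak{G}_0$. Next I would compute the image: all six generator-images lie in $(B_0 \times B_0)\rtimes D_0^{\operatorname{diag}}$, since $(a,d)=\psi_0(c)$ and $(d,a)=\psi_0(aca)$ are the diagonal generators, $(1,b)=\psi_0(d)$ and $(b,1)=\psi_0(ada)$ sit in $B_0 \times B_0$, and the remaining two images differ from the first pair by $(1, bc)$ and $(bc,1)\in B_0 \times B_0$. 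For the reverse inclusion, iterated conjugation of $(1,b)$ and $(b,1)$ by alternating products of $(a,d)$ and $(d,a)$ produces the four generators $\xi_1,\xi_2,\xi_3,\xi_4$ of $B_0$ of Lemma~\ref{lemmaB_0}(ii) in each coordinate, recovering all of $B_0 \times B_0$.

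The subgroup $D_0^{\operatorname{diag}}$ is dihedral of order~$8$ because $(a,d)(d,a)=(ad,da)$ has order~$4$ in $\mathfrak{G}_0\times\mathfrak{G}_0$ by $u_0=1$, and it meets $B_0 \times B_0$ trivially: projecting to $(\mathfrak{G}_0/B_0)\times(\mathfrak{G}_0/B_0)\simeq D_8\times D_8$ sends $D_0^{\operatorname{diag}}$ injectively to an order-$8$ subgroup. Hence the image has index $[\mathfrak{G}_0:B_0]^2 / |D_0^{\operatorname{diag}}| = 64/8 = 8$ in $\mathfrak{G}_0 \times \mathfrak{G}_0$. For the kernel, the containment $\langle\langle u_1, v_0\rangle\rangle_{\Xi_0} \subseteq \ker\psi_0$ is a direct wreath-product computation: one finds $\psi_0((acac)^4) = ((da)^4, (ad)^4) = 1$ for $u_1$, and $\psi_0((adacac)^4) = (b^2, (dabad)^2) = 1$ for $v_0$, the latter using $bad\cdot da = b$ and $(aba)^2 = 1$. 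For the reverse inclusion I would present $\Xi_0 / \langle\langle u_1, v_0 \rangle\rangle_{\Xi_0}$ from the six generators and the above five relations, and match this presentation against one for $(B_0\times B_0)\rtimes D_0^{\operatorname{diag}}$ assembled from Lemma~\ref{lemmaB_0}(iii) together with the order-$8$ dihedral action, making $\psi_0$ tautologically an isomorphism modulo the kernel.

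For (ii), the key is that the substitution $\sigma$ is engineered so that on each letter $w\in\{a,b,c,d\}$ the second coordinate of $\psi_0(\sigma(w))$ is exactly $w$, as one checks from $\sigma(a)=aca\mapsto(d,a)$, $\sigma(b)=d\mapsto(1,b)$, $\sigma(c)=b\mapsto(a,c)$, and $\sigma(d)=c\mapsto(a,d)$. Consequently $\psi_0$ sends the normal closure in $\Xi_0$ of $u_n = \sigma^{n-1}(u_1)$ (and similarly of $v_{n-1} = \sigma^{n-1}(v_0)$) into a subgroup of $\mathfrak{G}_0 \times \mathfrak{G}_0$ whose projection to the second factor is $\langle\langle u_{n-1}, v_{n-2}\rangle\rangle_{\mathfrak{G}_0}$, and symmetrically on the first factor. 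Quotienting both sides of the isomorphism from (i) by these normal subgroups then turns $\mathfrak{G}_0$ into $\mathfrak{G}_{n-1}$ and $B_0$ into $B_{n-1}$ while preserving the triviality of $D^{\operatorname{diag}} \cap (B\times B)$. Induction on $n$, with (i) as the base case, yields $\psi_n$. The main obstacle I anticipate is tracking the kernel of $\psi_0$ through the iteration — in particular, the ``level-zero'' relator $v_0$ from part~(i) is not itself a $\sigma$-image, so one must verify that combining it with the $\sigma$-shifted relators at higher levels produces no spurious extra relations and that the normal closures on the two sides of the isomorphism line up exactly at each step.
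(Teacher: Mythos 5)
The paper does not prove this lemma at all: it is quoted from \cite{GrHa--01} (``mostly Proposition 10''), so the only fair comparison is between your outline and the computation carried out there. Your overall strategy is the right one and matches the reference: realize $\Xi_0$ as $(V\ast V)/\langle\langle ((ada)d)^2\rangle\rangle$ via the index-$2$ Kurosh/Reidemeister--Schreier decomposition, verify $\psi_0$ on the six generators, compute the image and its index, and exploit the defining property of Lysenok's substitution, namely that the second coordinate of $\psi_0(\sigma(w))$ is $w$. Your verifications of well-definedness, of $u_1=(acac)^4\mapsto((da)^4,(ad)^4)=1$ and $v_0\mapsto(b^2,(dabad)^2)=1$, of the reverse inclusion for the image by conjugating $(b,1)$ alternately by $(a,d)$ and $(d,a)$ to produce $\xi_1,\dots,\xi_4$, and of the index ($|D_0^{\operatorname{diag}}|=8$, trivial intersection with $B_0\times B_0$ seen in $(\mathfrak G_0/B_0)^2$, hence index $64/8=8$) are all correct. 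One small slip: $(a,c)$ and $(a,d)$ differ by $(1,cd)=(1,b)\in B_0\times B_0$, not by $(1,bc)=(1,d)$, which does \emph{not} lie in $B_0\times B_0$; the conclusion survives with the corrected element.

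The genuine gaps are exactly the two steps you defer. First, the inclusion $\ker(\psi_0)\subseteq\langle\langle u_1,v_0\rangle\rangle_{\Xi_0}$ is the substance of part (i), and ``present both sides and match'' is a plan, not a proof: you must actually derive a presentation of $(B_0\times B_0)\rtimes D_0^{\operatorname{diag}}$ from Lemma~\ref{lemmaB_0}(iii) and the dihedral action, and carry out the Tietze transformations showing its relator set pulls back precisely to the five $\Xi_0$-relations together with $u_1$ and $v_0$ — this is the content of Proposition~10 of \cite{GrHa--01}. Second, in part (ii) the decisive identity is $\psi_0(N_n\cap\Xi_0)=N_{n-1}\times N_{n-1}$ (as subgroups, both inclusions); knowing that the images $(\lambda(w),w)$ and $(w,\lambda(w))$ have the right \emph{projections} does not yet identify the subgroup, since $\operatorname{Im}(\psi_0)$ has index $8$ and one must check that its conjugation action, together with $B_0\times 1$ and $1\times B_0$, suffices to generate all of $N_{n-1}\times N_{n-1}$ and nothing more (in particular that $\lambda(u_{n-1}),\lambda(v_{n-2})$ themselves land in $N_{n-1}$). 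You correctly flag this as the main obstacle, but flagging it leaves the lemma unproved: as written, your argument establishes the image statement and one inclusion of the kernel statement, and reduces the rest to the same computations the cited reference performs.
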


Set $K_0 = \langle\langle (ab)^2 \rangle\rangle_{\mathfrak G_0}$;
observe that $K_0 \subset B_0$.

\begin{lem}
\label{induction_0}
(i)
The subgroup $K_0$ is of index 2 in $B_0$. It is generated by 
\begin{equation*}
t=(ab)^2\quad v=(bada)^2 \quad w=(abad)^2
\end{equation*}
Moreover $K_0$ contains $N_n$ for $n \ge 1$. 
\par

(ii)
The group $K_0$ is a free group of rank 3.
\end{lem}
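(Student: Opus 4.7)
The plan is to identify $K_0$ as the unique $\mathfrak{G}_0$-invariant subgroup of index~$2$ in $B_0\cong C_2\ast C_2\ast C_2\ast C_2$, and to deduce its freeness and rank from a Euler characteristic computation. The first step is to translate the proposed generators into the free-product generators of $B_0$ supplied by Lemma~\ref{lemmaB_0}: direct multiplication gives
\begin{equation*}
t \;=\; \xi_2\xi_1, \qquad v \;=\; \xi_1\xi_4, \qquad w \;=\; \xi_2\xi_3,
\end{equation*}
each a product of two distinct involutions among $\xi_1,\ldots,\xi_4$.

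Next I would introduce the homomorphism $\phi\colon B_0\to\mathbb{Z}/2$ sending each $\xi_i$ to $1$. It can be described intrinsically as the restriction to $B_0$ of the composition $\mathfrak{G}_0\twoheadrightarrow\mathfrak{G}_0^{\mathrm{ab}}\twoheadrightarrow\langle\bar b\rangle\cong\mathbb{Z}/2$: every $\xi_i$ is a $\mathfrak{G}_0$-conjugate of $b$, so its image in the abelianization is~$\bar b$. Because $\phi$ factors through an abelian quotient of $\mathfrak{G}_0$, it is invariant under $\mathfrak{G}_0$-conjugation; and since $\phi((ab)^2)=\phi(\xi_2\xi_1)=0$, this forces $K_0\subseteq\ker\phi$, an index-$2$ subgroup of~$B_0$. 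The principal task is then to reverse this inclusion.

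I would then show that $t,v,w$ all lie in $K_0$ and that they generate $\ker\phi$. The element $t=(ab)^2$ is tautological; the identity $w=ava^{-1}$, immediate from the definitions, reduces the case of $w$ to that of $v$; and for $v$ I would exploit the fact that $\xi_4=(ad)\xi_2(ad)^{-1}$ by definition, while $(ad)\xi_1(ad)^{-1}=a(dbd)a=\xi_2$ using the relation $dbd=b$ from the Klein $4$-group. Consequently $(ad)\,t\,(ad)^{-1}=\xi_4\xi_2$, and
\begin{equation*}
(ad)\,t\,(ad)^{-1}\cdot t \;=\; \xi_4\xi_2^2\xi_1 \;=\; \xi_4\xi_1 \;=\; v^{-1},
\end{equation*}
so that $v=t^{-1}\cdot(ad)\,t^{-1}\,(ad)^{-1}\in K_0$. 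A Reidemeister--Schreier computation in $B_0$ with Schreier transversal $\{1,\xi_1\}$ for $\ker\phi$ then shows that $\ker\phi$ is freely generated by $\{\xi_1\xi_2,\xi_1\xi_3,\xi_1\xi_4\}=\{t^{-1},t^{-1}w,v\}$; hence $\ker\phi\subseteq K_0$, and therefore $K_0=\ker\phi=\langle t,v,w\rangle$ has index~$2$ in~$B_0$. The inclusion $N_n\subseteq K_0$ for $n\ge 1$ follows automatically: each of the defining relators $u_1,\ldots,u_n,v_0,\ldots,v_{n-1}$ of $N_n$ is a fourth power in $\mathfrak{G}_0$, hence has trivial image in $\mathfrak{G}_0^{\mathrm{ab}}$ and therefore lies in $\ker\phi$, and $K_0$ is normal in~$\mathfrak{G}_0$.

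For part~(ii), since every non-trivial finite-order element of $B_0\cong C_2^{\ast 4}$ is $B_0$-conjugate (by Kurosh) to some $\xi_i$ and hence has $\phi$-value $1$, the subgroup $K_0=\ker\phi$ is torsion-free; by Kurosh's subgroup theorem it is then free. Its rank follows from the multiplicativity of the rational Euler characteristic: $\chi(C_2^{\ast 4})=4\cdot\tfrac12-3=-1$, so $\chi(K_0)=[B_0:K_0]\cdot\chi(B_0)=-2$, yielding $K_0\cong F_3$. The main obstacle I anticipate is the concrete identification $v=t^{-1}\cdot(ad)\,t^{-1}\,(ad)^{-1}$: this is the only point where the specific structure of $\mathfrak{G}_0$ (through the Klein $4$-relations) must be exploited to connect different conjugacy classes of involutions in $B_0$, rather than appealing to purely abstract facts about free products of cyclic groups.
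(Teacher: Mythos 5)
Your proof is correct, and every computation checks out: $t=\xi_2\xi_1$, $v=\xi_1\xi_4$, $w=\xi_2\xi_3$, the conjugation identities $(ad)\xi_1(ad)^{-1}=\xi_2$ and $(ad)\xi_2(ad)^{-1}=\xi_4$ (using $dbd=b$ in $V$), hence $v^{-1}=(ad)t(ad)^{-1}\cdot t\in K_0$ and $w=ava\in K_0$, and the Reidemeister--Schreier/Euler-characteristic identification of $\ker\phi$ as $F_3$ on $\{\xi_1\xi_2,\xi_1\xi_3,\xi_1\xi_4\}=\{t^{-1},t^{-1}w,v\}$. The paper itself does not argue this way only because it argues hardly at all: part (i) is delegated to \cite[Page 230]{Harp--00} and part (ii) to \cite[Proposition 4]{BaCo--06}, with Kurosh's theorem and Reidemeister--Schreier mentioned as the tools --- exactly the tools you use. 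What your write-up adds is the content those citations hide, above all the verification that $v$ and $w$ actually lie in the normal closure of $(ab)^2$ and that $\langle t,v,w\rangle$ exhausts the even-length subgroup of $B_0$; this is the only non-formal step and you handle it cleanly. One small point to make explicit: for the relators $u_k,v_k$ to lie in $\ker\phi$ you first need them in $B_0$, which is Lemma \ref{lemmaB_0}.iv; you use it tacitly.
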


\begin{proof}
(i) This follows from \cite[Page 230]{Harp--00}.
Since $B_0$ contains 
$N_n$ and each $u_n,v_n$ is a fourth power, 
necessarily $N_n$ is contained in $K_0$.
\par

For (ii), see \cite[Proposition 4]{BaCo--06},
where the proof uses Kurosh's theorem. 
Alternatively one can use the Reidemeister-Schreier method 
to find a presentation for $K_0$ and see that it is indeed free of rank 3.
\end{proof}

\begin{lem}
\label{bes}
If $g$ is an element of $B_{n-1}$ then 
\begin{equation*}
\psi_n(\sigma_n(g)) \, = \ (1,g) 
\hskip.5cm \text{and} \hskip.5cm
\psi_n(a\sigma_n(g)a) \, = \, (g,1) .
\end{equation*}
\end{lem}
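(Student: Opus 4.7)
The plan is to reduce both identities to a verification on the four Lysenok generators $\xi_1=b$, $\xi_2=aba$, $\xi_3=dabad$, $\xi_4=adabada$ of $B_{n-1}$ listed in Lemma~\ref{lemmaB_0}, exploiting the fact that both sides are homomorphisms in $g$.

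First I would check that $\sigma_n(\xi_i) \in \Xi_n$ for each $i$, so that $\psi_n$ may be applied. The images under $\sigma$ of the $\xi_i$ unpack as
\begin{equation*}
\sigma(b) = d, \quad
\sigma(aba) = (aca)\,d\,(aca), \quad
\sigma(dabad) = c\,(aca)\,d\,(aca)\,c,
\end{equation*}
\begin{equation*}
\sigma(adabada) = (aca)\,c\,(aca)\,d\,(aca)\,c\,(aca),
\end{equation*}
each visibly a word in the generators $\{b,c,d,aba,aca,ada\}$ of $\Xi_n$.

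Second, I would compute $\psi_n(\sigma_n(\xi_i))$ for each $i$ by substituting the formulas defining $\psi_n$ and reducing using $a^2=b^2=c^2=d^2=1$. For example,
\begin{equation*}
\psi_n(\sigma_n(b)) = \psi_n(d) = (1,b), \qquad
\psi_n(\sigma_n(aba)) = (d,a)(1,b)(d,a) = (1,aba),
\end{equation*}
and similarly for $\xi_3,\xi_4$ the first coordinate collapses (thanks to cancellations like $dd=aa=1$) while the second reassembles to $\xi_i$. Since $B_{n-1} = \langle \xi_1,\xi_2,\xi_3,\xi_4\rangle$ and both $\psi_n\circ\sigma_n$ and $g\mapsto(1,g)$ are group homomorphisms from $B_{n-1}$ to $\mathfrak G_{n-1}\times\mathfrak G_{n-1}$ that agree on these generators, the first identity extends to all of $B_{n-1}$.

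Third, the second identity follows from the \emph{coordinate-swap property}: for every $h\in\Xi_n$ one has $\psi_n(aha) = \tau(\psi_n(h))$, where $\tau$ exchanges the two coordinates. This in turn is verified on the six generators of $\Xi_n$; for instance $\psi_n(a\cdot b\cdot a)=\psi_n(aba)=(c,a)=\tau(a,c)=\tau(\psi_n(b))$, and the remaining five checks are analogous. (Conceptually, this is just the identity $\Phi(a)=(1,1)\tau$ in the wreath-product picture for $\mathfrak G$, pushed down to $\mathfrak G_n$ because the defining relators of $\mathfrak G_n$ were chosen compatibly with $\Phi$.) Applying the swap property to $h=\sigma_n(g)$ and using the first identity yields $\psi_n(a\sigma_n(g)a)=\tau(1,g)=(g,1)$.

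The only thing requiring care is the bookkeeping — ensuring $\sigma_n$ lands in $\Xi_n$ on the chosen generators, and carrying out the wreath-product multiplications in the right order — but there is no conceptual obstacle; everything reduces to finitely many finite word computations in the involutions $a,b,c,d$.
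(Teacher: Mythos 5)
Your proposal is correct and follows essentially the same route as the paper: verify the first identity on the four generators $\xi_1,\dots,\xi_4$ of $B_{n-1}$ from Lemma~\ref{lemmaB_0} by explicit wreath-product computation, extend by the homomorphism property, and deduce the second identity from the coordinate-swap rule $\psi_n(aha)=(h_1,h_0)$. The extra bookkeeping you include (checking $\sigma_n(\xi_i)\in\Xi_n$ and verifying the swap on the generators of $\Xi_n$) is left implicit in the paper but is the same argument.
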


\begin{proof}
For the generators of $B_{n-1}$ that are images of 
those of Lemma \ref{lemmaB_0} for $B_0$,
we have
\begin{equation*}
\aligned
\psi_n(\sigma_n(b)) &= \psi_n(d) = (1,b) ,
\\
\psi_n(\sigma_n(aba)) &= \psi_n(acadaca)=(d^2,aba)=(1,aba) ,
\\
\psi_n(\sigma_n(dabad)) &= \psi_n(cacadacac)=(ad^2a,dabad)=(1,dabad) ,
\\
\psi_n(\sigma_n(adabada)) &= \psi_n(acacacadacacaca)=(1,adabada) ,
\endaligned
\end{equation*}
and this shows the first equality.
The second follows because, if $\psi_n(h) = (h_0, h_1)$,
then $\psi_n(aha) = (h_1, h_0)$.
\end{proof}

Let $K_n = K_0/N_n$. It is a normal subgroup of $\mathfrak G_n$ contained in $B_n$.

\begin{lem}
\label{yedi}
Let $n \ge 1$.
\par

(i) 
We have  $\sigma_n(K_{n-1})  \subset   K_n  \subset  B_n$.
\par

(ii)
If $H_{n-1}$ is a subgroup of $K_{n-1}$, then 
$\psi_n^{-1}(H_{n-1} \times H_{n-1}) \subset K_n$.
\end{lem}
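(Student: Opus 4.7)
The plan is to reduce (i) to a single calculation in the finite quotient $\mathfrak{G}_0/K_0$, and then to derive (ii) from (i) together with Lemmas \ref{inj}, \ref{bes}, and the normality of $K_n$.

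For (i), the inclusion $K_n \subset B_n$ is immediate from $K_0 \subset B_0$ by passing to the quotient by $N_n$. To establish $\sigma_n(K_{n-1}) \subset K_n$, I first observe that $\sigma$ shifts the defining relators of $N_n$ upward, so $\sigma(u_i) = u_{i+1}$ and $\sigma(v_i) = v_{i+1}$ imply $\sigma(N_{n-1}) \subset N_n$; hence the desired inclusion follows from the stronger statement $\sigma(K_0) \subset K_0$ inside $\mathfrak{G}_0$. Since $K_0 = \langle\langle (ab)^2 \rangle\rangle_{\mathfrak{G}_0}$ is normal in $\mathfrak{G}_0$, the subgroup $\sigma(K_0)$ is generated by the $\sigma(\mathfrak{G}_0)$-conjugates of $\sigma((ab)^2) = (acad)^2$, and all such conjugates lie in $K_0$ as soon as $(acad)^2 \in K_0$. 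The whole of (i) therefore reduces to this single calculation.

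I would verify $(acad)^2 \in K_0$ by computing in the finite group $G := \mathfrak{G}_0/K_0$. By Lemmas \ref{lemmaB_0}.i and \ref{induction_0}.i, $K_0$ has index $16$ in $\mathfrak{G}_0$, so $|G| = 16$. The relation $(ab)^2 = 1$ is already trivial in $\mathfrak{G}_0^{\mathrm{ab}} \simeq (\mathbf{Z}/2)^3$, so $G^{\mathrm{ab}}$ still has order $8$; this forces $|[G,G]| = 2$ and makes $G$ nilpotent of class $2$. In $G$, the relation $(ab)^2 = 1$ means $[a,b] = 1$, and combining this with $b = cd$ and the identity $[a,cd] = [a,c][a,d]$ (valid because $[G,G]$ is central) yields $[a,c] = [a,d] =: z$, the unique nontrivial element of $[G,G]$. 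Then $acac = [a,c] = z$ gives $aca = zc$, hence $acad = zcd$, and finally $(acad)^2 = z^2 (cd)^2 = 1$, proving $(acad)^2 \in K_0$.

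For (ii), let $g \in \Xi_n$ with $\psi_n(g) = (h_0, h_1)$ and $h_0, h_1 \in H_{n-1} \subset K_{n-1} \subset B_{n-1}$. By Lemma \ref{bes}, the element $g' := a \sigma_n(h_0) a \cdot \sigma_n(h_1)$ lies in $\Xi_n$ (since $\sigma_n(\mathfrak{G}_{n-1}) \subset \Xi_n$ and conjugation by $a$ preserves $\Xi_n$) and satisfies $\psi_n(g') = (h_0, 1)(1, h_1) = (h_0, h_1) = \psi_n(g)$. The injectivity of $\psi_n$ on $\Xi_n$ (Lemma \ref{inj}.ii) then forces $g = g'$. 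Part (i) ensures $\sigma_n(h_0), \sigma_n(h_1) \in K_n$, and the normality of $K_n$ in $\mathfrak{G}_n$ gives $a \sigma_n(h_0) a \in K_n$; hence $g = a\sigma_n(h_0)a \cdot \sigma_n(h_1) \in K_n$. The main obstacle is the single calculation in (i): everything else is a formal consequence of the previously established structural lemmas.
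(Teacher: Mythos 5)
Your proof is correct. Part (ii) is essentially the paper's own argument: both of you write $\psi_n^{-1}(h_0,h_1) = a\,\sigma_n(h_0)\,a\,\sigma_n(h_1)$ via Lemma \ref{bes} and conclude from (i) together with the normality of $K_n$. For part (i), however, you take a genuinely different route. The paper stays inside $\mathfrak G_n$ and uses the injective map $\psi_n$: it computes $\psi_n(w)=(1,t)=\psi_n(\sigma_n(t))$ to get $\sigma_n(t)=w\in K_n$, and then treats all the normal generators $t^{g_1}$ of $K_{n-1}$ by conjugating with a $\psi_n$-preimage of $(g_0,g_1)$. You instead prove the stronger statement $\sigma(K_0)\subset K_0$ in $\mathfrak G_0$ itself, which (since $K_0$ is normal and $\sigma(N_{n-1})\subset N_n$) reduces to the single identity $(acad)^2\in K_0$; your verification of that identity in the order-$16$ quotient $\mathfrak G_0/K_0$ is sound: the abelianization has order $8$, so the derived subgroup is central of order $2$, whence $[a,b]=1$ forces $[a,c]=[a,d]=:z$ and $(acad)^2=(zcd)^2=z^2b^2=1$. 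Your version buys a cleaner, quotient-free fact about $\mathfrak G_0$ and dispenses with $\psi_n$ in step (i) entirely, at the price of needing the index computations from Lemmas \ref{lemmaB_0} and \ref{induction_0}; the paper's version is uniform with the $\psi_n$-bookkeeping used throughout the section and yields the explicit identity $\sigma_n(t)=w$ as a by-product.
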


\begin{proof}
(i)
Let $t, v, w$ be now the canonical images in $K_n$
of the elements of $K_0$ denoted by the same symbols in Lemma \ref{induction_0}.
On the one hand, we have $\psi_n(\sigma_n(t)) = (1,t)$ by Lemma \ref{bes}.
On the other hand, we have
\begin{equation*}
\psi_n(w) = \psi_n(aba) \psi_n(d) \psi_n(aba) \psi_n(d) = (cc,abab) = (1,t) 
\end{equation*}
by the definitions of $\psi_n$ and $w$.
Hence $\sigma_n(t)=w \in K_n$ by Lemma \ref{inj}.ii.
\par

Let $g_1 \in \mathfrak G_{n-1}$.
From the definition of $\psi_n$, we see that
the composition $\Xi_n \longrightarrow \mathfrak G_{n-1}$
of $\psi_n$ with a projection onto one of the factors is onto.
Hence there exists $g \in \Xi_n$ and $g_0 \in \mathfrak G_{n-1}$
such that $\psi_n(g) = (g_0,g_1)$.
We have as above\footnote{Remember that $t^h = h^{-1}th$.}
$\psi_n(\sigma_n(t^{g_1})) = (1, t^{g_1})$ and
\begin{equation*}
\psi_n(w^g)  =  \psi_n(w)^{\psi_n(g)} = (1,t)^{\psi_n(g)} = (1,t^{g_1}) ,
\end{equation*}
and therefore $\sigma_n(t^{g_1}) = w^g$.
Since $K_n$ is a normal subgroup of $\mathfrak G_n$ containing $w$,
we have $\sigma_n(t^{g_1}) \in K_n$ for all $g_1 \in \mathfrak G_{n-1}$.
The inclusion  $\sigma_n(K_{n-1}) \subset K_n$ follows,
because $K_{n-1}$ is generated by $t$ as a normal subgroup of $\mathfrak G_{n-1}$.

(ii)
Let $(h_0,h_1) \in H_{n-1} \times H_{n-1}$. We have
\begin{equation*}
\psi_n^{-1}(h_0,h_1) \, = \,
a \sigma_n(h_0)a \hskip.1cm \sigma_n(h_1)
\end{equation*}
by Lemma \ref{bes},
and the right-hand side is in $K_n$ by (i).
\end{proof}

Set $H_0 = K_0$. For $n \ge 1$, define inductively
\begin{equation*}
H_n \, = \, \psi_n^{-1}(H_{n-1} \times H_{n-1}) .
\end{equation*}
The definition makes sense by Lemma \ref{yedi}.ii.
The following lemma finishes the proof of Theorem \ref{improved}.

\begin{lem}
\label{endofproof}
Let $n \ge 0$, and the notation be as above.
\par

(i) 
$H_n$ is a normal subgroup of $\mathfrak G_n$ contained in $K_n$. 
\par

(ii) 
The group $H_n$ is a direct product of $2^n$ free groups of rank $3$.

(iii)
Its index is given by $[\mathfrak G_n : H_n] = 2^{(2^{n+1}+2)}$.
\end{lem}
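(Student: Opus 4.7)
I will prove Lemma \ref{endofproof} by induction on $n \ge 0$, with the three assertions (i), (ii), (iii) established simultaneously.

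\textbf{Base case} $n=0$: By definition $H_0 = K_0$. Lemma \ref{induction_0}(i) states $K_0 = \langle\langle (ab)^2 \rangle\rangle_{\mathfrak G_0}$, so $H_0$ is normal in $\mathfrak G_0$ and contained in $K_0$. It is free of rank $3$ by Lemma \ref{induction_0}(ii), giving (ii) for $n=0$. For (iii), $[\mathfrak G_0 : B_0] = 8$ by Lemma \ref{lemmaB_0}(i) and $[B_0 : K_0] = 2$ by Lemma \ref{induction_0}(i), so $[\mathfrak G_0 : H_0] = 16 = 2^{2^{1}+2}$.

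\textbf{Inductive step:} Assume the lemma holds for $n-1$. For (i), I first note that $\Xi_n$ has index $2$ in $\mathfrak G_n$ (it is the stabilizer of the first level), and $\mathfrak G_n = \Xi_n \sqcup a\Xi_n$. If $h \in H_n = \psi_n^{-1}(H_{n-1} \times H_{n-1})$, then for $g \in \Xi_n$ with $\psi_n(g) = (g_0,g_1)$ we have $\psi_n(ghg^{-1}) = (g_0,g_1)\psi_n(h)(g_0,g_1)^{-1}$, which lies in $H_{n-1} \times H_{n-1}$ by the inductive normality of $H_{n-1}$ in $\mathfrak G_{n-1}$. For conjugation by $a$, using the identity $\psi_n(aha) = (h_1,h_0)$ whenever $\psi_n(h)=(h_0,h_1)$, we see $\psi_n(aH_na) = H_{n-1} \times H_{n-1}$, hence $aH_na = H_n$. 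The inclusion $H_n \subset K_n$ is Lemma \ref{yedi}(ii) applied to $H_{n-1} \subset K_{n-1}$.

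For (ii), since $\psi_n : \Xi_n \to (B_{n-1} \times B_{n-1}) \rtimes D_{n-1}^{\operatorname{diag}}$ is an isomorphism by Lemma \ref{inj}(ii), and since $H_{n-1} \times H_{n-1} \subset B_{n-1} \times B_{n-1}$ (because inductively $H_{n-1} \subset K_{n-1} \subset B_{n-1}$), the restriction of $\psi_n$ realizes an isomorphism $H_n \cong H_{n-1} \times H_{n-1}$. By the inductive description of $H_{n-1}$, this is a direct product of $2 \cdot 2^{n-1} = 2^n$ free groups of rank $3$.

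For (iii), combine the inductive index formula $[\mathfrak G_{n-1} : H_{n-1}] = 2^{2^{n}+2}$ with Lemma \ref{inj}(ii), giving
\begin{equation*}
[\Xi_n : H_n] \, = \, \frac{[\mathfrak G_{n-1} \times \mathfrak G_{n-1} : H_{n-1} \times H_{n-1}]}{[\mathfrak G_{n-1} \times \mathfrak G_{n-1} : \operatorname{Im}\psi_n]} \, = \, \frac{2^{2^{n+1}+4}}{8} \, = \, 2^{2^{n+1}+1},
\end{equation*}
and then $[\mathfrak G_n : H_n] = [\mathfrak G_n : \Xi_n] \cdot [\Xi_n : H_n] = 2 \cdot 2^{2^{n+1}+1} = 2^{2^{n+1}+2}$.

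The main technical point, and the place where one must be careful, is the verification of normality of $H_n$ in the full group $\mathfrak G_n$ (and not merely in $\Xi_n$): this hinges on the fact that conjugation by the coset representative $a$ corresponds under $\psi_n$ to the coordinate swap on $B_{n-1} \times B_{n-1}$, and therefore preserves the symmetric subgroup $H_{n-1} \times H_{n-1}$. Everything else is a bookkeeping consequence of Lemmas \ref{lemmaB_0}, \ref{inj}, \ref{induction_0}, and \ref{yedi} together with the inductive hypothesis.
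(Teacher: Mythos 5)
Your proof is correct and follows essentially the same route as the paper: induction on $n$, normality in $\Xi_n$ via Lemma \ref{inj}.ii together with the swap identity $\psi_n(aha)=(h_1,h_0)$ for the coset of $a$, the isomorphism $H_n\simeq H_{n-1}\times H_{n-1}$ for the structure claim, and the same index bookkeeping through $[\Xi_n:H_n]=2^{2^{n+1}+1}$. (Your displayed numerator $2^{2^{n+1}+4}$ is in fact the correct value of $[\mathfrak G_{n-1}\times\mathfrak G_{n-1}:H_{n-1}\times H_{n-1}]$; the paper's intermediate expression contains a small typo but reaches the same result.)
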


\begin{proof}
For $n=0$, the three claims follow 
from Lemmas \ref{lemmaB_0} and \ref{induction_0}.
We suppose now that $n \ge 1$ and that the lemma holds for $n-1$.
\par

(i) The group $H_n$ is clearly normal in $\Xi_n$, by Lemma \ref{inj}.ii. 
To show that $H_n$ is normal in $\mathfrak G_n$, it suffices to check
that $aH_na \subset H_n$,
because $\mathfrak G_n$ is generated by $\Xi_n$ (of index $2$ in $\mathfrak G_n$) and $a$.
Let $h \in H_n$. Let $h_0, h_1 \in H_{n-1}$ be defined by
$\psi_n(h) = (h_0, h_1)$.
Then $\psi_n(aha) = (h_1, h_0) \in H_{n-1} \times H_{n-1}$,
and therefore $aha \in H_n$.
\par

(ii) This is a straightforward consequence of the isomorphism
$H_n \simeq H_{n-1} \times H_{n-1}$, 
see again Lemma \ref{inj}.
\par

(iii)
By the induction hypothesis, we have
\begin{equation*}
\aligned
&[ (B_{n-1} \times B_{n-1}) \rtimes D_{n-1}^{\operatorname{diag}} : 
H_{n-1} \times H_{n-1} ]
\\
& \hskip2cm  \, = \,
\frac{ [\mathfrak G_{n-1} \times \mathfrak G_{n-1} : H_{n-1} \times H_{n-1} ] }
       { [\mathfrak G_{n-1} \times \mathfrak G_{n-1} : 
               (B_{n-1} \times B_{n-1}) \rtimes D_{n-1}^{\operatorname{diag}}] }
               \\
& \hskip2cm  \, = \,
\frac{ 2^{2^n+2} \times 2^{2^n+1} }{2^3} \, = \, 2^{2^{n+1}+1}.
\endaligned
\end{equation*}
Thus the commutative diagram
\begin{equation*}
\begin{array}{cccc}
  & \mathfrak G_n &   & \mathfrak G_{n-1}\times \mathfrak G_{n-1}\\
 &\arrowvert & & \arrowvert  \\
 & 2 & &2^3 \\
 & \arrowvert & & \arrowvert \\
\psi_n : & \Xi_n & \xrightarrow{\cong}& (B_{n-1} \times B_{n-1}) \rtimes D_{n-1}^{diag}  \\
 & \arrowvert & & \arrowvert \\
& 2^{(2^{n+1}+1)}& & 2^{(2^{n+1}+1)}\\
& \arrowvert  & & \arrowvert  \\
 & H_n & \longrightarrow & H_{n-1}  \times H_{n-1} 
\end{array}
\end{equation*}
shows that $H_n$ has index $2^{(2^{n+1}+2)}$ in $\mathfrak G_n$.
\end{proof}

The proof of Theorem \ref{improved} is now complete.

\vskip.2cm

Recall that a group $G$ is called  of type FP$_n$ 
if the trivial $\Z [G]$-module $\Z$ has a projective resolution,
namely if there exists an exact sequence
\begin{equation*}
\cdots \, \longrightarrow \, P_{j+1} \, \longrightarrow \,  
P_j \, \longrightarrow \,  \cdots 
\, \longrightarrow \, P_1\, \longrightarrow \, P_0 \, \longrightarrow \, \Z 
\end{equation*}
whee the $P_j$ 's are projective $\Z [G]$-modules for all $j \ge 0$, 
and finitely generated projective $\Z [G]$-modules for all $j \le n$.
It is known that 
\begin{itemize}
\item[(i)]
a group is of type FP$_1$ if and only if it is finitely generated;
\item[(ii)]
finitely presented groups are of type FP$_2$;
\item[(iii)]
Condition FP$_2$ is \emph{strictly} weaker than finite presentability;
\item[(iv)]
a group is of type FP$_2$ if and only if it is the quotient of some finitely presented group
by a \emph{perfect} normal subgroup.
\end{itemize}
For (i) and (ii), see for example the notes 
in which ``type FP$_n$'' was first defined \cite{Bier--76};
see \cite{BeBr--97} for (iii) and \cite[Section VII.5, Exercise 3]{Brow--82} for (iv).
The following question is natural:

\emph{Does $\mathfrak G$ have an amenable cover of type FP$_2$?}

\noindent
The answer is due to Yves de Cornulier (unpublished).
We reproduce it here, with our thanks to him.

\begin{prop}[de Cornulier]
\label{GrigNotFP2}
Any cover of type FP$_2$ of the group $\mathfrak G$ is large.
\end{prop}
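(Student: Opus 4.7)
The plan is to leverage characterization (iv) of type FP$_2$: if $E$ is FP$_2$ then $E = \tilde E / P$ with $\tilde E$ finitely presented and $P \triangleleft \tilde E$ perfect. The composition $\tilde E \twoheadrightarrow E \twoheadrightarrow \mathfrak G$ is then a finitely presented cover of $\mathfrak G$. Since the sequence $(\mathfrak G_n)_{n \ge 0}$ of finitely presented groups of Definition \ref{defGn} converges to $\mathfrak G$ in $\mathcal M_4$, Proposition \ref{main} produces, for some integer $n$, a surjection $\alpha \colon \tilde E \twoheadrightarrow \mathfrak G_n$ compatible with the given map to $\mathfrak G$. Let $\bar P := \alpha(P) \triangleleft \mathfrak G_n$; as the image of a perfect group it is itself perfect.

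The heart of the proof is to show $\bar P = \{1\}$, which I would do in two steps relying on the sharpened information in Theorem \ref{improved}. First, because $[\mathfrak G_n : H_n] = 2^{2^{n+1}+2}$ is a power of $2$, the finite quotient $\mathfrak G_n / H_n$ is a $2$-group and therefore has no non-trivial perfect subgroup. Applying this to the finite perfect quotient $\bar P / (\bar P \cap H_n) \hookrightarrow \mathfrak G_n / H_n$ gives $\bar P \subseteq H_n$. Second, $H_n$ is a direct product of $2^n$ free groups of rank $3$, say $H_n = F^{(1)} \times \cdots \times F^{(2^n)}$. Projecting the normal perfect subgroup $\bar P$ onto each factor yields a perfect subgroup of a free group, and since every non-trivial subgroup of a free group is free with non-trivial abelianization, each projection is trivial. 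Hence $\bar P = \{1\}$.

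From $\bar P = \{1\}$ we get $P \subseteq \ker \alpha$, so $\alpha$ factors through $E = \tilde E / P$ to give a surjection $E \twoheadrightarrow \mathfrak G_n$. By Theorem \ref{improved}, $\mathfrak G_n$ is large (via any non-abelian free factor $F^{(i)}$ of the finite-index subgroup $H_n$), and largeness lifts: the preimage in $E$ of a finite-index subgroup of $\mathfrak G_n$ that maps onto a non-abelian free group has finite index in $E$ and inherits that surjection. Therefore $E$ is large.

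The main obstacle to keep in mind is the naive attempt to work directly inside $\tilde E$: one would pick a finite-index normal subgroup $\tilde E_0 \triangleleft \tilde E$ with $\tilde E_0 \twoheadrightarrow F$ non-abelian free, and hope that $\tilde E_0 \cap P$ lies in the kernel. But $\tilde E_0 \cap P$ is only a subgroup of $P$, not in general a perfect group (perfectness is not preserved under taking finite-index subgroups), so its image in $F$ need not vanish. Routing the argument through $\mathfrak G_n$ and crucially exploiting that $[\mathfrak G_n : H_n]$ is a power of $2$, rather than an arbitrary finite number, is what avoids this: it forces the image of $P$ entirely into the nice factor $H_n$ before one ever has to confront a non-perfect intersection.
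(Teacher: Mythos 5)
Your proposal is correct and follows essentially the same route as the paper: pass from the FP$_2$ cover $E=\tilde E/P$ to the finitely presented group $\tilde E$, use Proposition \ref{main} to surject onto some $\mathfrak G_n$, observe that the image of $P$ is a perfect normal subgroup of $\mathfrak G_n$ which must be trivial, and conclude that $E$ itself covers the large group $\mathfrak G_n$. The only cosmetic difference is that the paper condenses your two-step argument (the $2$-group quotient $\mathfrak G_n/H_n$ plus the free factors of $H_n$) into the single observation that $\mathfrak G_n$ is residually soluble and hence has no non-trivial perfect subgroup.
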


\begin{proof}
Let $E \twoheadrightarrow \mathfrak G$ be a cover, with $E$ of type FP$_2$.
By (iv) above, there exists a finitely presented group $F$
and a perfect normal subgroup $P$ of $F$ such that $F/P$ is isomorphic to $E$.
For $n$ large enough and $\mathfrak G_n$ as in Theorem \ref{improved}, 
there exists by Proposition \ref{main}
a normal subgroup $K_n$ of $F$ such that $F/K_n$ 
is isomorphic to $\mathfrak G_n$.
Observe that the group
\begin{equation*}
F / PK_n \, \simeq \, \left( F / K_n \right) / \left(P / P \cap K_n \right)
\, \simeq \, \mathfrak G_n / \left(P / P \cap K_n \right)
\end{equation*}
is a quotient of $\mathfrak G_n$ by a perfect normal subgroup.
\par
Since $\mathfrak G_n$ has a subgroup of index a power of $2$
which is a direct product of free groups 
(Theorem \ref{improved}, or \cite{BaCo--06}), 
$\mathfrak G_n$ is residually soluble.
It follows that the only perfect subgroup of $\mathfrak G_n$ is $\{e\}$,
so that $F / PK_n \simeq \mathfrak G_n$.
Hence $F/PK_n$ is large,
and so is its cover $F/P \simeq E$.
\end{proof}

\appendix

\section{\textbf{On soluble groups, metabelian groups, and finite presentations}}
\label{AppendixAMetabelian}

The existence of groups that are finitely generated and infinitely presented
was established by B.H.\ Neumann in 1937.
More precisely, he constructed \emph{uncountably many $2$-generator groups}
\cite[Theorem 14]{Neum--37};
at most countably many of them are finitely presented.
Later it was checked that none of them is finitely presented 
(see the last proof of Appendix \ref{AppendixCBNS}, as well as \cite[Theorem C]{BaMi--09}).
\par

Infinitely generated \emph{soluble} groups are equally abundant,
as we recall below after having fixed some notation.
\par

The groups of the derived series of a group $G$
are defined inductively by $D^0G=G$ and $D^{\ell+1}G=[D^\ell G,D^\ell G]$.
The \textbf{free soluble group of rank $k$ and solubility class $\ell$}
is the quotient $\operatorname{FSol}(k,\ell) = F_k / D^\ell F_k$,
where $F_k$ stands for the free group of rank $k$.
Any $k$-generated soluble group of solubility class at most $\ell$
is a quotient of $\operatorname{FSol}(k,\ell)$.
A group $G$ is \textbf{metabelian} if $D^2G = 1$,
namely if it is a cover of an abelian group with abelian kernel.
The group $\operatorname{FSol}(k,2)$ 
is the \textbf{free metabelian group of rank $k$}.
\par

Philip Hall established the existence of 
\emph{uncountably many finitely generated soluble groups.}
His result is much more precise \cite[Theorem 6]{Hall--54}:
given any countable abelian group $A \ne 1$, there exist
uncountably many groups $G$ such that
\begin{equation*}
d(G) \, = \, 2, \hskip.5cm
Z(G) \, \simeq \, A, \hskip.5cm
[G , D^2G] \, = \, 1 .
\end{equation*}
Here $d(G)$ stands for the minimal number of generators of $G$,
and $Z(G)$ for its centre.
The condition $[G , D^2G] = 1$ can be translated in words:
$G$ is a \emph{centre-by-metabelian group}.
It is moreover known that
there are uncountably many finitely generated soluble groups
which are not quasi-isometric to each other \cite[Corollary 1.8]{CoTe}.
\par

On the contrary, 
there are only countably many finitely generated metabelian groups
(this is repeated as Proposition \ref{HallUncountablySolGroups} below),
and more generally\footnote{Let $\mathcal P$ and $\mathcal Q$ be group properties.
A group $G$ is $\mathcal P$-by-$\mathcal Q$ if $G$ has a normal subgroup $N$
with Property $\mathcal P$ such that $G/N$ has Property $\mathcal Q$.}
abelian-by-polycyclic groups
(\cite[Corollary 2 to Theorem 3]{Hall--54}, 
see also \cite[Corollary 4.2.5]{LeRo--04}).
Before comparing soluble groups in general with metabelian groups in particular,
we collect some well-known facts in the following lemma.

Recall that a group $G$ satisfies 
\textbf{Max-n}, \textbf{the maximal condition for normal subgroups},
if any increasing sequence of normal subgroups of $G$ is ultimately stationary,
or equivalently if any normal subgroup of $G$ 
is finitely generated \emph{as normal subgroup.}

\begin{lem}  
\label{consequenceofHall'slemma}
Let $G$ be a finitely generated group, $N$ a normal subgroup,
and $Z$ a central subgroup.

(i) If $G/N$ is finitely presented, 
there exists a finite subset $S \subset N$ such that
$N$ is the smallest normal subgroup of $G$ containing $S$.

(ii) If $G/Z$ is finitely presented,
then $Z$ is finitely generated.

(iii) If $G$ has uncountably many normal subgroups,
then $G$ has uncountably many pairwise non-isomorphic quotients.

(iv) Suppose that $G$ is finitely presented and satisfies Max-n.
Then $G/N$ is finitely presented.
\end{lem}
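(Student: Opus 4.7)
The plan is to handle the four parts in the natural order, since (ii) follows from (i), and then treat (iii) and (iv) by standard cardinality and lifting arguments respectively.

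For (i), the approach is to lift a finite presentation of $G/N$ to the free group on generators of $G$. Fix a generating set $\{g_1,\dots,g_n\}$ of $G$ and let $\pi : F_n \twoheadrightarrow G$ be the induced free cover, with $R = \ker \pi$. Then $G/N \simeq F_n / \pi^{-1}(N)$, and since $G/N$ is finitely presented on these $n$ generators, I can pick finitely many elements $r_1,\dots,r_k \in F_n$ whose normal closure in $F_n$ equals $\pi^{-1}(N)$. Set $S = \{\pi(r_1),\dots,\pi(r_k)\} \subset N$. One checks that $\pi^{-1}(\langle\langle S \rangle\rangle_G) = \langle\langle r_1,\dots,r_k\rangle\rangle_{F_n}$ (using that $R \subset \pi^{-1}(N)$ is already contained in the right-hand side), so $\langle\langle S \rangle\rangle_G = N$. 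Then (ii) is immediate: applying (i) to $N = Z$ gives a finite subset $S \subset Z$ whose normal closure is $Z$; but since every element of $S$ is central, the normal closure coincides with the ordinary subgroup $\langle S \rangle$, so $Z$ is finitely generated.

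For (iii), the argument is a cardinality count. Since $G$ is finitely generated, $G$ is countable, and hence so is every quotient $G/N$. For any fixed countable group $Q$, a homomorphism $G \to Q$ is determined by the images of the finite generating set, so there are at most countably many epimorphisms $G \twoheadrightarrow Q$, and therefore at most countably many normal subgroups $N \triangleleft G$ with $G/N \simeq Q$. If there were only countably many isomorphism classes of quotients, the total number of normal subgroups would be countable, contradicting the hypothesis.

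For (iv), fix a finite presentation $G = F_n / \langle\langle r_1,\dots,r_k \rangle\rangle_{F_n}$ and consider the projection $\pi : F_n \twoheadrightarrow G$. By Max-n applied to $G$, the normal subgroup $N$ is finitely generated as a normal subgroup of $G$, say by $s_1,\dots,s_m$. Lifting each $s_i$ to some $\tilde s_i \in F_n$, the normal subgroup $\pi^{-1}(N)$ of $F_n$ equals $\langle\langle r_1,\dots,r_k,\tilde s_1,\dots,\tilde s_m \rangle\rangle_{F_n}$, which exhibits $G/N$ as finitely presented on $n$ generators with $k+m$ relators.

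Of the four parts, I expect part (i) to require the most care, specifically the verification that $\pi^{-1}(\langle\langle S \rangle\rangle_G)$ equals $\langle\langle r_1,\dots,r_k\rangle\rangle_{F_n}$ rather than something larger; the other three parts are essentially immediate corollaries of (i), a cardinality argument, or the defining property of Max-n combined with a routine lifting. The only substantive point beyond (i) is to remember that Max-n gives normal generation of $N$ \emph{inside $G$}, not inside $F_n$, so one must lift and combine with the finite relator set of $G$.
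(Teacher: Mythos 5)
Your proof is correct and follows essentially the same route as the paper: (i) by lifting a finite presentation of $G/N$ to a free cover of $G$, (ii) as the central special case of (i), (iii) by counting homomorphisms from the finitely generated group $G$ onto a fixed countable quotient, and (iv) by adjoining to a finite presentation of $G$ the lifts of a finite normal generating set of $N$ supplied by Max-n. The one step you take for granted in (i) --- that the finitely presented group $G/N$ admits a finite presentation on the \emph{particular} generating set $\pi(g_1),\dots,\pi(g_n)$ --- is exactly B.H.\ Neumann's observation that finite presentability is independent of the chosen finite generating set, which is the ingredient the paper explicitly isolates.
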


\begin{proof}
Claim (i) is  \cite[Lemma 14.1.3]{Robi--96}.
It is a simple consequence of the following
fundamental observation of B.H. Neumann:
let $S,S'$ be two finite generating sets of a group $G$;
assume that $G$ has a finite presentation 
$\langle S \mid R \rangle$
involving $S$ and a finite set $R$ of relators;
then there exists a finite set $R'$ of relators in the letters of $S'$
such that $\langle S' \mid R' \rangle$
is also a finite presentation of $G$
\cite[Lemma 8]{Neum--37}.
\par

Claim (ii) is the special case of (i) for a central subgroup.
\par

For Claim (iii), consider
an uncountable family $(N_\alpha)_{\alpha \in A}$
of distinct normal subgroups of $G$.
Fix $\alpha \in A$.
Let $B$ be a subset of $A$ such that,
for each $\beta \in B$, there exists an isomorphism
$\phi_\beta : G/N_\beta \longrightarrow G/N_\alpha$.
It suffices to show that $B$ is countable.
\par

For $\beta \in B$, let $\pi_\beta$ denote the composition
of the canonical projection $G \longrightarrow G/N_\beta$ with $\phi_\beta$.
Since $G$ is finitely generated and $G/N_\alpha$ countable,
there are only countably many homomorphisms from $G$ to $G/N_\alpha$.
As $N_\beta = \ker (\pi_\beta)$, the set $B$ is countable.
\par

For Claim (iv), consider a finite presentation of $G$,
namely a free group $F$ on a finite set $S$
and a normal subgroup $M$ of $F$ generated as normal subgroup
by a finite subset $R$ of $F$, such that $G = F/M$.
Since $G$ satisfies Max-n, there exists a finite subset $R'$ of $F$
of which the image in $G$ generates $N$ as a normal subgroup.
Then $\langle S \mid R \cup R' \rangle$
is a finite presentation of $G/N$.
\par

Note that Claim (ii) is a special case of \cite[Lemma 2]{Hall--54}.
Our argument for Claim (iii) can be found in \cite[Page 433]{Hall--54},
and that for Claim (iv) is ``a well-known principle'' cited in \cite[Page 420]{Hall--54}.
\end{proof}

Finitely generated metabelian groups are ``well-behaved'' in many ways:

\begin{prop}[Hall, Baumslag, Remeslennikov]
\label{MetabelianWellBehaved}
Let $G$ be a finitely \emph{generated} metabelian group.
\begin{itemize}
\item[(i)]
$G$ satisfies Max-n.
In particular, the centre of $G$ is finitely generated.
\item[(ii)]
If $G$ is finitely presented,
so is any quotient of $G$.
\item[(iii)]
$G$ is residually finite.
\item[(iv)]
$G$ has a soluble word problem.
\item[(v)]
$G$ can be embedded into a finitely presented metabelian group.
\item[(vi)]
$G$ is recursively presented.
\end{itemize}
\end{prop}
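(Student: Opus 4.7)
The strategy is to prove the six assertions in a dependency order that starts with Hall's module-theoretic analysis and then bootstraps the remaining facts from it, citing the deeper embedding theorem of Baumslag-Remeslennikov for the harder parts.

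First I would prove (i), which is the engine. Write $A = [G,G]$ and $Q = G/A$, so $Q$ is a finitely generated abelian group and $A$ is an abelian group on which $Q$ acts by conjugation, making $A$ a $\Z[Q]$-module. The key observation is that since $G$ is finitely generated, $A$ is finitely generated as a $\Z[Q]$-module: if $S$ is a finite generating set for $G$ and $s_1,\dots,s_k$ are lifts of generators of $Q$, then $A$ is the normal closure in $G$ of the finitely many commutators $[s_i,s_j]$, and normal closure inside an abelian normal subgroup is the $\Z[Q]$-submodule generated by these commutators. Now $\Z[Q]$ is Noetherian by Hilbert's basis theorem (since $Q$ is finitely generated abelian, $\Z[Q]$ is a quotient of a finitely generated commutative polynomial ring over $\Z$), so every $\Z[Q]$-submodule of $A$ is finitely generated. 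The normal subgroups $N \triangleleft G$ contained in $A$ are exactly the $\Z[Q]$-submodules of $A$; an arbitrary normal subgroup $N$ sits in an extension $1 \to N\cap A \to N \to NA/A \to 1$, where both ends are Noetherian (the quotient inside the finitely generated abelian $Q$). This gives Max-n. Applying Max-n to the normal subgroup $Z(G)$, which is abelian, a finite normal generating set is a generating set (conjugation acts trivially), so $Z(G)$ is finitely generated.

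Claim (ii) follows immediately from (i) via Lemma \ref{consequenceofHall'slemma}.iv. For (iii), I would invoke Hall's residual finiteness theorem for finitely generated abelian-by-polycyclic groups \cite{Hall--59}; the proof goes through the same $\Z[Q]$-module picture, using that finitely generated modules over a commutative Noetherian ring are residually finite (via the Krull intersection theorem and reduction to Artinian quotients), then lifting the residual finiteness of $A$ through the polycyclic quotient $Q$.

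For (v), I would cite the Baumslag-Remeslennikov embedding theorem: every finitely generated metabelian group embeds in a finitely presented metabelian group. This is the one serious obstacle and I would not reprove it; the construction uses wreath-product style embeddings into matrix groups over finitely generated commutative rings and is genuinely nontrivial. Claim (vi) then follows formally: any finitely generated subgroup of a finitely presented group is recursively presented (the Higman-style enumeration of consequences of the finite presentation intersected with the subgroup yields a recursive enumeration of relators). Finally, (iv) is a consequence of (iii) and (vi) by a standard double-enumeration argument due to Mal'cev/McKinsey: a finitely generated, recursively presented, residually finite group has solvable word problem, since one can simultaneously enumerate proofs that $w = 1$ (from the recursive presentation) and enumerate finite quotients in which $w \ne 1$ (from residual finiteness and the recursive enumeration of finite groups), and exactly one of these two searches terminates for each word $w$.
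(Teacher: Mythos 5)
Your write-up is essentially an expansion of the paper's proof, which consists almost entirely of references: (i) is Hall's Theorem 3 in \cite{Hall--54} and your Noetherian $\Z[Q]$-module argument is precisely Hall's; (ii) is deduced exactly as in the paper from Lemma \ref{consequenceofHall'slemma}.iv; (iii) and (v) are cited to the same sources; and (vi) from (v) via ``finitely generated subgroups of finitely presented groups are recursively presented'' is one of the two derivations the paper mentions. All of that is fine.

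There is, however, a genuine gap in your derivation of (iv). The implication ``finitely generated, recursively presented and residually finite $\Rightarrow$ soluble word problem'' is \emph{false}, and the paper itself records a counterexample: Meskin \cite{Mesk--74} constructed a finitely generated, recursively presented, residually finite \emph{soluble} group with unsoluble word problem (see the proof of Proposition \ref{HallAbelsEtc}). The McKinsey/Mal'cev double enumeration needs the presentation to be \emph{finite} (or at least needs an effective enumeration of the finite quotients): to certify that an assignment of the generators into a finite group $F$ defines a homomorphism, you must verify that all relators die in $F$, and with an infinite (merely recursive) set of relators this verification never terminates, so the ``$w \ne 1$'' half of your search is not an algorithm. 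The repair is easy and stays within your own scheme: deduce (iv) from (v) rather than from (iii)$+$(vi). Embed $G$ into a finitely presented metabelian group $H$; by (iii) applied to $H$, the group $H$ is residually finite, so the genuine McKinsey argument (finite presentation plus residual finiteness) solves the word problem in $H$; and solubility of the word problem passes to finitely generated subgroups, hence to $G$. Alternatively one cites \cite{Baum--74} or Wehrfritz's quasi-linearity result, as the paper does.
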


\begin{proof}[References]
Claim (i) is \cite[Theorem 3]{Hall--54},
Claim (ii) follows by Lemma \ref{consequenceofHall'slemma}.iv,
and Claim (iii) is  \cite[Theorem 1]{Hall--59}.
\par
For the particular case of free metabelian groups,
Claim (iii) follows from linearity:
it is known that $\operatorname{FSol}(k,2)$ 
is a subgroup of $\operatorname{GL}_2(\C)$.
This is a form of the ``Magnus embedding theorem'' ;
see \cite{Magn--39}, and also \cite[Theorem 2.11]{Wehr--73}.
(On the contrary, $\operatorname{FSol}(k,\ell)$ is not linear when $\ell \ge 3$;
see Remark \ref{linearityofFSol}.)
\par

Claim (iv) can be found in \cite{Baum--74}.
It is also a consequence of (a particular case of)
a result of Wehrfritz: any finitely generated metabelian group is quasi-linear,
namely is a subgroup of a group of the form $\prod_{i=1}^r \operatorname{GL}_n(F_i)$,
where $F_1, \hdots, F_r$ are fields \cite{Wehr--80}.
More generally, several algorithmic problems are known to be soluble
in finitely generated metabelian groups \cite{BaCR--94}.
\par

Claim (v) was proved by Baumslag \cite{Baum--73} 
and Remeslennikov \cite{Reme--73}, independently.
See also \cite[Proposition 11.3.2]{LeRo--04}.
\par

Claim (vi) is  \cite[Corollary A1]{Baum--74};
it also follows from Claim~(v).
Note that Claim (vi) is contained in Claim (iv),
but we add it for comparison with Proposition \ref{HallAbelsEtc}.
\end{proof}

\begin{prop}[P.\ Hall]
\label{HallUncountablySolGroups}
There are countably many finitely generated metabelian groups.
\end{prop}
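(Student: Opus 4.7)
The plan is to reduce the statement to counting normal subgroups in the free metabelian groups, and to use the Max-n property already recorded in Proposition \ref{MetabelianWellBehaved}.(i).

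First I would observe that every finitely generated metabelian group $G$ is, by definition, a quotient of $\operatorname{FSol}(k,2)$ for some integer $k \ge 1$, namely for $k = d(G)$. Hence the set of isomorphism classes of finitely generated metabelian groups is the union over $k \ge 1$ of a set whose cardinality is bounded by the cardinality of the set of normal subgroups of $\operatorname{FSol}(k,2)$. Since a countable union of countable sets is countable, it suffices to show that, for each $k \ge 1$, the group $\operatorname{FSol}(k,2)$ has only countably many normal subgroups.

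Now $\operatorname{FSol}(k,2)$ is itself a finitely generated metabelian group, so by Proposition \ref{MetabelianWellBehaved}.(i) it satisfies Max-n, i.e.\ every normal subgroup is finitely generated as a normal subgroup. In particular, each normal subgroup $N \triangleleft \operatorname{FSol}(k,2)$ is the normal closure of some finite subset $S \subset \operatorname{FSol}(k,2)$. Since $\operatorname{FSol}(k,2)$ is a finitely generated, hence countable, group (being a quotient of the countable group $F_k$), it has only countably many finite subsets, and therefore only countably many normal subgroups. This is the content of the proof.

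The main step is the appeal to Max-n, but this is already granted as Proposition \ref{MetabelianWellBehaved}.(i) (P.\ Hall's theorem), so no real obstacle remains; the rest is a routine cardinality argument. I would close by noting the contrast with the uncountable family of centre-by-metabelian $2$-generator groups constructed by Hall and mentioned at the beginning of Appendix \ref{AppendixAMetabelian}: the countability here depends crucially on the derived length being exactly $2$, since Max-n fails already for centre-by-metabelian groups.
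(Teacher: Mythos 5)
Your argument is correct and is essentially the paper's own proof (given in Remark \ref{RemarkOnHall}.(a)): every finitely generated metabelian group is a quotient of some $\operatorname{FSol}(k,2)$ by the normal closure of a finite set, because $\operatorname{FSol}(k,2)$ satisfies Max-n, and a countable group has only countably many finite subsets. Your closing observation about the failure of Max-n for free centre-by-metabelian groups is a correct and pertinent aside, consistent with Hall's uncountable family cited in the appendix.
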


\begin{rem!}
\label{RemarkOnHall}
(a)
Let $G$ be a finitely generated metabelian group $G$;
let $k \ge 0$ be such that $G$ can be generated by $k$ elements,
so that $G$ is a quotient of the free metabelian group $\operatorname{FSol}(k,2)$.
Though it need not be finitely presented
(examples are shown below),
the group $G$ is finitely presented \emph{as a metabelian group},
because $\operatorname{FSol}(k,2)$ satisfies Max-n;
in other terms, we can write
\begin{equation*}
G \, = \, \operatorname{FSol}(k,2)/ \langle\langle r_1, \hdots, r_n \rangle\rangle ,
\end{equation*}
where the notation $\langle\langle \cdots \rangle\rangle$ indicates 
a normal subgroup generated \emph{as such} 
by elements $r_1, \hdots, r_n$ 
in $\operatorname{FSol}(k,2)$.
Proposition \ref{HallUncountablySolGroups} follows.
\par
Note that Proposition \ref{HallUncountablySolGroups}
is also a straightforward consequence of Claim (vi) in the previous proposition.
\par

(b) Some of the claims in Proposition \ref{MetabelianWellBehaved}
can be improved. For example, 
(i) holds for finitely generated abelian-by-polycyclic groups,
and (iii) holds for finitely generated abelian-by-nilpotent groups (Hall).
Moreover (iii) holds for abelian-by-polycyclic groups,
as shown by Roseblade and Jategaonkar in 1973 and 1974
(see \cite{Sega--75}, or
Chapter 7 and in particular Theorem 7.2.1 in \cite{LeRo--04}).

(c)
Until the early 70's, 
there were rather few known examples of finitely presented metabelian groups.
The 3-generator 3-relator group
\begin{equation*}
H \, = \, 
\langle a,s,t \mid a^t = a a^s, \hskip.1cm [s,t] = 1 = [a, a^s] \rangle ,
\end{equation*}
appeared independently in papers by 
Baumslag \cite{Baum--72} and Remes\-lennikov \cite{Reme--73}; 
see also \cite[Theorem A]{Stre--84}.
It is metabelian, its derived group is free abelian of infinite rank,
and it contains the wreath product $\Z \wr \Z$ as a subgroup
\cite[Pages 72--73]{Baum--74}.
It was quite a surprise at this time \cite[first lines]{Baum--72}
to find a finitely presented group
containing a normal abelian subgroup of infinite rank.
More recently,
the quotient group $H / (a^2 = 1)$ was the main character in \cite{GLSZ--00}.
%
\end{rem!}

For groups of higher solubility degrees, 
the picture is substantially different,
even under the stronger hypothesis of finite presentability.
Each of the claims of the next proposition is meant
to be compared with the corresponding claim
of Proposition \ref{MetabelianWellBehaved}.

\begin{prop}
\label{HallAbelsEtc}
Let $G$ be a finitely \emph{presented} soluble group.
\begin{itemize}
\item[(i)]
$G$ need not satisfy Max-n.
Indeed, the centre of $G$ need not be finitely generated.
\item[(ii)]
$G$ may have uncountably many quotients,
and in particular infinitely presented quotients.
\item[(ii')]
Any metabelian quotient of $G$ is finitely presented.
\item[(iii)]
$G$ need not be residually finite.
\item[(iv)]
$G$ need not have a soluble word problem.
\end{itemize}
Let $G$ be now a finitely \emph{generated} soluble group.
\begin{itemize}
\item[(v)]
$G$ need not be recursively presented.
\item[(vi)]
$G$ need not embed into any finitely presented group.
\end{itemize}
\end{prop}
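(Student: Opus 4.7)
The plan is to establish each of the six claims by exhibiting an explicit example or invoking a structural theorem, in each case reducing to well-known constructions in the theory of soluble groups.

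The unifying example behind (i), (ii), and (iii) is \emph{Abels' group}: a certain subgroup $A$ of $4 \times 4$ upper unitriangular matrices over $\Z[1/p]$ whose two extreme diagonal entries are constrained to coincide. Abels proved that $A$ is a finitely presented soluble group whose centre is isomorphic to the non-finitely-generated abelian group $\Z[1/p]$. This directly gives (i). For (ii), the $2^{\aleph_0}$ subgroups $H \le Z(A)$ produce $2^{\aleph_0}$ quotients $A/H$, pairwise non-isomorphic by Lemma \ref{consequenceofHall'slemma}.iii; since only countably many finitely presented groups exist up to isomorphism, uncountably many of these quotients are infinitely presented. For (iii), a suitable central quotient $A/\langle\langle z \rangle\rangle_A$, which remains finitely presented because one is only modding out a single normally-generated element, is shown by Abels not to be residually finite; alternatively, one may cite P.\ Hall's earlier centre-by-metabelian example.

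Claim (ii') is the Bieri-Strebel theorem that the maximal metabelian quotient $G/D^2 G$ of any finitely presented group $G$ is itself finitely presented; any metabelian quotient $G/N$ of $G$ factors through $G/D^2 G$ and is therefore finitely presented by Proposition \ref{MetabelianWellBehaved}.ii. Claim (iv) is established by direct citation of Kharlampovich's construction of a finitely presented soluble group with unsoluble word problem, already referenced in the introduction.

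Claims (v) and (vi) follow from a soft cardinality argument: by P.\ Hall's theorem recalled in the paragraph preceding Proposition \ref{HallUncountablySolGroups}, there exist uncountably many pairwise non-isomorphic finitely generated soluble groups, while only countably many finitely generated groups are recursively presented. Hence uncountably many finitely generated soluble groups fail to be recursively presented, proving (v); by Higman's embedding theorem a finitely generated group embeds into a finitely presented group if and only if it is recursively presented, so the same uncountable family witnesses (vi). The main conceptual obstacle in the whole proposition is (ii'), which genuinely requires the Bieri-Strebel structural theorem from the $\Sigma$-invariant machinery reviewed in Appendix \ref{AppendixCBNS}; the remaining claims reduce to citations of concrete constructions (Abels, Hall, Kharlampovich) or to counting arguments.
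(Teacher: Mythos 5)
There is a genuine gap in your treatment of claim (ii). You assert that $Z(A)\simeq \Z[1/p]$ has $2^{\aleph_0}$ subgroups and that these yield uncountably many pairwise non-isomorphic quotients of Abels' group. But $\Z\left[\frac{1}{p}\right]$ has only \emph{countably} many subgroups: every nonzero subgroup is either of the form $c\Z$ with $c\in\Z\left[\frac{1}{p}\right]$ or of the form $m\Z\left[\frac{1}{p}\right]$ with $m$ coprime to $p$ (unlike subgroups of $\Q$, there is no room for uncountably many ``types'' when only one prime is invertible). In fact the paper explicitly records that $A_n$ has only countably many quotients, citing Lyul'ko and \cite{BCGS}, so Abels' group cannot witness the first part of (ii). The paper instead invokes a \emph{different} finitely presented soluble group, due to Kharlampovich, whose centre is $(C_p)^{(\infty)}$, an infinite direct sum of cyclic groups of order $p$; this centre is an infinite-dimensional $\mathbf{F}_p$-vector space and genuinely has uncountably many subgroups, whence uncountably many normal subgroups of $G$ and, by Lemma \ref{consequenceofHall'slemma}.iii, uncountably many pairwise non-isomorphic quotients. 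Your deduction of ``infinitely presented quotients'' from the uncountability also collapses with it, though that part is independently salvageable: $A_n/Z(A_n)$ is finitely generated and infinitely presented by Lemma \ref{consequenceofHall'slemma}.ii, which is the paper's route.

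A secondary point: your justification of (ii') misstates the Bieri--Strebel input. It is false that the metabelianization of \emph{any} finitely presented group is finitely presented ($F_2$ is finitely presented while $\operatorname{FSol}(2,2)$ is not, by Corollary \ref{freeSol}). The correct statement, Theorem \ref{ThBieriStrebel}, is that an infinitely presented finitely generated metabelian group admits no finitely presented cover without non-abelian free subgroups; since the $G$ of the proposition is soluble, it has no such subgroups and the conclusion follows. Your argument is repaired simply by inserting that hypothesis. The remaining claims (i), (iii), (iv), (v), (vi) are handled essentially as in the paper (Abels, Hall's non-Hopfian central quotient, Kharlampovich, and the counting argument with the easy direction of Higman's theorem), modulo a garbled description of Abels' group in (i): its matrices are triangular with the two extreme diagonal entries equal to $1$ and the inner diagonal entries in $p^{\Z}$, not unitriangular.
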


\begin{proof}[On the proof]
Let $p$ be a prime.
For $n \ge 2$, consider the group $A_n$ 
of $n$-by-$n$ triangular matrices of the form
\begin{equation*}
\left(
\begin{array}{ccccc}
1 & * & \cdots & *  & *  \\
0 & * & \cdots & *  & *  \\
\vdots & \vdots & \ddots  & \vdots & \vdots \\
0 & 0 &  \cdots &  * & *  \\
0 & 0 & \cdots & 0 & 1
\end{array}
\right )
\end{equation*}
with upper-triangular entries $a_{i,j}$, $1 \le i < j \le n$, in $\Z [\frac{1}{p}]$ and 
diagonal entries $a_{2,2}, \hdots, a_{n-1,n-1}$ in $p^\Z$.
This group is soluble.
Its center $Z(A_n)$ is isomorphic to $\Z [\frac{1}{p}]$, that is to $A_2$,
and therefore is not finitely generated; 
it follows that $A_n$ does not satisfy Max-n.
It is easy to check that $A_n$ is finitely generated when $n \ge 3$.
\par

For $n \ge 4$, the group $A_n$ is finitely presented 
(\cite{Abel--77} for $n=4$ and \cite{AbBr--87} for $n \ge 4$).
This justifies Claim (i).
The existence of a finitely presented soluble group without Max-n
solves a problem of P.\ Hall; 
Remeslennikov had an earlier claim for this \cite{Reme--72}
which was apparently unjustified \cite{Thom--77}.
\par

Note that $A_3$ is infinitely presented.
This was most likely known to P.\ Hall,  and can be found in \cite{AbBr--87}.
But it is also a consequence of Bieri-Strebel Theorem \ref{ThBieriStrebel};
indeed, since  $Z(A_3)$ is not finitely generated,
$A_3/Z(A_3)$ is infinitely presented (Lemma \ref{consequenceofHall'slemma}),
and the soluble group $A_3$ cannot be a finitely presented cover
of the metabelian group $A_3/Z(A_3)$.
\par

Since  $Z(A_n) \simeq \Z [\frac{1}{p}]$ is not finitely generated,
the quotient $A_n / Z(A_n)$, with $n \ge 3$, 
is finitely generated non-finitely presented,
by Lemma \ref{consequenceofHall'slemma}.ii.
When $n \ge 4$, this justifies the second part of Claim (ii).
\par 

Claim (ii') follows from Theorem \ref{ThBieriStrebel}.
\par

For $n \ge 3$, the quotient of $A_n$ by the central subgroup
\begin{equation*}
\left\{ (z_{i,j})_{1 \le i,j \le n} \in A_n 
\hskip.2cm \Big\vert \hskip.2cm
\aligned
& z_{i,j} = \delta_{i,j} \hskip.2cm \text{for} \hskip.2cm (i,j) \ne (1,n)
\\
& z_{1,n} \in \Z 
\endaligned
\right\}    
\, \simeq \, \Z
\end{equation*}
(where $(\delta_{i,j})_{1 \le i,j \le n}$ denotes the unit matrix)
is finitely generated non-Hopfian\footnote{
A group $G$ is \textbf{non-Hopfian} if there exists
a surjective endomorphism of $G$ onto itself that is not injective.
A residually finite finitely generated group is Hopfian \cite{Mal'c--40}.}
(the argument of \cite{Hall--61} for $n=3$ carries over to all $n \ge 3$),
and therefore non-residually finite.
When $n \ge 4$, this justify Claim (iii).
\par

Still for $n \ge 4$, it is known that
the quotient $A_n /Z(A_n)$ does satisfy Max-n
and does not have any minimal presentation
\cite[Lemma 3.2 and Corollary 3.6]{BCGS}.
The last statement means that any presentation of $A_n/Z(A_n)$
contains redundant relators; 
in particular, the finitely related group $A_n$ has a quotient 
that is not finitely related.
The group $A_n$ itself has only countably many quotients;
see \cite[Theorem 1]{Lyul--84} and \cite[Corollary 3.4]{BCGS}.
\par

Concerning Claim (iv),
finitely presented soluble groups with unsoluble word problems
have been constructed by Kharlampovich in 
\cite{Khar--81}
and by Baumslag, Gildenhuys and Strebel in \cite{BaGS--86}.
Groups in \cite{BaGS--86} have centers that are not finitely generated,
and therefore have infinitely presented quotients 
(see again Claim (ii)).
\par

Earlier, Meskin had constructed a finitely generated 
recursively presented residually finite soluble group
with unsoluble word problem \cite{Mesk--74}.
\par

For any prime $p$, there exists 
a finitely presented soluble groups $G$ 
with centre $(C_p)^{(\infty)}$, an infinite direct sum
of cyclic groups of order $p$
(see \cite{Khar--90}, as well as \cite[Lemma 4.14]{KhMS}).
Hence $G$ has uncountably many quotients,
by Lemma \ref{consequenceofHall'slemma}.iii.
This justifies the first part of Claim (ii).
\par

Claim (v) follows from the existence 
of uncountably many finitely generated soluble groups,
because there are only countably many recursively presented groups.
\par

(Digression: Let $G$ be a finitely generated soluble group.
Assume that $G$ is of finite Pr\"ufer rank, namely that there exists an integer $d \ge 1$
such that any finitely generated subgroup of $G$ can be generated by $d$ elements.
Then $G$ is recursively presented if and only if
$G$ has a soluble word problem.
For this, and for examples of $G$ with and without soluble word problem,
see \cite{CaRo--84}.)
\par

Claim (vi) follows from Claim (v)
because a finitely generated subgroup
of a finitely presented group is recursively presented.
(This is straightforward; see the first page of \cite{Higm--61},
where Higman establishes the famous non-trivial converse;
alternatively, see \cite[Lemma 2.1]{Mill--89}.)
\end{proof}

\begin{rem!} 
\label{linearityofFSol}
As noted parenthetically just after Proposition \ref{MetabelianWellBehaved}, 
free solvable groups
$\operatorname{FSol}(k,\ell)$ are not linear when $k \ge 2$ and $\ell \ge 3$
\cite{Smir--64, Smir--65}.
Here is a proof.
\par

For $k \ge 3$, here is first a short reduction to a more standard result.
Let $\mathfrak C$ denote the class of (nilpotent-by-abelian)-by-finite groups;
observe that quotients of groups in $\mathfrak C$ are in $\mathfrak C$.
The Lie-Kolchin-Mal'cev theorem 
(see for example \cite[Section 15.1]{Robi--96}) establishes
that soluble linear groups are in $\mathfrak C$.
If $\operatorname{FSol}(k,\ell)$ were linear, hence in $\mathfrak C$,
so would be any quotient,  in particular the iterated wreath product
$(\Z \wr \Z) \wr \Z$; but this is not \cite[15.1.5]{Robi--96}.
\par

The following argument, shown to us by Ralph Strebel,
holds for any $k \ge 2$.
Denote by $x_1, \hdots, x_k$ a set of free generators of $\operatorname{FSol}(k, \ell)$.
Let $H$ be a subgroup of finite index in $\operatorname{FSol}(k, \ell)$;
there exists an integer $m \ge 1$ such that $x_1^m, x_2^m \in H$;
let $U$ denote the subgroup of $\operatorname{FSol}(k, \ell)$ 
generated by $x_1^m$ and $x_2^m$,
so that 
\begin{equation*}
U \,  \subset \,  H \, \subset_{\text{finite index}} \, \operatorname{FSol}(k, \ell) .
\end{equation*}
The group $U$ is isomorphic to $\operatorname{FSol}(k, \ell)$,
by a result due independently to Gilbert Baumslag \cite[Theorem 2]{Baum--63}
and Shmel'kin \cite[Theorem 5.2]{Shme--64}.
\par

If $\operatorname{FSol}(k, \ell)$ was linear, 
it would have a nilpotent-by-abelian subgroup of finite index, say $H$.
By the lines above, 
$\operatorname{FSol}(2, \ell)$ would be nilpotent-by-abelian.
Hence any $2$-generated soluble group of solubility class at least $3$
would be nilpotent-by-abelian. But this is not true. 
\par

Indeed, consider the symmetric group $S_4$ on four letters.
It is generated by two elements, a transposition and a $4$-cycle.
It is soluble of class $3$, with $D^1S_4 = A_4$ (the alternating group on four letters),
$D^2S_4 = V$ (the Klein Vierergruppe), and $D^3S_4 = \{1\}$. 
It is not nilpotent-by-abelian, namely $A_4$ is not nilpotent: $[A_4,V] = V$.
\par
The argument of Smirnov is different. It relies on the
bi-orderability of the groups $\operatorname{FSol}(k, \ell)$.
\end{rem!}

\section{\textbf{On wreath products and lamplighter groups}}
\label{AppendixBWreath}

Permutational wreath products have been defined 
in the beginning of Section \ref{sectionSelfsimilar}.
The \textbf{standard wreath product} $G \wr_H H$
refers to the action of $H$ on itself by left multiplications.

\begin{prop}
\label{finitenesswreath}
Consider two groups $G,H$, a non-empty $H$-set $X$, and the
permutational wreath product $G \wr_X H$.
We assume that $G \ne \{1\}$.
\par

(i)
$G \wr_X H$ is finitely generated if and only if 
$G,H$ are finitely generated and $H$ has finitely many orbits on $X$.
\par

(ii)
$G \wr_X H$ is finitely presented if and only if 
$G,H$ are finitely presented,
the $H_x$'s ($x \in X$) are finitely generated,
and there are finitely many orbits in $X \times X$ 
for the diagonal action of $H$
(where $H_x = \{h \in H \mid h(x) = x \}$).

(iii)
In particular, as soon as $H$ is infinite,
the standard wreath product $G \wr_H H$ is \emph{not} finitely presented.
\end{prop}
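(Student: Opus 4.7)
The plan is to handle the three parts in order, with part (iii) following from (ii) by a direct orbit count. Write $\iota_x(g)$ for the function $X \to G$ supported on $\{x\}$ with value $g$ there, and note that conjugation by $(1,h)$ carries $(\iota_x(g),1)$ to $(\iota_{xh^{-1}}(g),1)$, up to the $H$-action convention fixed in the excerpt. Part (i) is routine. For the ``if'' direction, given finite generating sets $S_G$ of $G$ and $S_H$ of $H$ and orbit representatives $x_1,\ldots,x_m$ of $X/H$, the finite set $\{(1,t):t\in S_H\}\cup\{(\iota_{x_i}(s),1):s\in S_G,\,1\le i\le m\}$ generates $G\wr_X H$, because conjugating the second type by elements of $H$ produces all $(\iota_x(s),1)$, which together generate $G^{(X)}$. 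For the ``only if'' direction, projecting to $H$ shows $H$ is finitely generated; given a finite generating set $\{(f_j,h_j)\}$, the finite set of values $\{f_j(x):j,\,x\in\operatorname{supp}(f_j)\}$ generates $G$, because conjugation by $H$ only translates supports; and every $H$-orbit in $X$ must meet some $\operatorname{supp}(f_j)$, forcing $X/H$ finite.

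For the ``if'' direction of (ii), I would exhibit an explicit finite presentation. Starting from finite presentations $\langle S_G\mid R_G\rangle$ of $G$ and $\langle S_H\mid R_H\rangle$ of $H$, orbit representatives $x_1,\ldots,x_m$ of $X/H$, finite generating sets $T_i$ of each $H_{x_i}$, and representatives of the non-diagonal $H$-orbits on $X\times X$ of the form $(x_{i_\alpha},x_{j_\alpha}\cdot g_\alpha)$, the presentation takes as generators $S_H$ together with a disjoint copy $\{s^{(i)}:s\in S_G\}$ for each $i$, and as relators $R_H$, a copy of $R_G$ on each $\{s^{(i)}\}$, stabiliser relators $[s^{(i)},t]=1$ for $t\in T_i$ and $s\in S_G$, and commutation relators $[s^{(i_\alpha)},g_\alpha^{-1}(s')^{(j_\alpha)}g_\alpha]=1$ for each non-diagonal orbit $\alpha$ and all $s,s'\in S_G$. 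A normal-form argument, pushing $H$-letters to the right and collecting $G$-letters coordinate by coordinate, shows this is indeed a presentation of $G\wr_X H$.

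The main obstacle is the ``only if'' direction of (ii). Assume $G\wr_X H$ is finitely presented; from part (i) and the observation that $H$ is a retract of $G\wr_X H$ (via $h\mapsto(1,h)$ composed with the projection), one immediately gets that $G,H$ are finitely generated, $H$ is finitely presented, and $X/H$ is finite. The subtle points are finite presentability of $G$, finite generation of each $H_{x_i}$, and finiteness of $(X\times X)/H$. For these, I would analyse the relation module of $G\wr_X H$ as a $\Z[H]$-module via the Lyndon--Hochschild--Serre spectral sequence of $1\to G^{(X)}\to G\wr_X H\to H\to 1$: finite presentability translates into finite generation, as a $\Z[H]$-module, of both the part of the relation module arising from the $G$-presentation at each coordinate orbit (which forces finite presentation of $G$ and finite generation of the $H_{x_i}$) and the part arising from commutators between distinct coordinates, with each $H$-orbit on $(X\times X)\smallsetminus\Delta_X$ contributing an independent generator. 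This is essentially Cornulier's double-coset criterion for finite presentability of wreath products, which I expect to be the cleanest way to package the argument.

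Finally, (iii) is immediate from (ii). The standard wreath product $G\wr_H H$ is the case $X=H$ with $H$ acting on itself by left multiplication, hence freely and transitively. The diagonal action of $H$ on $H\times H$ is therefore also free, with orbits parametrised by $x^{-1}y\in H$; there are $|H|$ such orbits, and the stabilisers $H_x$ are trivial. If $H$ is infinite then $(H\times H)/H$ is infinite, so (ii) forbids finite presentability of $G\wr_H H$.
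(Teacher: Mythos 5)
The paper gives no proof of this proposition; it only cites \cite[Propositions 2.1 and Theorem 1.1]{Corn--06} for (i) and (ii) and \cite{Baum--61} for (iii). Your part (i), your explicit presentation for the sufficiency half of (ii), and your deduction of (iii) from (ii) are correct and essentially reproduce Cornulier's constructions (one could quibble that for the right-action convention of Section 2 the invariant of a pair under the diagonal action is $xy^{-1}$ rather than $x^{-1}y$, but either way there are $\vert H \vert$ orbits, which is all you need).

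The genuine gap is in the necessity half of (ii), which is the hard part of the theorem, and the homological route you propose does not close it. Finite generation of the relation module over $\Z[W]$ characterizes type FP$_2$, which is indeed implied by finite presentation, so the logical direction is fine; the problem is that the relation module does not ``see'' the data you want it to see. Via Lyndon--Hochschild--Serre and K\"unneth, the contribution of a pair of distinct coordinates to $H_2(G^{(X)})$ is a copy of $H_1(G)\otimes H_1(G)$, and the contribution that would detect finite generation of a stabilizer $H_x$ is (after Shapiro) of the shape $H_1(H_x, H_1(G))$. If $G$ is perfect -- or merely has finite abelianization, in the stabilizer case -- these terms vanish or become finite, so the claim that ``each $H$-orbit on $(X\times X)\smallsetminus\Delta_X$ contributes an independent generator'' is simply false in general, and finite generation of the $H_x$ cannot be extracted either. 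Yet Cornulier's theorem holds for every $G\ne\{1\}$. His necessity argument is genuinely non-homological: he shows (Proposition 2.10 of \cite{Corn--06}, quoted in Proposition \ref{AwrZ} above) that when $H$ has infinitely many orbits on $X\times X$ the kernel of \emph{any} finitely presented cover of $G\wr_X H$ contains non-abelian free subgroups, by exhibiting the wreath product as a proper increasing union of the partial presentations you wrote down and locating explicit free subgroups in the kernels; a similar direct-limit argument handles non-finitely-generated stabilizers. As written, your proof of the only-if direction would need to be replaced by an argument of this kind.
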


\begin{proof}[References]
Claim (i) is standard, and easy; 
if necessary, see \cite[Proposition 2.1]{Corn--06}.
For Claim (ii), see \cite[Theorem 1.1]{Corn--06}.
Claim (iii) is the main result of \cite{Baum--61}.
\end{proof}

As a particular case of Proposition \ref{finitenesswreath},
if $G$ is finitely-generated abelian and $G \ne 1$, 
the group $G \wr \Z$ is
metabelian, finitely generated, and infinitely presented.
When $G$ is finite abelian
and $H \simeq \Z$ infinite cyclic,
we will call  $G \wr \Z$ the \textbf{lamplighter group} for $G$.
(For this terminology,  precise assumptions on $G$ and $H$ 
vary from one author to the other; some ask that $G = \Z / 2\Z$.)
\par

\begin{prop}
\label{AwrZ}
Consider two finitely presented groups $G,H$, 
an $H$-set $X$ such that $H$ has finitely many orbits on $X$
and infinitely many orbits on $X \times X$, 
and the permutational wreath product $G \wr_X H$.
We assume that $G \ne \{1\}$.
\par
(i) $G \wr_X H$ is finitely generated and is infinitely presented.
\par
(ii) For any finitely presented cover
$\pi : E \twoheadrightarrow G \wr_X H$,
the group $E$ has non-abelian free subgroups.
\end{prop}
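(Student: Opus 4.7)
Part~(i) is immediate from Proposition~\ref{finitenesswreath}: since $G$ and $H$ are finitely generated and $H$ has finitely many orbits on $X$, assertion~(i) of that proposition gives finite generation of $W := G \wr_X H$; since $H$ has infinitely many orbits on $X \times X$, at least one of the three conditions listed in assertion~(ii) fails, so $W$ is not finitely presented.

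For part~(ii), my plan is to approximate $W$ in the Chabauty topology by a sequence $(E_n)_{n \ge 1}$ of finitely presented covers of $W$, each containing a non-abelian free subgroup, and then invoke Proposition~\ref{main} together with Corollary~\ref{main2}. I start from a Cornulier-type presentation of $W$ (in the spirit of \cite{Corn--06}) whose generators are those of $H$ together with one copy of a finite generating set of $G$ at each $H$-orbit representative $x_j$ of $X$, and whose relators consist of the relations of $G$ and $H$, stabilizer relations making the $x_j$-copy of $G$ commute with a generating set of $H_{x_j}$, and an infinite family $R_{\mathrm{comm}}$ of commutation relations $[g^{(x)}, {g'}^{(y)}] = 1$ indexed by representatives of the $H$-orbits on the off-diagonal part of $X \times X$. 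The hypothesis that $X \times X$ has infinitely many $H$-orbits is exactly what makes $R_{\mathrm{comm}}$ infinite. I then define $E_n$ by keeping only the first $n$ elements of $R_{\mathrm{comm}}$: each $E_n$ is finitely presented, admits a natural epimorphism onto $W$, and the sequence $(E_n, S)$ converges to $(W, S)$ in the Chabauty topology on the relevant space of marked groups.

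The central step is to show that each $E_n$ contains a non-abelian free subgroup. The base subgroup $B_n \le E_n$ -- the kernel of the natural map $E_n \to H$ obtained by killing every copy of $G$ -- has, by a Reidemeister--Schreier analysis of the truncated presentation, the structure of a graph product of copies of $G$ indexed by $X$, where two vertices $x,y$ are joined by an edge iff $(x,y)$ lies in one of the finitely many $H$-orbits corresponding to the $n$ retained relations. Since those orbits are finite in number while $H$ has infinitely many orbits on $X \times X$, for every $n$ there exist $x,y$ in distinct $H$-orbits with $(x,y)$ not an edge of the commutation graph; the standard normal form for graph products then identifies the subgroup $\langle G^{\{x\}}, G^{\{y\}} \rangle \le B_n$ with the free product $G \ast G$, which contains a non-abelian free group whenever $|G| \ge 3$. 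In the remaining case $G \simeq C_2$, the same argument applied to three pairwise non-adjacent vertices yields a subgroup $C_2 \ast C_2 \ast C_2$, which is virtually free of rank~$2$. Given now an arbitrary finitely presented cover $E \twoheadrightarrow W$, Proposition~\ref{main} produces an epimorphism $E \twoheadrightarrow E_n$ for $n$ large enough, and Corollary~\ref{main2} concludes. The main technical obstacle will be the precise verification of the graph-product description of $B_n$, in particular checking that the interaction of the stabilizer relations with the $H$-action does not create unintended commutations, together with the small combinatorial analysis producing the required non-adjacent pair (or triple) of vertices in the commutation graph.
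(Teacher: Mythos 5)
Your part (i) coincides with the paper's, which likewise just invokes Proposition \ref{finitenesswreath}. For part (ii) the paper gives no argument at all: it simply cites \cite[Proposition 2.10]{Corn--06}, where the (stronger) statement that the \emph{kernel} of $\pi$ contains non-abelian free subgroups is proved. Your Chabauty-approximation strategy --- truncate the infinite family of commutation relators, show each truncation contains $F_2$, then apply Proposition \ref{main} and Corollary \ref{main2} --- is therefore a genuinely self-contained route, and it is exactly the strategy the paper carries out by hand for the special case $W=\Z\wr\Z$ later in Appendix \ref{AppendixBWreath} (there the truncations $W_n$ are analysed as HNN-extensions via Britton's lemma rather than as graph products). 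The outline is sound, but two steps would fail as written.

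First, the hypotheses of Proposition \ref{AwrZ} do \emph{not} include finite generation of the stabilizers $H_x$ (contrast with Proposition \ref{finitenesswreath}.ii; e.g.\ $X=H/K$ with $K$ an infinitely generated subgroup is allowed). If some $H_{x_j}$ is infinitely generated, your ``Cornulier-type presentation'' already has infinitely many stabilizer relators, so the groups $E_n$ obtained by truncating only $R_{\mathrm{comm}}$ are not finitely presented and Proposition \ref{main} does not produce an epimorphism $E\twoheadrightarrow E_n$. You must truncate the stabilizer relators as well; the base group of the resulting $E_n$ is then a graph product indexed not by $X$ but by a larger $H$-set $\widetilde X_n$ (disjoint union of coset spaces of finitely generated subgroups of the $H_{x_j}$) mapping equivariantly onto $X$. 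The argument survives --- non-edges in $X$ pull back to non-edges in $\widetilde X_n$ --- but this has to be said.

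Second, your fallback for $G\simeq C_2$ assumes the existence of three pairwise non-adjacent vertices, and this does not follow from what you have: the edge set is a finite union of off-diagonal $H$-orbits, which guarantees a non-adjacent \emph{pair} (since infinitely many orbit classes of $X\times X$ remain non-edges while the diagonal accounts for only finitely many), but a priori the complement graph could be triangle-free (e.g.\ all non-edges lying between two $H$-orbits $A$ and $B$ of $X$, with $A^2$ and $B^2$ consisting of finitely many orbits, all retained as edges). The case $G\simeq C_2$ can still be rescued: the base is then a right-angled Coxeter group $W_\Gamma$, and such a group fails to contain $F_2$ only when non-adjacency in $\Gamma$ is an equivalence relation with classes of size at most $2$; an $H$-invariant partial matching of non-edges would consist of at most as many $H$-orbits as $X$ has, contradicting the existence of infinitely many non-edge orbits. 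But this requires a different lemma than the one you propose, so as stated the step is a gap.
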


\begin{proof}
Claim (i) is a particular case of Proposition \ref{finitenesswreath}.
In Proposition 2.10 of \cite{Corn--06}, it is shown that the kernel of $\pi$
contains non-abelian free subgroups.
\end{proof}

Note that, in the particular case of two abelian groups $G$ and $H$,
Claim (ii) is also a consequence of Theorem \ref{ThBieriStrebel}.

\begin{proof}[On other proofs of Proposition \ref{AwrZ} in the case of $W := \Z \wr_{\Z} \Z$]
(We have $G = H = \Z$ and $X = \Z$.)
We have a presentation
\begin{equation*}
W \,  = \, 
\langle s,t \mid [s^{t^i}, s^{t^j}] \hskip.2cm \forall i,j \in \Z \rangle ;
\end{equation*}
indeed, any element in the right-hand side can be written as
\begin{equation*}
\aligned
& 
s^{m_1}t^{n_1}s^{m_2}t^{n_2}s^{m_3}t^{n_3} \cdots s^{m_\ell}t^{n_\ell} 
\\
& \hskip.5cm \, = \,
s^{m_1} 
\big( s^{t^{-n_1}} \big)^{m_2} 
\big( s^{t^{-n_1 - n_2}} \big)^{m_3}    \cdots 
\big( s^{t^{-n_1 - \cdots - n_{\ell-1}}} \big)^{m_\ell}
t^{n_1 + \cdots + n_\ell}
\endaligned
\end{equation*}
for some $m_1, n_1, \hdots, m_\ell, n_\ell \in \Z$, and therefore as
\begin{equation*}
\big(s^{t^{j_1}}\big)^{i_1} \big( s^{t^{j_2}} \big)^{i_2} \cdots \big( s^{t^{j_k}} \big)^{i_k} t^N
\end{equation*}
for appropriate $i_1, j_1, \hdots, i_k, j_k, N \in \Z$ with $i_1 < i_2 < \cdots < i_k$.
It follows that the natural homomorphism 
\begin{equation*}
\langle s,t \mid [s^{t^i}, s^{t^j}] \hskip.2cm \forall i,j \in \Z \rangle
\longrightarrow W
\end{equation*}
is an isomorphism.
\par

Since $t^i [s, s^{t^k}] t^{-i} = [s^{t^{i}}, s^{t^{i+k}}]$,
we have a second presentation
\begin{equation*}
W \,  = \, 
\langle s,t \mid [s, s^{t^i}] \hskip.2cm \forall i \in \N \rangle .
\end{equation*}
For a positive integer $n$, define
\begin{equation*}
W_n \, = \, \langle s,t \mid [s, s^{t^i}], \hskip.1cm i = 0, \hdots, n \rangle . 
\end{equation*}
Note that $\lim_{n \to \infty} W_n= W$ in $\mathcal M_2$.
We have a third presentation
\begin{equation*}
W_n \, = \, 
\left\langle 
s_0, \hdots, s_n, t \hskip.1cm \bigg\vert \hskip.1cm
\aligned
&[s_i, s_j] , \hskip.1cm 0 \le i,j \le n , \hskip.1cm 
\\
&s_k^t = s_{k+1}, \hskip.1cm 0 \le k \le n-1 
\endaligned
\right\rangle .
\end{equation*}
Indeed, it can be checked that the assignments
\begin{equation*}
\aligned
&\varphi_1 \, : \, s \longmapsto s_0, \hskip.2cm t \longmapsto t
\\
&\varphi_2 \, : \, s_i \longmapsto s^{t^i}, \hskip.2cm t \longmapsto t
\hskip1cm (0 \le i \le n)
\endaligned
\end{equation*}
define, between the groups of the two previous presentations,
isomorphisms that are inverse to each other.
\par

Let $H_n$ be the free abelian subgroup of $W_n$
generated by $s_0, \hdots, s_n$.
Denote by $K_n$ the subgroup of $H_n$ generated by $s_0, \hdots, s_{n-1}$,
and by $L_n$ that generated by $s_1, \hdots, s_n$;
observe that $K_n \simeq L_n \simeq \Z^n$.
Let $\psi_n : K_n \longrightarrow L_n$ be the isomorphism
defined by $\psi(s_{i-1}) = s_i$ for $i = 1, \hdots, n$. 
Then $W_n$ is clearly the HNN-extension
corresponding to the data $(H_n, \psi_n : K_n \overset{\simeq}{\rightarrow} L_n)$.
By Britton's lemma, $W_n$
contains non-abelian free subgroups.
\par
It follows from Corollary \ref{main2} that any finitely presented cover of $W$
contains non-abelian free groups.
\par

   Let us finally allude to another argument showing that $W$ is infinitely presented.
Let $H$ denote the subgroup of $\operatorname{GL}_3(\C)$
generated by the three matrices
\begin{equation*}
\left(
\begin{array}{ccc}
1 & 0 & 0  \\
0 & t & 0  \\
0 & 0 & 1
\end{array}
\right ) ,
\hskip.2cm
\left(
\begin{array}{ccc}
1 & 1 & 0  \\
0 & 1 & 0  \\
0 & 0 & 1
\end{array}
\right ) ,
\hskip.2cm \text{and} \hskip.2cm
\left(
\begin{array}{ccc}
1 & 0 & 0  \\
0 & 1 & 1  \\
0 & 0 & 1
\end{array} 
\right ) ,
\end{equation*}
where $t$ is some transcendental number.
The centre $Z(H)$ of $H$ is free abelian of infinite rank,
and the quotient $H/Z(H)$ is isomorphic to $M := \Z[t,t^{-1}]^2 \rtimes \Z$,
where $\rtimes$ refers to the action of the generator $1 \in \Z$
by $\left( \begin{array}{cc} t & 0 \\ 0 & t^{-1} \end{array} \right)$.
There is an isomorphism of $M$ onto a subgroup of index $2$ 
in $\Z[t,t^{-1}] \rtimes_t \Z \simeq \Z \wr \Z$,
given by $(P(t), Q(t), n) \longmapsto (P(t^2) + tQ(t^{-2}), 2n)$.
It follows from Lemma \ref{consequenceofHall'slemma}.ii
that $\Z \wr \Z$ is not finitely presented.
We are grateful to Adrien Le Boudec for correcting an earlier version
of our argument at this point.
\par

The isomorphism of $H/Z(H)$ with a subgroup of index $2$ in $\Z \wr \Z$
is essentially due to P.\ Hall.
See \cite[Theorem 7]{Hall--54} and \cite[Lemma 3.1]{CaCo}.
\end{proof}

Concerning Proposition \ref{AwrZ}, let us add one more remark
about the particular case $W = (\Z / h\Z) \wr \Z$, with $h \ge 2$:
it is known that any finitely presented cover of $W$ is large
\cite[Section IV.3, Theorem~7]{Baum--93}.

\vskip.2cm

Recall that the free soluble group 
$\operatorname{FSol}(k,\ell) = F_k / D^\ell F_k$
of rank $k$ and solubility class $\ell$
has been defined in Appendix \ref{AppendixAMetabelian}.

\begin{cor}
\label{freeSol}
For $k,\ell \ge 2$, 
the group $\operatorname{FSol}(k,\ell)$ is infinitely presented,
and any finitely presented cover of it contains non-abelian free subgroups.
\end{cor}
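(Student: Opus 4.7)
The plan is to deduce the corollary from the Bieri--Strebel Theorem \ref{ThBieriStrebel} by passing to the free metabelian quotient of rank $k$. For $k, \ell \ge 2$, the inclusion $D^\ell F_k \subset D^2 F_k$ of terms of the derived series of the free group $F_k$ induces a canonical epimorphism
\[
\operatorname{FSol}(k,\ell) \, = \, F_k / D^\ell F_k \, \twoheadrightarrow \, F_k / D^2 F_k \, = \, \operatorname{FSol}(k,2) .
\]
As noted in the list of examples following Theorem \ref{ThBieriStrebel}, the free metabelian group $\operatorname{FSol}(k,2)$ is finitely generated (by $k$ elements) and infinitely presented for $k \ge 2$, so Theorem \ref{ThBieriStrebel} applies to it.

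Next I would handle the claim about covers. Given any finitely presented cover $\pi : E \twoheadrightarrow \operatorname{FSol}(k,\ell)$, composition with the epimorphism above yields a finitely presented cover $E \twoheadrightarrow \operatorname{FSol}(k,2)$. Theorem \ref{ThBieriStrebel} then guarantees that $E$ contains non-abelian free subgroups, which is exactly the second statement of the corollary.

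Finally, for the infinite presentation claim I would argue by contradiction: if $\operatorname{FSol}(k,\ell)$ were finitely presented, then taking $E = \operatorname{FSol}(k,\ell)$ with the identity epimorphism in the argument above would force $\operatorname{FSol}(k,\ell)$ to contain non-abelian free subgroups. But $\operatorname{FSol}(k,\ell)$ is soluble of class at most $\ell$, and soluble groups contain no non-abelian free subgroups. This contradiction shows $\operatorname{FSol}(k,\ell)$ is infinitely presented. No step here presents a real obstacle, since the essential work is already packaged in Theorem \ref{ThBieriStrebel}; the only point to verify is the elementary fact that $\operatorname{FSol}(k,2)$ lies in the class to which that theorem applies, which is stated explicitly in the paper.
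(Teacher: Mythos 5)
Your argument is correct, but it is not the route the paper takes. You descend only one step, to the free metabelian quotient $\operatorname{FSol}(k,2)$, and invoke the Bieri--Strebel Theorem \ref{ThBieriStrebel}; the paper instead descends all the way to $\Z \wr \Z$ via the chain $\operatorname{FSol}(k,\ell) \twoheadrightarrow \operatorname{FSol}(k,2) \twoheadrightarrow \operatorname{FSol}(2,2) \twoheadrightarrow \Z \wr \Z$ and invokes Proposition \ref{AwrZ} (the wreath-product statement, which rests on de Cornulier's result that the kernel of any finitely presented cover of $G \wr_X H$ contains non-abelian free subgroups). Both proofs have the same skeleton --- pass to a quotient, apply a black box to that quotient, pull covers back, and get infinite presentability by contradiction with solubility --- and the paper itself notes after Proposition \ref{AwrZ} that for abelian $G$ and $H$ Claim (ii) there also follows from Theorem \ref{ThBieriStrebel}, so your black box is explicitly sanctioned. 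The one point where your version is slightly less self-contained: Theorem \ref{ThBieriStrebel} requires its input group to be \emph{infinitely presented}, so you must feed in the fact that $\operatorname{FSol}(k,2)$ is infinitely presented. You take this from the list of examples in the introduction, where it is asserted without proof (the reference to Shmel'kin appears only in the note attached to this corollary); in particular, for $\ell = 2$ the ``infinitely presented'' half of the corollary is, in your treatment, exactly the assumed input rather than a conclusion. The paper's detour through $\Z \wr \Z$ avoids this, since the infinite presentability of $\Z \wr \Z$ is established independently in Appendix \ref{AppendixBWreath} (Proposition \ref{finitenesswreath}.iii, Baumslag's theorem). If you prefer to keep your route, you can close this small gap by noting that $\operatorname{FSol}(k,2)$ surjects onto $\Z \wr \Z$ and that quotients of finitely presented metabelian groups are finitely presented (Proposition \ref{MetabelianWellBehaved}.ii), so $\operatorname{FSol}(k,2)$ inherits infinite presentability from $\Z \wr \Z$. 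What your approach buys is economy --- one quotient and one citation instead of a chain of three and an appeal to the wreath-product machinery; what the paper's buys is independence from the Bieri--Strebel theory and a proof entirely contained in its own appendices.
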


\noindent
\emph{Note.}
(i)
That $\operatorname{FSol}(k,\ell)$ is infinitely presented
is a result due to \cite{Shme--65}.
See also \cite[Proposition 2.10 and Corollary 2.14]{Corn--06}.
\par
(ii)
Corollary \ref{freeSol} and our proof carry over to 
free polynilpotent groups 
\begin{equation*}
\operatorname{FPolynilp}(k, \ell_1, \hdots, \ell_n) \, := \,  
F_k/ C^{\ell_k}( C^{\ell_{k-1}}( \cdots C^{\ell_1}(F_k) \cdots ))
\end{equation*}
for any $k \ge 2$, $n \ge 2$, and $\ell_1, \hdots, \ell_n \ge 2$
(where $C^\ell G$ denotes the $\ell$th group of the lower central series of a group $G$,
defined by $C^1G = G$, and $C^{\ell + 1}G  = [G, C^\ell G]$).
These groups are the subject of \cite{Shme--64}.

\begin{proof}
Since $\Z \wr \Z$ is a two-generator metabelian group,
we have an epimorphism $\operatorname{FSol}(2,2) \twoheadrightarrow \Z \wr \Z$.
Indeed, we have a sequence of natural epimorphisms
\begin{equation*}
\operatorname{FSol}(k,\ell) \twoheadrightarrow 
\operatorname{FSol}(k,2)  \twoheadrightarrow
\operatorname{FSol}(2,2)  \twoheadrightarrow
\Z \wr \Z .
\end{equation*}
Hence any finitely presented cover of $\operatorname{FSol}(k,\ell)$ 
is also one of $\Z \wr \Z$.
If $\operatorname{FSol}(k,\ell)$ was finitely presented,
it would contain non-abelian free subgroup by Proposition \ref{AwrZ},
but this cannot be in a soluble group.
\end{proof}

In the situation of Proposition \ref{AwrZ}, suppose moreover that
$H$ is infinite residually finite,
$G$ has at least one non-trivial finite quotient,
and consider the standard wreath product ($X = H$).
The following strengthening of Claim (ii) is shown in \cite[Theorem 1.5]{CoKa--11}:
any finitely presented cover of $G \wr H$ is \emph{large}.
In particular:
\begin{center}
\emph{any finitely presented cover of $\Z \wr \Z$ is large.}
\end{center}
By the proof of Corollary \ref{freeSol}, it follows that,
\begin{center}
\emph{for $k,\ell \ge 2$, any finitely presented cover of $\operatorname{FSol}(k,\ell)$
is large.}
\end{center}

\vskip.2cm

The following notion provides interesting examples of metabelian groups,
as we will illustrate with Baumslag-Solitar groups.

\begin{defn!}
The \textbf{metabelianization} of a group $G$ is the 
metabelian quotient group $G / D^2G$.
\end{defn!}

\begin{defn!}[\cite{BaSo--62}]
\label{BSmet}
For $\ell,m \in \Z \smallsetminus \{0\}$, 
the \textbf{Baumslag-Solitar group}  is defined by
the two-generators one-relator presentation
\begin{equation*}
\operatorname{BS}(\ell,m) \, = \, 
\langle s,t \mid t^{-1}s^\ell t = s^m \rangle .
\end{equation*}
\end{defn!}

We collect three well-known properties of these groups as follows.

\begin{prop}
\label{BSbasique}
Let $\ell,m \in \Z \smallsetminus \{0\}$ and $\operatorname{BS}(\ell,m)$ be as above.
\begin{itemize}
\item[(i)]
$\operatorname{BS}(\ell,m)$ is abelian 
if and only if $\operatorname{BS}(\ell,m)$ is nilpotent,
if and only if $\ell = m = \pm 1$.
\item[(ii)]
$\operatorname{BS}(\ell,m)$ is metabelian 
if and only if $\operatorname{BS}(\ell,m)$ is soluble, 
if and only if $\operatorname{BS}(\ell,m)$ does not contain non-abelian free subgroups,
if and only if $\vert \ell \vert = 1$ or $\vert m \vert = 1$.
\item[(iii)]
If $\ell,m$ satisfy $\ell, m \ge 2$ and are \emph{coprime},
then $\operatorname{BS}(\ell,m)$ is non-Hopfian.
\item[(iv)]
For $\ell,m$ as in (iii), the group $\operatorname{BS}(\ell,m)$
contains non-abelian free groups, but is not large.
\end{itemize}
\end{prop}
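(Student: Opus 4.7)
The plan is to treat the four claims in order, in all cases exploiting the HNN structure of $\operatorname{BS}(\ell,m)$ as an HNN extension of $\langle s\rangle\simeq\Z$ with associated subgroups $\ell\Z$ and $m\Z$ and stable letter $t$. For (i), the implication abelian $\Rightarrow$ nilpotent is trivial. For the converse I would split into cases: if $|\ell|,|m|\ge 2$, then by (ii) the group contains a nonabelian free subgroup, so is not even soluble; if say $|\ell|=1$ (the case $|m|=1$ being symmetric), the defining relation identifies $\operatorname{BS}(\ell,m)$ with the metabelian semi-direct product $\Z[1/m]\rtimes\Z$, which is nilpotent iff the $\Z$-action on $\Z[1/m]$ is trivial, and this forces $\ell=m=\pm 1$. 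In that latter case the defining relation reduces to $[s,t]=1$ and the group is $\Z^2$, proving abelianness.

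For (ii), the ``if'' direction is immediate from the $\Z[1/m]\rtimes\Z$ description. For the converse, when $|\ell|,|m|\ge 2$, I would invoke Britton's Lemma to show that a well-chosen pair of elements---for instance $t$ and $sts^{-1}$, whose Britton normal forms never pinch under nontrivial reduced words because $s$ lies neither in $\ell\Z$ nor in $m\Z$---generates a nonabelian free subgroup. This rules out solubility, hence metabelianness.

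For (iii), I would reproduce the classical 1962 Baumslag--Solitar construction. Define $\phi:\operatorname{BS}(\ell,m)\to\operatorname{BS}(\ell,m)$ by $\phi(s)=s^\ell$ and $\phi(t)=t$; the defining relator is preserved since $\phi(t^{-1}s^\ell t)=(t^{-1}s^\ell t)^\ell=s^{\ell m}=\phi(s^m)$. Surjectivity uses B\'ezout: if $a\ell+bm=1$ then $s=\phi(s)^a\phi(t^{-1}st)^b$ because $\phi(t^{-1}st)=t^{-1}s^\ell t=s^m$. The heart of the argument is exhibiting a nontrivial element of $\ker\phi$. I would verify via Britton normal form that $w=[s,\,t^{-1}st]$ is nontrivial in $\operatorname{BS}(\ell,m)$---the reduction fails because $s\notin\ell\Z\cup m\Z$---while $\phi(w)=[s^\ell,\,s^m]=1$ as both arguments are powers of $s$.

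For (iv), the existence of nonabelian free subgroups follows from (ii), so the main obstacle is to prove that $\operatorname{BS}(\ell,m)$ is not large. The plan is Bass--Serre theoretic: any finite-index subgroup $H$ acts cocompactly on the Bass--Serre tree of $\operatorname{BS}(\ell,m)$ with infinite cyclic vertex and edge stabilizers, and is therefore the fundamental group of a finite graph of infinite cyclic groups whose edge labels are powers of $\ell$ and $m$ dictated by the covering combinatorics. Each connected component of this quotient graph has Euler characteristic zero and is hence a single cycle; a direct computation of the abelianization of such a cycle-shaped graph of $\Z$'s yields a single relation of the form $\bigl(\prod_i p_i-\prod_i q_i\bigr)[s]=0$ on the vertex contribution, with each $p_i$ a power of $\ell$ and each $q_i$ a power of $m$. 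Coprimality of $\ell$ and $m$ then forces this coefficient to be nonzero, killing the vertex contribution over $\Q$ and leaving $H^{\mathrm{ab}}\otimes\Q$ of rank exactly one. Hence no finite-index subgroup surjects onto $F_2$, and $\operatorname{BS}(\ell,m)$ is not large.
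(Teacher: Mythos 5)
For parts (i)--(iii) your argument is correct and essentially identical to the paper's: reduction to the semi-direct product $\Z[1/m]\rtimes\Z$ when $|\ell|=1$ or $|m|=1$, Britton's Lemma applied to $t$ and a conjugate of $t$ to produce a free subgroup when $|\ell|,|m|\ge 2$ (the paper uses $s^{-1}ts$ and $t$, you use $sts^{-1}$ and $t$ --- same computation), and the classical non-Hopfian endomorphism $s\mapsto s^{\ell}$, $t\mapsto t$ with the commutator $[s,t^{-1}st]$ as a nontrivial kernel element. The only cosmetic difference is that for non-nilpotency the paper observes that $\Z[1/m]$ is not finitely generated (and subgroups of finitely generated nilpotent groups are finitely generated), whereas you argue via triviality of the $\Z$-action; both work, though your ``nilpotent iff the action is trivial'' needs a one-line justification via the lower central series, e.g.\ to handle $\operatorname{BS}(1,-1)$.

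Part (iv) is where you genuinely diverge: the paper simply cites \cite{EdPr--84} for non-largeness, while you sketch a Bass--Serre argument. Your strategy (bound $\dim_{\Q}H^{\mathrm{ab}}\otimes\Q$ by $1$ for finite-index subgroups $H$) is the right one, but the sketch has a real gap at the step ``the quotient graph has Euler characteristic zero and is hence a single cycle.'' Neither half of this is justified for an arbitrary finite-index subgroup: you have not shown that the number of $H$-orbits of vertices of the Bass--Serre tree equals the number of $H$-orbits of edges, and even $V=E$ for a connected graph only gives first Betti number $1$, not literally a cycle. Moreover the edge indices $[H_v:H_e]$ are \emph{divisors} of $\ell$ and $m$ (namely $\ell/\gcd(k,\ell)$ where $H_v=\langle s^k\rangle$ up to conjugacy), not powers, and the place where coprimality really enters is earlier than you use it. The standard repair: since largeness passes to finite-index subgroups, reduce to $H$ normal of finite index; then $H\cap\langle s\rangle=\langle s^k\rangle$ with $k\ge 1$, and normality together with $t^{-1}s^{\ell j}t=s^{mj}$ forces $\gcd(k,\ell)=\gcd(k,m)=1$ (here coprimality of $\ell$ and $m$ is essential). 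This makes every vertex of $H\backslash T$ meet exactly one edge orbit of each type, so the quotient graph really is $2$-regular, hence a single cycle, and your determinant computation $\ell^{\,n}\ne \pm m^{\,n}$ then closes the argument. Without this reduction the claimed structure of the quotient graph is unproved.
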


\begin{proof}[On the proof]
It is easy to check that the four groups
$\operatorname{BS}(\ell,m)$, $\operatorname{BS}(m,\ell)$,
$\operatorname{BS}(-\ell,-m)$, $\operatorname{BS}(-m,-\ell)$
are isomorphic. For simplicity, let us assume from now on
that $\ell$ and $m$ are positive.
(For the general case, with all details, we refer to \cite{Souc--01}).
\par
It is an exercise to check that $\operatorname{BS}(1,m) \simeq 
\Z \left[ \frac{1}{m} \right] \rtimes_m \Z$ for any $m \ge 1$.
It follows that $\operatorname{BS}(\ell,m)$ is metabelian 
if $\ell = 1$ or $m = 1$,
and abelian if and only if $\ell = m = 1$.
If $m \ge 2$, note that $\operatorname{BS}(1,m)$ is not nilpotent,
because its subgroup $\Z \left[ \frac{1}{m} \right]$
is not finitely generated.
If $\ell \ge 2$ and  $m \ge 2$,
the subgroup generated by $s^{-1}ts$ and $t$ is free of rank $2$,
by Britton's Lemma.

Claim (iii) is the main reason for the celebrity of these groups. 
It is straightforward to check that the assignments
$\varphi (s) = s^\ell$ and $\varphi (t) = t$
define an endomorphism $\varphi$ of $\operatorname{BS}(\ell,m)$.
The image of $\varphi$ contains $t$ and $s^\ell$,
hence $t^{-1}s^\ell t^{-1} = s^m$, and therefore $s$;
hence $\varphi$ is onto.
On the one hand, 
$\varphi( [t^{-1} s t, s]) = [t^{-1} s^\ell t, s^\ell] = [s^m, s^\ell] = 1$;
on the other hand, 
$[t^{-1} s t, s] = t^{-1} s t \hskip.1cm s \hskip.1cm t^{-1} s^{-1} t \hskip.1cm s^{-1} \ne 1$,
where the last inequality holds by Britton's Lemma;
hence $\varphi$ is not one-to-one.

More generally, we know necessary and sufficient conditions on $\ell,m$ 
for $\operatorname{BS}(\ell,m)$ to be non-Hopfian; 
see \cite{BaSo--62, Coll--78, CoLe--83}.

The first part of Claim (iv) is standard
(it also follows from Theorem \ref{ThBieriStrebel},
see \ref{BSmetab}.ii below). 
For the second part, see Example 3.2 and Theorem 6 in \cite{EdPr--84}.
\end{proof}

\begin{defn!}
\label{defmet(l,m)}
For two coprime positive integers $\ell,m$, not both $1$,
let
\begin{equation*}
\operatorname{Met}(\ell,m) \,= \, 
\left(
\begin{array}{cc}
  \big(\frac{\ell}{m} \big)^{\Z} & \Z \left[ \frac{1}{\ell m} \right] \\
  0   & 1 
\end{array} 
\right )  \, \simeq \,
\Z \left[ \frac{1}{\ell m} \right] \rtimes_{\ell / m} \Z
\end{equation*}
be the group of triangular matrices generated by
\begin{equation*}
\left(
\begin{array}{cc}
  1 & 1 \\
  0   & 1 
\end{array} 
\right ) 
\hskip.2cm \text{and} \hskip.2cm
\left(
\begin{array}{cc}
  \frac{\ell}{m} & 0 \\
  0   & 1 
\end{array} 
\right ) .
\end{equation*}
Let
\begin{equation*}
\mu_{\ell,m} \, : \, 
\operatorname{BS}(\ell,m) \, \twoheadrightarrow  \operatorname{Met}(\ell,m)
\end{equation*}
denote the epimorphism defined by
\begin{equation*}
\mu_{\ell,m}(s) =
\left(
\begin{array}{cc}
  1 & 1 \\
  0   & 1 
\end{array} 
\right ) 
\hskip.2cm \text{and} \hskip.2cm
\mu_{\ell,m}(t) = 
\left(
\begin{array}{cc}
  \frac{\ell}{m} & 0 \\
  0   & 1 
\end{array} 
\right ) .
\end{equation*}
\end{defn!}

The following proposition collects
facts on $\operatorname{BS}(\ell,m)$ and $\operatorname{Met}(\ell,m)$.
Claims (iii) to (v) constitute a digression from our theme.

\begin{prop}  
\label{BSmetab}
Let the notation be as just above, and $\ell,m$ 
be two \emph{coprime} positive integers, not both $1$.

(i) 
$\mu_{\ell,m}$ is an isomorphism if and only if $\ell = 1$ or $m = 1$.
\par

\noindent
We assume furthermore that  $\min \{\ell,m\} \ge 2$.
\par

(ii)
$\operatorname{Met}(\ell,m)$ is infinitely presented.
\par

(iii)
The multiplicator group $H_2(\operatorname{Met}(\ell,m), \Z)$ is trivial.
\par

(iv)
$\operatorname{Met}(\ell,m)$
is of cohomological dimension~$3$.
\par

(v)
For $x \in \C$ transcendental,
the matrices
$
\left(
\begin{array}{cc}
  1 & 1 \\
  0   & 1 
\end{array} 
\right ) 
$
and
$
\left(
\begin{array}{cc}
  x  & 0 \\
  0   & 1 
\end{array} 
\right ) 
$
generate a group isomorphic to $\Z \wr \Z$.
\end{prop}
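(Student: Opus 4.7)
My plan is to treat the five assertions separately, in roughly increasing order of difficulty. Parts (i) and (v) are direct matrix manipulations. For (i), if $\ell=1$ the defining presentation $\operatorname{BS}(1,m)=\langle s,t\mid tst^{-1}=s^m\rangle$ is already a presentation of the split extension $\Z[1/m]\rtimes_m\Z=\operatorname{Met}(1,m)$, and $\mu_{1,m}$ is the resulting tautological identification; the case $m=1$ is symmetric. Conversely, if $\ell,m\geq 2$, Proposition \ref{BSbasique}.ii places a non-abelian free subgroup inside $\operatorname{BS}(\ell,m)$, whereas $\operatorname{Met}(\ell,m)$ is metabelian by construction and contains no such subgroup, so $\mu_{\ell,m}$ has non-trivial kernel. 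For (v), with $s=\left(\begin{smallmatrix}1&1\\0&1\end{smallmatrix}\right)$ and $t=\left(\begin{smallmatrix}x&0\\0&1\end{smallmatrix}\right)$, a direct calculation gives $t^n s t^{-n}=\left(\begin{smallmatrix}1&x^n\\0&1\end{smallmatrix}\right)$; these matrices pairwise commute, and because $x$ is transcendental the family $\{x^n:n\in\Z\}$ is $\Z$-linearly independent in $\C$, so the normal closure of $s$ in $\langle s,t\rangle$ is free abelian on $(t^n s t^{-n})_{n\in\Z}$, identifying $\langle s,t\rangle$ with $\Z[x,x^{-1}]\rtimes_x\Z\simeq\Z\wr\Z$.

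For (ii), I would invoke the Bieri-Strebel finite-presentation criterion for metabelian groups, via the BNS machinery recalled in Appendix \ref{AppendixCBNS}. Write the extension $1\to A\to\operatorname{Met}(\ell,m)\to Q\to 1$ with $A=\Z[1/(\ell m)]$ and $Q=\Z$; the character sphere $S(Q)=\{\pm 1\}$ has two antipodal points, the $+1$-character lying in $\Sigma_A$ iff $A$ is finitely generated over $\Z[\ell/m]$ and the $-1$-character iff $A$ is finitely generated over $\Z[m/\ell]$. The coprimality $\gcd(\ell,m)=1$ gives $\Z[\ell/m]=\Z[1/m]$ and $\Z[m/\ell]=\Z[1/\ell]$, and $\ell,m\geq 2$ forces $A=\Z[1/(\ell m)]$ to be finitely generated over neither subring. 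Hence $\Sigma_A^c=S(Q)$ contains the antipodal pair $\pm 1$, and the Bieri-Strebel criterion forces $\operatorname{Met}(\ell,m)$ to be infinitely presented.

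For (iii) and (iv) I would run the Lyndon-Hochschild-Serre spectral sequence of the same extension. Because $A$ is torsion-free abelian of rank one, $\Lambda^q_\Z A=0$ for $q\geq 2$, so $H_q(A,\Z)=0$ for $q\geq 2$; and $Q=\Z$ has $\operatorname{cd}(Q)=1$. The only non-trivial contribution to $H_2(G,\Z)$ is therefore $E^2_{1,1}=H_1(\Z,A)=\ker(\ell/m-1\colon A\to A)$, which vanishes because $A$ is torsion-free and $\ell\neq m$; this proves (iii). For (iv), I first record $\operatorname{cd}(A)=2$: the upper bound comes from the two-dimensional $K(A,1)$ obtained as the mapping telescope $S^1\xrightarrow{\ell m}S^1\xrightarrow{\ell m}\cdots$, and the lower bound from Stallings-Swan, since a torsion-free group of cohomological dimension one must be free and $A\neq\Z$. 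The cohomological spectral sequence then yields $\operatorname{cd}(G)\leq\operatorname{cd}(A)+\operatorname{cd}(Q)=3$, and for the matching lower bound I would exhibit a $\Z[G]$-module $M$ for which $E_\infty^{1,2}=H^1(\Z,H^2(A,M))\neq 0$, forcing $H^3(G,M)\neq 0$.

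The main obstacle will be this last step, the lower bound $\operatorname{cd}(G)\geq 3$ in (iv): one must choose coefficients so that the $\Z$-action on $H^2(A,M)$ has non-trivial coinvariants and no higher differential annihilates the resulting class. The remaining claims reduce to matrix computation, the BNS criterion, or the two-row vanishing structure of the spectral sequence.
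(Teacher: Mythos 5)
The paper disposes of all five claims by citation ((i) as an exercise from the proof of Proposition \ref{BSbasique}, (ii) and (iii) to \cite{BaSt--76} and \cite{BiSt--78}, (iv) to \cite{Gild--79}, (v) to \cite{LeRo--04}), so any self-contained argument is necessarily a different route. Your treatments of (i), (ii), (iii) and (v) are correct and complete. The matrix computations in (i) and (v) are the intended ones; your $\Sigma$-computation for (ii), resting on $\Z[\ell/m]=\Z[1/m]$ and $\Z[m/\ell]=\Z[1/\ell]$ by coprimality and on the fact that $\Z\left[\frac{1}{\ell m}\right]$ is finitely generated over neither subring, is exactly the proof behind the cited Bieri--Strebel result and agrees with Example \ref{examplesBS}.iv; and the Lyndon--Hochschild--Serre argument for (iii) is valid because $A=\Z\left[\frac{1}{\ell m}\right]$ is torsion-free of rank one, so $\Lambda^2_{\Z}A=0$, the row $q\ge 2$ and the column $p\ge 2$ both vanish, and the whole diagonal $p+q=2$ collapses to $H_1(\Z,A)=\ker(\ell/m-1)=0$. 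What this buys over the paper's version is an actual proof in place of a pointer to the literature.

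Part (iv) is where there is a genuine gap, which you flag but do not close. The upper bound $\operatorname{cd}\le 3$ is fine. The lower bound, however, is not a routine spectral-sequence exercise: the inequality $\operatorname{cd}(G)\ge\operatorname{cd}(N)+1$ for $G/N\cong\Z$ fails in general (take $G=F_2$ and $N$ the kernel of a surjection onto $\Z$, so that $\operatorname{cd}N=1=\operatorname{cd}G$), and here the Hirsch length of $\operatorname{Met}(\ell,m)$ is $2$, so a priori $\operatorname{cd}\in\{2,3\}$ and deciding between the two values is precisely the content of the theorem of Gildenhuys that the paper cites. Your reduction of $H^3(G,M)$ to the coinvariants $H^1(\Z,H^2(A,M))$ is correct, since the other graded pieces vanish for the reasons you give, but you still owe a module $M$ with nonzero coinvariants: the naive choice $M=\Z$ leads to $H^2(A,\Z)\cong\operatorname{Ext}^1_{\Z}\big(\Z\left[\frac{1}{\ell m}\right],\Z\big)$, whose coinvariants under multiplication by $\ell/m$ are not obviously nonzero, while $M=\Z G$ is delicate because $A$ is not finitely generated, so its cohomology does not commute with the relevant limits. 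To finish one should either quote \cite{Gild--79} directly, or argue via the Bieri--Gildenhuys--Strebel circle of results that a soluble group of finite cohomological dimension has $\operatorname{cd}$ equal to its Hirsch length only if it is of type $FP$; since $\operatorname{Met}(\ell,m)$ is not finitely presented by your part (ii), and for metabelian groups $FP_2$ is equivalent to finite presentability, it is not of type $FP$, whence $\operatorname{cd}=3$.
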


\begin{proof}[On proofs.]
Claim (i) has already been given as an exercise,
in the proof of Proposition \ref{BSbasique}.
Claim (ii) is a consequence of a particular case 
of the main result of \cite{BaSt--76},
or a consequence of \cite[Theorem C]{BiSt--78};
see also \cite[Proposition 11.4.3]{LeRo--04}. 
Claim (iv) is \cite[Theorem 4]{Gild--79}.
Claim (v) is \cite[Proposition 3.1.4]{LeRo--04}.
\par

For (iii), see \cite[No 1.8]{BaSt--76}.
Recall that, if a group $G$ is finitely presented,
then its multiplicator group $H_2(G, \Z)$ is finitely generated;
this is a simple consequence of the so-called \emph{Schur-Hopf Formula},
for a group $G = F/R$ presented as a quotient of a free group $F$,
which reads
``$H_2(G, \Z) = (R \cap [F,F]) / [R,F]$''.
Claim (iii) is one of the standard examples showing
that the converse \emph{does not} hold.
\end{proof}

Let $\ell,m$ be coprime positive integers, with $\ell,m \ge 2$.
We denote by $p_{\ell,m} : F_2 \twoheadrightarrow \operatorname{BS}(\ell,m)$ 
the defining cover of the corresponding Baumslag-Solitar group,
namely the cover mapping a basis of the free group of rank $2$ onto $\{s,t\}$.
Let  $\varphi : \operatorname{BS}(\ell,m) \twoheadrightarrow \operatorname{BS}(\ell,m)$ 
be the usual non-injective surjective endomorphism,
as in Proposition \ref{BSbasique}.
For $n \ge 1$, set
\begin{equation*}
M_n \, = \, \ker( \varphi^n ), \hskip.2cm
N_n \, = \, \ker( \varphi^n p_{m,l} ), \hskip.2cm
G_n \, = \, \operatorname{BS}(\ell,m) / M_n \, = \, F_2 / N_n .
\end{equation*}
Observe that the sequence $(M_n)_{n \ge 1}$ is strictly increasing,
yet $G_n$ is isomorphic to $\operatorname{BS}(\ell,m)$ for each $n \ge 1$.

\begin{prop}
\label{BS+GrMa}  
Let $\ell,m$ be coprime integers, with $\ell,m \ge 2$,
and $\operatorname{Met}(\ell,m)$ as in Definition \ref{defmet(l,m)}.
Let $(G_n)_{n \ge 1}$ be as above.
\par

(i) $\operatorname{Met}(\ell,m)$ is isomorphic to 
$\operatorname{BS}(\ell,m)_{\operatorname{metab}}$.
\par

(ii) With the notation above, we have an isomorphism
\begin{equation*}
\operatorname{Met}(\ell,m) \, \simeq \, F_2 / \Big( \bigcup_{n \ge 1} N_n \Big) ,
\end{equation*}
so that $\lim_{n \to \infty}G_n =  \operatorname{Met}(\ell,m)$
in $\mathcal M_2$.
\par

(iii) Any finitely presented cover of $\operatorname{Met}(\ell,m)$
contains non-abelian free subgroups.
\end{prop}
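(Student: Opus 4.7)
The plan is to derive (iii) as an immediate corollary of (i), (ii), Proposition \ref{BSbasique}.iv, and Corollary \ref{main2}: since $\varphi$ is surjective, $G_n = \operatorname{BS}(\ell,m)/M_n \simeq \operatorname{BS}(\ell,m)$ for every $n \ge 1$, so each $G_n$ contains non-abelian free subgroups; combined with the convergence $G_n \to \operatorname{Met}(\ell,m)$ from (ii), Corollary \ref{main2} forces every finitely presented cover of $\operatorname{Met}(\ell,m)$ to contain non-abelian free subgroups. The real content is therefore (i) and (ii).

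For (i), I would compute the metabelianization $\operatorname{BS}(\ell,m)/D^2$ directly. Let $A \triangleleft \operatorname{BS}(\ell,m)$ be the normal closure of $s$, so that $\operatorname{BS}(\ell,m)/A \simeq \Z$ is generated by the class of $t$. Modulo $[A,A]$, conjugation makes $A/[A,A]$ into a cyclic $\Z[t,t^{-1}]$-module generated by $s$, and the HNN relation $t^{-1}s^\ell t = s^m$ reads $(\ell t^{-1} - m)\cdot s = 0$. An elementary calculation, using $\gcd(\ell,m) = 1$, identifies $\Z[t,t^{-1}]/(\ell t^{-1} - m)$ with $\Z[1/(\ell m)]$, with $t$ acting as $\ell/m$. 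The map $\mu_{\ell,m}$ induces a surjection $A/[A,A] \twoheadrightarrow \Z[1/(\ell m)]$ which must then be an isomorphism; in particular $D^2 = [A,A]$, and the extension $1 \to \Z[1/(\ell m)] \to \operatorname{BS}(\ell,m)/D^2 \to \Z \to 1$ splits via $t$ to recover $\operatorname{Met}(\ell,m)$.

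For (ii), the endomorphism $\varphi$ descends to an endomorphism $\bar\varphi$ of $\operatorname{Met}(\ell,m)$ acting as multiplication by $\ell$ on the $\Z[1/(\ell m)]$ factor and fixing $t$; since $\ell$ is a unit in $\Z[1/(\ell m)]$, $\bar\varphi$ is an automorphism. Hence $\varphi^n(g) = 1$ implies $\mu_{\ell,m}(g) = 1$, giving $\bigcup_n M_n \subseteq \ker \mu_{\ell,m} = D^2$. For the reverse inclusion, I would use $D^2 = [A,A]$ from (i) together with the fact that $A$ is generated by $\{t^{-i}st^i : i \in \Z\}$ to express $[A,A]$ as the normal closure in $\operatorname{BS}(\ell,m)$ of the family $\{[s, t^{-i}st^i] : i \ge 1\}$. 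Iterating the HNN relation yields $t^{-i}s^{\ell^i}t^i = s^{m^i}$, so
\begin{equation*}
\varphi^i\bigl([s, t^{-i}st^i]\bigr) \, = \, [s^{\ell^i}, t^{-i}s^{\ell^i}t^i] \, = \, [s^{\ell^i}, s^{m^i}] \, = \, 1,
\end{equation*}
placing $[s, t^{-i}st^i] \in M_i$; since $\bigcup_n M_n$ is normal in $\operatorname{BS}(\ell,m)$, this forces $D^2 \subseteq \bigcup_n M_n$. Pulling back along $p_{\ell,m}$ then gives $F_2/\bigcup_n N_n \simeq \operatorname{Met}(\ell,m)$, and convergence of $(G_n)$ in $\mathcal M_2$ follows from the general principle, recalled after Proposition \ref{propertiesChab}, that an increasing union of normal subgroups of $F_k$ yields the Chabauty limit of the associated marked quotients.

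The main obstacle will be the two-sided identification $D^2 = \bigcup_n M_n$ in (ii): it requires both the structural equality $D^2 = [A,A]$ coming from (i) and the explicit verification that each generator $[s, t^{-i}st^i]$ of $[A,A]$ is annihilated by $\varphi^i$. The algebra is routine once the HNN bookkeeping $t^{-i}s^{\ell^i}t^i = s^{m^i}$ is in hand, but the reduction of $D^2$ to this tractable normally generating family of commutators is the crux of the argument.
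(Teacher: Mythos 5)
Your overall architecture matches the paper's: claim (iii) is deduced exactly as you say, from the fact that each $G_n \simeq \operatorname{BS}(\ell,m)$ contains non-abelian free subgroups (Proposition \ref{BSbasique}) together with the convergence in (ii) and Corollary \ref{main2}. Where you differ is that the paper disposes of (i) and (ii) by citation (Baumslag's Theorem G for (i); Baumslag--Strebel and \cite{GrMa--97} for (ii)), whereas you supply a self-contained computation. Your argument for (ii) is correct and complete: the inclusion $\bigcup_n M_n \subseteq \ker \mu_{\ell,m}$ via the induced automorphism $\bar\varphi$ of $\operatorname{Met}(\ell,m)$, and the reverse inclusion via $\varphi^i\bigl([s,t^{-i}st^i]\bigr)=1$, together with the identification $\ker\mu_{\ell,m}=[A,A]$ coming from your module computation, give everything (ii) and (iii) need. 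Note that for this purpose only the equality $\ker\mu_{\ell,m}=[A,A]$ is required, not $[A,A]=D^2$.

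The genuine gap is in (i), at the words ``in particular $D^2=[A,A]$''. What your computation establishes is that $\mu_{\ell,m}$ restricted to $A$ has kernel exactly $[A,A]$, hence $\operatorname{BS}(\ell,m)/[A,A]\simeq\operatorname{Met}(\ell,m)$ and $D^2\subseteq[A,A]$ (since the target is metabelian). It does \emph{not} follow that $[A,A]\subseteq D^2$: since $A/D^1\simeq\Z/(\ell-m)$ is nontrivial whenever $\vert\ell-m\vert>1$, the group $A$ strictly contains $D^1$ and the inclusion $[A,A]\subseteq[D^1,D^1]$ needs proof. Concretely, writing $A=D^1\cdot\langle s\rangle$, one has $[A,A]\subseteq D^2$ if and only if $s$ centralizes the abelian group $M=D^1/D^2$. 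This is true, but it is a separate computation that again uses coprimality: $M$ is a cyclic $\Z[\operatorname{BS}(\ell,m)_{\operatorname{ab}}]$-module on $c=[s,t]$; the defining relation expresses $s^{m-\ell}\in M$ as $\bigl(\sum_{j=0}^{\ell-1}y^{-j}\bigr)c$ where $y$ denotes the action of $s$, and since $s$ commutes with $s^{m-\ell}$ one gets $y^{\ell}c=c$; since $s^{\ell-m}$ lies in the abelian group $M$, $y^{\ell-m}$ acts trivially on $M$, whence also $y^{m}c=c$; finally $\gcd(\ell,m)=1$ gives $yc=c$, so $y$ acts trivially on all of $M$. With this inserted, (i) is proved; without it, your argument only identifies $\operatorname{Met}(\ell,m)$ with $\operatorname{BS}(\ell,m)/[A,A]$, which is weaker than the stated claim (though, as noted, sufficient for (ii) and (iii)).
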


\begin{proof}
Claim (i) is part of \cite[Theorem G]{Baum--74}.
\par

Claim (ii) is  \cite[see 1.8]{BaSt--76} or \cite[Theorem 3]{GrMa--97}, 
there for $(\ell,m) = (2,3)$, but the argument carries over to the case stated here.
(When working on \cite{GrMa--97}, the authors were not aware of \cite{BaSt--76}.)

\par

Since $\operatorname{BS}(\ell,m)$ contains non-abelian free subgroups 
by Proposition \ref{BSbasique}.ii, 
and since $\lim_{n \to \infty} G_n = \operatorname{Met}(\ell,m)$, 
Claim (iii) follows by Corollary \ref{main2}.
\end{proof}

\section{\textbf{On Bieri-Neumann-Strebel invariants}}
\label{AppendixCBNS}

\begin{exe!}
\label{motivationBS}
Consider the two metabelian groups
\begin{equation*}
\aligned
\operatorname{Met}(1,6) \, &= \,  \operatorname{BS}(1,6)
\, = \, \Z \left[1/6\right] \rtimes_{1/6} \Z
\\
\operatorname{Met}(2,3) \, &= \, \operatorname{BS}(2,3)_{\operatorname{metab}}
\, = \, \Z \left[1/6\right] \rtimes_{2/3} \Z .
\endaligned 
\end{equation*}
The first group is finitely presented.
By contrast, the second group is infinitely presented
(Propositions \ref{BSmetab}).
This spectacularly different behavior of two superficially similar-looking groups
was the initial motivation of Bieri and Strebel (later joined by Neumann and others) 
for the work that lead to the $\Sigma$-invariants; 
see \cite{Bier--79}, as well as \cite[Problem 1]{Baum--74}.
We come back to these two examples in Example \ref{examplesBS}.
\end{exe!}

Let $G$ be a finitely generated group, $S$ a finite generating set,
and $\operatorname{Cay}(G,S)$ the corresponding \textbf{Cayley graph},
with vertex set $G$ and edge set
$\{ \{g,h\} \mid g^{-1}h \in S \cup S^{-1} \}$.
The set of \textbf{characters}, namely of group homomorphisms from $G$ to $\R$, 
is a real vector space $\operatorname{Hom}(G,\R)$ isomorphic to $\R^n$,
where $n$ is the torsion-free rank of the abelian group $G/D^1G$.
Let $S(G)$ denote the \textbf{character sphere} of $G$, quotient of
the set of non-zero characters by the natural action of the group of positive reals;
we have $S(G) \approx \mathbf{S}^{n-1}$, 
and we write $[\chi]$ the class in $S(G)$
of a character $\chi \ne 0$ in $\operatorname{Hom}(G,\R)$. 
(The sign $\approx$ indicates a homeomorphism;
$\mathbf{S}^k$ denotes the $k$-sphere, and $\mathbf{S}^{-1} = \emptyset$.)
For such a $[\chi]$, set
\begin{equation*}
G_{[\chi]} \, = \, \left\{g \in G \mid \chi(g) \ge 0 \right\},
\end{equation*}
and let $\operatorname{Cay}_{[\chi]}(G,S)$ be the subgraph of $\operatorname{Cay}(G,S)$
spanned by $G_{[\chi]}$.
The \textbf{Bieri-Neumann-Strebel invariant}, 
or shortly \textbf{BNS-invariant}, of $G$ is defined to be
\begin{equation*}
\Sigma^1(G) \, = \,
\left\{
[\chi] \in S(G) \mid \operatorname{Cay}_{[\chi]}(G,S) \hskip.2cm \text{is connected}
\right\} \, \subset \, S(G) .
\end{equation*}
The superscript indicates that $\Sigma^1$ is just one out of many ``geometric invariants''
(see \cite{BiRe--88} and \cite{Stre}).
\par

Observe that there is an antipodal map $[\chi] \longmapsto -[\chi] := [-\chi]$ defined on $S(G)$.
Let $G_0$ be a finitely generated group and $p : G_0 \twoheadrightarrow G$
an epimorphism;
then $p$ induces a map $p^* : S(G) \longrightarrow S(G_0)$
which a homeomorphism onto its image
and which intertwines the antipodal maps.
\par

On the one hand, the invariant of \cite{BiNS--87} involves one more group $A$ 
on which $D^1G$ acts in an appropriate way; here, we particularize to $A = D^1 G$.
On the other hand, the definition above is a reformulation of the original definition,
see for example \cite{Stre}.

\begin{thm}
\label{BNSgeneral}
Let $G$ be a finitely generated group, and $\Sigma^1(G) \subset S(G)$ as above.
\par

(i)
$\Sigma^1(G)$ is independent of the choice of $S$.

(ii) 
$\Sigma^1(G)$ is open in $S(G)$.

(iii)
$\Sigma^1(G) = S(G)$ if and only if
the derived group $D^1G$ is finitely generated.

(iv)
Suppose that $G$ is metabelian.
Then $G$ is finitely presented if and only if
$\Sigma^1(G) \cup (-\Sigma^1(G)) = S(G)$,
and $G$ is polycyclic if and only if 
$\Sigma^1(G) = S(G)$.

(v) 
If $G$ is finitely presented and has no non-abelian free subgroup,
then $\Sigma^1(G) \cup (-\Sigma^1(G)) = S(G)$.

(vi)
If $G_0$ is a finitely generated group and $p : G_0 \twoheadrightarrow G$
an epimorphism,
then  $\Sigma^1(G_0) \cap p^*(S(G)) \subset p^*(\Sigma^1(G))$.
In particular:
\begin{equation*}
\Sigma^1(G) \cup (-\Sigma^1(G)) \subsetneqq S(G)
\hskip.2cm \Longrightarrow \hskip.2cm
\Sigma^1(G_0) \cup (-\Sigma^1(G_0)) \subsetneqq S(G_0) .
\end{equation*}
\end{thm}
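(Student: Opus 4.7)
The plan is to establish the six statements in the order (i), (ii), (vi), (iii), (iv), (v), starting with direct Cayley-graph arguments and culminating in the deep results of Bieri--Strebel and Bieri--Neumann--Strebel.

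For (i), given two finite generating sets $S, S'$, I would write each $s' \in S' \cup (S')^{-1}$ as a word of $S$-length at most $L$. An $S'$-path in $\operatorname{Cay}_{[\chi]}(G,S')$ between $g, h \in G_{[\chi]}$ then expands to an $S$-walk whose intermediate vertices have $\chi$-value bounded below by some uniform $-C$; translating suitable sub-segments by powers of an element $t \in G$ with $\chi(t) > C$ (which exists since $\chi \neq 0$) pushes them back into $G_{[\chi]}$ and yields an $S$-path in $\operatorname{Cay}_{[\chi]}(G,S)$, and the symmetric argument concludes. Part (ii) follows similarly: for $[\chi] \in \Sigma^1(G)$, finitely many $\operatorname{Cay}_{[\chi]}(G,S)$-paths suffice to connect $1$ to each $s \in S \cup S^{-1}$; the finite vertex set involved remains in the half-space $\chi' \geq 0$ under any sufficiently small perturbation $\chi'$ of $\chi$, possibly after translation by an element of large $\chi'$-value. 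Part (vi) is pure functoriality: if $[\chi \circ p] \in \Sigma^1(G_0)$, projecting $S_0$-paths by $p$ produces $p(S_0)$-paths in the $[\chi]$-half of $G$, establishing $\Sigma^1(G_0) \cap p^*S(G) \subset p^*\Sigma^1(G)$; since $p^*$ intertwines antipodes, the contrapositive consequence stated in (vi) is immediate.

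For (iii), the implication ``$D^1G$ finitely generated $\Rightarrow \Sigma^1(G) = S(G)$'' is a direct construction: choose $T$ finite generating $D^1G$ and, for $[\chi] \in S(G)$, an element $g_0$ with $\chi(g_0) > 0$. Any $g' \in G_{[\chi]}$ decomposes as $g \hskip.1cm g_0^n$ with $g \in D^1 G$ and $n \geq 0$; expressing $g$ as a product of $T$-conjugates and interleaving sufficiently many copies of $g_0$ keeps all partial products in $G_{[\chi]}$. The converse uses the monoid-generation characterization of \cite{BiSt--78}: $[\chi] \in \Sigma^1(G)$ translates into finite generation of $D^1G$ as a module over a suitable submonoid of $G$, and compactness of $S(G)$ combined with $\Sigma^1(G) = S(G)$ patches finitely many such finite generating sets together into one for $D^1G$.

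Statements (iv) and (v) constitute the deep part. Part (iv) reduces to the main theorem of \cite{BiSt--80}, which asserts exactly that a finitely generated metabelian group is finitely presented if and only if $\Sigma^1(G) \cup (-\Sigma^1(G)) = S(G)$; the polycyclic assertion is then (iii) combined with the standard fact that a finitely generated metabelian group with finitely generated derived subgroup is polycyclic (both $G/D^1G$ and $D^1G$ being finitely generated abelian). Part (v) is the main theorem of \cite{BiNS--87}, and this is the main obstacle of the entire theorem: the proof shows that under finite presentability, the failure of $\Sigma^1(G) \cup (-\Sigma^1(G))$ to cover $S(G)$ would allow a ping-pong argument across the two $\chi$-half-spaces producing a non-abelian free subgroup, contradicting the hypothesis. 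I would not attempt a shortcut for (v) but follow \cite{BiNS--87} directly, as no elementary proof is known; the remaining statements then combine mechanically with parts (i)--(iii) and (vi) already established.
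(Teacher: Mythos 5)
Your proposal is correct and, on the substantive claims, rests on exactly the same pillars as the paper: the paper's ``proof'' is purely a list of references (Strebel's notes for (i), (ii), (vi) and the easy half of (iii); \cite{BiNS--87} for the converse of (iii) and for (v); \cite{BiSt--80} for (iv)), and you likewise delegate the converse of (iii), the finite-presentation criterion in (iv), and (v) to Bieri--Neumann--Strebel and Bieri--Strebel, which is unavoidable. What you do differently is supply actual argument sketches for the elementary parts that the paper only cites. These sketches are sound in outline, with two points worth tightening. In (i), ``translating suitable sub-segments by powers of $t$'' is where the real work hides: left translation by $t$ is a graph automorphism shifting $\chi$-values, but it is \emph{not} motion along Cayley edges, so after translating a sub-path into $G_{[\chi]}$ you must still connect $g$ to $tg$ inside the half-space, and the naive word-path for $g^{-1}tg$ can again dip below $0$; this reconnection is precisely the content of the comparison lemma (connectivity of the subgraph on $\{\chi\ge 0\}$ versus on $\{\chi\ge -d\}$) in Strebel's notes, and your sketch should be routed through that lemma rather than through ad hoc translation of segments. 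Similarly, in the forward direction of (iii), the decomposition $g'=g\,g_0^{\,n}$ with $g\in D^1G$ presumes that a single element $g_0$ generates $G$ modulo $D^1G$; in general one must interleave $g_0$ with lifts of a full generating set of the abelianization, though the idea is the same. Your observation in (iv) that the polycyclic case follows from (iii) plus the fact that an (f.g.\ abelian)-by-(f.g.\ abelian) group is polycyclic is correct and is a pleasant simplification over a bare citation; part (vi) is handled exactly as it should be.
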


\begin{proof}[On the proof]
For (i) see \cite[Theorem A2.3]{Stre}.
\par
For (ii), see \cite[Theorem A]{BiNS--87} and \cite[Theorem A3.3]{Stre}.
\par
(iii) If $D^1G$ is finitely generated, 
then $\Sigma^1(G) = S(G)$, \cite[Proposition A2.6]{Stre}.
For the ``iff'', see \cite[Theorem B1]{BiNS--87}. 
\par
For (iv), see \cite[Theorem A]{BiSt--80} and \cite[Subsection B3.2c]{Stre}.
\par
For (v), see \cite[Theorem C]{BiNS--87} and \cite[Theorem A5.1]{Stre}.
\par
Claim (vi) is rather straightforward from the definitions
\cite[Proposition A4.5]{Stre}.
\end{proof}

\begin{exe!}
\label{examplesBS}
(i)
$\Sigma^1(G) = S(G) \approx \mathbf S^{n-1}$
for $G$ a finitely generated abelian group of torsion-free rank $n$.
\par

(ii)
$\Sigma^1(F_n) = \emptyset \subset S(F_n) \approx \mathbf S^{n-1}$ 
for the non-abelian free group $F_n$ of rank $n \ge 2$; 
see \cite[Item A2.1a]{Stre}.

(iii)
For the soluble Baumslag-Solitar group $\operatorname{BS}(1,n)$,
the invariant $\Sigma^1(\operatorname{BS}(1,n))$ 
is one of the two points of the sphere $S(\operatorname{BS}(1,n)) \approx \mathbf S^0$;
the argument of \cite[Item A2.1a]{Stre} for $n=2$ carries over to $n \ge 2$.
\par

(iv)
For two coprime integers $\ell,m \ge 2$, we have
$\Sigma^1( \operatorname{Met}(\ell,m)) = \emptyset \subset \mathbf S^0$ 
 \cite[Item A3.6]{Stre}.

\par

(v) If $G$ is a semi-direct product $H \rtimes \Z$ of an infinite locally finite group $H$ 
by an infinite cyclic group, then  $\Sigma^1(G) = \emptyset \subset \mathbf S^0$ 
\cite[Lemma B3.1]{Stre}.

\par

(vi) For any $n \ge 1$ and any rational polyhedral subset $P$ of $\mathbf{S}^{n-1}$,
there exists a finitely presented group $G$ with $S(G) \approx \mathbf{S}^{n-1}$
and a homeomorphism $p^* : \mathbf {S}^{n-1} \longrightarrow S(G)$
such that $\Sigma^1(G) = p^* (S(G) \smallsetminus P)$.
\end{exe!}

Some comment is in order for (vi),
cited here to show that the invariant $\Sigma^1(G)$ can be more complicated
than those of Examples (i) to (v).
For a finitely generated group $G$, a non-zero character $\chi \in \operatorname{Hom}(G,\R)$
is \emph{rational} if $\chi(G)$ is an infinite cyclic subgroup of $\R$.
Denote by $\operatorname{Hom}^*_{\Q}(G,\R)$ the set of non-zero rational characters on $G$,
and by $S_{\Q}(G)$ its image in $S(G)$;
then $S_{\Q}(G)$ is a dense subset of $S(G)$ \cite[Lemma B3.3]{Stre}.
\par

Consider a positive integer $n$ and the sphere $\mathbf S^{n-1} = S(\Z^n)$.
A \emph{rational hemisphere} of $\mathbf S^{n-1}$ 
is the closure of the image of the half-space
$\{ \chi \in \operatorname{Hom}^*_{\Q}(G,\R) \mid \chi(z) \ge 0 \}$, 
for some $z \in \Z^n \smallsetminus \{0\}$.
A \emph{rational convex polyhedral subset} of $\mathbf S^{n-1}$
is a finite intersection of rational hemispheres.
A \emph{rational  polyhedral subset} of $\mathbf S^{n-1}$
is a finite union of rational convex polyhedral subsets.

Given an integer $n \ge 1$ and a rational polyhedral subset $P \subset \mathbf{S}^{n-1}$,
there is a finitely presented group $G$ 
and an epimorphism $p : G \twoheadrightarrow \Z^n$
such that $p^* : S(\Z^n) \longrightarrow S(G)$
is a homeomorphism, and $\Sigma^1(G) = p^*(S(\Z^n) \smallsetminus P)$.
See \cite[Corollary 7.6]{BiNS--87} and \cite[Chapter IV, Section 1.1]{BiSt}.

\begin{cor}
\label{BNScorollary}
Let $G$ be a finitely generated group and $E$ a finitely presented cover of $G$.
If $\Sigma^1(G) \cup (-\Sigma^1(G)) \subsetneqq S(G)$, 
then $E$ contains non-abelian free subgroups.
This holds in particular:
\par

when $G$ is metabelian and infinitely presented,

when $G = H \rtimes \Z$ with $H$ infinite locally finite, as in Example \ref{examplesBS}.v.
\end{cor}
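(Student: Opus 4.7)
The plan is to derive the corollary directly from Theorem~\ref{BNSgeneral}, using part~(vi) to transport the hypothesis from $G$ to $E$ and then the contrapositive of part~(v) to conclude. More precisely, let $p : E \twoheadrightarrow G$ be the assumed finitely presented cover; in particular $E$ is finitely generated, so $p^* : S(G) \hookrightarrow S(E)$ is a homeomorphism onto its image that intertwines antipodal maps. Applying the displayed implication in part~(vi), the strict inclusion $\Sigma^1(G) \cup (-\Sigma^1(G)) \subsetneqq S(G)$ yields
\begin{equation*}
\Sigma^1(E) \cup (-\Sigma^1(E)) \, \subsetneqq \, S(E).
\end{equation*}
Since $E$ is finitely presented, the contrapositive of part~(v) now forces $E$ to contain a non-abelian free subgroup, which is what we wanted.

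It remains to verify that the two specific situations in the ``in particular'' clause do satisfy the strict-inclusion hypothesis. If $G$ is metabelian and infinitely presented, then part~(iv) of Theorem~\ref{BNSgeneral} states that a metabelian $G$ is finitely presented precisely when $\Sigma^1(G) \cup (-\Sigma^1(G)) = S(G)$; so infinite presentability is, tautologically, the strict inclusion we need. If $G = H \rtimes \Z$ with $H$ infinite locally finite (and $G$ finitely generated as required by the standing hypothesis), then Example~\ref{examplesBS}(v) gives $\Sigma^1(G) = \emptyset$, hence $\Sigma^1(G) \cup (-\Sigma^1(G)) = \emptyset$; and $S(G)$ is non-empty, since the projection $G \twoheadrightarrow G/H \cong \Z$ produces a non-zero character on $G$, so in fact $S(G) \approx \mathbf{S}^0$ and the inclusion is strict.

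There is no genuine obstacle to the argument: granted Theorem~\ref{BNSgeneral} and Example~\ref{examplesBS}(v) in the forms already recorded, the proof is pure bookkeeping of four or five lines. The substantive content is of course buried in Theorem~\ref{BNSgeneral} itself, especially the implications~(iv) and~(v), which rest on the theory of $\Sigma$-invariants developed in \cite{BiNS--87} and its subsequent reformulations alluded to in the discussion following Theorem~\ref{ThBieriStrebel}; once those are in hand, Corollary~\ref{BNScorollary} is immediate and also a considerable strengthening of the original Bieri--Strebel Theorem~\ref{ThBieriStrebel}.
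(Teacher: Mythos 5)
Your proof is correct and follows essentially the same route as the paper, which simply declares the corollary ``straightforward from Claims (iii) to (v) of Theorem \ref{BNSgeneral} and from Example \ref{examplesBS}.v''; you have merely spelled out the bookkeeping. If anything, your explicit appeal to part (vi) to transport the strict inclusion $\Sigma^1(G) \cup (-\Sigma^1(G)) \subsetneqq S(G)$ from $G$ to the finitely presented cover $E$ makes precise the one step the paper's citation leaves implicit.
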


\begin{proof}
This is straightforward from Claims (iii) to (v) of Theorem \ref{BNSgeneral}
and from Example \ref{examplesBS}.v. 
\end{proof}

In Corollary \ref{BNScorollary},
the claim concerning metabelian groups is precisely Theorem \ref{ThBieriStrebel}.

\begin{exe!}[B.H.\ Neumann]
\label{exNeumann}
Let $\mathcal V = \{2 \le v_1 < v_2 < v_3 < \cdots \}$ 
be an infinite increasing sequence of integers.
Set 
\begin{equation*}
X_{\mathcal V} \, = \,  \{ (i,j) \in \Z^2 \mid i \ge 1, -v_i \le j \le v_i \} .
\end{equation*}
For each $i \ge 1$, denote by $X_{\mathcal V,i}$ the subset
$\{(i,j) \in X_{\mathcal V} \mid -v_i \le j \le v_i \}$, of cardinal $2v_i + 1$.
Define two permutations $\alpha_{\mathcal V}, \beta_{\mathcal V}$
of the set $X_{\mathcal V}$ as follows: for each $i \ge 1$, 
they preserve $X_{\mathcal V,i}$, and
\begin{itemize}
\item[($\alpha_{\mathcal V}$)]
the restriction of $\alpha_{\mathcal V}$ to $X_{\mathcal V,i}$
is the $(2v_i+1)$-cycle 
\item[]
$((i,-v_i), (i,-v_i+1), \hdots, (i, v_i))$,
\item[($\beta_{\mathcal V}$)]
the restriction of $\beta_{\mathcal V}$ to $X_{\mathcal V,i}$
is the $3$-cycle 
\item[]
$((i, -1), (i, 0), (i, 1))$.
\end{itemize}
The \textbf{Neumann group} corresponding to $\mathcal V$
is the group $G_{\mathcal V}$ of permutations of $X_{\mathcal V}$
generated by $\alpha_{\mathcal V}$ and $\beta_{\mathcal V}$.
\par

Let $\operatorname{Alt}_f(\Z)$ denote 
the group of permutations of finite supports of $\Z$ that are even on their support.
Let $\operatorname{Alt}_f(\Z) \rtimes_{\operatorname{shift}} \Z$
denote its semi-direct product with $\Z$, where $\Z$ acts on itself by shifts,
the generator $1$ acting by $\alpha : j \longmapsto j+1$.
Observe that $\operatorname{Alt}_f(\Z) \rtimes_{\operatorname{shift}} \Z$
is generated by $\alpha$ and the $3$-cycle $\beta = (-1, 0, 1)$.
It is easy to check that the assignment
$\alpha_{\mathcal V} \longmapsto \alpha, \hskip.1cm \beta_{\mathcal V} \longmapsto \beta$
extends to an epimorphism $\pi_{\mathcal V} : G_{\mathcal V} \twoheadrightarrow
\operatorname{Alt}_f(\Z) \rtimes_{\operatorname{shift}} \Z$.
Neumann has shown that the kernel of $\pi_{\mathcal V}$
is the restricted product ${\prod'}_{i = 1}^{\infty} \operatorname{Alt}(2v_i+1)$;
see \cite{Neum--37}, as well as \cite[Complement III.35]{Harp--00}.
\par

This has two straightforward consequences.
On the one hand, 
any minimal finite normal subgroup of $G_{\mathcal V}$
is one of the $\operatorname{Alt}(2v_i+1)$.
Thus, for two distinct sequences $\mathcal V$ and $\mathcal V'$,
the groups $G_{\mathcal V}$ and $G_{\mathcal V'}$ are not isomorphic.
In particular,
there are uncountably many pairwise non-isomorphic $2$-generator groups;
hence these are infinitely presented, 
except possibly for a countable number of them (but see below).
On the other hand, the groups $G_{\mathcal V}$ are elementary amenable.
\end{exe!}

\begin{cor}
\label{BNSrem}
Let $\mathcal V$ be a sequence of integers as in Example \ref{exNeumann}.
Any finitely presented cover of $G_{\mathcal V}$ contains non-abelian free subgroups.
\par
In particular $G_{\mathcal V}$ is not finitely presented.
\end{cor}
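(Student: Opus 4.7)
The plan is to factor through the Neumann-group epimorphism $\pi_{\mathcal V} : G_{\mathcal V} \twoheadrightarrow \operatorname{Alt}_f(\Z) \rtimes_{\operatorname{shift}} \Z$ recalled in Example \ref{exNeumann}, and then apply the BNS-transfer part of Theorem \ref{BNSgeneral}. Set $H := \operatorname{Alt}_f(\Z) \rtimes_{\operatorname{shift}} \Z$. Since $\operatorname{Alt}_f(\Z)$ is infinite locally finite and $H$ is a semidirect product of it with $\Z$, Example \ref{examplesBS}(v) gives $\Sigma^1(H) = \emptyset \subset S(H) \approx \mathbf S^{0}$. In particular $\Sigma^1(H) \cup (-\Sigma^1(H)) = \emptyset \subsetneqq S(H)$.

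Next I would invoke Theorem \ref{BNSgeneral}(vi) for the epimorphism $\pi_{\mathcal V} : G_{\mathcal V} \twoheadrightarrow H$: it guarantees that the strict inclusion $\Sigma^1 \cup (-\Sigma^1) \subsetneqq S$ passes from the quotient $H$ to the source $G_{\mathcal V}$. Thus
\begin{equation*}
\Sigma^1(G_{\mathcal V}) \cup \bigl(-\Sigma^1(G_{\mathcal V})\bigr) \, \subsetneqq \, S(G_{\mathcal V}) .
\end{equation*}
Corollary \ref{BNScorollary} (which rests on Theorem \ref{BNSgeneral}(v)) then immediately yields that any finitely presented cover $E$ of $G_{\mathcal V}$ contains non-abelian free subgroups; this proves the main statement.

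For the final ``in particular'' clause I would argue by contradiction: $G_{\mathcal V}$ is elementary amenable, as observed at the end of Example \ref{exNeumann}, and hence it contains no non-abelian free subgroup. If $G_{\mathcal V}$ were itself finitely presented, then taking $E = G_{\mathcal V}$ with the identity as cover would contradict the statement just proved. There is no real obstacle to this plan; the only substantive ingredient is Example \ref{examplesBS}(v), whose input (vanishing of $\Sigma^1$ for a semidirect product of an infinite locally finite group with $\Z$) is precisely the BNS-theoretic reason behind the existence of uncountably many Neumann groups that fail to be finitely presented.
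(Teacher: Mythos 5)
Your proof is correct and follows essentially the same route as the paper: both rest on Neumann's epimorphism $\pi_{\mathcal V} : G_{\mathcal V} \twoheadrightarrow \operatorname{Alt}_f(\Z) \rtimes_{\operatorname{shift}} \Z$, the vanishing of $\Sigma^1$ for such semidirect products (Example \ref{examplesBS}.v), and Corollary \ref{BNScorollary}. The only (immaterial) difference is that the paper applies Corollary \ref{BNScorollary} directly to the quotient $\operatorname{Alt}_f(\Z) \rtimes_{\operatorname{shift}} \Z$, observing that any finitely presented cover of $G_{\mathcal V}$ is also one of that quotient, whereas you first pull the condition $\Sigma^1 \cup (-\Sigma^1) \subsetneqq S$ back to $G_{\mathcal V}$ via Theorem \ref{BNSgeneral}.vi and then apply the corollary to $G_{\mathcal V}$ itself.
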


\begin{proof}
The claim of Corollary \ref{BNScorollary} concerning $H \rtimes \Z$ 
applies to the group
$\operatorname{Alt}_f(\Z) \rtimes_{\operatorname{shift}} \Z$.
Since any cover of $G_{\mathcal V}$ is a cover of
$\operatorname{Alt}_f(\Z) \rtimes_{\operatorname{shift}} \Z$,
Corollary \ref{BNSrem} follows from Corollary \ref{BNScorollary}.
\end{proof}

Let us also indicate how the last statement of
Corollary \ref{BNSrem} is a straightforward consequence of
the first paper \cite{BiSt--78} on BNS-invariants.

\begin{proof}[Proof that $G_{\mathcal V}$ is not finitely presented]
It follows from the definition of $G_{\mathcal V}$ 
that the kernel $N_{\mathcal V}$ of the composition
$G_{\mathcal V}
\twoheadrightarrow \operatorname{Alt}_f(\Z) \rtimes_{\operatorname{shift}} \Z
\twoheadrightarrow \Z$
is locally finite.
Suppose (ab absurdo) that $G_{\mathcal V}$ is finitely presented.
By \cite[Theorem A]{BiSt--78}, we have
$G_{\mathcal V} = \operatorname{HNN}(H, \varphi : K \overset{\simeq}{\rightarrow} L)$
for a finitely generated subgroup $H$ of $N_{\mathcal V}$
and an isomorphism $\varphi$ between two subgroups $K,L$ of $H$.
Since non-ascending extensions contain non-abelian free subgroups (by Britton's lemma),
$G_{\mathcal V}$ is an ascending HNN-extension; we may assume that $K=H$.
Since $N_{\mathcal V}$ is locally finite, the finitely generated subgroup $H$ of $N_{\mathcal V}$
is finite. It follows that $K=H=L$. Hence $G_{\mathcal V} = H \rtimes_\varphi \Z$
and the kernel of $G_{\mathcal V} \twoheadrightarrow \Z$ is finite.
This is preposterous, and the proof is complete.
\end{proof}

\section{\textbf{On growth and amenability}}
\label{AppendixDGrowthAmenability}

Let $G$ be a group generated by a finite set $S$.
For an integer $n \ge 0$, let $B_S^G(n)$ denote the 
``ball of radius $n$ around the origin'',
namely the set of those elements $g \in G$ 
that can be written as words
$g=s_1 \cdots s_n$, with $s_1, \hdots, s_n \in S \cup S^{-1} \cup \{1\}$.
Let $\gamma_S^G(n)$ denote 
the cardinality of $B_S^G(n)$.
Then $G$ is said to be
\begin{itemize}
\item[(pol)] of \textbf{polynomial growth} if there exist constants $a,d > 0$
such that $\gamma_S^G(n) \le a n^d$ for all $n \ge 1$,
\item[(exp)]
of \textbf{exponential growth} if there exist a constant $c > 1$
such that $\gamma_S^G(n) \ge c^n$ for all $n \ge 0$,
\item[(int)]
of \textbf{intermediate growth} in other cases.
\end{itemize}
It is easy to check that this trichotomy depends 
only on $G$, not on the finite generating set $S$.
For information on the growth of groups, 
we refer to the books \cite{Harp--00} and \cite{Mann--11}.
\par

A group $G$ is \textbf{amenable}
if there exists a left-invariant finitely additive probability measure 
defined on all subsets of $G$ (there are many other equivalent definitions). 
Two basic results are important here:
(i) amenability of groups is preserved by the four operations of 
taking subgroups, quotients, directs limits,
and extensions with amenable kernels
(already in \cite{vNeu--29}),
and (ii) groups of intermediate growth are amenable
(this goes back to \cite{AdSr--57},
and is also a straightforward consequence of F\o lner's Criterion \cite{Foln--55}).
These results make it natural to define three classes of groups:
\begin{itemize}
\item[$\mathcal A \mathcal G$]
is the \textbf{class of amenable groups}, 
defined in \cite{vNeu--29}.
\item[$\mathcal E \mathcal G$]
is the \textbf{class of elementary amenable groups},
defined in \cite{Day--57};
it is the smallest class of groups containing the easiest examples,
that are finite groups and abelian groups, 
and stable by the four operations listed above.
\item[$\mathcal S \mathcal G$]
is the \textbf{class of subexponentially amenable groups}
(see below for an historical comment on this definition);
it is the smallest class of groups containing $\mathcal E \mathcal G$
and the next easiest examples, 
that are the groups of intermediate growth.
\end{itemize}
We have a partition
\begin{equation*}
\mathcal A \mathcal G \, = \, 
(\mathcal A \mathcal G \smallsetminus \mathcal S \mathcal G)
\, \sqcup \, 
(\mathcal S \mathcal G \smallsetminus \mathcal E \mathcal G)
\, \sqcup \, 
\mathcal E \mathcal G .
\end{equation*}
Let us mention a few groups in each of these three parts.

\vskip.2cm

The class $\mathcal E \mathcal G$
contains all virtually soluble groups and all locally finite groups;
other examples are cited in Subsection \ref{Motivation}.
\par

There are finitely generated groups in $\mathcal E \mathcal G$
which are not virtually soluble: for example all Neumann groups $G_{\mathcal V}$
discussed in Example \ref{exNeumann},
or an example in \cite{Hill--91}.
As already mentioned in Subsection \ref{DualQuestion1.1},
any countable elementary amenable group embeds in a
finitely generated elementary amenable group;
the same hold for ``amenable'' instead of ``elementary amenable''  \cite[Corollary 1.3]{OlOs}.
\par

Let us describe a family of
finitely presented elementary amenable groups that are not virtually soluble.

\begin{exe!}
\label{Houghton}
Consider an integer $n \ge 1$, the set
\begin{equation*}
S_n \, = \, \{ (j,k) \in \Z^2 \mid j \ge 0, \hskip.1cm 1 \le k \le n \}
\end{equation*}
of $n$ parallel half-intervals in the square lattice, and the \textbf{Houghton group} $H_n$
of all permutations $h$ of $S_n$ such that,
for each $k \in \{1, \hdots, n\}$, there exists a translation $t_k \in \Z$
such that $h(j,k) = (j+t_k, k)$ for all $j$ large enough \cite{Houg--78}.
Denote by $\operatorname{Sym}_f(S_n)$ the group of permutations of $S_n$
with finite supports, clearly a normal subgroup of $H_n$, and set
\begin{equation*}
A \, = \, \{ (t_1, \hdots, t_n) \in \Z^n \mid \sum_{k=1}^n t_k = 0 \} .
\end{equation*}
We have a short exact sequence
\begin{equation*}
1 \, \longrightarrow \, \operatorname{Sym}_f(S_n) \, \longrightarrow \, H_n 
\, \overset{\pi}{\longrightarrow} \,  A \, \longrightarrow \, 1
\end{equation*}
where, with the notation above, $\pi(h) = (t_1, \hdots, t_n)$.
Since $\operatorname{Sym}_f(S_n)$ is locally finite and $A$ abelian,
$H_n$ is elementary amenable. 
\par

It is known that the group $H_n$ 
is of type FP$_{n-1}$ but not FP$_n$ \cite[Theorem 5.1]{Brow--87}.
In particular, for $n \ge 3$, the group $H_n$ is finitely presented.
\end{exe!}

Finitely generated groups 
in the class $\mathcal E \mathcal G$
are either of polynomial growth or of exponential growth
\cite{Chou--80};
this has been sharpened:
a finitely generated group 
in the class $\mathcal E \mathcal G$
has either polynomial growth 
or \emph{uniform} exponential growth \cite{Osin--04}
(see also \cite{Breu--07}).
By a famous theorem of Gromov \cite{Grom--81}, 
a finitely generated group of polynomial growth is virtually nilpotent,
and in particular finitely presented.

\vskip.2cm

The class 
$\mathcal S \mathcal G \smallsetminus \mathcal E \mathcal G$
contains the class of finitely generated groups of intermediate growth.
Historically, the group $\mathfrak G$ of 
Theorem \ref{GrHa+i} and Example \ref{Ex1stGrig}
was the first group shown to be of intermediate growth \cite{Grig--83}.
This class also contains finitely presented groups,
such as the group with $5$ generators and $11$ relators of \cite{Grig--98},
later shown to have another presentation with $2$ generators and $4$ relators
(due to Bartholdi, see \cite[Number 12]{CeGH--99}).
\par

The history of early papers on the class $\mathcal S \mathcal G$ is worth a few lines.
This class was first implicitly introduced in a paper on 4-manifold topology,
more precisely on 4-manifold surgery and 5-dimensional s-cobordism theorems,
\cite{FrTe--95} (see also \cite{KrQu--00}),
and then explicitly in \cite{Grig--98}.
Freedman and Teichner introduce a class of groups  that they call ``good'',
defined as the groups for which the ``$\pi_1$-Null Disk Lemma'' holds;
this lemma establishes the existence of $2$-discs bounding some closed curves
in 4-manifolds of a certain kind.
Good groups include finitely generated groups in the class $\mathcal S \mathcal G$
\cite[Theorem 0.1 and Lemma 1.2]{FrTe--95}.

\vskip.2cm

The class
$\mathcal A \mathcal G \smallsetminus \mathcal S \mathcal G$
contains the \textbf{Basilica group} $\mathfrak B$ of Example \ref{ExBasilica},
which was first shown to be not in $\mathcal S \mathcal G$ \cite{GrZ--02a},
and later shown to be amenable \cite{BaVi--05}. 
The method of Bartholdi and Virag 
was streamlined and generalized by Kaimanovich in \cite{Kaim--05},
in terms of entropy and of the legendary ``M\"unchhausen's trick''.
This and later papers show the amenability of $\mathfrak B$
and of many other non elementary amenable groups
(see \cite{BaKN--10, AmAV}, building among other things on \cite{Sidk--00}).
The class
$\mathcal A \mathcal G \smallsetminus \mathcal S \mathcal G$
contains also the finitely generated amenable simple groups
that appear in \cite{JuMo}, and in Problem \ref{ProblemOnJuMo}.

\vskip.2cm

Non-amenable groups include non-abelian free groups, 
more generally groups containing non-abelian free subgroups \cite{vNeu--29}.

\vskip.2cm

We conclude this report by Question \ref{lastquestion},
due to Tullio Ceccherini-Silberstein.
Before this, we recall some background.
\par

A group $G$ has a \textbf{paradoxical decomposition} if there exist
integers $p,q \ge 2$, subsets $X_1, \hdots, X_p$, $Y_1, \hdots, Y_q$ of $G$,
and elements 
$g_1, \hdots, g_p$, $h_1, \hdots, h_q $ in $G$ such that
\begin{equation*}
\aligned
G \, &= \, X_1 \sqcup \cdots \sqcup X_p \sqcup Y_1 \sqcup \cdots Y_q
\\
\, &= \, g_1X_1 \sqcup \cdots \sqcup g_pX_p 
\, = \, h_1Y_1 \sqcup \cdots \sqcup h_qY_q ,
\endaligned
\end{equation*}
where $\sqcup$ denotes disjoint union.
Tarski has shown that $G$ is non-amenable if and only if 
$G$ has a paradoxical decomposition \cite{Tars--38};
see also \cite{Wago--85} and  \cite{HaSk--86}.
For $G$ non-amenable, the \textbf{Tarski number} $\mathcal T (G)$ of $G$
is the minimum of the sum $p+q$,
over all paradoxical decompositions of $G$.
When $G$ has non-abelian free subgroups, it is easy to show that $\mathcal T (G) = 4$;
in particular, if $G$ is a non-elementary Gromov-hyperbolic group, then $\mathcal T (G) = 4$.
As a student of Tarski in the 1940's, 
Jonsson has shown that, conversely, $\mathcal T (G) = 4$ 
implies that $G$ has non-abelian free subgroups.
It is also easy to show that $\mathcal T (G) \ge 6$ for a non-amenable torsion group $G$.
See \cite{Wago--85} or \cite[Propositions 20 and 21]{CeGH--99}.
We do not know any example of a non-amenable group $G$ 
without non-abelian free subgroups
for which the exact value of $\mathcal T (G)$ has been computed.
\par

Let $\mathcal M_m^{\text{na}}$ be the subspace of $\mathcal M_m$ of those pairs $(G,S)$
with $G$ non-amenable.
For $m \ge 2$, the Tarski number function 
\begin{equation*}
\mathcal M_m^{\text{na}} \longrightarrow \N, \hskip.2cm (G,S) \longmapsto \mathcal T (G)
\end{equation*}
is not continuous.
Indeed, there are sequences of non-elementary Gromov-hyperbolic groups
(with Tarski number $4$)
converging in $\mathcal M_m$ to non-amenable torsion groups 
(with Tarski number at least $6$).
See \cite[Th\'eor\`eme 1.3]{Cham--00}, or one of the  following classes of examples.
\par

(1)
For integers $m,n \ge 2$, let $\operatorname{B}(m,n)$ denote the free Burnside group
of rank $m$ and exponent $n$, that is the quotient of the free group $F_m$
by the set of relators $(w^n = 1)_{w \in F_m}$.
For $n$ odd and $n \ge 665$, 
Adyan has shown that $\operatorname{B}(m,n)$ is non-amenable \cite{Adya--82};
moreover, it is known that $6 \le \mathcal T (\operatorname{B}(m,n)) \le 14$ \cite[Theorem 61]{CeGH--99}.
\par

Such groups are limits in $\mathcal M_m$ of non-elementary Gromov-hyperbolic groups.
More precisely, for $n$ odd and large enough, 
the group $\operatorname{B}(m,n)$ is a limit in $\mathcal M_m$ 
of a sequence $\big( \operatorname{B}(m,n,i) \big)_{i \ge 1}$
of non-elementary Gromov-hyperbolic groups.
A similar fact is shown by Ivanov in \cite[see Lemma 21.1]{Ivan--94},
in the much more difficult case of $n$ even, 
with $n \ge 2^{48}$ and $n$ divisible by $2^9$;
his proof adapts to the case needed here, with important simplifications
(compare with \cite{GrIv--09}, in particular Theorems 1.10 and 1.7,
and recall that a finitely presented group with a subquadratic Dehn function is hyperbolic).
In particular, for $m \ge 2$ and $n$ odd large enough,
the free Burnside group $\operatorname{B}(m,n)$, 
of Tarski number between $6$ and $14$,
is a limit in $\mathcal M_m$ (indeed in $\mathcal M_m^{\text{na}}$)
of groups of Tarski number~$4$.
\par

(2)
Ol'shanskii has worked out several ``Tarski monster groups''.
In particular, for any prime $p$ large enough,
he has constructed
a $2$-generated non-amenable torsion group $\operatorname{TM}(p)$
in which any proper non-trivial subgroup is of order $p$.
Moreover $\operatorname{TM}(p)$, of Tarski number $\ge 6$, 
is a limit in $\mathcal M_2$ of non-elementary Gromov hyperbolic groups,
of Tarski number $4$ \cite[Lemma 10.7a]{Ol'sh--80}.
\par

In several other papers by Ol'shanskii and co-authors,
there are several other classes of examples of such Burnside type limits of hyperbolic groups.
Let us only quote one paper \cite{OlOS--09}, and the book \cite{Ol'sh--91}.

\begin{que}
\label{lastquestion}
For an integer $k \ge 4$ and a finitely generated non-amenable group $G$
with $\mathcal T (G) = k$,
does there exist a finitely presented cover $E$ of $G$ with $\mathcal T (E) = k$~?
\end{que}

The answer is clearly positive when $\mathcal  T(G) = 4$ (with $E$ free).
If we define $\mathcal T (G) = \infty$ when $G$ is amenable,
the previous question for $k=\infty$ coincides with Question \ref{initialquestion},
and the answer is negative.

\vskip.5cm
\begin{center}
Post-scriptum
\end{center}

\section*{A construction of large groups, 
\\
due to Yves de Cornulier}

\vskip.2cm
\begin{center}
By Ralph Strebel
\end{center}
\vskip.3cm

The following proposition implies that any finitely presented cover 
of one of the groups $G_{\mathcal V}$ of Corollary \ref{BNSrem} is large.
The proof uses the basic idea of the proof of Theorem A in \cite{BiSt--78},
refined for Theorem 6.1 in \cite{BCGS}.

\vskip.2cm

\noindent
\textbf{Proposition.}
\emph{
Let $G$ be a finitely generated group
that is an extension of an infinite, locally finite group by an infinite cyclic group.}
\par
\emph{Any finitely presented cover of $G$ is large.}

\vskip.2cm

\begin{proof}
The beginning of our proof holds for any finitely generated group $G$
given with a projection $\chi : G \twoheadrightarrow \mathbf Z$;
let $N$ denote the kernel of $\chi$.
\par
We can choose a finite subset $A$ of $N$
and an element $t \in G$ such that $\chi(t) = 1$ and $A \cup \{t\}$ generates $G$.
Let $\widetilde A$ be a finite set given with a bijection $\tilde a \leftrightarrow a$ with $A$;
we denote by $F$ the free group on a set $\widetilde A \sqcup \{\tilde t \}$.
Let
\begin{equation*}
\rho \, : \,  F \longrightarrow G
\end{equation*}
be the epimorphism defined by $\rho(\tilde a) = a$
for all $\tilde a \in \widetilde A$, and $\rho(\tilde t) = t$.
\par
We construct as follows a sequence $(G_m)_{m \ge 1}$ of finitely generated groups.
The kernel $N$ is the union of subgroups
\begin{equation*}
B_m \, := \, \operatorname{gp} \{ t^\ell a t^{-\ell} \mid -m \le \ell \le m
\hskip.2cm \text{and} \hskip.2cm a \in A  \}, \hskip.5cm m \ge 1;
\end{equation*}
the notation indicates that $B_m$ is the subgroup $\operatorname{gp} \{ \hdots \}$ of $G$
generated by $\{ \hdots \}$.
For $m \ge 1$, consider the two subgroups  
\begin{equation*}
\aligned
H_m \, &:= \, \operatorname{gp} \{ t^\ell a t^{-\ell} \mid -m \le \ell \le m-1
\hskip.2cm \text{and} \hskip.2cm a \in A  \} 
\\
K_m \, &:= \, \operatorname{gp} \{ t^\ell a t^{-\ell} \mid -m+1 \le \ell \le m
\hskip.2cm \text{and} \hskip.2cm a \in A  \} 
\endaligned
\end{equation*}
of $B_m$, the isomorphism
\begin{equation*}
\tau_m \, : \,  H_m \overset{\simeq}{\longrightarrow} K_m, \hskip.2cm
h \longmapsto t h t^{-1},
\end{equation*}
and the HNN-extension
\begin{equation*}
G_m \, := \, \langle B_m, u_m \mid u_m h u_m^{-1} = \tau_m (h)
\hskip.2cm \text{for all} \hskip.2cm h \in H_m \rangle .
\end{equation*}
By Britton's lemma, we can and do view $B_m$ as a subgroup of $G_m$.
Observe that $G_m$ is actually generated by $A \subset H_m$ and $u_m$;
indeed, the defining relations in $G_m$ imply that
\begin{equation*}
u_m^\ell a u_m^{-\ell} \, = \, (\tau_m)^\ell (a) \, = \,
t^{\ell} a t^{-\ell}
\end{equation*}
for all 
$\ell$ with $-m \le \ell \le m$ and $a \in A$.
\par
Let
\begin{equation*}
\lambda_{m,m+1} \, : \, G_m \longrightarrow G_{m+1}
\end{equation*}
be the homomorphism of which the restriction to $B_m$
is the inclusion $B_m \subset B_{m+1} \subset G_{m+1}$, 
and such that $\lambda_{m,m+1}(u_m) = u_{m+1}$.
Let
\begin{equation*}
\lambda_m \, : \, G_m \longrightarrow G
\end{equation*}
be the homomorphism of which the restriction to $B_m$
is the inclusion $B_m \subset G$, 
and such that $\lambda_m(u_m) =t$.
Observe that $\lambda_{m,m+1}$ and $\lambda_m$ are epimorphisms,
and that $\lambda_m = \lambda_{m+1} \lambda_{m, m+1}$.
Let 
\begin{equation*}
\rho_m \, : \,F \longrightarrow G_m
\end{equation*}
be the homomorphism defined by
$\rho_m(\tilde a) = a$ for all $\tilde a \in \widetilde A$ and $\rho_m(\tilde t) = u_m$.
Observe that $\rho_m$ is an epimorphism,
and that $\lambda_m \rho_m = \rho$.
\par
We have clearly
\begin{equation*}
\ker(\rho_1) \, \subset \, \ker(\rho_2) \, \subset \,  \cdots
\subset \ker(\rho_m) \, \subset \,  \cdots \, \subset \ker(\rho) .
\end{equation*}
We claim that, moreover, $\bigcup_{m=1}^\infty \ker(\rho_m) = \ker(\rho)$.
Indeed, let $w \in \ker(\rho)$.
Since $\chi(\rho(w)) = 0 \in \mathbf Z$, the element $w$
is a product of conjugates by powers of $\tilde t$ of elements in 
$\widetilde A \cup {\widetilde A}^{-1}$.
Let $m$ be large enough for $w$ to be a product of elements of the form
${\tilde t}^\ell \tilde a {\tilde t}^{-\ell}$, with $\vert \ell \vert \le m$ 
and $\tilde a \in \widetilde A \cup {\widetilde A}^{-1}$.
Then $\rho(w) = \lambda_m (\rho_m (w)) = 0$,
with $\rho_m (w) \in B_m \subset G_m$.
Since the restriction of $\lambda_m$ to $B_m$ is injective,
this implies $w \in \ker (\rho_m)$, and the claim holds.

\vskip.2cm

We use now one more hypothesis: $N$ is locally finite. 
Since $B_m$ is a finitely generated subgroup in the locally finite group $N$, 
it is a finite group.
It follows from Proposition 11 and its corollary in \cite[Pages 160--161]{Serr--77}
that the HNN-extension $G_m$ 
has a finitely generated free subgroup $F_m$ of finite index.
We use now the last hypothesis: $N$ is infinite.
It implies that $F_m$ cannot be cyclic, 
i.e. that $G_m$ has  a finite index subgroup $F_m$ 
that is non-abelian and free.
In other words, $G_m$ is large.

\vskip.2cm

Consider finally a finitely presented group $E$ and an epimorphism
$\pi : E \twoheadrightarrow G$.
Upon replacing $A$ at the beginning of the proof by a larger finite subset of $N$,
we can assume that there exists an epimorphism
$\sigma : F \twoheadrightarrow E$
of which the kernel is generated by a finite set of relators,
and that $\rho$ factors as the composition 
$F \overset{\sigma}{\twoheadrightarrow} E  \overset{\pi}{\twoheadrightarrow} G$.
Hence $\ker(\sigma) \subset \ker(\rho)$.
Since $\ker(\sigma)$ is the normal closure of a finite set,
there exists $m \ge 1$ with $\ker(\sigma) \subset \ker(\rho_m)$.
It follows that $E = F/K$ has $G_m = F/\ker(\rho_m)$ as a quotient group.
Since $G_m$ is large, we conclude that $E$ is large.
\end{proof}

As a consequence of the proof, we have:

\vskip.2cm

\noindent
\textbf{Corollary.}
\emph{
Let $G$ be a (locally finite)-by-(infinite cyclic)  group, as in the previous proposition.
Assume that $G$ is finitely generated, say that $G$ has a generating set $S$ of some size $k \ge 1$.
}
\par
\emph{
Then, in the space of marked group $\mathcal M_k$, see Section \ref{sectionMark+Chab},
the pair $(G,S)$ is the limit of finitely generated virtually free groups.
More precisely, there exists in $\mathcal M_k$
a sequence $(G_m, S_m)_{m \ge 1}$
of virtually free marked groups converging to $(G,S)$,
and for each $m \ge 1$ an epimorphism $G_m \twoheadrightarrow G$
mapping $S_m$ onto $S$.
}


\begin{thebibliography}{MaSw--59}

\bibitem[Abel--77]{Abel--77}
Herbert {\sc Abels},
\emph{An example of a finitely presented solvable group},
in ``Homological group theory, Proc.\ Sympos.\ Durham, 1977'',
London math.\ Soc.\ Lecture Note Ser.\ \textbf{36} 
(Cambridge Univ.\ Press 1979), 205--211.

\bibitem[AbBr--87]{AbBr--87}
Herbert {\sc Abels} and Kenneth S.\ {\sc Brown},
\emph{Finiteness properties of solvable S-arithmetic groups: an example},
J.\ Pure Appl.\ Algebra \textbf{44} (1987), 77--83.

\bibitem[AdSr--57]{AdSr--57}
Georgy M.\ {\sc Adel'son-Vel'skii} and Yulii A.\ {\sc Sreider},
\emph{The Banach mean on groups},
Usphei Math.\ Nauk.\ (N.S.) \textbf{12}(6) (1957), 131--136.

\bibitem[Adya--82]{Adya--82}
Sergei Ivanovich {\sc Adyan},
\emph{Random walks on free periodic groups},
Math.\ USSR Izv.\ \textbf{21}$^3$ (1983), 425--434.
[Russian:
Izv.\ Adak.\ Nauk SSSR, Ser.\ Math.\  \textbf{46}$^6$ (1982), 1139--1149.]

\bibitem[AmAV]{AmAV}
Gideon {\sc Amir}, Omer {\sc Angel}, and B\'alint {\sc Virag},
\emph{Amenability of linear-activity automaton groups},
Preprint, arXiv:0905.2007v2, 21 June 2011,
J.\  European Math.\ Soc.\, to appear.

\bibitem[Bart--03]{Bart--03}
Laurent {\sc Bartholdi},
\emph{L-presentations and branch groups},
J.\ Algebra \textbf{268} (2003), 419--443.

\bibitem[Bart]{Bart}
Laurent {\sc Bartholdi},
\emph{(Self-)similar groups and the Farrell-Jones conjectures},
Preprint, arXiv:1107.5339v2, 28 Jul 2011.

\bibitem[BaGr--02]{BaGr--02}
Laurent {\sc Bartholdi} and Rostislav {\sc Grigorchuk}
\emph{On parabolic subgroups and Hecke algebras of some fractal groups},
Serdica Math.\ J.\ \textbf{28} (2002), 47--90.

\bibitem[BaGN--03]{BaGN--03}
Laurent {\sc Bartholdi}, Rostislav {\sc Grigorchuk}
and  Volodymyr {\sc Nekrashevych},
\emph{From fractal groups to fractal sets},
in ``Fractal in Graz 2001'', Birkh\"auser (2003), 25--118.

\bibitem[BaVi--05]{BaVi--05}
Laurent {\sc Bartholdi} and B\'alint {\sc Vir\'ag},
\emph{Amenability via random walks},
Duke Math.\ J.\ \textbf{130} (2005), 39--56.

\bibitem[BaCo--06]{BaCo--06}
Laurent {\sc Bartholdi} and Yves {\sc de Cornulier},
\emph{Infinite groups with large balls of torsion elements and small entropy},
Arch.\ Math.\ Basel \textbf{87} (2006), 104--112.

\bibitem[BaPo--09]{BaPo--09}
Laurent {\sc Bartholdi} and Floriane {\sc Pochon},
\emph{On growth of torsion groups},
Groups Geom.\ Dyn.\ \textbf{3} (2009), 525--539.

\bibitem[BaKN--10]{BaKN--10}
Laurent {\sc Bartholdi}, Vadim {\sc Kaimanovich}
and  Volodymyr {\sc Nekrashevych},
\emph{On amenability of automata groups},
Duke Math.\ J.\ \textbf{154} (2010), 575--598.

\bibitem[BaEr--a]{BaEr--a}
Laurent {\sc Bartholdi} and Anna {\sc Erschler},
\emph{Growth of permutational extensions},
Inventiones Math.\ (to appear).

\bibitem[BaEr--b]{BaEr--b}
Laurent {\sc Bartholdi} and Anna {\sc Erschler},
\emph{Groups of given intermediate word growth},
Preprint, arXiv:1110.3650v2, 15 Nov 2011.

\bibitem[Baum--61]{Baum--61}
Gilbert {\sc Baumslag},
\emph{Wreath products and finitely presented groups},
Math.\ Z.\ \textbf{75} (1961), 22--28.

\bibitem[Baum--63]{Baum--63}
Gilbert \textsc{Baumslag}
\emph{Some subgroup theorems for free $v$-groups},
Trans.\ AMS \textbf{108} (1963), 516--525.

\bibitem[Baum--72]{Baum--72}
Gilbert {\sc Baumslag}, 
\emph{A finitely presented meta\-belian group 
with a free abelian derived group of infinite rank},
Proc.\ Amer.\ Math.\ Soc.\ \textbf{35} (1972), 61--62. 

\bibitem[Baum--73]{Baum--73}
Gilbert {\sc Baumslag},
\emph{Subgroups of finitely presented metabelian groups}, 
J.\ Australian Math.\ Soc.\ \textbf{16} (1973), 98--110. 

\bibitem[Baum--74]{Baum--74}
Gilbert {\sc Baumslag},
\emph{Finitely presented metabelian groups},
in ``Proc.\ Conf.\ Theory of Groups, Canberra, 1973'',
Springer Lecture Notes in Math.\ \textbf{372} (1974), 65--74.

\bibitem[Baum--93]{Baum--93}
Gilbert {\sc Baumslag},
\emph{Topics in combinatorial group theory}, 
Lectures in Mathematics, ETH Z\"urich, Birkh\"auser, 1993.

\bibitem[BaSo--62]{BaSo--62}
Gilbert {\sc Baumslag} and Donald {\sc Solitar},
\emph{Some two-generator one-relator non-Hopfian groups},
Bull.\ Amer.\ Math.\ Soc.\ \textbf{68} (1962), 199--201.

\bibitem[BaSt--76]{BaSt--76}
Gilbert {\sc Baumslag} and Ralph {\sc Strebel},
\emph{Some finitely generated, infinitely related metabelian groups
with trivial multiplicator},
J.\ of Algebra \textbf{40} (1976), 46--62.


\bibitem[BaGS--86]{BaGS--86}
Gilbert {\sc Baumslag}, Dion {\sc Gildenhuys}, and Ralph {\sc Strebel},
\emph{Algorithmically insoluble problems about
finitely presented solvable groups, Lie and associative algebras. I},
J.\ Pure Appl.\ Algebra \textbf{39} (1986), 53--94.

\bibitem[BaCR--94]{BaCR--94}
Gilbert {\sc Baumslag}, Frank B.\ {\sc Cannonito}, and Derek J.S.\ {\sc Robinson},
\emph{The algorithmic theory of finitely generated metabelian groups},
Trans.\ Amer.\ Math.\ Soc.\ \textbf{344} (1994), 629--648.

\bibitem[BaMi--09]{BaMi--09}
Gilbert {\sc Baumslag} and Charles F.\  {\sc Miller, III},
\emph{Reflections on some groups of B.H.\ Neumann},
J.\ Group Theory \textbf{12} (2009), 771--781.

\bibitem[BeBr--97]{BeBr--97}
Mladen {\sc Bestvina} and Noel {\sc Brady},
\emph{Morse theory and finiteness properties of groups},
Invent.\ Math.\ \textbf{129} (1997), 445--470.

\bibitem[Bier--76]{Bier--76}
Robert {\sc Bieri},
\emph{Homological dimension of discrete groups},
Queen Mary College Mathematical Notes, 1976.

\bibitem[Bier--79]{Bier--79}
Robert {\sc Bieri},
\emph{Finitely presented soluble groups},
S\'eminaire d'alg\`ebre Paul Dubreuil, 1977-78,
Springer Lecture Notes in Math.\ \textbf{740} (1979), 1--8.


\bibitem[BiSt--78]{BiSt--78}
Robert {\sc Bieri} and Ralph {\sc Strebel},
\emph{Almost finitely presented soluble groups},
Comment.\ Math.\ Helv.\ \text{53} (1978), 258--278.

\bibitem[BiSt--80]{BiSt--80}
Robert {\sc Bieri} and Ralph {\sc Strebel},
\emph{Valuations and finitely presented metabelian groups},
Proc.\ London Math.\ Soc.\ (3) \textbf{41} (1980), 439--464.

\bibitem[BiNS--87]{BiNS--87}
Robert {\sc Bieri}, Walter {\sc Neumann}, and Ralph {\sc Strebel},
\emph{A geometric invariant of discrete groups},
Invent.\ Math.\ \textbf{90} (1987), 451--477.

\bibitem[BiRe--88]{BiRe--88}
Robert {\sc Bieri} and Burkhardt {\sc Renz},
\emph{Valuations on free resolutions and higher geometric invariants of groups},
Comment.\ Math.\ Helv.\ \text{63} (1988), 464--497.

\bibitem[BiSt]{BiSt}
Robert {\sc Bieri} and Ralph {\sc Strebel},
\emph{Geometric invariants for discrete groups},
unpublished book, 1992
(see \cite{Stre}).

\bibitem[BCGS]{BCGS}
Robert {\sc Bieri}, Yves {\sc de Cornulier},
Luc {\sc Guyot}, and Ralph {\sc Strebel},
\emph{Infinite presentability of groups and condensations},
Preprint, arXiv:1010.0271v2, 17 May 2012.

\bibitem[Bour--63]{Bour--63}
Nicolas {\sc Bourbaki},
\emph{Int\'egration, chapitres 7 et 8},
Hermann, 1963. 

\bibitem[Breu--07]{Breu--07}
Emmanuel {\sc Breuillard},
\emph{On uniform exponential growth for solvable groups},
Pure Appl.\ Math.\ Q.\ \textbf{3} (2007), 949--967.

\bibitem[Brow--82]{Brow--82}
Kenneth S.\ {\sc Brown},
\emph{Cohomology of groups},
Springer, 1982.

\bibitem[Brow--87]{Brow--87}
Kenneth S.\ {\sc Brown},
\emph{Finiteness properties of groups},
J.\ Pure Appl.\ Algebra \textbf{44} (1987), 45--75.

\bibitem[BuPe--06]{BuPe--06}
Kai-Uwe {\sc Bux} and Rodrigo {\sc P\'erez},
\emph{On the growth of iterated monodromy groups},
in ``Topological and asymptotic aspects of group theory'',
Contemp.\ Math.\ \textbf{394} (Amer.\ Math.\ Soc.\ 2006), 61--76.

\bibitem[CaRo--84]{CaRo--84}
Frank B.\ {\sc Cannonito} and Derek J.S.\ {\sc Robinson},
\emph{The word problem for finitely generated soluble groups of finite rank},
Bull.\ London Math.\ Soc.\ \textbf{16} (1984), 43--46.

\bibitem[CaCo]{CaCo}
Pierre-Emmanuel {\sc Caprace} and Yves {\sc de Cornulier},
\emph{On embeddings into compactly generated groups},
Preprint, 17 September 2012.

\bibitem[CeCo--10]{CeCo--10}
Tullio {\sc Ceccherini-Silberstein} and Michel {\sc Coornaert},
\emph{Cellular automata and groups},
Springer, 2010.

\bibitem[CeGH--99]{CeGH--99}
Tullio {\sc Ceccherini-Silberstein}, Rostislav {\sc Grigorchuk},
and Pierre {\sc de la Harpe},
\emph{Amenability and paradoxical decompositions for pseudogroups 
and for discrete metric spaces},
Proc.\ Steklov Inst.\ Math.\ \textbf{224} (1999), 57--97.

\bibitem[Chab--50]{Chab--50}
Claude {\sc Chabauty},
\emph{Limite d'ensembles et g\'eom\'etrie des nombres},
Bull.\ Soc.\ Math.\ France \textbf{78} (1950), 143--151.

\bibitem[Cham--00]{Cham--00}
Christophe {\sc Champetier},
\emph{L'espace des groupes de type fini},
Topology \textbf{39} (2000), 657--680.

\bibitem[ChGu--05]{ChGu--05}
Christophe {\sc Champetier} and Vincent {\sc Guirardel},
\emph{Limit groups as limits of free groups:
compactifying the set of free groups},
Isarel J.\ Math.\ \textbf{146} (2005), 1--75.

\bibitem[Chou--80]{Chou--80}
Christophe {\sc Chou},
\emph{Elementary amenable groups},
Illinois J.\ Math.\ \textbf{1} (1980), 396--407.

\bibitem[Clap--67]{Clap--67}
Christopher Robert Jasper {\sc Clapham},
\emph{Finitely presented groups with word problems
of arbitrary degrees of insolubility},
Proc.\ London Math.\ Soc.\ \textbf{14} (1964), 633--676.

\bibitem[Coll--78]{Coll--78}
Donald J.\ {\sc Collins},
\emph{Some one-relator Hopfian groups},
Trans.\ Amer.\ Math.\ Soc.\ \textbf{235} (1978), 363--374.

\bibitem[CoLe--83]{CoLe--83}
Donald J.\ {\sc Collins} and Frank {\sc Levin},
\emph{Automorphisms and Hopficity of certain Baumslag-Solitar groups},
Arch.\ Math.\ \textbf{40} (1983), 385--400.
      
\bibitem[Corn--06]{Corn--06}
Yves {\sc de Cornulier},
\emph{Finitely presented wreath products and double coset decompositions},
Geom.\ Dedicata \textbf{122} (2006), 89--108.

\bibitem[CoGP--07]{CoGP--07}
Yves {\sc de Cornulier}, Luc {\sc Guyot}, and Wolfgang {\sc Pitsch},
\emph{On the isolated points in the space of groups},
Groups Geom.\ Dyn. \textbf{1} (2007), 47--59.

\bibitem[CoKa--11]{CoKa--11}
Yves {\sc de Cornulier} and Aditi {\sc Kar},
\emph{On Property (FA) for wreath products},
J.\ Group Theory \textbf{14} (2011), 165--174.

\bibitem[CoTe]{CoTe}
Yves {\sc de Cornulier} and Romain {\sc Tessera},
\emph{Dehn function and asymptotic cones of Abels' group},
Preprint, arXiv:1203.4696v1, 21 Mar 2012.

\bibitem[Day--57]{Day--57}
Mahlon M.\ {\sc Day},
\emph{Amenable semigroups}, 
Illinois J.\ Math.\ \textbf{1} (1957), 509--544.

\bibitem[DuMi]{DuMi}
Martin J.\ {\sc Dunwoody} and Ashot {\sc Minasyan},
\emph{An (FA)-group that is not (FR)},
Preprint, arXiv:1203.3317v1, 15 Mar 2012.

\bibitem[EdPr--84]{EdPr--84}
Martin {\sc Edjvet} and Stephen J.\ {\sc Pride}, 
\emph{The concept of "largeness" in group theory II},
in A.C.\ Kim and B.H.\ Neumann (Eds.),
``Groups-Korea 1983'', Lecture Notes in Math.\ \textbf{1098}
(Springer 1984) 29--54.

\bibitem[Ersc--04]{Ersc--04}
Anna {\sc Erschler},
\emph{Non residually finite groups of intermediate growth, 
commensurability and non-geometricity},
J.\ of Algebra \textbf{222} (2004), 154--172.


\bibitem[Ersc--07]{Ersc--07}
Anna {\sc Erschler},
\emph{Automatically presented groups},
Groups Geom.\ Dyn.\ \textbf{1} (2007), 47--59.

\bibitem[FaGu--85]{FaGu--85}
Jacek {\sc Fabrykowski} and Narain {\sc Gupta},
\emph{On groups with sub-exponential growth functions},
J.\ Indian Math.\ Soc.\  \textbf{49} (1985), 249--256.

\bibitem[FaGu--91]{FaGu--91}
Jacek {\sc Fabrykowski} and Narain {\sc Gupta},
\emph{On groups with sub-exponential growth functions. II},
J.\ Indian Math.\ Soc.\  \textbf{56} (1991), 217--228.

\bibitem[Foln--55]{Foln--55}
Erling {\sc F\o lner},
\emph{On groups with full Banach mean value},
Math.\ Scand.\ \textbf{3} (1955), 243--254.

\bibitem[FrTe--95]{FrTe--95}
Michael H.\  {\sc Freedman} and Peter {\sc Teichner},
\emph{4-manifold topology I: subexponential groups},
Inventiones Math.\ \textbf{122} (1995), 500--529.
See the Appendix in \cite{KrQu--00}.   

\bibitem[Gild--79]{Gild--79}
Don {\sc Gildenhuys},
\emph{Classification of soluble groups of cohomological dimension two},
Math.\ Z.\ \textbf{166} (1979), 21--25.

\bibitem[Grig--80]{Grig--80}
Rostislav {\sc Grigorchuk},
\emph{On {B}urnside's problem on periodic groups},
Funktsional.\ Anal.\ i Prilozhen,
\textbf{14} (1980), 53--54.

\bibitem[Grig--83]{Grig--83}
Rostislav {\sc Grigorchuk},
\emph{On the Milnor problem of  growth},
Dokl.\ Acad.\ Nauk SSSR
\textbf{271} (1983), 30--33.

\bibitem[Gri--84a]{Gri--84a}
Rostislav {\sc Grigorchuk},
\emph{Degrees of growth of finitely generated groups 
and the theory of invariant means},
Izv.\ Akad.\ Nauk SSSR Ser.\ Mat.\ \textbf{48}(5) (1984), 939--985.
[In English Math.\ USSR Izv.\ \textbf{85} (1985) 259--300.]

\bibitem[Gri--84b]{Gri--84b}
Rostislav {\sc Grigorchuk},
\emph{Construction of $p$-groups of intermediate growth
that have a continuum of quotient groups},
Algebra Logic \textbf{23} (1984), 265--273.


\bibitem[Grig--98]{Grig--98}
Rostislav {\sc Grigorchuk},
\emph{An example of a finitely presented amenable group that does not
belong to the class $EG$},
Mat.\ Sb.\ \textbf{189} (1998), 79--100.

\bibitem[Grig--99]{Grig--99}
Rostislav {\sc Grigorchuk},
\emph{On the system of defining relations and the Schur multiplier of periodic groups
generated by finite automata},
in ``Groups St Andrews 1997 in Bath, 1'',
C.M.\ Campbell et al.\ Editors, 
London Math.\ Soc.\ Lecture Note Ser.\ \textbf{260} (1999), 290--317.


\bibitem[Grig--00]{Grig--00}
Rostislav {\sc Grigorchuk},
\emph{Just infinite branch groups},
in ``New horizons in pro-$p$-groups'', 
M.\ {\sc du Sautoy} et al.\  Editors,
Birkh\"auser (2000), 121--179.

\bibitem[Grig--05]{Grig--05}
Rostislav {\sc Grigorchuk},
\emph{Solved and unsolved problems around one group},
in ``Infinite groups: geometric, combinatorial and dynamical aspects'',
L.\ Bartholdi, T.\ Ceccherini-Silberstein, T.\ Smirnova-Nagnibeda, 
and A.\ \`Zuk Editors,
Progr.\ in Math\ \textbf{248}, Birkh\"auser (2005), 117--218.

\bibitem[Grig--06]{Grig--06}
Rostislav {\sc Grigorchuk}, 
\emph{Hanoi Towers group on $3$ pegs and its pro-finite closure,
joint work with Volodymyr {\sc Nekrashevych} and Zoran {\sc{\v{S}}uni{\'c}}}, 
Mathematisches Forschungsinstitut Oberwolfach Report 25/2006,
Pages 1477--1479
[see also Report 19/2006, Pages 1179--1182].


\bibitem[Grig]{Grig}
Rostislav {\sc Grigorchuk},
\emph{Milnor's Problem on the growth of groups and its consequences},
Preprint, arXiv:1111.0512v2, 31 Jan 2012.


\bibitem[GrMa--97]{GrMa--97}
Rostislav {\sc Grigorchuk} and Mohammad J.\ {\sc Mamaghani},
\emph{On use of iterates of endomorphisms for constructing groups
with specific properties},
Mat. Stud.\ \textbf{8} (1997), 198--206, 238.



\bibitem[GLSZ--00]{GLSZ--00}
Rostislav {\sc Grigorchuk}, Peter {\sc Linnell},
Thomas {\sc Schick}, and Andrzej {\sc \`Zuk},
\emph{On a question of Atiyah},
C.R.\ Acad.\ Sci., Paris, S\'er.\ I, Math.\ \textbf{331}$^9$ (2000), 663--668.


\bibitem[GrHa--01]{GrHa--01}
Rostislav {\sc Grigorchuk} and Pierre {\sc de la Harpe},
\emph{Limit behaviour of exponential growth rates for finitely generated groups},
in ``Essays on geometry and related topics,
M\'emoires d\'edi\'es \`a Andr\'e Haefliger, {V}ol.\ 2'',
Monogr.\ Enseign.\ Math.\ \textbf{38} (2001), 351--370.

\bibitem[GrZu--01]{GrZu--01}
Rostislav {\sc Grigorchuk} and Andrzej {\sc \`Zuk},
\emph{The lamplighter group as a group generated by a $2$-state automaton,
and its spectrum},
Geom.\ Dedicata \textbf{87} (2001), 209--244.

\bibitem[GrZ--02a]{GrZ--02a}
Rostislav {\sc Grigorchuk} and Andrzej {\sc \`Zuk},
\emph{On a torsion-free weakly branch group defined by a three state automaton},
Internat.\ J.\ Algebra Comput.\ \textbf{12} (2002), 223--246.

\bibitem[GrZ--02b]{GrZ--02b}
Rostislav {\sc Grigorchuk} and Andrzej {\sc \`Zuk},
\emph{Spectral properties of a torsion-free weakly branch group 
defined by a three state automaton},  
in ``Computational and statistical group theory (Las Vegas, 
2001)'',
Contemp.\ Math.\ \textbf{298} (Amer.\ Math.\ Soc.\ 2002), 57--82. 

\bibitem[GrSu--06]{GrSu--06}
Rostislav {\sc Grigorchuk} and Zoran {\sc{\v{S}}uni{\'c}},
\emph{Asymptotic aspects of Schreier graphs and Hanoi Towers groups},
C.\ R.\ Math.\ Acad.\ Sci.\ Paris \textbf{342}$^8$ (2006), 545--550.

\bibitem[GrSu--07]{GrSu--07}
Rostislav {\sc Grigorchuk} and Zoran {\sc{\v{S}}uni{\'c}},
\emph{Self-similarity and branching in group theory},
in ``Groups St.\ Andrews 2005, Vol.\ 1'', C.M.\ Campbell et al.\ Editors,
London Math.\ Soc.\ Lecture Note Ser.\ \textbf{339} (2007), 36--95.

\bibitem[GrSS--07]{GrSS--07}
Rostislav {\sc Grigorchuk}, Dmytro {\sc Savchuk}, and Zoran {\sc{\v{S}}uni{\'c}},
\emph{The spectral problem, substitutions and iterated monodromy},
in ``Probability and mathematical physics'',
CRM Proc.\ Lecture Notes \textbf{42} (Amer.\ Math.\ Soc.\ 2007), 225--248.

\bibitem[GrIv--09]{GrIv--09}
Rostislav {\sc Grigorchuk} and Sergei V.\ {\sc Ivanov},
\emph{On Dehn functions of infinite presentations of groups},
Geom.\ Funct.\ Anal.\ \textbf{18} (2009), 1841--1874.  

\bibitem[GrMe]{GrMe}
Rostislav {\sc Grigorchuk} and Konstantin  {\sc Medynets},
\emph{Topological full groups are locally embeddable into finite groups},
Preprint, arXiv:1105.0719v3, 23Jan2012.

\bibitem[Grom--81]{Grom--81}
Michael {\sc Gromov},
\emph{Groups of polynomial growth and expanding maps
(with an appendix by Jacques Tits)},
Publ.\ math.\ I.H.\'E.S.\ \textbf{53} (1981) 53--78.

\bibitem[GuSi--83]{GuSi--83}
Narain {\sc Gupta} and Said {\sc Sidki},
\emph{On the Burnside problem for periodic groups},
Math.\ Z.\ \textbf{182} (1983), 385--388.

\bibitem[Hall--50]{Hall--50}
Marshall {\sc Hall},
\emph{A topology for free groups and related groups},
Annals of Math.\ \textbf{52} (1950), 127--139.


\bibitem[Hall--54]{Hall--54}
Philip {\sc Hall},
\emph{Finiteness conditions for  soluble groups},
Proc.\ London Math.\ Soc.\ \textbf{4} (1954), 419--436.
Collected Works 305--324.

\bibitem[Hall--59]{Hall--59}
Philip {\sc Hall},
\emph{On the finiteness of certain soluble groups},
Proc.\ London Math.\ Soc.\ \textbf{9} (1959), 595--622.
Collected Works 515--544.

\bibitem[Hall--61]{Hall--61}
Philip {\sc Hall},
\emph{The Frattini subgroups of finitely generated groups},
Proc.\ London Math.\ Soc.\ \textbf{11} (1961), 327--352.
Collected Works 581--608.

\bibitem[Harp--00]{Harp--00}
Pierre de la {\sc Harpe},
\emph{Topics in geometric group theory},
Chicago Univ.\  Press, 2000.

\bibitem[HaSk--86]{HaSk--86}
Pierre {\sc de la Harpe} and Georges {\sc Skandalis},
\emph{Un r\'esultat de Tarski sur les actions moyennables de groupes et les partitions paradoxales},
Enseign.\ Math.\ \textbf{32} (1986), 121--138.

\bibitem[Higm--61]{Higm--61}
Graham {\sc Higman},
\emph{Subgroups of finitely presented groups},
Proc.\ Roy.\ Soc. London Ser.\ A \textbf{262} (1961), 455--475.

\bibitem[HiNN--49]{HiNN--49}
Graham {\sc Higman},  Bernhard Hermann {\sc Neumann}, and Hanna {\sc Neumann},
\emph{Embedding theorems for groups},
J.\ London Math.\ Soc.\ \textbf{24} (1949), 247--254.

\bibitem[Hill--91]{Hill--91}
Jonathan A.\ {\sc Hillman},
\emph{Elementary amenable groups and $4$-manifolds with Euler characteristic $0$},
J.\ Austral.\ Math.\ Soc.\ (Series A) \textbf{50} (1991), 160--170.

\bibitem[Houg--78]{Houg--78} Chris H.\ {\sc Houghton},
\emph{The first cohomology of a group with permutation module coefficients},
Arch.\ Math\ \textbf{31} (1978) 254--258.

\bibitem[Ivan--94]{Ivan--94}
Sergei V.\ {\sc Ivanov},
\emph{The free Burnside groups of sufficiently large exponents},
Int.\ J.\ Algebra Comput.\ \textbf{4}, No.1-2 (1994), 1--308.

\bibitem[JuMo]{JuMo}
Kate {\sc Juschenko} and Nicolas {\sc Monod},
\emph{Cantor systems, piecewise translations and simple amenable groups},
Preprint, arXiv:1204.2132v3, 29 Apr 2012.

\bibitem[Kaim--05]{Kaim--05}
Vadim {\sc Kaimanovich},
\emph{``M\"unchhausen trick'' and amenability of self-similar groups},
Int.\ J.\ Algebra Comput.\ \textbf{15} (2005), 907--937.


\bibitem[Khar--81]{Khar--81}
Olga {\sc  Kharlampovich},
\emph{A finitely presented solvable group with unsolvable word problem},
Izv.\ Akad.\ Nauk SSSR Ser.\ Math.\ \textbf{45} (1981), 852--873, 928.
[English translation: Math.\ USSR Izvestija \textbf{19} (1982), 151--169.]

\bibitem[Khar--90]{Khar--90}
Olga {\sc  Kharlampovich},
\emph{The word problem for groups and Lie algebras},
Doctor's Thesis (Russian), Moscow Steklov Mathematical Institute, 1990.

\bibitem[KhMS]{KhMS}
Olga {\sc  Kharlampovich}, Alexei {\sc Myasnikov}, and Mark {\sc Sapir}, 
\emph{Algorithmically complex residually finite groups},
Preprint, arXiv:1204.6506v4, 4 Oct 2012.

\bibitem[Kour--06]{Kour--06}
\emph{The Kourovka notebook: unsolved problems in group theory},
16th Edition.
Russian Academy of Sciences, Siberian Division, 2006.


\bibitem[KrQu--00]{KrQu--00}
Vyacheslav S.\ {\sc Kruskhal} and Frank {\sc Quinn},
\emph{Subexponential groups in 4-manifold topology},
Geometry $\&$ Topology \textbf{4} (2000), 407--430.
With an appendix by 
Michael H.\  {\sc Freedman} and Peter {\sc Teichner}.

\bibitem[LeRo--04]{LeRo--04}
John C.\ {\sc Lennox} and Derek J.S.\ {\sc Robinson},
\emph{The theory of infinite soluble groups},
Clarendon Press, 2004.

\bibitem[Lyse--85]{Lyse--85}
Igor {\sc Lys\"enok},
\emph{A set of defining relations for the Grigorchuk group},
Math.\ Zametki \textbf{38} (1985), 503--516, 636.
English translation:
Math.\ Notes Acad.\ Sc.\ USSR \textbf{38}:4 (1985), 784--792.

\bibitem[Lyul--84]{Lyul--84}
Natalia Albertovna {\sc Lyul'ko},
\emph{Normal subgroups of Abels' groups},
Math.\ Notes \textbf{36} (1984), 639--641.

\bibitem[Magn--39]{Magn--39}
Wilhelm {\sc Magnus},
\emph{On a theorem of Marshall Hall},
Annals of Math.\ \textbf{40} (1939), 764--768.

\bibitem[Mal'c--40]{Mal'c--40} 
Anatoly Ivanovich {\sc Mal'cev},
\emph{On the faithful representation of infinite groups by matrices},
Amer.\ Math.\ Soc.\ Transl.\ Ser.\  2 \textbf{45} (1962), 1--18.
[Russian:
Rec.\ Math.\ [Mat.\ Sbornik] N.S.,  \textbf{8(50)}:3 (1940), 405--422.]

\bibitem[Mand--80]{Mand--80}
Beno\^{\i}t B.\ {\sc Mandelbrot},
\emph{Fractal aspects of the iteration of $z \to \Lambda z(1-z)$
for complex $\Lambda$ and $z$},
Annals New York Acad.\ Sciences \textbf{357} (1980), 249--259.

\bibitem[Mann--11]{Mann--11}
Avinoam {\sc Mann},
\emph{How groups grow},
Cambridge Univ.\ Press, 2011.

\bibitem[Matu--06]{Matu--06}
Hiroki {\sc Matui},
\emph{Some remarks on topological full groups of Cantor minimal systems},
Internat.\ J.\ Math.\ \textbf{17} (2006), 231--251.

\bibitem[Meld--95]{Meld--95}
John D.P.\ {\sc Meldrum},
\emph{Wreath products of groups and semigroups},
Pitman Monograph, Longman, 2005.

\bibitem[Mesk--74]{Mesk--74}
Stephen {\sc Meskin},
\emph{A finitely generated residually finite group
with an unsolvable word problem},
Proc.\ Amer.\ Math.\ Soc.\ \textbf{43} (1974), 8--10.

\bibitem[Mill--89]{Mill--89}
Charles F.\ {\sc Miller, III},
\emph{Decision problems for groups -- survey and reflections},
in ``Algorithms and classification in combinatorial group theory (Berkeley, CA, 1989)''
(Springer 1991), 1--59. 

\bibitem[Nekr--05]{Nekr--05}
Volodymyr {\sc Nekrashevych},
\emph{Self-similar groups},
Mathematical Surveys and Monographs \textbf{117},
American Mathematical Society 2005.

\bibitem[Nekr--10]{Nekr--10}
Volodymyr {\sc Nekrashevych},
\emph{Free subgroups in groups acting on rooted trees},
Groups, Geometry, and Dynamics \textbf{4} (2010), 847--862.

\bibitem[Nekr--11]{Nekr--11}
Volodymyr {\sc Nekrashevych},
\emph{Iterated monodromy groups},
in ``Groups St Andrews 2009 in Bath'', Volume 1,
C.M.\ Campbell et al.\ Editors, 
London Math.\ Soc.\ Lecture Note Ser.\ \textbf{387} (2011), 41--93.

\bibitem[Neum--37]{Neum--37}
Bernhard Hermann {\sc Neumann},
\emph{Some remarks on infinite groups},
J.\ London Math.\ Soc.\ \textbf{12} (1937), 120--127.

\bibitem[NeNe--59]{NeNe--59}
Bernhard Hermann {\sc Neumann} and Hanna {\sc Neumann},
\emph{Embedding theorems for groups},
J.\ London Math.\ Soc.\ \textbf{34} (1959), 465--479.


\bibitem[vNeu--29]{vNeu--29}
John {\sc von Neumann},
\emph{Zur allgemeinen Theorie des Masses},
Fund.\ Math.\ \textbf{13} (1929)  73--116, 333 
(= Collected works, Vol.\ I,  599--643).

\bibitem[Ol'sh--80]{Ol'sh--80}
Alexander Yu.\ {\sc Ol'shanskii},
\emph{An infinite group with subgroups of prime orders}
Math.\ USSR, Izv.\ \textbf{16} (1981) 279--289.
Izv.\ Akad.\ Nauk SSSR, Ser.\ Mat.\ \textbf{44} (1980), 
309--321.] 

\bibitem[Ol'sh--91]{Ol'sh--91}
Alexander Yu.\ {\sc Ol'shanskii},
\emph{Geometry of defining relations in groups},
Kluwer, 1991.

\bibitem[OlSa--02]{OlSa--02}
Alexander Yu.\ {\sc Ol'shanskii} and Mark {\sc Sapir}, 
\emph{Non-amenable finitely presented torsion-by-cyclic groups},
Publ.\ Math.\ I.H.\'E.S.\ \textbf{96} (2002), 43--169.

\bibitem[OlOS--09]{OlOS--09}
Alexander Yu.\ {\sc Ol'shanskii}, Denis {\sc Osin}, and Mark {\sc Sapir}, 
with an appendix by Michael {\sc Kapovich} and Bruce {\sc Kleiner},
\emph{Lacunary hyperbolic groups},
Geom.\ Topol.\ \textbf{13} (2009), 2052--2140.


\bibitem[OlOs]{OlOs}
Alexander Yu.\ {\sc Ol'shanskii} and Denis {\sc Osin}, 
\emph{A quasi-isometric embedding theorem for groups},
Preprint, arXiv:1202.6437v2, 2 Mar 2012.

\bibitem[Osin--04]{Osin--04}
Denis {\sc Osin}, 
\emph{Algebraic entropy of elementary amenable groups},
Geom.\ Dedicata \textbf{104} (2004), 133--151.

\bibitem[Reme--72]{Reme--72}
Vladimir Nikanorovich {\sc Remeslennikov},
\emph{A finitely presented soluble group 
without maximum condition for normal subgroups},
Mat.\ Zametki \textbf{12} (1972), 287--293 [Russian].

\bibitem[Reme--73]{Reme--73}
Vladimir Nikanorovich {\sc Remeslennikov},
\emph{On finitely presented soluble groups},
Proc.\ Fourth All-Union Symposion on the Theory of Groups
(Novosibirsk, 1973), 164--169 [Russian].

\bibitem[Robi--96]{Robi--96}
Derek J.S.\ {\sc Robinson},
\emph{A course in the theory of groups},
Second Edition,
Graduate Texts in Math.\ \textbf{80}, Springer 1996.

\bibitem[SaWi--02]{SaWi--02}
Mark {\sc Sapir} and Daniel T.\ {\sc Wise},
\emph{Ascending HNN extensions of residually finite groups
can be non-Hopfian and can have very few finite quotients},
J.\ Pure Appl.\ Algebra \textbf{166} (2002), 191--202.

\bibitem[Sega--75]{Sega--75}
Dan {\sc Segal},
\emph{On abelian-by-polycyclic groups},
J.\ London Math.\ Soc.\ \textbf{11} (1975), 445--452.

\bibitem[Serr--71]{Serr--71}
Jean-Pierre {\sc Serre},
\emph{ Cohomologie des groupes discrets},
in ``Prospects in mathematics'',
Annals of Math.\ Studies \textbf{70}
(Princeton Univ.\ Press 1971), 77--169.


\bibitem[Serr--77]{Serr--77}
Jean-Pierre {\sc Serre},
\emph{Arbres, amalgames, $\operatorname{SL}_2$},
Ast\'erisque \textbf{46}, Soc.\ Math.\ France, 1977.

\bibitem[Shal--00]{Shal--00}
Yehuda {\sc Shalom},
\emph{Rigidity of commensurators and irreducible lattices},
Invent.\ Math.\ \textbf{141} (2000), 1--54.

\bibitem[Shme--64]{Shme--64}
Alfret L.\ {\sc Shmelkin},
\emph{Free polynilpotent groups},
Amer.\ Math.\ Transl.\ II \textbf{55} (1966), 270--304.
[Russian: Izv.\ Adad.\ Nauk.\ SSSR \textbf{28} (1964), 91--122.]


\bibitem[Shme--65]{Shme--65}
Alfret L.\ {\sc Shmelkin},
\emph{\"Uber aufl\"osbare Produkte von Gruppen},
Sib.\ Mat.\ Sz.\ \textbf{6} (1965), 212--220.

\bibitem[Sidk--00]{Sidk--00}
Said {\sc Sidki}, 
\emph{Automorphisms of one-rooted trees: growth, circuit structure and acyclicity}, 
J.\ of Math.\ Sci.\, New York, \textbf{100} (2000), 1925--1943.

\bibitem[Smir--64]{Smir--64}
Dmitri\u {\i} Matveevich {\sc Smirnov},
\emph{Generalized solvable groups and their group rings},
Soviet Math.\ Doklady \textbf{5} (1964), 465--467.

\bibitem[Smir--65]{Smir--65}
Dmitri\u {\i} Matveevich {\sc Smirnov},
\emph{On generalized solvable groups and their group rings},
Mat.\ Sb.\ (N.S.) \textbf{67 (109)} 1965, 366--383.

\bibitem[Souc--01]{Souc--01}
Estelle {\sc Souche},
\emph{Quasi-isom\'etries et quasi-plans dans l'\'etude des groupes discrets},
Ph.D.\ Thesis, Universit\'e de Provence, 17 d\'ecembre 2001.
\emph{Appendice A: groupes de Baumslag-Solitar}, Pages 69--89.

\bibitem[Step--96]{Step--96}
Anatolii Mikhailovich {\sc Stepin},
\emph{Approximation of groups and group actions, the Cayley topology},
in ``Ergodic theory of $Z_d$-actions: 
Proceedings of the Warwick symposium, 1993--94'',
Lect.\ Note Ser.\ \textbf{228}
(Cambridge University Press 1996), 475--484.

\bibitem[Stre--84]{Stre--84}
Ralph {\sc Strebel},
\emph{Finitely presented soluble groups},
in ``Group theory, Essays for Philip Hall''
(Academic Press, 1984), 257--314.

\bibitem[Stre]{Stre}
Ralph {\sc Strebel},
\emph{Notes on the Sigma-invariants, Part 1},
Preprint, arXiv:1204.0214v1, 1 Apr 2012.

\bibitem[Tars--38]{Tars--38}
Alfred {\sc Tarski},
\emph{Algebraische Fassung des Massproblems},
Fund.\ Math.\ \textbf{31} (1938), 47--66
[Collected papers, Vol.\ 1, pp.\ 599--643.]


\bibitem[Thom--77]{Thom--77}
Michael W.\ {\sc Thomson},
\emph{Subgroups of finitely presented solvable linear groups},
Trans.\ Amer.\ Math.\ Soc.\ \textbf{231} (1977), 133--142.

\bibitem[Vale--93]{Vale--93}
Alain {\sc Valette},
\emph{A short proof that free groups are residually $p$-groups},
Expositiones Math.\ \textbf{11} (1993), 65--71.

\bibitem[Wago--85]{Wago--85}
Stan {\sc Wagon},
\emph{The Banach-Tarski paradox},
Cambridge University Press, 1985.

\bibitem[Wehr--73]{Wehr--73}
Bertram A.F.\ {\sc Wehrfritz},
\emph{Infinite linear groups},
Ergebnisse der Mathematik und ihrer Grenzbebiete \textbf{76}, Springer 1973.

\bibitem[Wehr--80]{Wehr--80}
Bertram A.F.\ {\sc Wehrfritz},
\emph{On finitely generated soluble linear groups},
Math.\ Zeit.\ \textbf{170} (1980), 155--167.


\end{thebibliography}
\end{document}